\theoremstyle{plain}
\newtheorem{Thm}[subsection]{Theorem}
\newtheorem{Cor}[subsection]{Corollary}
\newtheorem{Lem}[subsection]{Lemma}
\newtheorem{Prop}[subsection]{Proposition}
\newtheorem{Conj}[subsection]{Conjecture}
\theoremstyle{definition}
\newtheorem{Def}[subsection]{Definition}
\theoremstyle{remark}
\newtheorem{Rem}[subsection]{Remark}
\numberwithin{equation}{section}
\renewcommand{\rm}{\normalshape}
\newif\ifShowLabels
\newdimen\theight
\def\TeXref#1{%
    \leavevmode\vadjust{\setbox0=\hbox{{\tt
        \quad\quad  {\small \rm #1}}}%
    \theight=\ht0
    \advance\theight by \lineskip
    \kern -\theight \vbox to
    \theight{\rightline{\rlap{\box0}}%
    \vss}%
    }}%
\newenvironment{thm}[1]%
    { \begin{Thm} \label{T:#1}  \ifShowLabels \TeXref{T:#1} \fi }%
    { \end{Thm} }
\renewcommand{\th}[1]{\begin{thm}{#1} \sl }
\renewcommand{\eth}{\end{thm} }
\newenvironment{lemma}[1]%
    { \begin{Lem} \label{L:#1}  \ifShowLabels \TeXref{L:#1} \fi }%
    { \end{Lem} }
\newcommand{\lem}[1]{\begin{lemma}{#1} \sl}
\newcommand{\elem}{\end{lemma}}
\newenvironment{propos}[1]%
    { \begin{Prop} \label{P:#1}  \ifShowLabels \TeXref{P:#1} \fi }%
    { \end{Prop} }
\newcommand{\prop}[1]{\begin{propos}{#1}\sl }
\newcommand{\eprop}{\end{propos}}
\newenvironment{corol}[1]%
    { \begin{Cor} \label{C:#1}  \ifShowLabels \TeXref{C:#1} \fi }%
    { \end{Cor} }
\newcommand{\cor}[1]{\begin{corol}{#1} \sl }
\newcommand{\ecor}{\end{corol}}
\newenvironment{defeni}[1]%
    { \begin{Def} \label{D:#1}  \ifShowLabels \TeXref{D:#1} \fi }%
    { \end{Def} }
\newcommand{\defe}[1]{\begin{defeni}{#1} \sl }
\newcommand{\edefe}{\end{defeni}}
\newenvironment{remark}[1]%
    { \begin{Rem} \label{R:#1}  \ifShowLabels \TeXref{R:#1} \fi }%
    { \end{Rem} }
\newcommand{\rem}[1]{\begin{remark}{#1}}
\newcommand{\erem}{\end{remark}}
\newenvironment{conjec}[1]%
    { \begin{Conj} \label{Co:#1}  \ifShowLabels \TeXref{Co:#1} \fi }%
    { \end{Conj} }
\renewcommand{\conj}[1]{\begin{conjec}{#1} \sl }
\newcommand{\econj}{\end{conjec}}
\newcommand{\eq}[1]%
    { \ifShowLabels \TeXref{E:#1} \fi
       \begin{equation} \label{E:#1} }
\newcommand{\eeq}{ \end{equation} }
\newcommand{\prf}{ \begin{proof} }
\newcommand{\epr}{ \end{proof} }
\newcommand\nc{\newcommand}
\nc{\unl}{\underline}
\nc{\ol}{\overline}
\nc{\on}{\operatorname}
\nc{\BA}{{\mathbb{A}}}
\nc{\BC}{{\mathbb{C}}}
\nc{\BD}{{\mathbb{D}}}
\nc{\BF}{{\mathbb{F}}}
\nc{\BG}{{\mathbb{G}}}
\nc{\BM}{{\mathbb{M}}}
\nc{\BN}{{\mathbb{N}}}
\nc{\BO}{{\mathbb{O}}}
\nc{\BQ}{{\mathbb{Q}}}
\nc{\BP}{{\mathbb{P}}}
\nc{\BR}{{\mathbb{R}}}
\nc{\BZ}{{\mathbb{Z}}}
\nc{\BS}{{\mathbb{S}}}
\nc{\CA}{{\mathcal{A}}} \nc{\CB}{{\mathcal{B}}} \nc{\CalC}{{\mathcal
C}} \nc{\CalD}{{\mathcal D}} \nc{\CE}{{\mathcal{E}}}
\nc{\CF}{{\mathcal{F}}} \nc{\CG}{{\mathcal{G}}}
\nc{\CH}{{\mathcal{H}}} \nc{\CI}{{\mathcal{I}}}
\nc{\CK}{{\mathcal{K}}} \nc{\CL}{{\mathcal{L}}}
\nc{\CM}{{\mathcal{M}}} \nc{\CN}{{\mathcal{N}}}
\nc{\CO}{{\mathcal{O}}} \nc{\CP}{{\mathcal{P}}}
\nc{\CQ}{{\mathcal{Q}}} \nc{\CR}{{\mathcal{R}}}
\nc{\CS}{{\mathcal{S}}} \nc{\CT}{{\mathcal{T}}}
\nc{\CU}{{\mathcal{U}}} \nc{\CV}{{\mathcal{V}}}
\nc{\CW}{{\mathcal{W}}} \nc{\CX}{{\mathcal{X}}}
\nc{\CY}{{\mathcal{Y}}} \nc{\CZ}{{\mathcal{Z}}}
\nc{\fa}{{\mathfrak{a}}}
\nc{\fb}{{\mathfrak{b}}}
\nc{\fg}{{\mathfrak{g}}}
\nc{\fgl}{{\mathfrak{gl}}}
\nc{\fh}{{\mathfrak{h}}}
\nc{\fj}{{\mathfrak{j}}}
\nc{\fl}{{\mathfrak{l}}}
\nc{\fm}{{\mathfrak{m}}}
\nc{\fn}{{\mathfrak{n}}}
\nc{\fu}{{\mathfrak{u}}}
\nc{\fp}{{\mathfrak{p}}}
\nc{\frr}{{\mathfrak{r}}}
\nc{\fs}{{\mathfrak{s}}}
\nc{\ft}{{\mathfrak{t}}}
\nc{\fw}{{\mathfrak{w}}}
\nc{\fz}{{\mathfrak{z}}}
\nc{\fA}{{\mathfrak{A}}}
\nc{\fB}{{\mathfrak{B}}}
\nc{\fD}{{\mathfrak{D}}}
\nc{\fE}{{\mathfrak{E}}}
\nc{\fF}{{\mathfrak{F}}}
\nc{\fG}{{\mathfrak{G}}}
\nc{\fI}{{\mathfrak{I}}}
\nc{\fJ}{{\mathfrak{J}}}
\nc{\fK}{{\mathfrak{K}}}
\nc{\fL}{{\mathfrak{L}}}
\nc{\fM}{{\mathfrak{M}}}
\nc{\fN}{{\mathfrak{N}}}
\nc{\frP}{{\mathfrak{P}}}
\nc{\fQ}{{\mathfrak Q}}
\nc{\fR}{{\mathfrak R}}
\nc{\fS}{{\mathfrak S}}
\nc{\fT}{{\mathfrak{T}}}
\nc{\fU}{{\mathfrak{U}}}
\nc{\fW}{{\mathfrak{W}}}
\nc{\fY}{{\mathfrak{Y}}}
\nc{\fZ}{{\mathfrak{Z}}}
\nc{\ba}{{\mathbf{a}}}
\nc{\bb}{{\mathbf{b}}}
\nc{\bc}{{\mathbf{c}}}
\nc{\bd}{{\mathbf{d}}}
\nc{\be}{{\mathbf{e}}}
\nc{\bi}{{\mathbf{i}}}
\nc{\bj}{{\mathbf{j}}}
\nc{\bn}{{\mathbf{n}}}
\nc{\bp}{{\mathbf{p}}}
\nc{\bq}{{\mathbf{q}}}
\nc{\bu}{{\mathbf{u}}}
\nc{\bv}{{\mathbf{v}}}
\nc{\bw}{{\mathbf{w}}}
\nc{\bx}{{\mathbf{x}}}
\nc{\by}{{\mathbf{y}}}
\nc{\bz}{{\mathbf{z}}}
\nc{\bA}{{\mathbf{A}}}
\nc{\bB}{{\mathbf{B}}}
\nc{\bC}{{\mathbf{C}}}
\nc{\bD}{{\mathbf{D}}}
\nc{\bE}{{\mathbf{E}}}
\nc{\bI}{{\mathbf{I}}}
\nc{\bK}{{\mathbf{K}}}
\nc{\bH}{{\mathbf{H}}}
\nc{\bM}{{\mathbf{M}}}
\nc{\bN}{{\mathbf{N}}}
\nc{\bO}{{\mathbf{O}}}
\nc{\bQ}{{\mathbf Q}}
\nc{\bS}{{\mathbf{S}}}
\nc{\bT}{{\mathbf{T}}}
\nc{\bV}{{\mathbf{V}}}
\nc{\bW}{{\mathbf{W}}}
\nc{\bX}{{\mathbf{X}}}
\nc{\bP}{{\mathbf{P}}}
\nc{\bY}{{\mathbf{Y}}}
\nc{\bZ}{{\mathbf{Z}}}
\nc{\sA}{{\mathsf{A}}}
\nc{\sB}{{\mathsf{B}}}
\nc{\sC}{{\mathsf{C}}}
\nc{\sD}{{\mathsf{D}}}
\nc{\sF}{{\mathsf{F}}}
\nc{\sK}{{\mathsf{K}}}
\nc{\sM}{{\mathsf{M}}}
\nc{\sO}{{\mathsf{O}}}
\nc{\sQ}{{\mathsf{Q}}}
\nc{\sP}{{\mathsf{P}}}
\nc{\sT}{{\mathsf{T}}}
\nc{\sV}{{\mathsf{V}}}
\nc{\sW}{{\mathsf{W}}}
\nc{\sX}{{\mathsf{X}}}
\nc{\sZ}{{\mathsf{Z}}}
\nc{\sfb}{{\mathsf{b}}}
\nc{\sfc}{{\mathsf{c}}}
\nc{\sd}{{\mathsf{d}}}
\nc{\sg}{{\mathsf{g}}}
\nc{\sk}{{\mathsf{k}}}
\nc{\sfl}{{\mathsf{l}}}
\nc{\sfp}{{\mathsf{p}}}
\nc{\sr}{{\mathsf{r}}}
\nc{\st}{{\mathsf{t}}}
\nc{\sfu}{{\mathsf{u}}}
\nc{\sw}{{\mathsf{w}}}
\nc{\sz}{{\mathsf{z}}}
\nc{\sx}{{\mathsf{x}}}
\nc{\bLambda}{{\boldsymbol{\Lambda}}}
\nc{\vv}{{\boldsymbol{v}}}
\nc{\Fl}{{{\mathcal F}\ell}}
\nc{\Gr}{{\on{Gr}}}
\nc{\CHH}{{\CH\!\!\CH}}
\nc{\lambdavee}{{\lambda^{\!\scriptscriptstyle\vee}}}
\nc{\alphavee}{\alpha^{\!\scriptscriptstyle\vee}}
\nc{\rhovee}{{\rho^{\!\scriptscriptstyle\vee}}}
\newcommand\iso{\,\vphantom{j^{X^2}}\smash{\overset{\sim}{\vphantom{\rule{0pt}{0.20em}}\smash{\longrightarrow}}}\,}
\nc{\oQM}{\vphantom{j^{X^2}}\smash{\overset{\circ}{\vphantom{\vstretch{0.7}{A}}\smash{\QM}}}}
\nc{\oZ}{{}^\dagger\!\vphantom{j^{X^2}}\smash{\overset{\circ}{\vphantom{\vstretch{0.7}{A}}\smash{Z}}}}
\nc{\odZ}{{}^\dagger\!\vphantom{j^{X^2}}\smash{\overset{\circ}{\vphantom{\vstretch{0.7}{A}}\smash{\mathfrak Z}}}^{c',c}}
\nc{\bdZ}{{}^\dagger\!\vphantom{j^{X^2}}\smash{\overset{\bullet}{\vphantom{\vstretch{0.7}{A}}\smash{\mathfrak Z}}}^{c',c}}
\nc{\oS}{\vphantom{j^{X^2}}\smash{\overset{\circ}{\vphantom{\vstretch{0.7}{A}}\smash{S}}}}
\nc{\buM}{\vphantom{j^{X^2}}\smash{\overset{\bullet}{\vphantom{\vstretch{0.7}{A}}\smash{M}}}}
\nc{\dW}{{}^\dagger\ol\CW{}}
\nc{\hW}{{}^\dagger\hat\CW{}}
\nc{\wW}{{}^\dagger\widetilde\CW{}}
\nc{\dZ}{{}^\dagger\!\fZ^{c',c}}
\nc{\dZc}{{}^\dagger\!\fZ^{c,c}}
\nc{\tZ}{{}^\dagger\!\tilde{Z}{}}
\nc{\hZ}{{}^\dagger\!\hat{Z}{}}
\nc{\ssl}{\mathfrak{sl}} \nc{\gl}{\mathfrak{gl}}
\nc{\wt}{\widetilde} \nc{\Sym}{\mathrm{Sym}} \nc{\Res}{\mathrm{Res}}
\nc{\sE}{{\mathsf{E}}} \nc{\bs}{{\mathbf{s}}}
\nc{\trig}{\mathrm{trig}} \nc{\rat}{\mathrm{rat}}
\nc{\sign}{\mathrm{sign}} \nc{\sL}{{\mathsf{L}}}
\nc{\fv}{{\mathfrak{v}}} \nc{\ad}{\mathrm{ad}}
\nc{\spsi}{{\mathsf{\psi}}} \nc{\sh}{{\mathsf{h}}}
\nc{\rtt}{\mathrm{rtt}} \nc{\qdet}{\mathrm{qdet}}
\nc{\M}{\mathrm{M}} \nc{\Ker}{\mathrm{Ker}} \nc{\ssc}{\mathrm{sc}}
\nc{\loc}{\mathrm{loc}} \nc{\fra}{\mathrm{frac}}
\nc{\ddj}{\mathrm{DJ}}
\nc{\rk}{\mathrm{rk}}
\nc{\g}{\mathfrak{g}}
\nc{\h}{\mathfrak{h}}
\nc{\BT}{{\mathbb{T}}}
\nc{\fk}{{\mathfrak{k}}}
\nc{\Or}{\mathrm{Or}}
\nc{\tr}{\mathrm{tr}}
\nc{\op}{\mathrm{op}}
\nc{\D}{\mathbf{D}}
\nc{\HH}{\mathsf{H}}
\nc{\BH}{{\mathbb{H}}}
\nc{\n}{\mathsf{n}}
\nc{\m}{\mathsf{m}}
\nc{\evv}{\mathsf{ev}}
\nc{\NN}{\mathsf{N}}
\nc{\End}{\mathrm{End}}
\nc{\id}{\mathrm{id}}
\nc{\Dyn}{\mathrm{Dyn}}
\begin{document}
\title[Modified quantum difference Toda systems]
{On Sevostyanov's construction of quantum difference Toda lattices}

\author{Roman Gonin}
 \address{R.G.:
  National Research University Higher School of Economics, Department of Mathematics,
  6 Usacheva st., Moscow 119048, Russian Federation;
  Center for Advanced Studies, Skoltech}
 \email{roma-gonin@yandex.ru}

\author[Alexander Tsymbaliuk]{Alexander Tsymbaliuk}
 \address{A.T.: Yale University, Department of Mathematics, New Haven, CT 06511, USA}
 \email{sashikts@gmail.com}

\dedicatory{To our teacher and dear friend Michael Finkelberg, with admiration and gratitude}

\begin{abstract}
We propose a natural generalization of the construction of the quantum
difference Toda lattice~\cite{E,S2} associated to a simple Lie algebra $\g$.
Our construction depends on two orientations of the Dynkin diagram of $\g$ and
some other data (which we refer to as a pair of \emph{Sevostyanov triples}). In types
$A$ and $C$, we provide an alternative construction via Lax matrix formalism, cf.~\cite{KuT}.
We also show that the generating function of the pairing of Whittaker vectors in
the Verma modules is an eigenfunction of the corresponding modified quantum difference
Toda system and derive fermionic formulas for the former in spirit of~\cite{FFJMM}.
We give a geometric interpretation of all Whittaker vectors in type $A$ via line
bundles on the Laumon moduli spaces and obtain an edge-weight path model
for them, generalizing the construction of~\cite{DFKT}.
\end{abstract}
\maketitle


\section{Introduction}

In a recent work~\cite{FT} of M.~Finkelberg and the second author, a family of $3^{n-1}$
commutative subalgebras in the algebra of difference operators on $(\BC^\times)^{n+1}$
was constructed, generalizing the type $A$ quantum difference Toda lattice of~\cite{E,S2}.
In this paper, we show how the construction of~\cite{S2} for an arbitrary semisimple Lie algebra
$\g$ can be generalized to produce $3^{\rk(\g)-1}$ \emph{integrable systems}, thus answering
a question of P.~Etingof. In types $A$ and $C$, we identify these systems with the ones obtained
via the Lax matrix formalism. We also discuss some generalizations of the basic results on the
quantum difference Toda system to the current setting.

The importance of our generalization of $q$-Toda systems of~\cite{E,S2} is two-fold.
First of all, as emphasized in~\cite{FT} (historically this goes back at least
to~\cite{stss}), already the quasi-classical limit of this construction (known as the
\emph{relativistic open Toda system}) crucially depends on a choice of a pair of
Coxeter elements in the Weyl group of $G$ (simply-connected algberaic group
associated with $\g$).
One of our main results, Theorem~\ref{Main Conj}, gives an upper bound on the number
of different integrable systems we obtain this way in the quantum case.
Another motivation arises from the geometric representation theory, where
Whittaker vectors (closely related to the Toda systems due to Theorem~\ref{J-eigenfunction})
often have natural geometric interpretations which unveil additional symmetry.
We illustrate this in Section~\ref{section geometric}, where the universal Verma module
over $U_v(\ssl_n)$ is realized as the equivariant $K$-theory of Laumon spaces due to~\cite{BF}
(see Theorem~\ref{Br-Fin}), one of the Whittaker vectors is realized as a sum of the structure sheaves
(see~(\ref{simplest Whittaker}) and Proposition~\ref{eigen-property}(a)), while an extra symmetry noticed
in Proposition~\ref{eigen-property}(b) gives rise to a family of Whittaker vectors
(see Theorem~\ref{geometric Whittaker} and Proposition~\ref{Feigin's relation continued}).

\medskip

This paper is organized as follows:

$\bullet$
In Section~\ref{section basic}, we construct the \emph{modified quantum difference Toda systems}
depending on a pair of \emph{Sevostyanov triples} (following~\cite{S1,S2}) and generalizing the
$q$-Toda systems of~\cite{E,S2}.

$\bullet$
In Section~\ref{section power of 3}, we explain how to compute explicitly the
corresponding hamiltonians using~\cite{KT}. We write down the formulas for the
hamiltonians corresponding to the first fundamental representation in the classical
types and $G_2$. In the latter case of $G_2$, our formula seems to be new even in the
simplest setup of the standard $q$-Toda system of~\cite{E}, see~(\ref{standard qToda for G2}).

One of the key results of this section is that there are at most $3^{\rk(\g)-1}$
different modified quantum difference Toda systems, see Theorem~\ref{Main Conj}.
For the classical types and $G_2$, see Theorem~\ref{Main Thm A}, whose proof is more elementary and
relies on Propositions~\ref{bound for A},~\ref{bound for C},~\ref{bound for D},~\ref{bound for B},~\ref{bound for G2}.
We also show that these are maximal commutative subalgebras, determined by their first hamiltonians,
see Theorem~\ref{Main Thm B}.

We also prove that in type $A$ these integrable systems exactly match those of~\cite[11(ii, iii)]{FT},
see Theorem~\ref{Lax for type A}. This generalizes the Lax matrix realization of
the type $A$ $q$-Toda system, due to~\cite{KuT}. In Theorem~\ref{Lax for type C}, we also
provide a similar Lax matrix realization of the type $C$ modified quantum difference Toda systems.
Noticing that the \emph{periodic} counterparts of these two constructions in the classical case
(that is, for $\vec{k}=\vec{0}$ in the notations of \emph{loc.cit.}) match up with the hamiltonians
of the affine $q$-Toda lattice of~\cite{E}, see
formulas~(\ref{affine Toda A},~\ref{affine Toda C},~\ref{affine Toda D},~\ref{affine Toda B},~\ref{affine Toda G2}),
we propose a periodic analogue of the modified quantum difference Toda systems
in types $A, C$, see Propositions~\ref{Closed Toda for A},~\ref{Closed Toda for C}
and Remarks~\ref{affine Toda 1}(b,c),~\ref{affine Toda 2}(b).

$\bullet$
In Section~\ref{section properties}, we study the Shapovalov pairing between a pair of Whittaker vectors
(determined by Sevostyanov triples) in Verma modules. We obtain fermionic formulas for those in spirit
of~\cite{FFJMM}, see Theorems~\ref{Fermionic one},~\ref{Fermionic two}. We also prove that their generating
function is naturally an eigenfunction of the corresponding modified quantum difference Toda system,
see Theorem~\ref{J-eigenfunction}.

$\bullet$
In Section~\ref{section geometric}, we provide a geometric interpretation of all type $A$
Whittaker vectors and their Shapovalov pairing via the geometry of the Laumon moduli spaces,
generalizing~\cite{BF}, see Theorems~\ref{geometric Whittaker},~\ref{geometric J-function}.

Following a suggestion of B.~Feigin, we relate this family of Whittaker vectors to an
eigen-property of the (geometrically) simplest one~(\ref{simplest Whittaker}) with respect
to the action of the quantum loop algebra $U_v(L\ssl_n)$ (via the evaluation homomorphism),
see Propositions~\ref{eigen-property},~\ref{Feigin's relation continued}, and
Corollary~\ref{Feigin's relation}. This viewpoint also provides an edge-weight path model
for a general type $A$ Whittaker vector, generalizing the path model of~\cite{DFKT} for a
particular choice of a Sevostyanov triple, see Propositions~\ref{Path model 1},~\ref{Path model 2}.

$\bullet$
In Appendices, we prove Proposition~\ref{bound for A} and
Theorems~\ref{Main Conj},~\ref{Main Thm A},~\ref{Main Thm B},~\ref{Lax for type A}.


\subsection*{Acknowledgments}
\

We are deeply grateful to P.~Etingof for suggesting this problem, stimulating discussions,
and proposing the elegant argument used in the proofs of Theorems~\ref{Main Thm A},~\ref{Main Thm B};
to A.~Braverman for the insight that our proof of Theorem~\ref{Main Thm A} combined with
Theorem~\ref{Fermionic two} provides an immediate proof of Theorem~\ref{Main Conj};
to B.~Feigin for enlightening discussions on the Whittaker vectors and
fermionic formulas; to P.~Di~Francesco, R.~Kedem, A.~Sevostyanov, and M.~Yakimov for a useful
correspondence; to M.~Bershtein and A.~Marshakov for their interest in this work.
We also thank the anonymous referee for useful suggestions which improved the exposition.

Last, but not least, we are extremely grateful to M.~Finkelberg for his encouragement,
numerous discussions, and constant support. This paper would not appear without his
interest in the current project and inspiring discussions the authors had with him.
We would like to use this opportunity to express our gratitude to M.~Finkelberg for all
the beautiful mathematics he has taught us for more than a decade.

The work of R.G.\ has been funded by the  Russian Academic Excellence Project `5-100'.
A.T.\ gratefully acknowledges support from Yale University at which most of the research
for this paper was performed, and is extremely grateful to IHES (Bures-sur-Yvette, France)
for the hospitality and wonderful working conditions in the summer 2017 when this project
was initiated. A.T.\ was partially supported by the NSF Grants DMS--1502497, DMS--1821185.


\section{Sevostyanov triples and Whittaker functions}\label{section basic}


\subsection{Quantum groups}
\

We fix the notations as follows. Let $G$ be a simply-connected complex
algebraic group with a semisimple Lie algebra $\g$. We denote by $H\subset B$
a pair of a Cartan torus and a Borel subgroup.
The Cartan subalgebra $\h\subset \g$ is defined as the Lie algebra of $H$,
$\Delta$ denotes the set of roots of $(\g,\h)$, $\Delta_+\subset \Delta$ denotes the set
of positive roots corresponding to $B$. Let $n=\rk(\g)$ be the rank of $\g$,
$\alpha_1,\ldots,\alpha_n$ be the simple positive roots, and $\omega_1,\ldots,\omega_n$ be
the fundamental weights. Let $P:=\oplus_{i=1}^n \BZ\omega_i$ be the weight lattice,
$Q:=\oplus_{i=1}^n \BZ\alpha_i$ be the root lattice, and set
$P_+:=\oplus_{i=1}^n \BZ_{\geq 0}\omega_i,\ Q_+:=\oplus_{i=1}^n \BZ_{\geq 0}\alpha_i$.
We write $\beta\geq \gamma$ if $\beta-\gamma\in Q_+$.
We fix a non-degenerate invariant bilinear form $(\cdot,\cdot)\colon \h\times\h\to \BC$
and identify $\h^*$ with $\h$ via $(\cdot,\cdot)$. We set $\sd_i:=\frac{(\alpha_i,\alpha_i)}{2}$.
The choice of $(\cdot,\cdot)$ is such that $\sd_i=1$ for short roots $\alpha_i$,
in particular, $\sd_i\in \{1,2,3\}$  for any $i$. We also define $\omega_i^\vee:=\omega_i/\sd_i$
so that $(\omega_i^\vee,\alpha_j)=\delta_{i,j}$, and
$\rho:=\sum_{i=1}^n \omega_i=\frac{1}{2}\sum_{\gamma\in \Delta_+}\gamma\in P$.
Let $(a_{ij})_{i,j=1}^n$ be the corresponding Cartan matrix with
$a_{ij}=\frac{2(\alpha_i,\alpha_j)}{(\alpha_i,\alpha_i)}$. We define
$b_{ij}=\sd_ia_{ij}=(\alpha_i,\alpha_j)$, so that $(b_{ij})_{i,j=1}^n$ is symmetric.

Choose $\NN\in \BZ_{>0}$ so that $(P,P)\subset \frac{1}{\NN}\BZ$.
The quantum group (of \emph{adjoint type} in the terminology of~\cite{L}) $U_v(\g)$
is the unital associative $\BC(v^{1/\NN})$-algebra generated by
$\{E_i,F_i,K_\mu\}_{1\leq i\leq n}^{\mu\in P}$ with the following defining relations:
\begin{align*}
  & K_{\mu} K_{\mu'}=K_{\mu+\mu'},\ K_0=1,\\
  & K_\mu E_i K^{-1}_\mu=v^{(\mu,\alpha_i)}E_i,\
    K_\mu F_i K^{-1}_\mu=v^{-(\mu,\alpha_i)}F_i,\
    [E_i,F_j]=\delta_{i,j}\frac{K_i-K_i^{-1}}{v_i-v_i^{-1}},\\
  & \sum_{r=0}^{1-a_{ij}}(-1)^r {1-a_{ij}\brack r}_{v_i} E_i^{1-a_{ij}-r}E_jE_i^{r}=0,\
    \sum_{r=0}^{1-a_{ij}}(-1)^r {1-a_{ij}\brack r}_{v_i} F_i^{1-a_{ij}-r}F_jF_i^{r}=0\ (i\ne j),
\end{align*}
where
 $K_i:=K_{\alpha_i},\ v_i:=v^{\sd_i},\ [r]_v:=\frac{v^r-v^{-r}}{v-v^{-1}},\
  [r]_v!:=[1]_v\cdots [r]_v,\ {m\brack r}_v:=\frac{[m]_v!}{[r]_v!\cdot [m-r]_v!}$.

Set $L_i:=K_{\omega_i}$. Since $P=\oplus_{i=1}^n \BZ\omega_i$, we will alternatively view
$U_v(\g)$ as the $\BC(v^{1/\NN})$-algebra generated by $\{E_i,F_i,L_i^{\pm 1}\}_{i=1}^n$
with the corresponding defining relations. In particular,
\begin{equation*}
  L_iE_jL_i^{-1}=v_i^{\delta_{i,j}}E_j,\
  L_iF_jL_i^{-1}=v_i^{-\delta_{i,j}}F_j,\
  K_i=\prod_{j=1}^n L_j^{a_{ji}}.
\end{equation*}


\subsection{Sevostyanov triples}\label{Sevostyanov triples}
\

Let $\Dyn(\g)$ be the graph obtained from the Dynkin diagram of $\g$ by replacing
all multiple edges by simple ones, e.g.,
  $\Dyn(\mathfrak{sp}_{2n})=\Dyn(\mathfrak{so}_{2n+1})=\Dyn(\ssl_{n+1})=A_n$.
Given an orientation $\Or$ of $\Dyn(\g)$, define the associated matrix
$\epsilon=(\epsilon_{ij})_{i,j=1}^n$ via
\begin{equation*}
  \epsilon_{ij}=
  \begin{cases}
     0, & \mathrm{if}\ a_{ij}=0\ \mathrm{or}\ i=j,\\
     1, & \mathrm{if}\ a_{ij}<0\ \mathrm{and\ the\ edge\ is\ oriented}\ i\rightarrow j\ \mathrm{in}\ \Or,\\
     -1, & \mathrm{if}\ a_{ij}<0\ \mathrm{and\ the\ edge\ is\ oriented}\ i\leftarrow j\ \mathrm{in}\ \Or.
  \end{cases}
\end{equation*}

\begin{Def}
A \textbf{Sevostyanov triple} is a collection of the following data:

(a) an orientation $\Or$ of $\Dyn(\g)$,

(b) an integer matrix $\n=(\n_{ij})_{i,j=1}^n$ satisfying
$\sd_j\n_{ij}-\sd_i\n_{ji}=\epsilon_{ij}b_{ij}$ for any $i,j$,

(c) a collection $c=(c_i)_{i=1}^n\in (\BC(v^{1/\NN})^\times)^n$.

\noindent
We refer to this \textbf{Sevostyanov triple} by $(\epsilon, \n, c)$.
\end{Def}

Fix a pair of integer matrices $\n^\pm=(\n^\pm_{ij})_{i,j=1}^n$ and collections
$c^\pm=(c^\pm_i)_{i=1}^n\in (\BC(v^{1/\NN})^\times)^n$. Set
  $e_i:=E_i\cdot \prod_{p=1}^n L_p^{\n^+_{ip}},\
   f_i:=\prod_{p=1}^n L_p^{-\n^-_{ip}}\cdot F_i$,
and let $U^{+}_{\n^+}(\g), U^{-}_{\n^-}(\g)$ be the $\BC(v^{1/\NN})$-subalgebras of $U_v(\g)$
generated by $\{e_i\}_{i=1}^n$ and $\{f_i\}_{i=1}^n$, respectively.

The following simple, but very important, observation is essentially due to~\cite{S1}:

\begin{Lem}\label{Sev twist}
(a) The assignment $e_i\mapsto c^+_i\ (1\leq i\leq n)$ extends to an algebra homomorphism
$\chi^+\colon U^{+}_{\n^+}(\g)\to \BC(v^{1/\NN})$ if and only if there exists an orientation
$\Or^+$ of $\Dyn(\g)$ with an associated matrix $\epsilon^+$, such that $(\epsilon^+,\n^+,c^+)$
is a Sevostyanov triple.

\noindent
(b) The assignment $f_i\mapsto c^-_i\ (1\leq i\leq n)$ extends to an algebra homomorphism
$\chi^-\colon U^{-}_{\n^-}(\g)\to \BC(v^{1/\NN})$ if and only if there exists an orientation
$\Or^-$ of $\Dyn(\g)$ with an associated matrix $\epsilon^-$, such that $(\epsilon^-,\n^-,c^-)$
is a Sevostyanov triple.
\end{Lem}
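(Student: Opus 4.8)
The plan is to prove part (a) in detail; part (b) follows by the obvious symmetry (apply the anti-automorphism swapping $E_i$ and $F_i$, or simply repeat the argument with signs flipped). So fix integer matrix $\n^+=(\n^+_{ij})$ and scalars $c^+=(c^+_i)\in(\BC(v^{1/\NN})^\times)^n$, and write $e_i=E_i\prod_p L_p^{\n^+_{ip}}$. The key computation is the commutation relation among the $e_i$: using $L_pE_jL_p^{-1}=v_p^{\delta_{pj}}E_j$ and the relation $K_i=\prod_j L_j^{a_{ji}}$, one finds that $e_i$ and $e_j$ quasi-commute up to a power of $v$. Concretely, I would compute
\[
  e_ie_j = v^{\,\sd_j\n^+_{ij}}\, E_iE_j \prod_p L_p^{\n^+_{ip}+\n^+_{jp}},
  \qquad
  e_je_i = v^{\,\sd_i\n^+_{ji}}\, E_jE_i \prod_p L_p^{\n^+_{ip}+\n^+_{jp}}.
\]
Hence $e_ie_j = v^{\,\sd_j\n^+_{ij}-\sd_i\n^+_{ji}}\, (E_iE_j/E_jE_i)\, e_je_i$ in the appropriate sense; more precisely, the $q$-Serre relation among the $E$'s, once rewritten in terms of the $e$'s, becomes a \emph{twisted} $q$-Serre relation with an extra power of $v$ in each term governed by $\sd_j\n^+_{ij}-\sd_i\n^+_{ji}$.

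Now here is the conceptual point. The subalgebra $U^+_{\n^+}(\g)$ is, up to the monomial change of variables $E_i\mapsto e_i$, a twisted form of $U_v(\n_+)$; whether $e_i\mapsto c^+_i$ extends to a character is exactly the question of whether the twisted $q$-Serre relations are compatible with setting all $e_i$ equal to scalars. For the ordinary $U_v(\n_+)$ the only character is $E_i\mapsto 0$ (because the $q$-Serre relations have the leading term $E_i^{1-a_{ij}}E_j$ with a nonzero coefficient and no constant term), so after the twist the relevant relations among the $e_i$ must be \emph{weighted-homogeneous of degree zero}, i.e. every monomial $e_{i_1}\cdots e_{i_N}$ appearing in a relation must carry the same total power of $v$ coming from the reordering; otherwise substituting $e_i=c^+_i$ with $c^+_i\neq 0$ gives a contradiction. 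Tracking the reordering exponents through the twisted $q$-Serre relation $\sum_{r}(-1)^r{1-a_{ij}\brack r}_{v_i} e_i^{1-a_{ij}-r}e_je_i^{r}=0$, the coefficient picked up when moving all $e_i$ to the left of $e_j$ in the $r$-th term is (after the computation above) a power of $v$ that depends on $r$ \emph{unless} $\sd_j\n^+_{ij}-\sd_i\n^+_{ji}$ equals precisely $\pm b_{ij}$ with the correct sign — and the sign is forced to be constant along each edge, which is exactly the content of "there exists an orientation $\Or^+$ whose associated matrix $\epsilon^+$ satisfies $\sd_j\n^+_{ij}-\sd_i\n^+_{ji}=\epsilon^+_{ij}b_{ij}$." So I would: (i) do the reordering bookkeeping to express the relation among the $e_i$ explicitly; (ii) observe that the constant $c^+_i\mapsto e_i$ assignment kills all of $U^+_{\n^+}(\g)$'s relations iff each such relation is $v$-homogeneous after reordering; (iii) translate $v$-homogeneity of the twisted $q$-Serre relation into the numerical condition $\sd_j\n^+_{ij}-\sd_i\n^+_{ji}\in\{b_{ij},-b_{ij}\}$ for each edge $i\!-\!j$ and $=0$ for non-adjacent $i,j$ or $i=j$; (iv) note that any assignment of signs $\epsilon^+_{ij}=-\epsilon^+_{ji}\in\{\pm1\}$ to the edges of $\Dyn(\g)$ is realized by a unique orientation $\Or^+$, so the numerical condition is equivalent to the existence of $\Or^+$ making $(\epsilon^+,\n^+,c^+)$ a Sevostyanov triple.

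For the converse direction in (iii)–(iv): if the Sevostyanov triple condition holds, then in the twisted $q$-Serre relation the $r$-th term acquires (relative to the $r=0$ term) a factor $v^{\text{something}\cdot(\epsilon^+_{ij}b_{ij}\text{-dependent})}$ which one checks collapses so that the \emph{whole} relation, upon reordering all $L$'s to the right, becomes $\big(\prod_p L_p^{(1-a_{ij})\n^+_{ip}+\n^+_{jp}}\big)\cdot\big(\text{scalar}\big)\cdot\sum_r(-1)^r{1-a_{ij}\brack r}_{v_i}E_i^{1-a_{ij}-r}E_jE_i^r = 0$, which holds identically. Hence each defining relation of $U^+_{\n^+}(\g)$, written in the $e_i$, is a \emph{scalar} multiple of a monomial in the $e_i$ that vanishes — in fact it is the twisted $q$-Serre polynomial which, being weighted-homogeneous of degree zero, evaluates to $\big(\sum_r(-1)^r{1-a_{ij}\brack r}_{v_i}\big)\prod(c^+)^{\bullet}=0$ on $e_i=c^+_i$ because $\sum_r(-1)^r{1-a_{ij}\brack r}_{v_i}=0$ (a standard $q$-binomial identity, valid since $1-a_{ij}\geq 2$ when $a_{ij}<0$). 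Thus the assignment extends to a homomorphism.

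The main obstacle will be the bookkeeping in step (i)–(iii): one must be careful that the reordering exponents involve $(\mu,\alpha_i)$ with $\mu=\sum_p\n^+_{jp}\omega_p$, which by $(\omega_p,\alpha_i)=\sd_i\delta_{pi}$ gives $\sd_i\n^+_{ji}$, and matching this against the powers of $v_i=v^{\sd_i}$ in the $q$-Serre coefficients (which scale the inequivalent terms by $v_i^{r(1-a_{ij}-r)+\cdots}$-type factors) requires identifying exactly which combination must vanish. Once the identity $\sd_j\n^+_{ij}-\sd_i\n^+_{ji}=\pm b_{ij}$ emerges as the clean obstruction, the rest — recognizing that edge-orientations biject with such sign choices, and invoking $\sum_r(-1)^r{1-a_{ij}\brack r}_{v_i}=0$ — is routine. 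This is essentially the computation of~\cite{S1}, recast in the present notation.
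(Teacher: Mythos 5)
Your overall strategy --- rewrite the $q$-Serre relations in terms of the twisted generators $e_i$, observe that a character with nonzero values exists iff the resulting scalar condition holds, and translate this into $\sd_j\n^+_{ij}-\sd_i\n^+_{ji}=\epsilon^+_{ij}b_{ij}$ for some orientation --- is the paper's approach, but the key step is carried out incorrectly. After reordering, the relation presenting $U^+_{\n^+}(\g)$ is $\sum_{r=0}^{1-a_{ij}}(-1)^rv^{rm}{1-a_{ij}\brack r}_{v_i}e_i^{1-a_{ij}-r}e_je_i^r=0$ with $m=\sd_j\n^+_{ij}-\sd_i\n^+_{ji}$, and the character $e_i\mapsto c^+_i\ne 0$ exists iff the scalar $\sum_r(-1)^rv^{rm}{1-a_{ij}\brack r}_{v_i}$ vanishes. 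Your criterion that the relation must be ``weighted-homogeneous of degree zero'' is neither necessary nor sufficient: homogeneity means $m=0$, which for $a_{ij}=-1$ gives the \emph{nonvanishing} scalar $2-v_i-v_i^{-1}$, whereas vanishing does occur for $m=\pm b_{ij}\ne 0$. Relatedly, the identity you invoke for the ``if'' direction, $\sum_r(-1)^r{1-a_{ij}\brack r}_{v_i}=0$ for $1-a_{ij}\ge 2$, is false whenever $1-a_{ij}$ is even --- in particular in the simply-laced case --- and for $m\ne 0$ the twisted relation is \emph{not} a scalar multiple of the untwisted Serre relation (the monomials $e_i^{1-a_{ij}-r}e_je_i^r$ are linearly independent, so proportionality would force $v^{rm}$ to be independent of $r$). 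What is actually needed is the Gauss factorization $\sum_{r=0}^{N}{N\brack r}_{q}z^r=\prod_{k=0}^{N-1}(1+zq^{2k-N+1})$, which shows the scalar vanishes iff $m\in\sd_i\{a_{ij},a_{ij}+2,\ldots,-a_{ij}\}$; for $a_{ij}=0$ this forces $m=0$, for $a_{ij}=-1$ it forces $m=\pm\sd_i=\pm b_{ij}$, and the ``if'' direction is the specialization $m=\pm b_{ij}$ of this same factorization, not the (false) untwisted identity.

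There is a second gap in your necessity argument: for a multiple edge ($a_{ij}\in\{-2,-3\}$) the vanishing of the $(i,j)$ scalar alone does \emph{not} force $m=\pm b_{ij}$ (for $a_{ij}=-2$ it also allows $m=0$, and for $a_{ij}=-3$ it also allows $m=\pm 1$). As in the paper, one must invoke the transposed relation, where $a_{ji}=-1$, to pin down $m_{ji}=\pm b_{ji}$ and hence $m_{ij}=-m_{ji}=\mp b_{ij}$; your edge-by-edge reduction skips this and would not close the argument in types $B$, $C$, $F_4$, $G_2$. The remaining ingredients you use (the presentation of $U^+_{\n^+}(\g)$ by the twisted Serre relations via the triangular decomposition, the bijection between sign choices $\epsilon^+_{ij}=-\epsilon^+_{ji}\in\{\pm 1\}$ on edges and orientations of $\Dyn(\g)$, and part (b) by symmetry) are fine and agree with the paper.
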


\begin{proof}
(a) As the ``if'' part is proved in~\cite[Theorem 4]{S1}, let us now prove the ``only if''
part following similar arguments. Due to the triangular decomposition of $U_v(\g)$, the algebra
$U^{+}_{\n^+}(\g)$ is generated by $\{e_i\}_{i=1}^n$ subject to
  $\sum_{r=0}^{1-a_{ij}}(-1)^rv^{r(\sd_j\n^+_{ij}-\sd_i\n^+_{ji})}
   {1-a_{ij}\brack r}_{v_i} e_i^{1-a_{ij}-r}e_je_i^{r}=0$
for $i\ne j$. Hence, there is a character
  $\chi^+\colon U^{+}_{\n^+}(\g)\to \BC(v^{1/\NN})$ with $\chi^+(e_i)\ne 0$
if and only if
  $\sum_{r=0}^{1-a_{ij}}(-1)^rv^{r(\sd_j\n^+_{ij}-\sd_i\n^+_{ji})} {1-a_{ij}\brack r}_{v_i}=0$
for any $i\ne j$. If $a_{ij}=0$, then we immediately get $\sd_j\n^+_{ij}-\sd_i\n^+_{ji}=0$.
If $a_{ij}=-1$, then we recover $\sd_j\n^+_{ij}-\sd_i\n^+_{ji}\in \{\pm \sd_i\}=\{\pm b_{ij}\}$
and $\epsilon_{ij}\in \{\pm 1\}$. Finally, if $a_{ij}<-1$, then $a_{ji}=-1$ and we can apply
the previous case.
(b) Analogous.
\end{proof}

\subsection{Whittaker functions}\label{section Whittaker functions}
\

From now on, we fix a pair of Sevostyanov triples $(\epsilon^\pm,\n^\pm,c^\pm)$,
which give rise to the subalgebras $U^{\pm}_{\n^\pm}(\g)$ of $U_v(\g)$ and the corresponding
characters $\chi^\pm\colon U^{\pm}_{\n^\pm}(\g)\to \BC(v^{1/\NN})$ of Lemma~\ref{Sev twist}.
We consider the quantum function algebra $\CO_v(G)$ spanned by the matrix coefficients
of integrable $U_v(\g)$-modules (with the highest weights in $P_+$). Let ${\mathcal D}_v(G)$
denote the corresponding Heisenberg double~\cite[Section 3]{sts}. It acts on $\CO_v(G)$.
It is equipped with a homomorphism $\mu_v\colon U_v(\g)\otimes U_v(\g)\to{\mathcal D}_v(G)$.
Let $\CO_v(Bw_0B)$ stand for the quantized coordinate ring of the big Bruhat cell~\cite[8.2]{GY}
(a localization of $\CO_v(G)$).
The action of ${\mathcal D}_v(G)$ on $\CO_v(G)$ extends to the action on $\CO_v(Bw_0B)$.
In particular, $U^-_{\n^-}(\g)\otimes U^+_{\n^+}(\g)\subset U_v(\g)\otimes U_v(\g)$ acts
on $\CO_v(Bw_0B)$. According to~\cite[(3.22), Theorem~4.7, Proposition~8.3]{GY}, there are
subalgebras $S^\pm_v$ of $\CO_v(Bw_0B)$\footnote{We note that $\CO_v(G), \CO_v(Bw_0B),S^+_v$
are denoted by $R_v[G], R_v[Bw_0B],\overline{S}^\mp_{w_0}$, respectively, in~\cite{GY}.}
such that $\CO_v(Bw_0B)\simeq S^-_v\otimes \CO_v(H)\otimes S^+_v$ (as vector spaces)
and $S^\pm_v\simeq U^\pm_v(\g)$, where $U^-_v(\g), U^+_v(\g)$ are the subalgebras of $U_v(\g)$
generated by $\{F_i\}_{i=1}^n$ and $\{E_i\}_{i=1}^n$, respectively. Hence, there is
a (vector space) isomorphism
\begin{equation}\label{triangular}
  \CO_v(Bw_0B)\simeq U^-_{\n^-}(\g)\otimes \CO_v(H)\otimes U^+_{\n^+}(\g),
\end{equation}
under which the above actions of $U^-_{\n^-}(\g),U^+_{\n^+}(\g)$ on $\CO_v(Bw_0B)$
are via the left and the right multiplications. Let $U^\pm_{\n^\pm}(\g)^\wedge$ denote
the completions of $U^\pm_{\n^\pm}(\g)$ with respect to the natural gradings with
$\deg(e_i)=1$ and $\deg(f_i)=1$. In view of the identification~(\ref{triangular}),
we define the completion of $\CO_v(Bw_0B)$ via
  $\CO_v(Bw_0B)^\wedge\simeq U^-_{\n^-}(\g)^\wedge\otimes \CO_v(H)\otimes U^+_{\n^+}(\g)^\wedge$.
Hence, the subspace of semi-invariants
  $\left(\CO_v(Bw_0B)^\wedge\right)^{U^-_{\n^-}(\g)\otimes U^+_{\n^+}(\g),\chi^-\otimes\chi^+}$
projects isomorphically onto $\CO_v(H)$ under the restriction projection
$\CO_v(Bw_0B)^\wedge\to\CO_v(H)$. We denote this projection by $\phi\mapsto\phi_{\mid_H}$.

\begin{Def}\label{Whit function}
A \textbf{Whittaker function} is an element of
  $\left(\CO_v(Bw_0B)^\wedge\right)^{U^-_{\n^-}(\g)\otimes U^+_{\n^+}(\g),\chi^-\otimes\chi^+}$.
\end{Def}

\begin{Rem}\label{formal}
Following~\cite{E}, we could alternatively work with the dual quantum formal group
$\CA_\hbar(\g)=U_\hbar(\g)^*$, defined as the space of linear functions on $U_\hbar(\g)$.
Here, the quantum group $U_\hbar(\g)$ is defined over $\BC[[\hbar]]$ with $v$ replaced
by $e^\hbar$. In this setup, a \textbf{Whittaker function} is an element
$\phi\in \CA_\hbar(\g)$ such that $\phi(x^-xx^+)=\chi^-(x^-)\chi^+(x^+)\phi(x)$ for any
$x^\pm \in U^{\pm}_{\n^\pm}(\g), x\in U_\hbar(\g)$. Let us point out that this differs
from the notion of Whittaker functions as defined in~\emph{loc.cit.}
\end{Rem}

We note that the character lattice $X^*(H)=P$ and the pairing $(Q,P)\subset \BZ$,
hence, we have the natural embedding of $Q$ into the cocharacter lattice $X_*(H)$.
Thus, for every $\lambda\in Q$ we can define the difference operators $T_\lambda$
acting on $\CO_v(H)$ via $(T_\lambda f)(x)=f(x\cdot v^\lambda)$. Moreover, since
$v^{(P,P)}\subset \BC(v^{1/\NN})$, the difference operators $T_\lambda$ are also
well-defined for $\lambda\in P$. Let $\wt{\CalD}_v(H)$ be the algebra generated by
$\{e^\lambda,T_{\mu}|\lambda,\mu \in P\}$, and $\CalD_v(H^\ad)$ be its subalgebra
generated by $\{e^\lambda,T_{\mu}|\lambda\in Q,\mu \in P\}$.
The following is completely analogous to~\cite[Proposition 3.2]{E}:

\begin{Lem}\label{commuting operators}
(a) For any $Y\in U_v(\g)$, there exists a unique difference operator
$\tilde{\D}_Y=\tilde{\D}_Y(\epsilon^\pm,\n^\pm,c^\pm)\in \wt{\CalD}_v(H)$ such that
$(Y\phi)_{\mid_H}=\tilde{\D}_Y(\phi_{\mid_H})$ for any Whittaker function $\phi$.

\noindent
(b) $\tilde{\D}_Y$ is an element of $\CalD_v(H^\ad)\subset \wt{\CalD}_v(H)$.

\noindent
(c) If $Y_1$ and $Y_2$ are central elements of $U_v(\g)$, then
$\tilde{\D}_{Y_1Y_2}=\tilde{\D}_{Y_1}\tilde{\D}_{Y_2}$.
\end{Lem}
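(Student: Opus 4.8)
The plan is to mimic the argument of \cite[Proposition 3.2]{E}, which establishes the same statement for the standard $q$-Toda system. The key point is that the space of Whittaker functions projects isomorphically onto $\CO_v(H)$ under $\phi\mapsto\phi_{\mid_H}$, so every Whittaker function is uniquely recovered from its restriction, and the $U_v(\g)$-action on $\CO_v(Bw_0B)^\wedge$ preserves the subspace of Whittaker functions (since $Y\in U_v(\g)$ acting by the second factor of $\mu_v$ commutes with the left/right multiplications by $U^-_{\n^-}(\g)\otimes U^+_{\n^+}(\g)$ under the identification~(\ref{triangular})). Hence the assignment $\phi_{\mid_H}\mapsto (Y\phi)_{\mid_H}$ is a well-defined linear endomorphism of $\CO_v(H)$, and what remains is to check it is implemented by a difference operator with the claimed properties.

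First I would make part (a) precise: writing a Whittaker function in the form dictated by~(\ref{triangular}), $\phi=\sum x^-\otimes a\otimes x^+$ with $a\in\CO_v(H)$ and $x^\pm$ in the completed subalgebras, the semi-invariance forces $\phi$ to be determined by $a=\phi_{\mid_H}$ and the structure constants of $U^\pm_{\n^\pm}(\g)$; then applying $Y$ and restricting back to $H$ produces an expression which, after moving the $U^-_{\n^-}(\g)\otimes U^+_{\n^+}(\g)$ factors past and evaluating via $\chi^-\otimes\chi^+$, is a finite $\CO_v(H)$-linear combination of shifts $T_\mu$ composed with multiplications $e^\lambda$, $\lambda,\mu\in P$ — this is exactly an element of $\wt{\CalD}_v(H)$. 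The computation that the result lies in $\wt{\CalD}_v(H)$ rather than some larger operator algebra is the bookkeeping heart of (a); uniqueness is immediate since $\wt{\CalD}_v(H)$ acts faithfully on $\CO_v(H)$.

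For part (b) I would track which shifts $T_\mu$ actually occur. The action of $U_v(\g)$ changes weights by the root lattice $Q$: applying $E_i$ or $F_i$ shifts the $\CO_v(H)$-weight by $\pm\alpha_i\in Q$, while the $K_\mu$ (equivalently the $L_i$) act by the character $e^{(\mu,\cdot)}$ and contribute the multiplication operators $e^\lambda$ for $\lambda\in P$. So the pure shift part of $\tilde\D_Y$ involves only $T_\mu$ with $\mu\in Q$, placing $\tilde\D_Y$ in $\CalD_v(H^\ad)$. One subtlety is that the $\n^\pm$-twisted generators $e_i=E_i\prod L_p^{\n^+_{ip}}$, $f_i=\prod L_p^{-\n^-_{ip}}F_i$ carry extra $L$-factors, but these only affect the $e^\lambda$ part (which is allowed to range over all of $P$), not the shift part, so the conclusion $\mu\in Q$ is unaffected.

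Part (c) is the formal consequence: for central $Y_1,Y_2$ the operators $\tilde\D_{Y_1},\tilde\D_{Y_2}$ are characterized by $(Y_i\phi)_{\mid_H}=\tilde\D_{Y_i}(\phi_{\mid_H})$ for all Whittaker $\phi$; since $Y_1\phi$ is again a Whittaker function (centrality of $Y_1$ ensures $Y_1$ commutes with the $U^-_{\n^-}(\g)\otimes U^+_{\n^+}(\g)$-action and hence preserves semi-invariance), we get $(Y_1Y_2\phi)_{\mid_H}=\tilde\D_{Y_1}((Y_2\phi)_{\mid_H})=\tilde\D_{Y_1}\tilde\D_{Y_2}(\phi_{\mid_H})$, and by the uniqueness in (a) this forces $\tilde\D_{Y_1Y_2}=\tilde\D_{Y_1}\tilde\D_{Y_2}$. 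I expect the main obstacle to be purely the care needed in part (a) — verifying that commuting the $U_v(\g)$-action past the completed semi-invariance conditions through the non-standard twist by $\n^\pm$ still lands inside $\wt{\CalD}_v(H)$, and that the relevant sums remain finite after restriction to $H$; the rest follows the template of \cite{E} essentially verbatim.
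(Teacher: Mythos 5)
The paper offers no argument of its own here beyond the citation of \cite[Proposition 3.2]{E}, and your parts (a) and (c) do follow that template in substance. But part (b) contains a genuine error: you have the dictionary between the quantum-group generators and the two kinds of generators of $\wt{\CalD}_v(H)$ backwards. In the paper's conventions $T_\mu$ acts on $\CO_v(H)$ diagonally, $(T_\mu f)(x)=f(x\cdot v^\mu)$, i.e.\ $T_\mu e^\lambda=v^{(\mu,\lambda)}e^\lambda$; this is exactly how a Cartan element $K_\mu$ ($\mu\in P$) acts on a weight-$\lambda$ matrix coefficient, so the Cartan parts of $Y$ (including all the extra $L$-factors coming from the $\n^\pm$-twisted generators) become \emph{shift} operators $T_\mu$ with $\mu$ ranging over all of $P$. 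The root vectors $E_i,F_i$, on the other hand, change the $H$-weight by $\pm\alpha_i$, so after evaluating the twisted generators through $\chi^\pm$ they contribute \emph{multiplication} operators $e^{-\beta}$ with $\beta\in Q$. Hence $\tilde{\D}_Y$ is a finite sum of terms $e^{-\beta}T_\mu$ with $\beta\in Q$, $\mu\in P$, which is precisely membership in $\CalD_v(H^\ad)$ as the paper defines it (multiplications confined to $Q$, shifts allowed in all of $P$). Your version --- ``the pure shift part involves only $T_\mu$ with $\mu\in Q$'' while $e^\lambda$ ranges over $P$ --- would prove membership in the transposed subalgebra generated by $\{e^\lambda,T_\mu\,|\,\lambda\in P,\ \mu\in Q\}$, which is not $\CalD_v(H^\ad)$ and does not even contain the hamiltonians: already $\D_1$ in type $A$, see \eqref{standard qToda for A}, involves $T_{2\varpi_j}$ with $2\varpi_j\notin Q$ but only multiplications $e^{-\alpha_i}$. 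The correct statement of the bookkeeping is the transpose of yours; once swapped, the rest of your argument for (b) goes through.

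Two smaller points. The preliminary claim that the action of an arbitrary $Y\in U_v(\g)$ preserves the space of Whittaker functions is both unnecessary and, in general, false: only central $Y$ commutes with the $U^-_{\n^-}(\g)\otimes U^+_{\n^+}(\g)$-action used in the semi-invariance condition (and that is exactly what part (c) needs). For (a), well-definedness of $\phi_{\mid_H}\mapsto(Y\phi)_{\mid_H}$ requires only that $\phi\mapsto\phi_{\mid_H}$ is an isomorphism onto $\CO_v(H)$, which the paper already states. Finally, in (c) the computation $(Y_1Y_2\phi)_{\mid_H}=\tilde{\D}_{Y_1}\bigl((Y_2\phi)_{\mid_H}\bigr)$ uses that $Y_2\phi$ (not $Y_1\phi$) is again a Whittaker function; this is harmless since both elements are assumed central, but the justification should be attached to $Y_2$.
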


Recall the element $\Theta\in (U_v(\g)\otimes U_v(\g))^\wedge$ of the completion
of the vector space $U_v(\g)\otimes U_v(\g)$ as defined in~\cite[4.1.1]{L}. Loosely
speaking, the universal $R$-matrix is given by $R=\Theta^\op \cdot R^0$, where $R^0=v^T$
and $T\in \h\otimes \h$ stands for the canonical element. Let $\pi_V\colon U_v(\g)\to \End(V)$
be a finite-dimensional representation, $\{w_k\}_{k=1}^N$ be a weight basis of $V$,
and $\mu_k\in P$ be the weight of $w_k$. First, we note that though $\Theta,\Theta^\op$
are defined as infinite sums, their images
  $(\id\otimes \pi_V)(\Theta), (\id\otimes \pi_V)(\Theta^\op)\in U_v(\g)\otimes \End(V)$
are well-defined. Second, the image
  $(\id\otimes \pi_V)(R^0)=(\id\otimes \pi_V)((R^0)^\op)\in U_v(\g)\otimes \End(V)$
is also well-defined via
  $(\id\otimes \pi_V)(R^0)=\sum_{k=1}^N K_{\mu_k}\otimes E_{k,k}$
with $E_{k,k}\in \End(V)$ given by $E_{k,k}(w_{k'})=\delta_{k,k'}w_{k'}$
(this does not depend on the choice of a weight basis $\{w_k\}$).
Hence, working over $\BC(v^{1/\NN})$ (rather than in the formal setting
$\BC[[\hbar]]$ as in~\cite{E,S2}), the elements
$(\id\otimes \pi_V)(R)$ and $(\id\otimes \pi_V)(R^\op)$ are still well-defined.

Due to~\cite{D,R}, the center of $U_v(\g)$ is spanned by elements $C_V$ corresponding to
finite-dimensional $U_v(\g)$-representations $V$ via the formula
\begin{equation}\label{quantum center}
  C_V=\tr_V(\id\otimes \pi_V)\left(R^\op R(1\otimes K_{2\rho})\right).
\end{equation}
We define $\tilde{\D}_V, \D_V\in \CalD_v(H^\ad)$ via $\tilde{\D}_V:=\tilde{\D}_{C_V}$
and $\D_V:=e^\rho\tilde{\D}_Ve^{-\rho}$.
Consider the fundamental representations $\{V_i\}_{i=1}^n$ of $U_v(\g)$ and set
$\tilde{\D}_i:=\tilde{\D}_{V_i}, \D_i:=\D_{V_i}$. According to Lemma~\ref{commuting operators},
$\{\tilde{\D}_i\}_{i=1}^n$ and therefore $\{\D_i\}_{i=1}^n$ are families of pairwise
commuting elements of $\CalD_v(H^\ad)$.

\begin{Def}
A \textbf{modified quantum difference Toda system} is the commutative subalgebra
$\CT=\CT(\epsilon^\pm,\n^\pm,c^\pm)$ of $\CalD_v(H^\ad)$ generated by $\{\D_i\}_{i=1}^n$.
\end{Def}

Due to Theorem~\ref{Main Thm B}(c) below, $\D_V\in \CT$ for any finite-dimensional
$U_v(\g)$-representation $V$.

\begin{Rem}
This construction is a $q$-deformed version of the Kazhdan-Kostant approach to the
classical Toda system. In case the two Sevostyanov triples coincide, we recover the
original construction of~\cite{S2}. Let us point out right away that we do not know
how to generalize an alternative approach of~\cite{E} to obtain our modified quantum
difference Toda systems.
\end{Rem}


\section{First hamiltonians, classification, and Lax realization in types A,C}\label{section power of 3}

The main result of this section is:

\begin{Thm}\label{Main Conj}
There are at most $3^{n-1}$ different modified quantum difference Toda systems,
up to algebra automorphisms of $\CalD_v(H^\ad)$.
\end{Thm}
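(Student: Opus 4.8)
The plan is to reduce the count of distinct systems to a count of equivalence classes of pairs of Sevostyanov triples under a group of transformations that manifestly act by automorphisms of $\CalD_v(H^\ad)$. First I would isolate the dependence of $\CT(\epsilon^\pm,\n^\pm,c^\pm)$ on its data. The constants $c^\pm_i\in\BC(v^{1/\NN})^\times$ should be eliminable: rescaling $e_i\mapsto c^+_i e_i$, $f_i\mapsto (c^-_i)^{-1}f_i$ corresponds to conjugating the Whittaker-function construction by a shift $e^\lambda\mapsto \gamma_\lambda e^\lambda$ on $\CO_v(H)$, and this conjugation is realized by an automorphism of $\CalD_v(H^\ad)$; hence $\CT$ depends only on $(\epsilon^\pm,\n^\pm)$ up to automorphism. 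Next, the $\n^\pm$ themselves are constrained: by Definition, $\n^\pm$ must satisfy $\sd_j\n^\pm_{ij}-\sd_i\n^\pm_{ji}=\epsilon^\pm_{ij}b_{ij}$, and different solutions $\n^\pm$ for a \emph{fixed} $\epsilon^\pm$ differ by a symmetric integer perturbation; I expect this freedom to be absorbed either by the rescaling above or by a further automorphism of $\CalD_v(H^\ad)$ coming from the $T_\mu$ (a change of the lattice trivialization $T_\mu\mapsto T_{\mu}\cdot(\text{unit})$). So the system depends, up to automorphism, only on the unordered-with-signs combinatorial datum built from the two orientations $\Or^\pm$ of $\Dyn(\g)$.

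Once the dependence is reduced to $(\Or^+,\Or^-)$, the remaining point is to exhibit enough automorphisms of $\CalD_v(H^\ad)$ to collapse the $2^{n-1}\cdot 2^{n-1}$ (up to global reversal, $(2^{n-1})^2$) ordered pairs of orientations of the tree $\Dyn(\g)$ down to $3^{n-1}$ classes. Here I would use the symmetry $\CT(\epsilon^+,\n^+,c^+;\epsilon^-,\n^-,c^-)\leftrightarrow\CT(\epsilon^-,\dots;\epsilon^+,\dots)$, which swaps the roles of $\chi^+$ and $\chi^-$ and is implemented by the Chevalley-type antiautomorphism of $U_v(\g)$ together with an inversion automorphism $e^\lambda\mapsto e^{-\lambda}$, $T_\mu\mapsto T_{-\mu}$ of $\CalD_v(H^\ad)$; this reduces ordered pairs to unordered ones. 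Crucially, I would prove that reversing the orientation of a single edge $\{i,j\}$ \emph{simultaneously in both} $\Or^+$ and $\Or^-$ yields an isomorphic system — this is the key move that produces the factor $3$ rather than $4$ per edge: the three surviving local configurations at an edge are ``both agree one way'', ``both agree the other way'' (these two identified by the simultaneous flip), and ``they disagree'' (the disagreeing configuration being its own mirror). Since $\Dyn(\g)$ is a tree on $n$ vertices with $n-1$ edges and flips at distinct edges act independently, the count becomes $3^{n-1}$.

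The main obstacle will be the middle step: verifying that a simultaneous single-edge orientation flip $(\Or^+,\Or^-)\to(\Or'^+,\Or'^-)$ actually induces an automorphism of $\CalD_v(H^\ad)$ carrying $\CT$ to $\CT'$, rather than merely a vector-space or algebra isomorphism of the ambient Heisenberg double that fails to descend. I would attack this by writing the flip at the level of the generators $e_i,f_i$ — it changes $\n^\pm$ by $\n^\pm_{ij}\mapsto \n^\pm_{ij}\mp 1$ (or similar), which amounts to multiplying $e_i$ and $f_i$ by a fixed power $L_j^{\pm 1}$ — and then tracking this through the isomorphism \eqref{triangular} to see that it is conjugation by a character twist of $\CO_v(H)$, hence lands in $\CalD_v(H^\ad)$ exactly when the resulting lattice shift is by an element of $P$ (which it is, since $\omega_j\in P$). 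The acknowledgments credit A.~Braverman with the observation that combining the type-by-type bound of Theorem~\ref{Main Thm A} with the fermionic formula of Theorem~\ref{Fermionic two} already gives this, so an alternative — and perhaps cleaner — route I would keep in reserve is: use Theorem~\ref{Fermionic two} to show the generating function of Whittaker pairings, hence by Theorem~\ref{J-eigenfunction} the whole system, depends only on the reduced combinatorial datum, and then invoke the explicit type-$A$/classical-types/$G_2$ bounds of Propositions~\ref{bound for A}--\ref{bound for G2} assembled in Theorem~\ref{Main Thm A} to conclude $3^{n-1}$ uniformly.
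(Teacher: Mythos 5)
Your proposal has two genuine gaps. First, the combinatorial identification step is wrong. Reversing an edge simultaneously in $\Or^+$ and $\Or^-$ does \emph{not} fix the disagreeing configuration: it exchanges the two disagreeing configurations, i.e.\ it sends the invariant $\epsilon_e=\tfrac{\epsilon^+_e-\epsilon^-_e}{2}$ to $-\epsilon_e$. So if your claimed flip-identification were valid for all configurations you would get $2^{n-1}$ classes, not $3^{n-1}$, and in fact that identification is neither needed nor established (the paper keeps $\epsilon_e=+1$ and $\epsilon_e=-1$ distinct). What actually has to be proved is that the class of $\CT(\epsilon^\pm,\n^\pm,c^\pm)$ depends only on $\vec{\epsilon}\in\{-1,0,1\}^{n-1}$, i.e.\ one must identify the two \emph{agreeing} configurations on each edge together with \emph{all} choices of $\n^\pm$ and $c^\pm$ compatible with a given $\vec\epsilon$. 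That is the entire content of the theorem, and in your write-up it is only ``expected'': absorbing the symmetric ambiguity in $\n^\pm$ cannot be done by a rescaling or by $T_\mu\mapsto T_\mu\cdot(\text{unit})$; the paper needs conjugation by a quadratic exponential in the $\log T_{\omega_i}$ (the operator $\fD$ of Appendix~\ref{Proof of Main Conjecture}), whose existence rests on \reft{Fermionic two} (the pairing $\tilde J_\beta$ depends only on $\nu^+_i-\nu^-_i$ and $c^+_ic^-_i$) and on \reft{J-eigenfunction}: one checks via Lemma~\ref{L:relating Js} that $\fD$ carries one family of $J$-functions to multiples of the other, and then the diagonalization/upper-triangularity argument shows the conjugation matches \emph{all} hamiltonians $\D_i$, not just $\D_1$, so the whole commutative subalgebra is carried over. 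Your auxiliary swap identification $(\Or^+,\Or^-)\leftrightarrow(\Or^-,\Or^+)$ is likewise unproven and, if used, would change the count; it plays no role in the paper.

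Your ``reserve'' route is much closer to the paper's actual argument, but as stated it also fails: \reft{Main Thm A} (and Propositions~\ref{P:bound for A}--\ref{P:bound for G2}) cover only the classical types and $G_2$, so they cannot be ``invoked uniformly''; for $E_6,E_7,E_8,F_4$ there is no first-hamiltonian computation to fall back on. The paper's proof of \reft{Main Conj} is uniform precisely because it replaces those case-by-case computations by the fermionic formula: it runs the eigenfunction/diagonalization argument of Appendix~\ref{Proof of Main Theorem A} with \reft{Fermionic two} supplying the statement that systems with equal $\vec\epsilon$ have (explicitly twisted) equal eigenfunctions, and then the count $3^{n-1}=\#\{-1,0,1\}^{n-1}$ is immediate. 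To repair your proposal you would need to (i) replace the per-edge flip combinatorics by the invariant $\vec\epsilon$, and (ii) actually construct the conjugating automorphism of $\CalD_v(H^\ad)$ along the lines of \refp{Key point}.
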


The proof of this result is presented in Appendix~\ref{Proof of Main Conjecture}
and crucially relies on Theorem~\ref{Fermionic two}.

We also provide a more straightforward proof for the classical types $A_n,B_n,C_n,D_n$ as well as
the exceptional type $G_2$. To state the result, we label the simple roots $\{\alpha_i\}_{i=1}^n$
as in~\cite[Chapter VI, \S4]{B} (here $n=2$ for the type $G_2$).
Given a pair of Sevostyanov triples $(\epsilon^\pm,\n^\pm,c^\pm)$, we define
$\vec{\epsilon}=(\epsilon_{n-1},\ldots,\epsilon_1)\in \{-1,0,1\}^{n-1}$ via
  $\epsilon_i:=\begin{cases}
      \frac{\epsilon^+_{n-2,n}-\epsilon^-_{n-2,n}}{2}, & \mathrm{if}\ i=n-1\ \mathrm{in\ type}\ D_n,\\
      \frac{\epsilon^+_{i,i+1}-\epsilon^-_{i,i+1}}{2}, & \mathrm{otherwise}.
   \end{cases}$

\begin{Thm}\label{Main Thm A}
If $\g$ is of type $A_n,B_n,C_n,D_n$ or $G_2$, then up to algebra automorphisms
of $\CalD_v(H^\ad)$, the modified quantum difference Toda system
$\CT(\epsilon^\pm,\n^\pm,c^\pm)$ depends only on $\vec{\epsilon}$.
\end{Thm}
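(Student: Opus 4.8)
The plan is to reduce the apparent dependence of $\CT(\epsilon^\pm,\n^\pm,c^\pm)$ on the full data of the two Sevostyanov triples to a dependence on the single vector $\vec{\epsilon}$, by exhibiting, for any two pairs of Sevostyanov triples with the same $\vec{\epsilon}$, an explicit automorphism of $\CalD_v(H^\ad)$ carrying one system to the other. First I would isolate the ``easy'' parameters: the constants $c^\pm=(c_i^\pm)$ enter the hamiltonians $\D_i$ only through overall rescalings of the $e_i$ and $f_i$, so a suitable monomial change of variables $e^\lambda\mapsto \mu_\lambda e^\lambda$ (for appropriate scalars $\mu_\lambda\in\BC(v^{1/\NN})^\times$) on $\CalD_v(H^\ad)$ — which is an algebra automorphism fixing the $T_\mu$ — absorbs all the $c^\pm$. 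This lets me assume $c_i^\pm=1$ throughout.

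Next I would deal with the matrices $\n^\pm$ themselves (as opposed to their antisymmetrized combinations $\epsilon^\pm$). Here the key point is that, by Lemma~\ref{Sev twist}, the character $\chi^\pm$ and the subalgebra $U^\pm_{\n^\pm}(\g)$ depend on $\n^\pm$ only through the ``Serre exponents'' $\sd_j\n^+_{ij}-\sd_i\n^+_{ji}=\epsilon^+_{ij}b_{ij}$; the remaining gauge freedom in $\n^\pm$ is a symmetric-matrix worth of data, and conjugating the generators $e_i=E_i\prod_p L_p^{\n^+_{ip}}$ by an element $K_\nu$ of the Cartan (equivalently, multiplying by a central-in-the-relevant-sense power of the $L_p$) shifts $\n^\pm$ by such symmetric contributions. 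On the level of difference operators, this conjugation again realizes as an automorphism of $\CalD_v(H^\ad)$ of the form $e^\lambda\mapsto v^{(\text{linear in }\lambda)}e^\lambda$. So after these two reductions the system depends only on the pair $(\epsilon^+,\epsilon^-)$, i.e.\ on the two orientations of $\Dyn(\g)$.

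The substantive step is then to show that, among orientation pairs $(\epsilon^+,\epsilon^-)$, the system depends only on the difference vector $\vec{\epsilon}$ whose $i$-th entry is essentially $\tfrac12(\epsilon^+_{i,i+1}-\epsilon^-_{i,i+1})$ along the linear chain underlying $\Dyn(\g)$ in types $A_n,B_n,C_n,G_2$ (with the $D_n$ modification at the fork). Here I would argue orientation edge by edge: changing the orientation of a single edge $\{i,i+1\}$ in \emph{both} $\Or^+$ and $\Or^-$ simultaneously leaves $\vec{\epsilon}$ unchanged, and I claim it is implemented by an automorphism of $\CalD_v(H^\ad)$. The natural candidate is the ``reflection'' automorphism coming from the diagram symmetry or from a partial inversion $e^\lambda\mapsto e^{s\lambda}$, $T_\mu\mapsto T_{s\mu}$ built from an appropriate involution $s$ of the lattice $P$ restricted to the relevant rank-two subsystem — one checks this intertwines $\D_i(\epsilon^\pm)$ with $\D_i(\tilde\epsilon^\pm)$ because the $R$-matrix formula \eqref{quantum center} for $C_V$ transforms covariantly under such a symmetry. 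Independent flips of $\epsilon^+$ and $\epsilon^-$ that \emph{do} change $\vec\epsilon$ are, of course, genuinely not implementable, which is why $\vec\epsilon$ survives; this part I would verify is consistent with the case analysis in Propositions~\ref{bound for A}–\ref{bound for G2}, which compute the first hamiltonian $\D_1$ explicitly and show it depends precisely on $\vec\epsilon$.

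The main obstacle I anticipate is the last step: proving that simultaneously reversing an edge in $\Or^\pm$ really is an automorphism of the \emph{whole} commutative subalgebra, not merely a coincidence for $\D_1$. Controlling all the $\D_i$ at once requires tracking how $(\id\otimes\pi_{V_i})(R^{\op}R)$ and the twist by $K_{2\rho}$ behave under the lattice involution and under passing from $E_i,F_i$ to $e_i,f_i$; the completions $U^\pm_{\n^\pm}(\g)^\wedge$ and the semi-invariance condition of Definition~\ref{Whit function} must be shown to be preserved. A cleaner route, which I would pursue if the direct check gets unwieldy, is to invoke Theorem~\ref{Main Thm B}: since each $\CT$ is a \emph{maximal} commutative subalgebra determined by its first hamiltonian $\D_1$, it suffices to match the $\D_1$'s — and that is exactly what Propositions~\ref{bound for A}–\ref{bound for G2} do. Thus the architecture of the proof is: (1) kill $c^\pm$ and the symmetric part of $\n^\pm$ by monomial automorphisms; (2) invoke the explicit computation of $\D_1$ in each type to see it depends only on $\vec\epsilon$; (3) conclude via Theorem~\ref{Main Thm B}(c) that the full subalgebra $\CT$ is determined by $\D_1$, hence only by $\vec\epsilon$.
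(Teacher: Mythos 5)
Your final architecture is sound and non-circular, but it closes the argument by a different route than the paper, so let me compare. Both you and the paper take as the essential input Propositions~\ref{bound for A}--\ref{bound for G2}: for two pairs of triples with the same $\vec{\epsilon}$ there is an explicit automorphism of $\CalD_v(H^\ad)$ --- conjugation by a ``formal function'' $F$ of the shift operators of the shape~(\ref{Canonical Form}), an exponential of a quadratic expression in the $\log T_\mu$ --- matching the first hamiltonians $\D_1$. Where you diverge is the passage from $\D_1$ to the whole subalgebra: the paper proves directly (Proposition~\ref{Etingof argument}, Etingof's argument) that the \emph{same} conjugation sends every $\D_i$ to $\tilde{\D}_i$, by acting on $W_\lambda$ where the $\D_i$ are upper-triangular with explicit character values on the diagonal and $\D_1$ has simple spectrum; you instead invoke Theorem~\ref{Main Thm B} --- the relevant item is part~(b), the centralizer statement, not part~(c) as in your summary --- to say that $\CT$ is recovered as the centralizer of $\D_1$ in $\CalD^{\leq}_v(H^\ad)$. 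This is legitimate: the proof of Theorem~\ref{Main Thm B} does not use Theorem~\ref{Main Thm A}, only the $J$-function eigenbasis coming from Theorem~\ref{J-eigenfunction}, so there is no circularity; you should just add the (easy) observation that conjugation by $F$ fixes the $T_\mu$ and sends each $e^{-\alpha_i}$ to a constant multiple of $e^{-\alpha_i}T_{\kappa_i}$, hence preserves $\CalD^{\leq}_v(H^\ad)$, so the centralizer characterization transports to both sides and yields $F^{-1}\CT(\epsilon^\pm,\n^\pm,c^\pm)F=\CT(\tilde{\epsilon}^\pm,\tilde{\n}^\pm,\tilde{c}^\pm)$. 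What your route buys is brevity modulo Theorem~\ref{Main Thm B}; what it loses is the finer conclusion of Proposition~\ref{Etingof argument} that one automorphism matches each fundamental hamiltonian individually --- and since Theorem~\ref{Main Thm B} is itself proved by the very same spectral machinery, no work is genuinely saved. Finally, your preliminary reductions are dispensable and the middle one is shaky as stated: the constants $c^\pm$ and the symmetric part of $\n^\pm$ are already absorbed by the conjugations of Propositions~\ref{bound for A}--\ref{bound for G2}, and a lattice-involution/diagram-symmetry map $e^\lambda\mapsto e^{s\lambda}$, $T_\mu\mapsto T_{s\mu}$ would in general not fix the Cartan part $\sum_j T_{2\varpi_j}$ of $\D_1$, so it cannot implement an arbitrary simultaneous edge flip of $\Or^\pm$; the automorphisms that actually occur are conjugations by quadratic exponentials in the $T_\mu$, not maps induced by involutions of $P$. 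Since your final plan defers to the explicit propositions for the $\D_1$-matching, this weak link is not load-bearing, but it should be excised or corrected.
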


We present the proof of this result in Appendix~\ref{Proof of Main Theorem A}.
The key ingredient in our proof is that the first hamiltonian $\D_1$ depends
only on $\vec{\epsilon}\in \{-1,0,1\}^{n-1}$ up to an algebra automorphism of
$\CalD_v(H^\ad)$, which is established case-by-case in
Propositions~\ref{bound for A},~\ref{bound for C},~\ref{bound for D},~\ref{bound for B},~\ref{bound for G2}.
Following an elegant argument of P.~Etingof, we show in Appendix~\ref{Proof of Main Theorem A}
that the other hamiltonians $\D_i$ match as well under the same automorphism.

Let $\CalD^\leq_v(H^\ad)$ be the subalgebra of $\CalD_v(H^\ad)$, generated
by $\{e^{-\alpha_i},T_{\mu}|1\leq i\leq n, \mu \in P\}$. It follows from the
construction that $\D_i\in \CalD^\leq_v(H^\ad)$, so that $\CT\subset \CalD^\leq_v(H^\ad)$.
Applying ideas similar to those from the proof of Theorem~\ref{Main Thm A},
we get another important result:

\begin{Thm}\label{Main Thm B}
Consider a modified quantum difference Toda system $\CT=\CT(\epsilon^\pm,\n^\pm,c^\pm)$.

\noindent
(a) The difference operators $\{\D_i\}_{i=1}^n\subset \CT(\epsilon^\pm,\n^\pm,c^\pm)$
are algebraically independent.

\noindent
(b) The centralizer of $\D_1$ in $\CalD^\leq_v(H^\ad)$ coincides with $\CT(\epsilon^\pm,\n^\pm,c^\pm)$.

\noindent
(c) We have $\D_V(\epsilon^\pm,\n^\pm,c^\pm)\in \CT(\epsilon^\pm,\n^\pm,c^\pm)$ for any
finite-dimensional $U_v(\g)$-module $V$.
\end{Thm}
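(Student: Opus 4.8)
The plan is to treat (a), (b), (c) together, since they all rest on understanding the leading term of $\D_1$ inside $\CalD^\leq_v(H^\ad)$, and on the fact that the $\D_i$ arise from the center of $U_v(\g)$. I would first set up a filtration on $\CalD^\leq_v(H^\ad)$ by the degree in the $e^{-\alpha_i}$ variables (equivalently, by $-Q_+$-weight), for which the associated graded is a commutative algebra (a "classical limit" of the difference operators, where the shift operators $T_\mu$ become commuting invertible variables). The key computational input, already isolated in Propositions~\ref{bound for A}--\ref{bound for G2} and used in the proof of Theorem~\ref{Main Thm A}, is the explicit shape of $\D_1$: after the automorphism of $\CalD_v(H^\ad)$ normalizing it, $\D_1$ has a "free" term $\sum_{\mu} a_\mu T_\mu$ (the image of the highest-weight contribution to $C_{V_1}$) plus lower terms in the $e^{-\alpha_i}$-filtration that genuinely involve all simple roots, each with nonzero coefficient. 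I would extract from this that the symbol of $\D_1$ in the associated graded is a Laurent polynomial whose centralizer is as small as possible.

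For part (b): the inclusion $\CT\subset$ centralizer is Lemma~\ref{commuting operators}(c) (the $\D_i$ come from central $C_{V_i}$, which commute). For the reverse inclusion, suppose $X\in\CalD^\leq_v(H^\ad)$ commutes with $\D_1$. Passing to the associated graded, $\mathrm{gr}(X)$ must commute with $\mathrm{gr}(\D_1)$ in the commutative graded algebra — but here "commute" is vacuous, so the real argument must be done before taking symbols: I would expand $[X,\D_1]=0$ order by order in the filtration and use the nondegeneracy of the coefficients of the $e^{-\alpha_i}$-terms of $\D_1$ (those are exactly the simple-root "Toda potential" terms) to solve recursively for $X$, showing at each step that $X$ is forced to lie in the subalgebra generated by $\D_1$ and the Cartan part, hence (by a Harish-Chandra/separation-of-variables argument) in $\CT$. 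This is the type-$A$-style argument attributed to Etingof; the general-type version should go through because the coefficient of each $e^{-\alpha_i}$ in $\D_1$ is a unit in $\BC(v^{1/\NN})[T_\mu^{\pm}]$ — this is the content of the case-by-case Propositions.

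For part (a): algebraic independence of $\{\D_i\}$. Since $\D_i$ is the image under $\tilde\D_{(-)}$ of $C_{V_i}$ and the $C_{V_i}$ freely generate the center of $U_v(\g)$ (by the Harish-Chandra isomorphism $Z(U_v(\g))\cong \BC(v^{1/\NN})[P]^W$, which is a polynomial algebra on the $C_{V_i}$), and since $\tilde\D_{(-)}$ is an injective algebra homomorphism on the center (injectivity follows because a central element acting as zero on all Whittaker functions would act as zero on the Verma modules, hence vanish — this is where Theorem~\ref{J-eigenfunction}, or directly the faithfulness of the Whittaker pairing, enters), we get algebraic independence of the $\D_i$ for free. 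Alternatively, and more self-containedly, I would compare top-degree symbols: in the associated graded the symbols of $\D_i$ reduce to (up to the $e^\rho$-twist) the symbols of the $\mathrm{HC}$-images of $C_{V_i}$, i.e.\ to the $W$-invariant Laurent polynomials $\sum_{\nu\in W\omega_i}e^\nu+\dots$, which are algebraically independent generators of $\BC(v^{1/\NN})[P]^W$.

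For part (c): given any finite-dimensional $V$, $C_V$ is a polynomial in the $C_{V_i}$ with coefficients in $\BC(v^{1/\NN})$, because $\{[V_i]\}$ generate the representation ring $R(U_v(\g))\cong\BC(v^{1/\NN})[P]^W$. By Lemma~\ref{commuting operators}(c), $\tilde\D_{(-)}$ is multiplicative on central elements, so $\tilde\D_V$ is the same polynomial in the $\tilde\D_i$, and applying the conjugation by $e^\rho$ gives $\D_V$ as that polynomial in the $\D_i$; hence $\D_V\in\CT$. Note this last part (c) is essentially formal given Lemma~\ref{commuting operators}(c) and the structure of $R(U_v(\g))$, and does not need the hard case-by-case input.

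The main obstacle is part (b): establishing that the centralizer is no larger than $\CT$. This requires the precise statement that every simple-root coefficient of $\D_1$ is a unit (not merely nonzero as an element that could be a zero-divisor), so that the recursive solving of $[X,\D_1]=0$ does not stall; this is exactly why the proof is pushed to the Appendix and why it leans on the explicit Propositions~\ref{bound for A}--\ref{bound for G2} rather than on a uniform argument. A secondary subtlety is making the filtration/associated-graded bookkeeping rigorous for difference (as opposed to differential) operators — one must be careful that the $T_\mu$ do not drop degree and that the "classical limit" is genuinely a polynomial algebra on which a Jacobian-type nondegeneracy can be checked.
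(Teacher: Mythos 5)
The substantive gap is in part (b), which is where the content of the theorem lies. Your plan — expand $[X,\D_1]=0$ order by order in the $e^{-\alpha_i}$-grading and exploit that each $e^{-\alpha_i}$ enters $\D_1$ with an invertible coefficient — is never actually carried out, and its pivotal intermediate claim (that the recursion forces $X$ into ``the subalgebra generated by $\D_1$ and the Cartan part, hence into $\CT$'') does not deliver the conclusion: $\CT$ is generated by \emph{all} the hamiltonians $\D_1,\dots,\D_n$, the shift operators $T_\mu$ themselves do not commute with $\D_1$, and you give no mechanism by which the recursion produces the higher hamiltonians; the ``Harish-Chandra/separation-of-variables argument'' you invoke at that point is precisely the statement to be proved. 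What actually has to be shown is that the degree-zero component $X^{(0)}$ of any commuting operator lies in $(\CalD^\leq_v(H^\ad)^0_{ev})^W$, i.e.\ is a polynomial in the $\D_i^{(0)}=\sum_k T_{2\mu^{(i)}_k}$; invertibility of the simple-root coefficients of $\D_1$ alone does not force this $W$-invariance and evenness in rank $>1$, where the degree-by-degree commutator equations couple all components of $X$. The paper proves exactly this step (Proposition~\ref{centralizer}) not by a symbol computation but representation-theoretically: it lets $\CalD^\leq_v(H^\ad)$ act on the space $W_\lambda$ of Appendix~\ref{Proof of Main Theorem A}, where $\D_1$ is upper-triangular with simple spectrum and the eigenbasis consists of $J$-functions (Theorem~\ref{J-eigenfunction}); it then passes to the quotient supported on $\BZ\alpha_i$ and specializes $u_j\mapsto 1$ for $j\ne i$ to reduce to a rank-one lemma (Lemma~\ref{centralizer sl2}, proved by induction on the $e^{-\alpha}$-degree), and finally subtracts a polynomial $P(\D_1,\dots,\D_n)$ and uses the simple spectrum of $\D_1$ on $W_\lambda$ to conclude that the strictly negative-degree remainder vanishes. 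Neither the rank-one reduction, nor this final descent from ``top term matched'' to $X\in\CT$, has a counterpart in your sketch.

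A factual correction: your premise that part (b) must lean on the case-by-case Propositions~\ref{bound for A},~\ref{bound for C},~\ref{bound for D},~\ref{bound for B},~\ref{bound for G2} is off — the paper's proof of Theorem~\ref{Main Thm B} is uniform in $\g$ (as it must be, since the theorem is not restricted to classical types and $G_2$) and uses none of those computations, only the eigenfunction machinery above. Parts (a) and (c) of your proposal are essentially sound and take routes different from the paper's: for (a), your symbol argument works because the degree-zero components of the $\D_i$ are the fundamental characters written in the commuting shift operators, hence algebraically independent (the paper instead reads off the eigenvalues $\sum_k v^{2(\mu^{(i)}_k,\lambda-\beta)}$ on $W_\lambda$ and uses genericity of $v^{2\lambda}$), though your first route via injectivity of $Y\mapsto\tilde{\D}_Y$ on the center is only gestured at; for (c), deducing $\D_V\in\CT$ from Lemma~\ref{commuting operators}(c) together with multiplicativity of $V\mapsto C_V$ and generation of the representation ring by the $[V_i]$ is a legitimate alternative (the paper instead obtains (c) as a corollary of (b)), provided you justify that $C_{V\otimes W}=C_V C_W$ so that the center is indeed generated by the $C_{V_i}$.
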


The proof of Theorem~\ref{Main Thm B} is presented in Appendix~\ref{Proof of Main Theorem B}.


\subsection{R-matrix and convex orderings}\label{subsection technicalities}
\

Our computations are based on the explicit formula for the universal $R$-matrix $R$,
due to~\cite{KT}. First, let us recall the construction of Cartan-Weyl root elements
$\{E_\gamma,F_\gamma\}_{\gamma\in \Delta_+}$, which is crucially based on the notion of
a convex ordering on $\Delta_+$.

\begin{Def}
An ordering $\prec$ on the set of positive roots $\Delta_+$ is called
\textbf{convex}\footnote{We note that such orderings are called  \emph{normal} in~\cite{KT,Tol}.}
if for any three roots $\alpha,\beta,\gamma\in \Delta_+$ such that $\gamma=\alpha+\beta$,
we have either $\alpha\prec \gamma\prec \beta$ or $\beta\prec \gamma\prec \alpha$.
\end{Def}

Fix a convex ordering $\prec$ on $\Delta_+$.
For a simple root $\alpha_i\ (1\leq i\leq n)$, set $E_{\alpha_i}:=E_i, F_{\alpha_i}:=F_i$.
To construct the remaining root vectors, we apply the following inductive algorithm.
Let $\alpha,\beta,\gamma\in \Delta_+$ be such that $\gamma=\alpha+\beta, \alpha\prec\beta$,
and there are no $\alpha\npreceq \alpha'\prec\beta'\npreceq\beta$ satisfying
$\gamma=\alpha'+\beta'$. Suppose that $E_\alpha,F_\alpha,E_\beta,F_\beta$ have been
already constructed. Then, we define
\begin{equation*}
  E_\gamma:=E_\alpha E_\beta-v^{(\alpha,\beta)}E_\beta E_\alpha,\
  F_\gamma:=F_\beta F_\alpha-v^{-(\alpha,\beta)}F_\alpha F_\beta.
\end{equation*}
According to~\cite{KT}, we have
  $[E_\gamma,F_\gamma]=a(\gamma)\frac{K_\gamma-K_\gamma^{-1}}{v_\gamma-v_\gamma^{-1}}$
for certain constants $a(\gamma)\in \BC(v^{1/\NN})$, where $v_\gamma:=v^{(\gamma,\gamma)/2}$
(note that $a(\alpha_i)=1$ and $v_{\alpha_i}=v_i$). For $\gamma\in \Delta_+$, define
\begin{equation*}
  R_\gamma:=\mathrm{exp}_{v_\gamma^{-1}}
  \left(\frac{v_\gamma-v_\gamma^{-1}}{a(\gamma)}E_\gamma\otimes F_\gamma\right),
\end{equation*}
where
  $\mathrm{exp}_v(x):=\sum_{r=0}^\infty \frac{x^r}{(r)_v!},
   (r)_v!:=(1)_v\cdots(r)_v, (r)_v:=\frac{1-v^r}{1-v}$.
The following is due to~\cite{KT}:

\begin{Thm}[\cite{KT}]\label{KT decomposition}
Fix a convex ordering $\prec$ on $\Delta_+$. Then,
  $\Theta^\op=\prod_{\gamma\in \Delta_+}R_\gamma$,
where the order in the product coincides with the ordering $\prec$.
\end{Thm}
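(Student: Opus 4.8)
Since this is the Khoroshkin--Tolstoy multiplicative formula for the quasi-$R$-matrix, the proof I have in mind follows~\cite{KT} (cf.~also~\cite{Tol}). The plan is to show that the ordered product $P:=\prod_{\gamma\in\Delta_+}R_\gamma$, taken in the order $\prec$, satisfies the properties that characterize $\Theta^\op$ uniquely, in the form given by Lusztig~\cite[4.1]{L}: writing $P=\sum_\nu P_\nu$, that $P_\nu$ lies in the completed weight component $(U^+_v(\g))_\nu\otimes(U^-_v(\g))_{-\nu}$, that $P_0=1\otimes 1$, and that $P$ intertwines the comultiplication $\Delta$ with its bar-twisted counterpart $\bar\Delta$ in the sense of \emph{loc.cit.}; by the uniqueness part of Lusztig's theorem this forces $P=\Theta^\op$. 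First I would dispose of the first two properties, which are immediate: the inductive algorithm defining $\{E_\gamma,F_\gamma\}$ shows, by induction on the height of $\gamma$, that $E_\gamma$ is homogeneous of weight $\gamma$ and $F_\gamma$ of weight $-\gamma$, so each $R_\gamma$ lies in the appropriate completed weight component and contributes $1\otimes 1$ to $P_0$. For the intertwining identity it is enough to treat the generators $x\in\{E_i,F_i,K_\mu\}$, with the case $x=K_\mu$ automatic by homogeneity.

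The core of the argument is a two-step reduction to root systems of rank $\le 2$. First I would prove that $P$ does not depend on the convex ordering $\prec$. Every convex (normal) ordering of $\Delta_+$ arises from a reduced decomposition of the longest element $w_0$, and any two such decompositions are linked by a sequence of braid moves; on the level of orderings, each move either transposes two adjacent roots whose sum is not a root or reverses a contiguous block of roots that exhausts the positive roots of a rank-$2$ sub-root-system. Hence ordering-independence reduces to the four rank-$2$ cases $A_1\times A_1$, $A_2$, $B_2$, $G_2$, where one checks directly --- using the Levendorskii--Soibelman straightening relations, which express the Cartan--Weyl vectors attached to one convex ordering through those attached to the other --- that the two ordered products of the $R_\gamma$ over that subsystem coincide. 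Granting this, I would then verify $\Delta(E_i)\,P=P\,\bar\Delta(E_i)$, and analogously for $F_i$, by choosing a convex ordering in which $\alpha_i$ is minimal, factoring $P=R_{\alpha_i}\cdot P'$, and inducting on $\#\Delta_+$: the base case $\g=\ssl_2$ is Drinfeld's original identity $\Theta^\op=R_{\alpha_i}$, while the inductive step again only involves rank-$2$ manipulations inside the subsystems generated by $\alpha_i$ together with its Dynkin neighbours. Lusztig's uniqueness of $\Theta$ then yields $P=\Theta^\op$, which is the claim.

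The hard part will be the rank-$2$ verifications --- establishing the straightening relations and the resulting equality of ordered products for $B_2$ and, above all, $G_2$ (with $4$ and $6$ positive roots, respectively), where the $q$-exponentials $\mathrm{exp}_{v_\gamma^{-1}}$ must be expanded and recombined and where the constants $a(\gamma)$ and the normalizations $v_\gamma=v^{(\gamma,\gamma)/2}$ genuinely enter the bookkeeping. This is precisely the computation carried out in~\cite{KT}; everything else in the argument is formal, following from the connectivity of the set of convex orderings under braid moves and the uniqueness of the quasi-$R$-matrix.
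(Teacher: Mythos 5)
The paper does not prove this statement at all: Theorem~\ref{KT decomposition} is imported from~\cite{KT} (cf.\ also~\cite{Tol}) and used as a black box in the computation of $C'_{V_1}$, so there is no internal proof to compare yours against. Judged on its own, your overall strategy --- characterize $\Theta^\op$ by Lusztig's defining properties~\cite[4.1]{L} (weight-homogeneity of the components, the $\nu=0$ term being $1\otimes 1$, and the intertwining identity), then prove that the ordered product $P=\prod_{\gamma\in\Delta_+}R_\gamma$ has these properties --- is a legitimate and standard route, and deferring the rank-$2$ computations ($B_2$, $G_2$ straightening relations, the constants $a(\gamma)$) to~\cite{KT} is acceptable in context.

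There is, however, a genuine gap in the step where you verify the intertwining identity. Factoring $P=R_{\alpha_i}\cdot P'$ for a convex ordering with $\alpha_i$ minimal and ``inducting on $\#\Delta_+$'' is not well-founded as stated: $\Delta_+\setminus\{\alpha_i\}$ is not the positive system of a smaller semisimple subalgebra, so $P'$ is not the quasi-$R$-matrix of anything to which an induction hypothesis applies, and pushing $\Delta(E_i)$ past the factors $R_\gamma$ involves every $\gamma$ whose support contains $\alpha_i$ --- not only the roots lying in rank-$2$ subsystems spanned by $\alpha_i$ and its Dynkin neighbours. The known ways to close this step are either (i) the quantum Weyl group argument (Kirillov--Reshetikhin, Levendorskii--Soibelman): write $w_0=s_iw'$, identify $P'$ with $(T_i\otimes T_i)$ applied to the partial quasi-$R$-matrix attached to the reduced word of $w'$, and induct on the length of $w_0$ rather than on the rank --- which also requires reconciling the paper's $q$-commutator root vectors with the braid-operator root vectors up to the scalars absorbed in $a(\gamma)$; or (ii) the pairing argument, closer to~\cite{KT} itself: $\Theta$ is the canonical tensor of the Drinfeld pairing between $U^+_v(\g)$ and $U^-_v(\g)$, the PBW monomials attached to a convex ordering are orthogonal for this pairing (a consequence of the Levendorskii--Soibelman convexity relations), and the pairing of $E_\gamma^m$ with $F_\gamma^m$ is explicitly computable, whence the canonical element factorizes as the ordered product of $q$-exponentials with no separate intertwining check needed. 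Either repair makes your outline into a proof; as written, the inductive step would not go through.
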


The explicit computations of $\D_1$ below are based on the special choice of convex orderings.
We choose two convex orderings $\prec_\pm$ on $\Delta_+$ in such a way that
  $\epsilon^\pm_{ij}=-1\Rightarrow \alpha_i\prec_\pm \alpha_j$\footnote{As shown in~\cite{Tol},
any ordering on simple positive roots can be extended to a convex ordering on $\Delta_+$.}.
This choice is motivated by Proposition~\ref{reduced central elt} below.
To state the result, define
\begin{equation}\label{C'}
  C'_V=\tr_V(\id\otimes \pi_V)\left(\prod R_{\alpha_i}^\op\cdot (R^0)^\op\cdot
  \prod R_{\alpha_i}\cdot R^0\cdot (1\otimes K_{2\rho})\right),
\end{equation}
where the first and the second products are over all simple positive roots ordered
according to $\prec_-$ and $\prec_+$, respectively, whereas
$(\id\otimes \pi_V)(R^0)=(\id\otimes \pi_V)((R^0)^\op)$ are understood as before.
We define $\bar{\D}_{V}:=\tilde{\D}_{C'_V}$.

\begin{Prop}\label{reduced central elt}
We have $\tilde{\D}_V=\bar{\D}_V$.
\end{Prop}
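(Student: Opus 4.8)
The plan is to show that the difference operator $\tilde{\D}_{C_V}$ is unchanged when we replace the full central element $C_V$ of~\eqref{quantum center} by the truncated expression $C'_V$ of~\eqref{C'}. The point is that the operator $\tilde{\D}_Y$ is determined by its action on Whittaker functions: $(Y\phi)_{\mid_H}=\tilde{\D}_Y(\phi_{\mid_H})$. So it suffices to prove that $C_V$ and $C'_V$ act identically on the space of Whittaker functions, or rather that their difference $C_V-C'_V$ annihilates the projection to $\CO_v(H)$. First I would unwind $C_V=\tr_V(\id\otimes\pi_V)(R^{\op}R(1\otimes K_{2\rho}))$ using Theorem~\ref{KT decomposition}: writing $\Theta^{\op}=\prod_{\gamma\in\Delta_+}R_\gamma$ for the convex ordering $\prec_+$ and $\Theta$ (the opposite-coalgebra version) in terms of $\prec_-$, together with $R=\Theta^{\op}(R^0)$ and $R^{\op}=(R^0)^{\op}\Theta^{\text{(something)}}$, one sees that $C_V$ is a trace over $V$ of a product of the $R_\gamma^{(\op)}$ over all of $\Delta_+$ (in the two orders) sandwiching the Cartan factors, while $C'_V$ is the same product but keeping only the factors $R_{\alpha_i}^{(\op)}$ indexed by simple roots.

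The key mechanism is that the ``extra'' factors $R_\gamma^{(\op)}$ for non-simple $\gamma$ contribute, in the first tensor leg, only elements lying in the augmentation ideals of $U^-_{\n^-}(\g)$ and $U^+_{\n^+}(\g)$ — more precisely, products $F_\gamma\cdots$ and $E_\gamma\cdots$ with $\gamma$ non-simple, which by our choice of the convex orderings $\prec_\pm$ (made so that $\epsilon^\pm_{ij}=-1\Rightarrow\alpha_i\prec_\pm\alpha_j$) can be written as elements of $\ker\chi^\pm$ times Cartan elements. When such a factor hits a Whittaker function $\phi$ from the appropriate side, the semi-invariance $\phi(x^-xx^+)=\chi^-(x^-)\chi^+(x^+)\phi(x)$ forces its contribution to vanish upon restriction to $H$, except for the ``leading'' term where all non-simple root factors are replaced by $1$. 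Thus modulo operators that kill Whittaker functions, $C_V\equiv C'_V$, hence $\tilde{\D}_{C_V}=\tilde{\D}_{C'_V}$, i.e. $\tilde{\D}_V=\bar{\D}_V$. Here I would use that $e_i=E_i\prod_p L_p^{\n^+_{ip}}$ and $f_i=\prod_p L_p^{-\n^-_{ip}}F_i$ generate $U^\pm_{\n^\pm}(\g)$, and that $\chi^\pm$ being a character means any monomial in the $E$'s (resp. $F$'s) of length $\geq 2$ — which is what a non-simple $E_\gamma$ or $F_\gamma$ unfolds into — is handled by the character only after commuting Cartan elements to one side, which is exactly the role of the condition $\sd_j\n^\pm_{ij}-\sd_i\n^\pm_{ji}=\epsilon^\pm_{ij}b_{ij}$ from the definition of a Sevostyanov triple (this is the same computation as in Lemma~\ref{Sev twist}).

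The main obstacle I anticipate is bookkeeping the Cartan/torus parts correctly: when one discards a factor $R_\gamma$ with $\gamma=\sum m_i\alpha_i$, the piece $E_\gamma\otimes F_\gamma$ carries a definite weight, and replacing it by its ``constant term'' must be reconciled with the surrounding $K_{\mu}$'s and with $R^0$, $(R^0)^{\op}$, so that the truncated product $C'_V$ still makes sense as written in~\eqref{C'} with the stated $R^0$-insertions and still has the same $H$-action. Concretely, one must check that commuting the extra root vectors past the Cartan factors and then applying $\chi^\pm$ produces precisely the scalar $1$ on the surviving term and genuine elements of $\ker\chi^\pm$ (times Cartan elements that are then absorbed) on all others — this uses that $\chi^\pm(e_i)=c^\pm_i\neq 0$ and the defining relation of the Sevostyanov triple, but the signs and the $v$-powers coming from $(\alpha,\beta)$ in the inductive definition $E_\gamma=E_\alpha E_\beta-v^{(\alpha,\beta)}E_\beta E_\alpha$ require care. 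Once this reduction to simple-root factors is justified termwise in the trace over $V$, the equality $\tilde{\D}_V=\bar{\D}_V$ follows immediately from the uniqueness in Lemma~\ref{commuting operators}(a).
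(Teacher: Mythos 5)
Your proposal is correct and follows essentially the same route as the paper: factor $R^{\op}R$ via the Khoroshkin--Tolstoy decomposition for the convex orderings $\prec_\pm$ adapted to $\epsilon^\pm$, and observe that every factor $R_\gamma,R_\gamma^{\op}$ with $\gamma$ non-simple contributes twisted root vectors $e_\gamma,f_\gamma$ lying in $U^\pm_{\n^\pm}(\g)$ and killed by $\chi^\pm$, so only the simple-root factors survive the projection to $\CO_v(H)$, giving $C'_V$. The ``key mechanism'' you describe (non-simple $e_\gamma,f_\gamma$ become commutators, hence lie in $\ker\chi^\pm$, with the Cartan bookkeeping controlled by the Sevostyanov condition) is exactly the paper's Lemma~\ref{L:nonessential}, quoted from~\cite[Propositions 2.2.4, 2.2.5]{S2} and proved by induction along the convex ordering.
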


\begin{proof}
For $\gamma=\sum_{i=1}^n m_i\alpha_i\in \Delta_+\ (m_i\in \BZ_{\geq 0})$, define
$e_\gamma,f_\gamma\in U_v(\g)$ via $e_\gamma:=E_\gamma\cdot \prod_{i,k=1}^n L_k^{m_i\n^+_{ik}}$
and $f_\gamma:=\prod_{i,k=1}^n L_k^{-m_i\n^-_{ik}}\cdot F_\gamma$, so that
$e_{\alpha_i}=e_i, f_{\alpha_i}=f_i$ as defined in Section~\ref{Sevostyanov triples}.
The proof of Proposition~\ref{reduced central elt} is based on the following properties
of these elements $\{e_\gamma, f_\gamma\}_{\gamma\in \Delta_+}$ established
in~\cite[Propositions 2.2.4, 2.2.5]{S2}.

\begin{Lem}\label{nonessential}
(a) For $\gamma\in \Delta_+$, we have $e_\gamma\in U^+_{\n^+}(\g)$ and $f_\gamma\in U^-_{\n^-}(\g)$.

\noindent
(b) If $\gamma\in \Delta_+$ is not a simple root, then $\chi^+(e_\gamma)=0$ and $\chi^-(f_\gamma)=0$.
\end{Lem}

We recall the proof of this Lemma to make our exposition self-contained.

\begin{proof}
(a) The proof is by induction used in the above definition of the root vectors $E_\gamma,F_\gamma$.
The claim is trivial when $\gamma$ is a simple root. For the remaining cases, let
$\alpha,\beta,\gamma\in \Delta_+$ be as above and assume that we have already established
the inclusions $e_\alpha,e_\beta\in U^+_{\n^+}(\g)$ and $f_\alpha,f_\beta\in U^-_{\n^-}(\g)$.
Let us write $\alpha=\sum_{i=1}^n m_i\alpha_i, \beta=\sum_{i=1}^n m'_i\alpha_i$. Then
\begin{align*}
  &  E_\gamma=\left(v^{-\sum_{i,k=1}^n m_i\n^+_{ik}(\omega_k,\beta)}e_\alpha e_\beta-
     v^{(\alpha,\beta)-\sum_{i,k=1}^n m'_i\n^+_{ik}(\omega_k,\alpha)}e_\beta e_\alpha\right)
     \cdot \prod_{i,k=1}^n L_k^{-(m_i+m'_i)\n^+_{ik}},\\
  &  F_\gamma=\prod_{i,k=1}^n L_k^{(m_i+m'_i)\n^-_{ik}}\cdot
     \left(v^{\sum_{i,k=1}^n m_i\n^-_{ik}(\omega_k,\beta)}f_\beta f_\alpha-v^{-(\alpha,\beta)+
     \sum_{i,k=1}^n m'_i\n^-_{ik}(\omega_k,\alpha)}f_\alpha f_\beta\right),
\end{align*}
so that
\begin{equation}\label{root basis e}
  e_\gamma=
  v^{-\sum_{i,k=1}^n m_i\n^+_{ik}(\omega_k,\beta)}e_\alpha e_\beta-v^{(\alpha,\beta)-
  \sum_{i,k=1}^n m'_i\n^+_{ik}(\omega_k,\alpha)}e_\beta e_\alpha
\end{equation}
and
\begin{equation}\label{root basis f}
  f_\gamma=
  v^{\sum_{i,k=1}^n m_i\n^-_{ik}(\omega_k,\beta)}f_\beta f_\alpha-
  v^{-(\alpha,\beta)+\sum_{i,k=1}^n m'_i\n^-_{ik}(\omega_k,\alpha)}f_\alpha f_\beta.
\end{equation}
Thus, $e_\gamma\in U^+_{\n^+}(\g)$ and $f_\gamma\in U^-_{\n^-}(\g)$, which completes
our inductive step. Part (a) follows.

(b) Due to the formulas~(\ref{root basis e},~\ref{root basis f}), it suffices
to prove $\chi^+(e_\gamma)=0$ and $\chi^-(f_\gamma)=0$ for $\gamma=\alpha+\beta$
with $\alpha=\alpha_i, \beta=\alpha_j$.

In the former case, we get
\begin{equation*}
  e_\gamma=
  v^{-\sd_j\n^+_{ij}}e_ie_j-v^{b_{ij}-\sd_i\n^+_{ji}}e_je_i=
  v^{-\sd_j\n^+_{ij}}[e_i,e_j],
\end{equation*}
since $\sd_j\n^+_{ij}-\sd_i\n^+_{ji}=\epsilon^+_{ij}b_{ij}=-b_{ij}$ as
$\alpha_i\prec_+ \alpha_j$. Hence,
$\chi^+(e_\gamma)=v^{-\sd_j\n^+_{ij}}[\chi^+(e_i),\chi^+(e_j)]=0$.

In the latter case, we get
\begin{equation*}
  f_\gamma=
  v^{\sd_j\n^-_{ij}}f_jf_i-v^{-b_{ij}+\sd_i\n^-_{ji}}f_if_j=
  v^{\sd_j\n^-_{ij}}[f_j,f_i],
\end{equation*}
since $\sd_j\n^-_{ij}-\sd_i\n^-_{ji}=\epsilon^-_{ij}b_{ij}=-b_{ij}$ as
$\alpha_i\prec_- \alpha_j$. Thus,
$\chi^-(f_\gamma)=v^{\sd_j\n^-_{ij}}[\chi^-(f_j),\chi^-(f_i)]=0$.
\end{proof}

Tracing back the definition of $\tilde{\D}_V$, Lemma~\ref{nonessential} implies
that $R_\gamma,R_\gamma^\op$ give trivial contributions to $\tilde{\D}_V$ unless
$\gamma\in \Delta_+$ is a simple root, cf.~\cite[Lemma 5.2]{E}
and~\cite[Proposition 3.6]{FFJMM}. Hence, the equality $\tilde{\D}_V=\bar{\D}_V$.
\end{proof}


\subsection{Explicit formulas and classification in type $A_{n-1}$}\label{explicit hamiltonians A}
\

Recall explicit formulas for the action of $U_v(\ssl_n)$ on its first fundamental
representation $V_1$. The space $V_1$ has a basis $\{w_1,\ldots,w_n\}$, in which
the action is given by the following formulas:
\begin{equation*}
  E_i(w_j)=\delta_{j,i+1}w_{j-1},\ F_i(w_j)=\delta_{j,i}w_{j+1},\
  L_i(w_j)=v^{-\frac{i}{n}+\delta_{j\leq i}}w_j,\
   K_i(w_j)=v^{\delta_{j,i}-\delta_{j,i+1}}w_j
\end{equation*}
for any $1\leq i<n, 1\leq j\leq n$. Let $\varpi_1,\ldots,\varpi_n$ be the weights of
$w_1,\ldots,w_n$, respectively, so that $(\varpi_i,\varpi_j)=\delta_{i,j}-1/n$.
Recall that the simple roots are given by
$\alpha_i=\varpi_i-\varpi_{i+1}\ (1\leq i\leq n-1)$, while
$\rho=\sum_{j=1}^n \frac{n+1-2j}{2}\varpi_j$. According to
Proposition~\ref{reduced central elt}, to compute $\D_1$ explicitly, we should:

$\bullet$
evaluate $C'_{V_1}$,

$\bullet$
replace $E_i,F_i$ by $e_i,f_i$ and $L_p$, moving the latter to the middle part,

$\bullet$
apply $\chi^\pm$ as in~\cite{E} to obtain the difference operator
$\tilde{\D}_1=\bar{\D}_{V_1}$,

$\bullet$
conjugate by $e^\rho$.

Note that the operators $\{E_i^r,F_i^r\}^{r>1}_{1\leq i\leq n-1}$ act trivially on $V_1$.
Hence, applying formula~(\ref{C'}), we can replace $R_{\alpha_i}$ by
  $\bar{R}_{\alpha_i}:=1+(v-v^{-1})E_i\otimes F_i$.
Let us now compute all the non-zero terms contributing to $C'_{V_1}$:

$\bullet$
Picking $1$ out of each $\bar{R}^\op_{\alpha_i}, \bar{R}_{\alpha_i}$, we recover
  $\sum_{j=1}^n v^{n+1-2j}\cdot K_{2\varpi_j}$.

$\bullet$
Picking non-trivial terms only at $\bar{R}_{\alpha_j}^\op,\bar{R}_{\alpha_i}$,
the result does not depend on $\Or^\pm$ (hence, the orderings $\prec_\pm$) and
the total contribution is
  $\sum_{i=1}^{n-1} (v-v^{-1})^2 v^{n+1-2i}\cdot F_iK_{\varpi_{i+1}}E_i K_{\varpi_i}$.
Rewriting in terms of $e_i,f_i$ and $L_p$, we get
  $(v-v^{-1})^2\sum_{i=1}^{n-1} v^{n-2i+(\n^+_{ii}-\n^-_{ii})}\cdot
   f_i K_{\varpi_i+\varpi_{i+1}}\prod_{p=1}^{n-1} L_p^{\n^-_{ip}-\n^+_{ip}} e_i$.

$\bullet$
The computation of the remaining terms is based on the following obvious formulas:
\begin{align*}
  & F_{i_k}\cdots F_{i_2}F_{i_1}(w_i)=\delta_{i_1,i}\delta_{i_2,i_1+1}\cdots\delta_{i_k,i_{k-1}+1}w_{i+k},\\
  & E_{j_k}\cdots E_{j_2}E_{j_1}(w_j)=\delta_{j_1,j-1}\delta_{j_2,j_1-1}\cdots\delta_{j_k,j_{k-1}-1}w_{j-k}.
\end{align*}
Hence, picking non-trivial terms only at
  $\bar{R}^\op_{\alpha_{j_1}},\ldots, \bar{R}^\op_{\alpha_{j_{k'}}},
   \bar{R}_{\alpha_{i_k}},\ldots,\bar{R}_{\alpha_{i_1}}$
(in the order listed) is possible only if
  $i_k\prec_+ \cdots \prec_+ i_1,\ j_1\prec_- \cdots \prec_- j_{k'}$,
and  gives a non-zero contribution to $C'_{V_1}$ if and only if
$k=k',\ i_k=i_{k-1}+1=\ldots=i_1+k-1$, and $i_a=j_a$ for $1\leq a\leq k$.
Thus, the remaining terms contributing to $C'_{V_1}$ depend on $\prec_\pm$
(only on $\Or^\pm$) and give in total
\begin{equation*}
  \sum_{1\leq i<j-1\leq n-1}^{\epsilon^\pm_{i,i+1}=\ldots=\epsilon^\pm_{j-2,j-1}=\pm 1}
  (v-v^{-1})^{2(j-i)}v^{n+1-2i}\cdot F_i\cdots F_{j-1}K_{\varpi_j}E_{j-1}\cdots E_i K_{\varpi_i}.
\end{equation*}
Rewriting this in terms of $e_i,f_i$ and $L_p$, and moving the latter to the middle, we get
\begin{equation*}
  \sum (v-v^{-1})^{2(j-i)}v^{n-2i+\sum_{i\leq a\leq b\leq j-1}(\n^+_{ab}-\n^-_{ab})}\cdot
  f_i\cdots f_{j-1}\cdot K_{\varpi_i+\varpi_j}\prod_{p=1}^{n-1} L_p^{\sum_{s=i}^{j-1} (\n^-_{sp}-\n^+_{sp})}\cdot
  e_{j-1}\cdots e_i,
\end{equation*}
where the sum is over all $1\leq i<j-1\leq n-1$ such that
  $\epsilon^\pm_{i,i+1}=\ldots=\epsilon^\pm_{j-2,j-1}=\pm 1$.
Note that $L_p=K_{\varpi_1+\ldots+\varpi_p}$.
Set $\m_{ik}:=\sum_{p=k}^{n-1} (\n^-_{ip}-\n^+_{ip})$.
Then, the Cartan part above equals
  $K_{\varpi_i+\varpi_j}\prod_{p=1}^{n-1} L_p^{\sum_{s=i}^{j-1} (\n^-_{sp}-\n^+_{sp})}=
   K_{\sum_{k=1}^n (\sum_{s=i}^{j-1} \m_{sk}+\delta_{k,i}+\delta_{k,j})\varpi_k}$.

We have listed all the non-zero terms contributing to $C'_{V_1}$. To obtain
the desired difference operator $\tilde{\D}_1$, apply the characters $\chi^\pm$
with $\chi^+(e_i)=c^+_i, \chi^-(f_i)=c^-_i$ as in~\cite[Lemma 5.2]{E}.
Set $b_i:=(v-v^{-1})^2v^{\n^+_{ii}-\n^-_{ii}}c^+_ic^-_i$. Then, we have
\begin{multline}\label{D1 for A-type}
  \tilde{\D}_1=\sum_{j=1}^n v^{n+1-2j} T_{2\varpi_j}+
  \sum_{i=1}^{n-1} b_iv^{n-2i}\cdot e^{-\alpha_i}T_{\sum_{k=1}^n (\m_{ik}+\delta_{k,i}+\delta_{k,i+1})\varpi_k}+\\
  \sum_{1\leq i<j-1\leq n-1}^{\epsilon^\pm_{i,i+1}=\ldots=\epsilon^\pm_{j-2,j-1}=\pm 1}
  b_i\cdots b_{j-1} v^{n-2i+\sum_{i\leq a<b\leq j-1} (\n^+_{ab}-\n^-_{ab})}\times\\
  e^{-\sum_{s=i}^{j-1} \alpha_s}T_{\sum_{k=1}^n (\sum_{s=i}^{j-1} \m_{sk}+\delta_{k,i}+\delta_{k,j})\varpi_k}.
\end{multline}

Conjugating this by $e^\rho$, we finally obtain the explicit formula for the first
hamiltonian $\D_1$ of the type $A_{n-1}$ modified quantum difference Toda system:
\begin{multline}\label{FinalD for A-type}
  \D_1=\sum_{j=1}^n T_{2\varpi_j}+
  \sum_{i=1}^{n-1} b_iv^{\sum_{k=1}^n \frac{2k-n-1}{2}\m_{ik}}\cdot e^{-\alpha_i}T_{\sum_{k=1}^n (\m_{ik}+\delta_{k,i}+\delta_{k,i+1})\varpi_k}+\\
  \sum_{1\leq i<j-1\leq n-1}^{\epsilon^\pm_{i,i+1}=\ldots=\epsilon^\pm_{j-2,j-1}=\pm 1}
  b_i\cdots b_{j-1} v^{j-i-1+\sum_{i\leq a<b\leq j-1} (\n^+_{ab}-\n^-_{ab})+\sum_{k=1}^n\sum_{s=i}^{j-1}\frac{2k-n-1}{2}\m_{sk}}\times\\
  e^{-\sum_{s=i}^{j-1} \alpha_s}T_{\sum_{k=1}^n (\sum_{s=i}^{j-1} \m_{sk}+\delta_{k,i}+\delta_{k,j})\varpi_k}.
\end{multline}

\begin{Rem}\label{Etingof's formula for A}
If $\epsilon^+=\epsilon^-$, then the last sum is vacuous. If we also set $\n^+=\n^-$
and $c^\pm_i=\pm 1$ for all $i$, then we recover the formula~\cite[(5.7)]{E}
for the first hamiltonian of the type $A_{n-1}$ quantum difference Toda lattice:
\begin{equation}\label{standard qToda for A}
  \D_1=\sum_{j=1}^n T_{2\varpi_j}-(v-v^{-1})^2\sum_{i=1}^{n-1} e^{-\alpha_i}T_{\varpi_i+\varpi_{i+1}}.
\end{equation}
\end{Rem}

Let $\CA_n$ be the associative $\BC(v^{1/\NN})$-algebra generated by
$\{\sw^{\pm 1}_j, \sD^{\pm 1}_j\}_{j=1}^n$ with the defining relations
\begin{equation}\label{Algebra A}
  [\sw_i,\sw_j]=[\sD_i,\sD_j]=0,\
  \sw^{\pm 1}_i\sw^{\mp 1}_i=\sD^{\pm 1}_i\sD^{\mp 1}_i=1,\
  \sD_i\sw_j=v^{\delta_{i,j}}\sw_j\sD_i.
\end{equation}
Define $\bar{\CA}_n$ as the quotient of the $\BC(v^{1/\NN})$-subalgebra generated by
  $\{\sw_j^{\pm 1}, (\sD_i/\sD_{i+1})^{\pm 1}\}_{1\leq i<n}^{1\leq j\leq n}$
by the relation $\sw_1\cdots \sw_n=1$. Consider the anti-isomorphism from
the algebra $\bar{\CA}_n$ to the algebra $\CalD_v(H^\ad_{\ssl_n})$ of
Section~\ref{section Whittaker functions}, sending
  $\sw_j\mapsto T_{-\varpi_j}, \sD_i/\sD_{i+1}\mapsto e^{-\alpha_i}$.
Then, the hamiltonian $\D_1$ is the image of the following element
$\HH=\HH(\epsilon^\pm, \n^\pm, c^\pm)$ of $\bar{\CA}_n$:
\begin{multline}\label{H for A-type}
  \HH(\epsilon^\pm, \n^\pm, c^\pm)=
  \sum_{j=1}^n \sw_j^{-2} +
  \sum_{i=1}^{n-1} b_i v^{\sum_{k=1}^n \frac{2k-n-1}{2}\m_{ik}}\cdot
  \prod_{k=1}^n \sw_k^{-\m_{ik}-\delta_{k,i}-\delta_{k,i+1}}\cdot \frac{\sD_i}{\sD_{i+1}}+\\
  \sum_{1\leq i<j-1\leq n-1}^{\epsilon^\pm_{i,i+1}=\ldots=\epsilon^\pm_{j-2,j-1}=\pm 1}
  b_i\cdots b_{j-1} v^{j-i-1+\sum_{i\leq a<b\leq j-1} (\n^+_{ab}-\n^-_{ab})+
  \sum_{k=1}^n\sum_{s=i}^{j-1}\frac{2k-n-1}{2}\m_{sk}}\times\\
  \prod_{k=1}^n \sw_k^{-\sum_{s=i}^{j-1} \m_{sk}-\delta_{k,i}-\delta_{k,j}}\cdot \frac{\sD_i}{\sD_j}.
\end{multline}

The following is the key property of $\HH(\epsilon^\pm,\n^\pm,c^\pm)$ in type $A$:

\begin{Prop}\label{bound for A}
$\HH(\epsilon^\pm,\n^\pm,c^\pm)$ depends only on
$\vec{\epsilon}=(\epsilon_{n-2},\ldots,\epsilon_1)\in \{-1,0,1\}^{n-2}$
with $\epsilon_i:=\frac{\epsilon^+_{i,i+1}-\epsilon^-_{i,i+1}}{2}$,
up to algebra automorphisms of $\bar{\CA}_n$.
\end{Prop}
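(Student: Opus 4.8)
The plan is to normalize the Sevostyanov data by a composition of explicit automorphisms of $\bar{\CA}_n$. Throughout write $X_i:=\sD_i/\sD_{i+1}$ for $1\le i<n$, so that $\bar{\CA}_n$ is generated by the commuting families $\{\sw_j^{\pm1}\}_{j=1}^n$ (with $\sw_1\cdots\sw_n=1$) and $\{X_i^{\pm1}\}_{i=1}^{n-1}$ subject to $X_i\sw_j=v^{\delta_{i,j}-\delta_{i+1,j}}\sw_j X_i$, and with $\sD_i/\sD_j=X_iX_{i+1}\cdots X_{j-1}$ for $i<j$. The first observation is a \emph{reduction of the data}: inspecting~\eqref{H for A-type}, the element $\HH(\epsilon^\pm,\n^\pm,c^\pm)$ depends on the pair of Sevostyanov triples only through $\vec{\epsilon}$, the difference matrix $\bn:=\n^--\n^+$, and the products $c^+_ic^-_i$. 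Indeed $\m_{ik}=\sum_{p=k}^{n-1}\bn_{ip}$, the exponent $\sum_{i\le a<b\le j-1}(\n^+_{ab}-\n^-_{ab})$ equals $-\sum_{i\le a<b\le j-1}\bn_{ab}$, the scalar $b_i$ involves $\n^\pm$ only via $v^{\n^+_{ii}-\n^-_{ii}}=v^{-\bn_{ii}}$ and $c^+_ic^-_i$, and the index set of the third sum is $\{(i,j):1\le i<j-1\le n-1,\ \epsilon_a=1\ \text{for all}\ i\le a\le j-2\}$, since ``$\epsilon^+_{a,a+1}=1,\ \epsilon^-_{a,a+1}=-1$'' is equivalent to ``$\epsilon_a=1$''. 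By the Sevostyanov condition (with all $\sd_i=1$ in type $A$), $\bn$ is an integer matrix satisfying $\bn_{a,a+1}-\bn_{a+1,a}=2\epsilon_a$ and $\bn_{ab}=\bn_{ba}$ for $|a-b|\ge2$, with unconstrained diagonal. So it suffices to show that, for fixed $\vec{\epsilon}$, the element $\HH$ is independent up to an automorphism of $\bar{\CA}_n$ of such a matrix $\bn$ and of $(c^+_ic^-_i)\in(\BC(v^{1/\NN})^\times)^{n-1}$.

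Second, I record the automorphisms used. (i) For $\vec\eta\in(\BC(v^{1/\NN})^\times)^n$, the assignment $\sD_i\mapsto\eta_i\sD_i$, $\sw_j\mapsto\sw_j$ is an automorphism of $\bar{\CA}_n$ rescaling $X_i$ by the arbitrary scalar $\eta_i/\eta_{i+1}$. (ii) Given integers $(r_{ik})_{1\le i<n,\,1\le k\le n}$, the assignment $\sw_j\mapsto\sw_j$, $X_i\mapsto(\prod_{k}\sw_k^{r_{ik}})X_i$ preserves all defining relations of $\bar{\CA}_n$ except possibly $X_iX_{i'}=X_{i'}X_i$, and a one‑line computation shows the latter is preserved precisely when the matrix $(r_{ik}-r_{i,k+1})_{1\le i,k\le n-1}$ is symmetric; when it is, this ``shear'' is an automorphism (its inverse is the shear with $-r_{ik}$). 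Both families fix every $\sw_j$, hence fix the first summand $\sum_j\sw_j^{-2}$ of $\HH$.

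Third, the normalization. Let $\bn^0=\bn^0(\vec{\epsilon})$ be the matrix with $\bn^0_{a,a+1}=\epsilon_a$, $\bn^0_{a+1,a}=-\epsilon_a$, and all other entries $0$; then $d:=\bn-\bn^0$ is a \emph{symmetric} integer matrix (the adjacent‑entry constraints on $\bn$ and $\bn^0$ cancel, and the other entries are symmetric by the above). Apply the shear with $r_{ik}:=\sum_{p=k}^{n-1}d_{ip}$ — legitimate since $r_{ik}-r_{i,k+1}=d_{ik}$ is symmetric — and note that $r_{ik}$ is exactly the amount by which the $\sw_k$-exponent attached to $X_i$ must change to pass from the $\bn$-version of~\eqref{H for A-type} to its $\bn^0$-version. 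Hence the shear carries every $\sw$-monomial of the second and third sums to its $\bn^0$-counterpart; one then checks that the $v$-powers it produces — in particular the reordering corrections $v^{\sum_{i\le s<t\le j-1}d_{ts}}$ arising when the inserted monomials are pushed through a string $X_i\cdots X_{j-1}$ — recombine so that the new coefficient of the $(i,j)$-th term of the third sum is again the product of the new coefficients of the $i,\dots,j-1$-st terms of the second sum, times the expected $\bn^0$-power of $v$; writing out this identity, the $v$-power discrepancy is exactly $\sum_{i\le a<b\le j-1}(d_{ba}-d_{ab})=0$. Thus the shear sends $\HH(\vec{\epsilon},\bn,(c^+_ic^-_i))$ to $\HH(\vec{\epsilon},\bn^0(\vec{\epsilon}),\tilde c\,)$ for some $\tilde c$. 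Finally, since each factor $b_s$ in~\eqref{H for A-type} is attached to $X_s$, the $X_i$-rescaling by $\nu_i$ carries $\HH(\vec{\epsilon},\bn^0(\vec{\epsilon}),\tilde c\,)$ to $\HH(\vec{\epsilon},\bn^0(\vec{\epsilon}),(\nu_i\tilde c_i))$, so all torus data become equivalent; composing these automorphisms shows that $\HH(\epsilon^\pm,\n^\pm,c^\pm)$ depends only on $\vec{\epsilon}$ up to automorphisms of $\bar{\CA}_n$, as claimed.

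The step I expect to require genuine care is the $v$-power bookkeeping in the third part: tracking the reorderings of the inserted $\sw$-monomials past the strings $X_i\cdots X_{j-1}$, and confirming that the multiplicative structure of the coefficients (a third-sum coefficient being the product of second-sum coefficients) is respected after the shear. This, together with the verification that the shear in the second part is an honest automorphism, rests entirely on the symmetry of $d=\bn-\bn^0(\vec{\epsilon})$, which is in turn forced by the Sevostyanov constraints together with the definition of $\vec{\epsilon}$.
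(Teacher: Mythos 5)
Your proposal is correct and follows essentially the same route as the paper: the paper's conjugation by the quadratic exponential $F$ of~(\ref{Canonical Form}), with exponents solving~(\ref{system A.1},~\ref{system A.2}), is precisely your shear-plus-rescaling automorphism (fixing the $\sw_j$ and twisting each $\sD_i/\sD_{i+1}$ by an $\sw$-monomial and a scalar), and both verifications rest on the same symmetry of $\n^--\n^+$ forced by the Sevostyanov constraints. The only differences are organizational: you normalize to a canonical matrix $\bn^0(\vec{\epsilon})$ and check the longer terms in closed form via $\sum_{a<b}(d_{ba}-d_{ab})=0$, whereas the paper compares two arbitrary data sets directly and argues by induction on $j-i$.
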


This result implies that given two pairs of Sevostyanov triples
$(\epsilon^\pm,\n^\pm,c^\pm)$ and $(\tilde{\epsilon}^\pm,\tilde{\n}^\pm,\tilde{c}^\pm)$ with
  $\epsilon^+_{i,i+1}-\epsilon^-_{i,i+1}=\tilde{\epsilon}^+_{i,i+1}-\tilde{\epsilon}^-_{i,i+1}$,
there exists an algebra automorphism of $\CalD_v(H^\ad_{\ssl_n})$ which maps the first hamiltonian
$\D_1(\epsilon^\pm,\n^\pm,c^\pm)$ to $\D_1(\tilde{\epsilon}^\pm,\tilde{\n}^\pm,\tilde{c}^\pm)$.
As we will see in Appendix~\ref{Proof of Main Theorem A}, the same automorphism maps the modified
quantum Toda system $\CT(\epsilon^\pm,\n^\pm,c^\pm)$ to $\CT(\tilde{\epsilon}^\pm,\tilde{\n}^\pm,\tilde{c}^\pm)$.

We present the proof of Proposition~\ref{bound for A} in Appendix~\ref{Proof of bound for A}.


\subsection{Explicit formulas and classification in type $C_n$}\label{explicit hamiltonians C}
\

Recall explicit formulas for the action of $U_v(\mathfrak{sp}_{2n})$ on its
first fundamental representation $V_1$. The space $V_1$ has a basis
$\{w_1,\ldots,w_n,w_{n+1},\ldots,w_{2n}\}$, in which the action is given via
\begin{align*}
  &  E_i(w_j)=\delta_{j,i+1}w_{j-1},\ E_i(w_{n+j})=\delta_{j,i}w_{n+j+1},\ E_n(w_j)=0,\ E_n(w_{n+j})=\delta_{j,n}w_n,\\
  &  F_i(w_j)=\delta_{j,i}w_{j+1},\ F_i(w_{n+j})=\delta_{j,i+1}w_{n+j-1},\ F_n(w_j)=\delta_{j,n}w_{2n},\  F_n(w_{n+j})=0,\\
  &  L_i(w_j)=v^{\delta_{j\leq i}}w_j,\  L_i(w_{n+j})=v^{-\delta_{j\leq i}} w_j,\ L_n(w_j)=vw_j,\ L_n(w_{n+j})=v^{-1}w_{n+j},\\
  &  K_i(w_j)=v^{\delta_{j,i}-\delta_{j,i+1}}w_j,\ K_i(w_{n+j})=v^{-\delta_{j,i}+\delta_{j,i+1}}w_{n+j},\\
  &  K_n(w_j)=v^{2\delta_{j,n}}w_j,\ K_n(w_{n+j})=v^{-2\delta_{j,n}}w_{n+j}
\end{align*}
for any $1\leq i<n, 1\leq j\leq n$. Let $\varpi_j$ be the weight of $w_j\ (1\leq j\leq n)$,
so that the weight of $w_{n+j}$ equals $-\varpi_j$, while $(\varpi_i,\varpi_j)=\delta_{i,j}$.
Recall that the simple positive roots are given by $\alpha_i=\varpi_i-\varpi_{i+1}\ (1\leq i\leq n-1)$
and $\alpha_n=2\varpi_n$, while $\rho=\sum_{i=1}^n (n+1-i)\varpi_i$ and
$\sd_1=\ldots=\sd_{n-1}=1, \sd_n=2$.

To compute $\D_1$ explicitly, we use the same strategy as in type $A$. Note that
the operators $\{E_i^r,F_i^r\}^{r>1}_{1\leq i\leq n}$ act trivially on $V_1$.
Therefore, applying formula~(\ref{C'}), we can replace $R_{\alpha_i}$ by
  $\bar{R}_{\alpha_i}:=1+(v_i-v_i^{-1})E_i\otimes F_i$.
Let us now compute all the non-zero terms contributing to $C'_{V_1}$:

$\bullet$
Picking $1$ out of each $\bar{R}^\op_{\alpha_i}, \bar{R}_{\alpha_i}$, we recover
$\sum_{i=1}^n \left(v^{2(n+1-i)}\cdot K_{2\varpi_i}+v^{-2(n+1-i)}\cdot K_{-2\varpi_i}\right)$.

$\bullet$
Picking non-trivial terms only at $\bar{R}_{\alpha_j}^\op, \bar{R}_{\alpha_i}$, the result
does not depend on $\Or^\pm$ (hence, the orderings $\prec_\pm$) and the total contribution
of the non-zero terms equals
\begin{align*}
  & \sum_{i=1}^{n-1} (v-v^{-1})^2\left(v^{2(n+1-i)}F_iK_{\varpi_{i+1}}E_iK_{\varpi_i}+
    v^{-2(n-i)}F_iK_{-\varpi_i}E_iK_{-\varpi_{i+1}}\right)+\\
  & (v^2-v^{-2})^2v^2F_nK_{-\varpi_n}E_nK_{\varpi_n}.
\end{align*}

$\bullet$
The other terms contributing to $C'_{V_1}$ depend on $\prec_\pm$ (only on $\Or^\pm$).
Picking non-trivial terms only at
  $\bar{R}^\op_{\alpha_{j_1}},\ldots, \bar{R}^\op_{\alpha_{j_{k'}}},
   \bar{R}_{\alpha_{i_k}},\ldots,\bar{R}_{\alpha_{i_1}}$
(in the order listed) is possible only if
  $i_k\prec_+ \cdots \prec_+ i_1,\ j_1\prec_- \cdots \prec_- j_{k'}$,
and gives a non-zero contribution to $C'_{V_1}$ if and only if
$k=k', i_k=i_{k-1}\pm 1=\ldots=i_1\pm (k-1)$ (the sign stays the same everywhere),
and $i_a=j_a$ for $1\leq a\leq k$. When computing these contributions,
we shall distinguish between the two cases: $\max(i_1,i_k)=n$ and $\max(i_1,i_k)<n$.
The total contribution of such terms with $k>1$ equals
\begin{align*}
  &  \sum_{1\leq i<j<n}^{\epsilon^\pm_{i,i+1}=\ldots=\epsilon^\pm_{j-1,j}=\pm 1}
  (v-v^{-1})^{2(j-i+1)}v^{2(n+1-i)}\cdot F_i\cdots F_{j}K_{\varpi_{j+1}}E_{j}\cdots E_i K_{\varpi_i}+\\
  &  \sum_{1\leq i<n}^{\epsilon^\pm_{i,i+1}=\ldots=\epsilon^\pm_{n-1,n}=\pm 1}
  (v-v^{-1})^{2(n-i)}(v^2-v^{-2})^{2}v^{2(n+1-i)}\cdot F_i\cdots F_{n}K_{-\varpi_n}E_{n}\cdots E_i K_{\varpi_i}+\\
  &  \sum_{1\leq i<j<n}^{\epsilon^\pm_{i,i+1}=\ldots=\epsilon^\pm_{j-1,j}=\mp 1}
  (v-v^{-1})^{2(j-i+1)}v^{-2(n-j)}\cdot F_j\cdots F_{i}K_{-\varpi_{i}}E_{i}\cdots E_j K_{-\varpi_{j+1}}+\\
  &  \sum_{1\leq i<n}^{\epsilon^\pm_{i,i+1}=\ldots=\epsilon^\pm_{n-1,n}=\mp 1}
  (v-v^{-1})^{2(n-i)}(v^2-v^{-2})^{2}v^{2}\cdot F_n\cdots F_{i}K_{-\varpi_i}E_{i}\cdots E_n K_{\varpi_n}.
\end{align*}

We have listed all the non-zero terms contributing to $C'_{V_1}$. To obtain
$\tilde{\D}_1=\bar{\D}_{V_1}$, we should rewrite the above formulas via $e_i,f_i$
and $L_p=K_{\varpi_1+\ldots+\varpi_p}\ (1\leq p\leq n)$, moving all the Cartan terms to
the middle, and then apply the characters $\chi^\pm$ with $\chi^+(e_i)=c^+_i, \chi^-(f_i)=c^-_i$.
Conjugating further by $e^\rho$, we obtain the explicit formula for the first hamiltonian
$\D_1$ of the type $C_n$ modified quantum difference Toda system. To write it down,
define constants $b_i,\m_{ik}$ via $b_i:=(v_i-v_i^{-1})^2v_i^{\n^+_{ii}-\n^-_{ii}}c^+_ic^-_i$
and $\m_{ik}:=\sum_{p=k}^n(\n^-_{ip}-\n^+_{ip})$. Then, we have
\begin{multline}\label{FinalD for C-type}
  \D_1=
  \sum_{i=1}^n (T_{2\varpi_i}+T_{-2\varpi_i})+
  b_n v^{\sum_{k=1}^n (k-n-1) \m_{nk}}\cdot e^{-\alpha_n}T_{\sum_{k=1}^n \m_{nk}\varpi_k}+\\
  \sum_{i=1}^{n-1} b_iv^{\sum_{k=1}^n (k-n-1) \m_{ik}}\cdot
  e^{-\alpha_i}\left(T_{\sum_{k=1}^n (\m_{ik}+\delta_{k,i}+\delta_{k,i+1})\varpi_k}+T_{\sum_{k=1}^n (\m_{ik}-\delta_{k,i}-\delta_{k,i+1})\varpi_k}\right)+\\
  \sum_{1\leq i<j<n}^{\epsilon^\pm_{i,i+1}=\ldots=\epsilon^\pm_{j-1,j}=\pm 1}
  b_i\cdots b_j v^{j-i+\sum_{i\leq a<b\leq j} (\n^+_{ab}-\n^-_{ab})+\sum_{k=1}^n\sum_{s=i}^j (k-n-1) \m_{sk}}\times\\
  e^{-\alpha_i-\ldots-\alpha_j}T_{\sum_{k=1}^n(\sum_{s=i}^j \m_{sk}+\delta_{k,i}+\delta_{k,j+1})\varpi_k}+\\
  \sum_{1\leq i<n}^{\epsilon^\pm_{i,i+1}=\ldots=\epsilon^\pm_{n-1,n}=\pm 1}
  b_i\cdots b_n v^{n+1-i+\sum_{i\leq a<b\leq n}(\n^+_{ab}-\n^-_{ab})(1+\delta_{b,n})+\sum_{k=1}^n\sum_{s=i}^n (k-n-1) \m_{sk}}\times\\
  e^{-\alpha_i-\ldots-\alpha_n}T_{\sum_{k=1}^n(\sum_{s=i}^n \m_{sk}+\delta_{k,i}-\delta_{k,n})\varpi_k}+\\
  \sum_{1\leq i<j<n}^{\epsilon^\pm_{i,i+1}=\ldots=\epsilon^\pm_{j-1,j}=\mp 1}
  b_i\cdots b_j v^{j-i+\sum_{i\leq a<b\leq j}(\n^+_{ba}-\n^-_{ba})+\sum_{k=1}^n\sum_{s=i}^j (k-n-1) \m_{sk}}\times\\
  e^{-\alpha_i-\ldots-\alpha_j}T_{\sum_{k=1}^n(\sum_{s=i}^j \m_{sk}-\delta_{k,i}-\delta_{k,j+1})\varpi_k}+\\
  \sum_{1\leq i<n}^{\epsilon^\pm_{i,i+1}=\ldots=\epsilon^\pm_{n-1,n}=\mp 1}
  b_i\cdots b_n v^{n+1-i+\sum_{i\leq a<b\leq n} (\n^+_{ba}-\n^-_{ba})+\sum_{k=1}^n\sum_{s=i}^n (k-n-1) \m_{sk}}\times\\
  e^{-\alpha_i-\ldots-\alpha_n}T_{\sum_{k=1}^n(\sum_{s=i}^n \m_{sk}-\delta_{k,i}+\delta_{k,n})\varpi_k}.
\end{multline}

\begin{Rem}\label{Etingof's formula for C}
If $\epsilon^+=\epsilon^-$, then the last four sums are vacuous. If we also set
$\n^+=\n^-$ and $c^\pm_i=\pm 1$ for all $i$, then we obtain the formula for the first
hamiltonian of the type $C_n$ quantum difference Toda lattice as defined in~\cite{E}
(we write down this formula as we could not find it in the literature, even though
it can be derived completely analogously to~\cite[(5.7)]{E}, cf.~\cite[the end of Section 3]{FFJMM}):
\begin{equation}\label{standard qToda for C}
  \D_1=
  \sum_{i=1}^n (T_{2\varpi_i}+T_{-2\varpi_i})-
  (v-v^{-1})^2\sum_{i=1}^{n-1}  e^{-\alpha_i}\left(T_{\varpi_i+\varpi_{i+1}}+T_{-\varpi_i-\varpi_{i+1}}\right)-
  (v^2-v^{-2})^2 e^{-\alpha_n}.
\end{equation}
\end{Rem}

Let $\CalC_n$ be the $\BC(v^{1/\NN})$-subalgebra of $\CA_n$ generated by
  $\{\sw^{\pm 1}_j, (\sD_i/\sD_{i+1})^{\pm 1}, \sD_n^{\pm 2}\}_{1\leq i<n}^{1\leq j\leq n}$.\footnote{Note
that $\CalC_n$ can be abstractly defined as the associative algebra generated by
$\{\tilde{\sw}_i^{\pm 1},\tilde{\sD}_i^{\pm 1}\}_{i=1}^n$ with the defining relations
  $[\tilde{\sw}_i,\tilde{\sw}_j]=[\tilde{\sD}_i,\tilde{\sD}_j]=0,
   \tilde{\sw}^{\pm 1}_i\tilde{\sw}^{\mp 1}_i=\tilde{\sD}^{\pm 1}_i\tilde{\sD}^{\mp 1}_i=1,
   \tilde{\sD}_i\tilde{\sw}_j=v^{\delta_{i,j}\sd_i}\tilde{\sw}_j\tilde{\sD}_i,$
where $\sd_i=1+\delta_{i,n}$.}
Consider the anti-isomorphism from $\CalC_n$ to the algebra $\CalD_v(H^\ad_{\mathfrak{sp}_{2n}})$
of Section~\ref{section Whittaker functions}, sending
  $\sw_j\mapsto T_{-\varpi_j}, \sD_i/\sD_{i+1}\mapsto e^{-\alpha_i}, \sD_n^2\mapsto e^{-\alpha_n}$.
Then, the hamiltonian $\D_1$ is the image of the following element
$\HH=\HH(\epsilon^\pm, \n^\pm, c^\pm)$ of $\CalC_n$:
\begin{multline}\label{H for C-type}
  \HH(\epsilon^\pm, \n^\pm, c^\pm)=
  \sum_{i=1}^n (\sw_i^{-2}+\sw_i^2)+\\
  b_n v^{\sum_{k=1}^n (k-n-1) \m_{nk}} \cdot \prod_{k=1}^n \sw_k^{-\m_{nk}}\cdot \sD_n^2+\\
  \sum_{i=1}^{n-1} b_iv^{\sum_{k=1}^n (k-n-1) \m_{ik}}\cdot
  \left(\prod_{k=1}^n \sw_k^{-\m_{ik}-\delta_{k,i}-\delta_{k,i+1}}+
        \prod_{k=1}^n \sw_k^{-\m_{ik}+\delta_{k,i}+\delta_{k,i+1}}\right)\cdot\frac{\sD_i}{\sD_{i+1}}+\\
  \sum_{1\leq i<j<n}^{\epsilon^\pm_{i,i+1}=\ldots=\epsilon^\pm_{j-1,j}=\pm 1}
  b_i\cdots b_j v^{j-i+\sum_{i\leq a<b\leq j}(\n^+_{ab}-\n^-_{ab})+\sum_{k=1}^n\sum_{s=i}^j (k-n-1) \m_{sk}}\times\\
  \prod_{k=1}^n \sw_k^{-\sum_{s=i}^j \m_{sk}-\delta_{k,i}-\delta_{k,j+1}}\cdot \frac{\sD_i}{\sD_{j+1}}+\\
  \sum_{1\leq i<j<n}^{\epsilon^\pm_{i,i+1}=\ldots=\epsilon^\pm_{j-1,j}=\mp 1}
  b_i\cdots b_j v^{j-i+\sum_{i\leq a<b\leq j}(\n^+_{ba}-\n^-_{ba})+\sum_{k=1}^n\sum_{s=i}^j (k-n-1) \m_{sk}}\times\\
  \prod_{k=1}^n \sw_k^{-\sum_{s=i}^j \m_{sk}+\delta_{k,i}+\delta_{k,j+1}}\cdot\frac{\sD_i}{\sD_{j+1}}+\\
  \sum_{1\leq i<n}^{\epsilon^\pm_{i,i+1}=\ldots=\epsilon^\pm_{n-1,n}=\pm 1}
  b_i\cdots b_n v^{n+1-i+\sum_{i\leq a<b\leq n}(\n^+_{ab}-\n^-_{ab})(1+\delta_{b,n})+\sum_{k=1}^n\sum_{s=i}^n (k-n-1) \m_{sk}}\times\\
  \prod_{k=1}^n \sw_k^{-\sum_{s=i}^n \m_{sk}-\delta_{k,i}+\delta_{k,n}}\cdot\sD_i\sD_n+\\
  \sum_{1\leq i<n}^{\epsilon^\pm_{i,i+1}=\ldots=\epsilon^\pm_{n-1,n}=\mp 1}
  b_i\cdots b_n v^{n+1-i+\sum_{i\leq a<b\leq n} (\n^+_{ba}-\n^-_{ba})+\sum_{k=1}^n\sum_{s=i}^n (k-n-1) \m_{sk}}\times\\
  \prod_{k=1}^n \sw_k^{-\sum_{s=i}^n \m_{sk}+\delta_{k,i}-\delta_{k,n}}\cdot\sD_i\sD_n.
\end{multline}

The following is the key property of $\HH(\epsilon^\pm,\n^\pm,c^\pm)$ in type $C$:

\begin{Prop}\label{bound for C}
$\HH(\epsilon^\pm,\n^\pm,c^\pm)$ depends only on
$\vec{\epsilon}=(\epsilon_{n-1},\ldots,\epsilon_1)\in \{-1,0,1\}^{n-1}$
with $\epsilon_i:=\frac{\epsilon^+_{i,i+1}-\epsilon^-_{i,i+1}}{2}$,
up to algebra automorphisms of $\CalC_n$.
\end{Prop}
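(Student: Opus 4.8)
The plan is to peel off, one by one, the parameters of the pair of Sevostyanov triples on which $\HH(\epsilon^\pm,\n^\pm,c^\pm)$ seems to depend, absorbing each into an explicit automorphism of $\CalC_n$. The point of departure is an inspection of~\eqref{H for C-type}: the only data entering it are the constants $b_i$ (occurring only as the products $b_n$, $b_i\cdots b_j$, $b_i\cdots b_n$ attached, up to a power of $v$ and a monomial in the $\sw_k$, to the factors $\sD_n^2$, $\sD_i/\sD_{j+1}$, $\sD_i\sD_n$), the difference matrix $\delta\n:=\n^+-\n^-$ (entering through the exponents $\m_{ik}=-\sum_{p\ge k}\delta\n_{ip}$ and through every scalar power of $v$, each built from the $\delta\n_{ab}$), and the combinatorial conditions ``$\epsilon^\pm_{i,i+1}=\dots=\pm1$'' indexing the long-range sums. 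For the convex orderings $\prec_\pm$ chosen in Section~\ref{subsection technicalities}, such a condition on a sub-interval $\alpha_i,\dots,\alpha_j$ of the Dynkin chain holds precisely when the consecutive simple roots are $\prec_+$-decreasing and $\prec_-$-increasing, i.e.\ when $\epsilon^+_{a,a+1}=+1$ and $\epsilon^-_{a,a+1}=-1$ (respectively $\epsilon^+_{a,a+1}=-1$, $\epsilon^-_{a,a+1}=+1$ for the ``$\mp1$'' family) for all $i\le a\le j-1$, that is, when $\epsilon_a=+1$ (respectively $\epsilon_a=-1$) throughout; so the index sets of all sums in~\eqref{H for C-type} depend only on $\vec\epsilon$. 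Finally, axiom~(b) of a Sevostyanov triple gives $\sd_j\delta\n_{ij}-\sd_i\delta\n_{ji}=(\epsilon^+_{ij}-\epsilon^-_{ij})b_{ij}$, so $\delta\n$ lies in a coset --- depending only on $\vec\epsilon$ --- of the lattice $\Lambda$ of integer matrices $\Delta$ with $\sd_j\Delta_{ij}=\sd_i\Delta_{ji}$ for all $i,j$. Hence $\HH$ is a function of $\vec\epsilon$, of $(b_i)_{i=1}^n$, and of $\delta\n$ ranging over that coset.

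Next I absorb the constants $b_i$. For any $\lambda_1,\dots,\lambda_n\in\BC(v^{1/\NN})^\times$, the assignment $\sw_j\mapsto\sw_j$, $\sD_i/\sD_{i+1}\mapsto\lambda_i\,(\sD_i/\sD_{i+1})$ $(1\le i<n)$, $\sD_n^2\mapsto\lambda_n\,\sD_n^2$ visibly respects the defining relations of $\CalC_n$ and hence extends to an automorphism of $\CalC_n$; taking $\lambda_i=(v_i-v_i^{-1})^2/b_i$ turns every occurrence of $b_i$ in $\HH$ into $(v_i-v_i^{-1})^2$. In particular the scalars $c^\pm_i$ and the diagonal entries $\n^\pm_{ii}$ are eliminated, as they enter $\HH$ only through the $b_i$.

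It remains to remove the off-diagonal part of $\delta\n$. Any two pairs of Sevostyanov triples with the same $\vec\epsilon$ have difference matrices $\delta\n$ that differ by an element $\Delta\in\Lambda$; and $\Lambda$ is exactly the lattice of shifts $\n^+\mapsto\n^++\Delta$ preserving the axioms of a Sevostyanov triple, independently of the orientation. So it suffices to follow the effect of one such shift on $\HH$. Now replacing $\n^+$ by $\n^++\Delta$ replaces each $e_i=E_i\prod_p L_p^{\n^+_{ip}}$ by $e_iK_{\nu_i}$, with $\nu_i:=\sum_p\Delta_{ip}\omega_p\in P$; carrying this Cartan factor through the computation of Section~\ref{explicit hamiltonians C} --- moving it to the ``middle'' and applying $\chi^+$ --- shifts the $\sw$-monomial of every term of $\HH$ in a coherent way, namely by the composite of a shearing automorphism $\sD_i/\sD_{i+1}\mapsto(\prod_k\sw_k^{t_{ik}})(\sD_i/\sD_{i+1})$, $\sD_n^2\mapsto(\prod_k\sw_k^{r_k})\sD_n^2$ of $\CalC_n$ --- the exponents $t_{ik},r_k$ being read off from the $\nu_i$ --- with a rescaling of the kind above to soak up the induced $v$-power shifts. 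Composing the automorphisms produced in these three steps carries an arbitrary $\HH(\epsilon^\pm,\n^\pm,c^\pm)$ to a canonical form depending only on $\vec\epsilon$, which is the assertion.

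I expect the last step to be the genuine obstacle. One must check that the proposed $\sD$-shearings really are automorphisms of $\CalC_n$, i.e.\ that the exponents $t_{ik},r_k$ extracted from the $\nu_i$ satisfy the commutation constraints making the images of $\sD_i/\sD_{i+1}$ and $\sD_n^2$ pairwise commute; and, more seriously, that all the scalar $v$-power factors in~\eqref{FinalD for C-type}/\eqref{H for C-type} --- including those generated by commuting the monomial $\nu_i$-shifts past the generators --- match up consistently across the ``short'' terms, the long-range terms supported on an $A$-type sub-chain, and the long-range terms reaching the long root $\alpha_n$, the latter carrying the asymmetric factors $(1+\delta_{b,n})$ and the products $\sD_i\sD_n$. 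A safer, case-by-case alternative, parallel to Appendix~\ref{Proof of bound for A}, is to make the target explicit: use a Sevostyanov move to bring $\n^\pm$ to the normal form supported on the Dynkin edges with $\n^\pm_{ji}=0$ for $i<j$ and $\n^\pm_{ii}=0$ --- legitimate in type $C$ since the forced values $\epsilon^\pm_{ij}b_{ij}/\sd_j$ are integers --- for which the $\m_{ik}$ become explicit functions of $\vec\epsilon$ and~\eqref{H for C-type} collapses to a formula in $\vec\epsilon$ and the $b_i$ alone, and then match two such normalized Hamiltonians --- together with their $b_i$ --- term by term.
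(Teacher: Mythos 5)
Your reduction of the data is mostly sound (the index sets of all sums in~(\ref{H for C-type}) indeed depend only on $\vec{\epsilon}$, and only $b_i$ and $\n^+-\n^-$ otherwise enter), and the class of automorphisms you invoke --- rescalings and $\sw$-monomial shearings of the $\sD$-generators --- is exactly the class the paper uses, since conjugation by $F=\exp\bigl(\sum r_{ij}\log(\sw_i)\log(\sw_j)+\sum r_i\log(\sw_i)\bigr)$ as in~(\ref{Canonical Form}) is precisely such a composite. But the proof is not complete: the step you yourself flag as ``the genuine obstacle'' is not an afterthought, it \emph{is} the proposition. The paper's argument (Remark~\ref{Remark on other types}, parallel to Appendix~\ref{Proof of bound for A}) determines the exponents $r_{ij},r_i$ uniquely by matching the coefficients of $\sD_i/\sD_{i+1}$ and $\sD_n^2$, checks that the resulting overdetermined linear system is consistent --- this is where the Sevostyanov relations $\sd_j\n^\pm_{ij}-\sd_i\n^\pm_{ji}=\epsilon^\pm_{ij}b_{ij}$ and the hypothesis $\vec{\epsilon}=\vec{\tilde\epsilon}$ are actually used --- and then proves by induction on the length of the support that the same automorphism matches every long-range term, including the asymmetric ones ending at the long root (the terms with $\sD_i\sD_n$ and the $(1+\delta_{b,n})$ factors). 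Your proposal asserts that carrying the Cartan factors $K_{\nu_i}$ through the computation ``shifts the $\sw$-monomial of every term coherently,'' but offers no verification that the shearing defined on the generators reproduces the correct $\sw$-monomials \emph{and} the correct scalar $v$-powers on the composite terms; without that check (the analogue of~(\ref{system A.3})/(\ref{system A.6})), nothing is proved.

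Two further problems. First, your claim that the rescaling step eliminates the diagonal entries $\n^\pm_{ii}$ ``as they enter $\HH$ only through the $b_i$'' is false: $\m_{ik}=\sum_{p=k}^n(\n^-_{ip}-\n^+_{ip})$ contains the diagonal difference for $k\le i$, so $\n^+_{ii}-\n^-_{ii}$ also enters the $\sw$-exponents and $v$-exponents, which a scalar rescaling cannot absorb (this can be folded into your step~3, but as written step~2 overclaims). Second, the ``safer, case-by-case alternative'' is circular: bringing $\n^\pm$ to a normal form by a ``Sevostyanov move'' changes $\HH$, and the assertion that this change is implemented by an automorphism of $\CalC_n$ is exactly the unproven step~3, not something you may assume in order to avoid it.
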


The proof of this result is completely analogous to that of Proposition~\ref{bound for A}
given in Appendix~\ref{Proof of bound for A}, see also Remark~\ref{Remark on other types},
we leave details to the interested reader. Proposition~\ref{bound for C} implies that given
two pairs of Sevostyanov triples $(\epsilon^\pm,\n^\pm,c^\pm)$ and
$(\tilde{\epsilon}^\pm,\tilde{\n}^\pm,\tilde{c}^\pm)$ with
  $\epsilon^+_{i,i+1}-\epsilon^-_{i,i+1}=\tilde{\epsilon}^+_{i,i+1}-\tilde{\epsilon}^-_{i,i+1}$,
there exists an algebra automorphism of $\CalD_v(H^\ad_{\mathfrak{sp}_{2n}})$ which maps
the first hamiltonian $\D_1(\epsilon^\pm,\n^\pm,c^\pm)$ to
$\D_1(\tilde{\epsilon}^\pm,\tilde{\n}^\pm,\tilde{c}^\pm)$. As we will see in
Appendix~\ref{Proof of Main Theorem A}, the same automorphism maps the modified quantum Toda system
$\CT(\epsilon^\pm,\n^\pm,c^\pm)$ to $\CT(\tilde{\epsilon}^\pm,\tilde{\n}^\pm,\tilde{c}^\pm)$.


\subsection{Explicit formulas and classification in type $D_n$}\label{explicit hamiltonians D}
\

Recall explicit formulas for the action of $U_v(\mathfrak{so}_{2n})$ on its
first fundamental representation $V_1$. The space $V_1$ has a basis
$\{w_1,\ldots,w_{2n}\}$, in which the action is given via
\begin{align*}
  &  E_i(w_{2j-1})=\delta_{j,i+1}w_{2j-3},\ E_i(w_{2j})=\delta_{j,i}w_{2j+2},\\
  &  F_i(w_{2j-1})=\delta_{j,i}w_{2j+1},\ F_i(w_{2j})=\delta_{j,i+1}w_{2j-2},\\
  &  L_p(w_{2j-1})=v^{\delta_{j\leq p}}w_{2j-1}, \ L_p(w_{2j})=v^{-\delta_{j\leq p}}w_{2j},\\
  &  L_{n-1}(w_{2j-1})=v^{\frac{1}{2}-\delta_{j,n}}w_{2j-1}, \ L_{n-1}(w_{2j})=v^{-\frac{1}{2}+\delta_{j,n}}w_{2j},\\
  &  K_i(w_{2j-1})=v^{\delta_{j,i}-\delta_{j,i+1}}w_{2j-1},\ K_i(w_{2j})=v^{-\delta_{j,i}+\delta_{j,i+1}}w_{2j},\\
  &  E_n(w_{2j})=\delta_{j,n-1}w_{2n-1}+\delta_{j,n}w_{2n-3},\ F_n(w_{2j-1})=\delta_{j,n-1}w_{2n}+\delta_{j,n}w_{2n-2},\\
  &  E_n(w_{2j-1})=0,\ F_n(w_{2j})=0,\
     L_n(w_{2j-1})=v^{\frac{1}{2}} w_{2j-1},\ L_n(w_{2j})=v^{-\frac{1}{2}}w_{2j},\\
  &  K_n(w_{2j-1})=v^{\delta_{j,n}+\delta_{j,n-1}}w_{2j-1},\ K_n(w_{2j})=v^{-\delta_{j,n}-\delta_{j,n-1}}w_{2j}
\end{align*}
for any $1\leq p\leq n-2, 1\leq i<n, 1\leq j\leq n$. Let $\varpi_j$ be the weight of
$w_{2j-1}\ (1\leq j\leq n)$, so that the weight of $w_{2j}$ equals $-\varpi_j$,
while $(\varpi_i,\varpi_j)=\delta_{i,j}$. Recall that the simple roots are given by
$\alpha_i=\varpi_i-\varpi_{i+1}\ (1\leq i\leq n-1)$ and $\alpha_n=\varpi_{n-1}+\varpi_n$,
while $\rho=\sum_{i=1}^n (n-i)\varpi_i$ and $\sd_1=\ldots=\sd_{n}=1$.

To compute $\D_1$ explicitly, we use the same strategy as in type $A$.
Similarly to the types $A$ and $C$ treated above, we note that the operators
$\{E_i^r,F_i^r\}^{r>1}_{1\leq i\leq n}$ act trivially on $V_1$, hence,
applying formula~(\ref{C'}), we can replace $R_{\alpha_i}$ by
$\bar{R}_{\alpha_i}:=1+(v-v^{-1})E_i\otimes F_i$.
Let us now compute all the non-zero terms contributing to $C'_{V_1}$:

$\bullet$
Picking $1$ out of each $\bar{R}^\op_{\alpha_i}, \bar{R}_{\alpha_i}$, we recover
$\sum_{i=1}^n \left(v^{2(n-i)}\cdot K_{2\varpi_i}+v^{-2(n-i)}\cdot K_{-2\varpi_i}\right)$.

$\bullet$
Picking non-trivial terms only at $\bar{R}_{\alpha_j}^\op, \bar{R}_{\alpha_i}$,
the result does not depend on $\Or^\pm$ (hence, the orderings $\prec_\pm$) and
the total contribution of the non-zero terms equals
\begin{align*}
  & \sum_{i=1}^{n-1} (v-v^{-1})^2\left(v^{2(n-i)}F_iK_{\varpi_{i+1}}E_iK_{\varpi_i}+
    v^{-2(n-i-1)}F_iK_{-\varpi_i}E_iK_{-\varpi_{i+1}}\right)+\\
  &  (v-v^{-1})^2\left(F_nK_{-\varpi_{n-1}}E_nK_{\varpi_n}+F_nK_{-\varpi_n}E_nK_{\varpi_{n-1}}\right).
\end{align*}

$\bullet$
In contrast to the types $A,C$ considered above, there is one more summand
independent of the orientations. It arises by picking non-trivial terms only at
$\bar{R}^\op_{\alpha_{n-1}},\bar{R}^\op_{\alpha_n}$ and
$\bar{R}_{\alpha_{n-1}}, \bar{R}_{\alpha_n}$
(note that $E_{n-1}E_{n}=E_{n}E_{n-1}, F_{n-1}F_{n}=F_{n}F_{n-1}$,
due to the $v$-Serre relations) and equals
\begin{equation*}
  (v-v^{-1})^4v^2F_nF_{n-1}K_{-\varpi_{n-1}}E_nE_{n-1}K_{\varpi_{n-1}}.
\end{equation*}

$\bullet$
The contribution of the remaining terms to $C'_{V_1}$ depends on $\Or^\pm$.
Tracing back explicit formulas for the action of $U_v(\mathfrak{so}_{2n})$ on $V_1$,
we see that the total sum of such terms equals
\begin{align*}
  &  \sum_{1\leq i<j<n}^{\epsilon^\pm_{i,i+1}=\ldots=\epsilon^\pm_{j-1,j}=\pm 1}
   (v-v^{-1})^{2(j-i+1)}v^{2(n-i)}\cdot F_i\cdots F_{j}K_{\varpi_{j+1}}E_{j}\cdots E_i K_{\varpi_i}+\\
  &  \sum_{1\leq i<n-1}^{\epsilon^\pm_{i,i+1}=\ldots=\epsilon^\pm_{n-3,n-2}=\epsilon^\pm_{n-2,n}=\pm 1}
    (v-v^{-1})^{2(n-i)}v^{2(n-i)}\cdot F_i\cdots F_{n-2}F_nK_{-\varpi_n}E_{n}E_{n-2}\cdots E_i K_{\varpi_i}+\\
  &  \sum_{1\leq i<n-1}^{\epsilon^\pm_{i,i+1}=\ldots=\epsilon^\pm_{n-2,n-1}=\epsilon^\pm_{n-2,n}=\pm 1}
    (v-v^{-1})^{2(n-i+1)}v^{2(n-i)}\cdot F_i\cdots F_{n-1}F_nK_{-\varpi_{n-1}}E_{n}E_{n-1}\cdots E_i K_{\varpi_i}+\\
  &  \sum_{1\leq i<j<n}^{\epsilon^\pm_{i,i+1}=\ldots=\epsilon^\pm_{j-1,j}=\mp 1}
    (v-v^{-1})^{2(j-i+1)}v^{-2(n-j-1)}\cdot F_j\cdots F_{i}K_{-\varpi_{i}}E_{i}\cdots E_j K_{-\varpi_{j+1}}+\\
  &  \sum_{1\leq i<n-1}^{\epsilon^\pm_{i,i+1}=\ldots=\epsilon^\pm_{n-3,n-2}=\epsilon^\pm_{n-2,n}=\mp 1}
    (v-v^{-1})^{2(n-i)}\cdot F_nF_{n-2}\cdots F_{i}K_{-\varpi_i}E_{i}\cdots E_{n-2}E_n K_{\varpi_n}+\\
  &  \sum_{1\leq i<n-1}^{\epsilon^\pm_{i,i+1}=\ldots=\epsilon^\pm_{n-2,n-1}=\epsilon^\pm_{n-2,n}=\mp 1}
    (v-v^{-1})^{2(n-i+1)}v^2\cdot F_nF_{n-1}\cdots F_i K_{-\varpi_i}E_{i}\cdots E_{n-1}E_n K_{\varpi_{n-1}}.
\end{align*}

We have listed all the non-zero terms contributing to $C'_{V_1}$. To obtain
$\tilde{\D}_1=\bar{\D}_{V_1}$, we should rewrite the above formulas via $e_i,f_i$ and
  $L_p=\begin{cases}
     K_{\varpi_1+\ldots+\varpi_p}, & \mathrm{if}\ 1\leq p\leq n-2\\
     K_{\frac{1}{2}(\varpi_1+\ldots+\varpi_{n-1}-\varpi_n)}, & \mathrm{if}\ p=n-1\\
     K_{\frac{1}{2}(\varpi_1+\ldots+\varpi_{n-1}+\varpi_n)}, & \mathrm{if}\ p=n
   \end{cases},$
moving all the Cartan terms to the middle, and then apply the characters
$\chi^\pm$ with $\chi^+(e_i)=c^+_i, \chi^-(f_i)=c^-_i$. Conjugating further
by $e^\rho$, we obtain the explicit formula for the first hamiltonian $\D_1$
of the type $D_n$ modified quantum difference Toda system. To write it down,
define constants
  $\m_{ik}:=\begin{cases}
     \sum_{p=k}^{n-2}(\n^-_{ip}-\n^+_{ip})+\frac{1}{2}(\n^-_{i,n-1}-\n^+_{i,n-1})+\frac{1}{2}(\n^-_{in}-\n^+_{in}), & \mathrm{if}\ k<n\\
     -\frac{1}{2}(\n^-_{i,n-1}-\n^+_{i,n-1})+\frac{1}{2}(\n^-_{in}-\n^+_{in}), & \mathrm{if}\ k=n
   \end{cases}$
and $b_i:=(v-v^{-1})^2v^{\n^+_{ii}-\n^-_{ii}}c^+_ic^-_i$. Then, we have
\begin{multline}\label{FinalD for D-type}
  \D_1=
  \sum_{i=1}^n (T_{2\varpi_i}+T_{-2\varpi_i})+
  b_n v^{\sum_{k=1}^n (k-n) \m_{nk}}\cdot e^{-\alpha_n}T_{\sum_{k=1}^n (\m_{nk}-\delta_{k,n-1}+\delta_{k,n})\varpi_k}+\\
  b_n v^{-2+\sum_{k=1}^n (k-n) \m_{nk}}\cdot e^{-\alpha_n}T_{\sum_{k=1}^n (\m_{nk}+\delta_{k,n-1}-\delta_{k,n})\varpi_k}+\\
  b_{n-1}b_nv^{(\n^+_{n-1,n}-\n^-_{n-1,n})+\sum_{k=1}^n (k-n) (\m_{n-1,k}+\m_{nk})}\cdot e^{-\alpha_{n-1}-\alpha_n}T_{\sum_{k=1}^n (\m_{n-1,k}+\m_{n,k})\varpi_k}+\\
  \sum_{i=1}^{n-1} b_iv^{\sum_{k=1}^n (k-n) \m_{ik}}\cdot
  e^{-\alpha_i}\left(T_{\sum_{k=1}^n (\m_{ik}+\delta_{k,i}+\delta_{k,i+1})\varpi_k}+T_{\sum_{k=1}^n (\m_{ik}-\delta_{k,i}-\delta_{k,i+1})\varpi_k}\right)+\\
  \sum_{1\leq i<j<n}^{\epsilon^\pm_{i,i+1}=\ldots=\epsilon^\pm_{j-1,j}=\pm 1}
  b_i\cdots b_j v^{j-i+\sum_{i\leq a<b\leq j} (\n^+_{ab}-\n^-_{ab})+\sum_{k=1}^n\sum_{s=i}^j (k-n) \m_{sk}}\times\\
  e^{-\alpha_i-\ldots-\alpha_j}T_{\sum_{k=1}^n(\sum_{s=i}^j \m_{sk}+\delta_{k,i}+\delta_{k,j+1})\varpi_k}+\\
  \sum_{1\leq i<n-1}^{\epsilon^\pm_{i,i+1}=\ldots=\epsilon^\pm_{n-2,n}=\pm 1}
  b_i\cdots b_{n-2}b_n v^{n-i-1+\sum_{i\leq a<b\leq n}^{a,b\ne n-1} (\n^+_{ab}-\n^-_{ab})+\sum_{k=1}^n\sum_{i\leq s\leq n}^{s\ne n-1} (k-n) \m_{sk}}\times\\
  e^{-\alpha_i-\ldots-\alpha_{n-2}-\alpha_n}T_{\sum_{k=1}^n(\sum_{i\leq s\leq n}^{s\ne n-1} \m_{sk}+\delta_{k,i}-\delta_{k,n})\varpi_k}+\\
  \sum_{1\leq i<n-1}^{\epsilon^\pm_{i,i+1}=\ldots=\epsilon^\pm_{n-2,n-1}=\epsilon^\pm_{n-2,n}=\pm 1}
  b_i\cdots b_n v^{n-i+\sum_{i\leq a<b\leq n} (\n^+_{ab}-\n^-_{ab})+\sum_{k=1}^n \sum_{s=i}^n (k-n) \m_{sk}}\times\\
  e^{-\alpha_i-\ldots-\alpha_n}T_{\sum_{k=1}^n(\sum_{s=i}^{n} \m_{sk}+\delta_{k,i}-\delta_{k,n-1})\varpi_k}+\\
  \sum_{1\leq i<j<n}^{\epsilon^\pm_{i,i+1}=\ldots=\epsilon^\pm_{j-1,j}=\mp 1}
  b_i\cdots b_j v^{j-i+\sum_{i\leq a<b\leq j}(\n^+_{ba}-\n^-_{ba})+\sum_{k=1}^n \sum_{s=i}^j (k-n) \m_{sk}}\times\\
  e^{-\alpha_i-\ldots-\alpha_j}T_{\sum_{k=1}^n(\sum_{s=i}^j \m_{sk}-\delta_{k,i}-\delta_{k,j+1})\varpi_k}+\\
  \sum_{1\leq i<n-1}^{\epsilon^\pm_{i,i+1}=\ldots=\epsilon^\pm_{n-2,n}=\mp 1}
  b_i\cdots b_{n-2}b_n v^{n-i-1+\sum_{i\leq a<b\leq n}^{a,b\ne n-1} (\n^+_{ba}-\n^-_{ba})+\sum_{k=1}^n \sum_{i\leq s\leq n}^{s\ne n-1} (k-n) \m_{sk}}\times\\
  e^{-\alpha_i-\ldots-\alpha_{n-2}-\alpha_n}T_{\sum_{k=1}^n(\sum_{i\leq s\leq n}^{s\ne n-1} \m_{sk}-\delta_{k,i}+\delta_{k,n})\varpi_k}+\\
  \sum_{1\leq i<n-1}^{\epsilon^\pm_{i,i+1}=\ldots=\epsilon^\pm_{n-2,n-1}=\epsilon^\pm_{n-2,n}=\mp 1}
  b_i\cdots b_n v^{n-i+\sum_{i\leq a<b\leq n} (\n^+_{ba}-\n^-_{ba})+\sum_{k=1}^n \sum_{s=i}^n (k-n) \m_{sk}}\times\\
  e^{-\alpha_i-\ldots-\alpha_n}T_{\sum_{k=1}^n(\sum_{s=i}^n \m_{sk}-\delta_{k,i}+\delta_{k,n-1})\varpi_k}.
\end{multline}

\begin{Rem}\label{Etingof's formula for D}
If $\epsilon^+=\epsilon^-$, then the last six sums are vacuous. If we further set
$\n^+=\n^-$ and $c^\pm_i=\pm 1$ for all $i$, then we obtain the formula for the first
hamiltonian of the type $D_n$ quantum difference Toda lattice as defined in~\cite{E}
(we write down this formula as we could not find it in the literature, even though it
can be derived completely analogously to~\cite[(5.7)]{E}, cf.~\cite[the end of Section 3]{FFJMM}):
\begin{multline}\label{standard qToda for D}
  \D_1=
  \sum_{i=1}^n (T_{2\varpi_i}+T_{-2\varpi_i})-
  (v-v^{-1})^2 \sum_{i=1}^{n-1} e^{-\alpha_i} \left(T_{\varpi_i+\varpi_{i+1}}+T_{-\varpi_i-\varpi_{i+1}}\right)-\\
  (v-v^{-1})^2 e^{-\alpha_n}\left(T_{-\varpi_{n-1}+\varpi_n}+v^{-2} T_{\varpi_{n-1}-\varpi_n}\right)+
  (v-v^{-1})^4 e^{-\alpha_{n-1}-\alpha_n}.
\end{multline}
\end{Rem}

Recall the algebra $\CalC_n$ from Section~\ref{explicit hamiltonians C}. Consider the
anti-isomorphism from $\CalC_n$ to the algebra $\CalD_v(H^\ad_{\mathfrak{so}_{2n}})$
of Section~\ref{section Whittaker functions}, sending
  $\sw_j\mapsto T_{-\varpi_j},
   \sD_i/\sD_{i+1}\mapsto e^{-\alpha_i},
   \sD_n^2\mapsto e^{\alpha_{n-1}-\alpha_n}$.
Let $\HH=\HH(\epsilon^\pm, \n^\pm, c^\pm)$ be the element of $\CalC_n$ which corresponds
to $\D_1$ under this anti-isomorphism (to save space, we omit the explicit long formula for $\HH$).
The following is the key property of $\HH(\epsilon^\pm,\n^\pm,c^\pm)$ in type $D$:

\begin{Prop}\label{bound for D}
$\HH(\epsilon^\pm,\n^\pm,c^\pm)$ depends only on
$\vec{\epsilon}=(\epsilon_{n-1},\ldots,\epsilon_1)\in \{-1,0,1\}^{n-1}$ with
  $\epsilon_i:=\frac{\epsilon^+_{i,i+1}-\epsilon^-_{i,i+1}}{2}\ (1\leq i\leq n-2),
   \epsilon_{n-1}:=\frac{\epsilon^+_{n-2,n}-\epsilon^-_{n-2,n}}{2}$,
up to algebra automorphisms of $\CalC_n$.
\end{Prop}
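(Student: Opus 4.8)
The plan is to run the argument of Proposition~\ref{bound for A} (proved in Appendix~\ref{Proof of bound for A}, see also Remark~\ref{Remark on other types}), adapting the bookkeeping to the trivalent node of $\Dyn(\mathfrak{so}_{2n})$. Concretely: given two pairs of Sevostyanov triples $(\epsilon^\pm,\n^\pm,c^\pm)$ and $(\tilde\epsilon^\pm,\tilde\n^\pm,\tilde c^\pm)$ sharing the same vector $\vec\epsilon$, I will produce an algebra automorphism of $\CalC_n$ carrying $\HH(\epsilon^\pm,\n^\pm,c^\pm)$ to $\HH(\tilde\epsilon^\pm,\tilde\n^\pm,\tilde c^\pm)$; equivalently, I will bring each $\HH$ to a normal form depending only on $\vec\epsilon$.

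First I would record the shape of $\HH$ (the $D_n$-analogue of~(\ref{H for A-type}), obtained from~(\ref{FinalD for D-type}) via the anti-isomorphism of Section~\ref{explicit hamiltonians D}): a data-independent part $\sum_{i=1}^n(\sw_i^{-2}+\sw_i^2)$, plus a sum of ``interval terms'' indexed by the connected subdiagrams $\Gamma\subseteq\Dyn(\mathfrak{so}_{2n})$ whose edges are coherently oriented by $\Or^+$, or by $\Or^-$ (for the two legs one must additionally record whether $\Gamma$ passes through $\alpha_{n-1}$ or $\alpha_n$, which is what produces the extra summands and the exceptional orientation-independent term $b_{n-1}b_nv^{\ldots}e^{-\alpha_{n-1}-\alpha_n}$ in~(\ref{FinalD for D-type})). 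Each interval term has the form $(\text{monomial in }v,b_i,\dots,b_j)\cdot\prod_k\sw_k^{e_k(\Gamma)}\cdot(\text{product of }\sD\text{-generators along }\Gamma)$, with the exponents $e_k(\Gamma)$ built from the integers $\m_{sk}$ and a few Kronecker deltas.

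Next I would use two families of automorphisms of $\CalC_n$, both fixing the $\sw_i$ (hence fixing $\sum_i(\sw_i^{-2}+\sw_i^2)$): (i) rescalings $\sD_i/\sD_{i+1}\mapsto \nu_i\,\sD_i/\sD_{i+1}$ and $\sD_n^2\mapsto\nu_n\,\sD_n^2$ by arbitrary $\nu_i\in\BC(v^{1/\NN})^\times$, which absorb the scalars $b_i$ and the $v$-powers coming from the diagonal part of $\n^\pm$, normalizing each single-edge coefficient to $(v-v^{-1})^2$; and (ii) translations $\sD_i\mapsto \sD_i\cdot m_i$ with $m_i$ a Laurent monomial in the pairwise commuting $\sw_k$ — these are algebra automorphisms since $m_i$ is central with respect to all $\sw_j$, and they stay inside $\CalC_n$ once the exponents of $m_i$ are chosen compatibly with $\sw_1\cdots\sw_n$ acting trivially. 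Choosing the $m_i$ to cancel, edge by edge, the $\m$-dependent monomial in front of each factor $\sD_i/\sD_{i+1}$ (and of $\sD_n^2$) puts every single-edge term into its $\vec\epsilon$-canonical form; and since the operator attached to a longer $\Gamma$ is the product of the single-edge operators, the translation monomial it acquires is automatically the product of the already-fixed pieces — the discrepancy between $\sum_s e_k(\text{edge}_s)$ and $e_k(\Gamma)$ being a fixed, data-independent combination of $\delta$'s (exactly the telescoping that produces the powers $v^{j-i}$, $v^{j-i-1}$, $v^{n-i}$, $\dots$ in~(\ref{FinalD for D-type})). So after these automorphisms each interval term depends only on whether $\Gamma$ is of ``$+$-type'' or ``$\mp$-type'', which is precisely the information encoded in $\vec\epsilon$; the individual signs $\epsilon^\pm_{i,i+1}$ have disappeared, as they entered only through the choice of convex orderings $\prec_\pm$ in~(\ref{root basis e},~\ref{root basis f}), and that sign is swallowed by the translations.

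The main obstacle — the only genuinely new point beyond type $A$ — is the bookkeeping at the trivalent node $\alpha_{n-2}$: a coherently oriented chain may turn the corner into the leg $\alpha_{n-1}$ or the leg $\alpha_n$, and I would have to check that a single consistent choice of translation monomials $m_i$ simultaneously normalizes all four families of ``leg'' interval terms in~(\ref{FinalD for D-type}) together with the exceptional summand $b_{n-1}b_nv^{\ldots}e^{-\alpha_{n-1}-\alpha_n}$. This is a finite, explicit computation of the same nature as the one carried out for type $A$ in Appendix~\ref{Proof of bound for A} (and parallel to the type $B$ and $G_2$ cases, Propositions~\ref{bound for B},~\ref{bound for G2}), and I expect no essential difficulty beyond organizing the branches at the fork.
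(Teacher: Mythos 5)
Your overall strategy is the paper's: fix the $\sw_i$, rescale the $\sD$-generators by $\sw$-monomials and constants so as to match first the constant part, then the single-edge terms, and then argue that the longer interval terms follow; this is exactly how Proposition~\ref{bound for A} is proved in Appendix~\ref{Proof of bound for A} and transported to type $D$ via Remark~\ref{Remark on other types}. But there is a genuine gap at the central step. The maps you call ``translations'', $\sD_i\mapsto \sD_i\cdot m_i$ with $m_i$ an arbitrary Laurent monomial in the $\sw_k$, are \emph{not} algebra automorphisms in general: centrality of $m_i$ with respect to the $\sw_j$ only preserves the relations $\sD_i\sw_j=v^{\delta_{i,j}}\sw_j\sD_i$, while the relation $[\sD_i,\sD_j]=0$ forces the exponent matrix of the $m_i$'s to be symmetric (one computes $(m_i\sD_i)(m_j\sD_j)=v^{a_{ji}}m_im_j\sD_i\sD_j$ and $(m_j\sD_j)(m_i\sD_i)=v^{a_{ij}}m_im_j\sD_i\sD_j$, where $a_{ij}$ is the $\sw_i$-exponent of $m_j$). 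Equivalently, the admissible automorphisms are exactly conjugations by a function $F$ as in~(\ref{Canonical Form}). Because of this constraint you cannot choose the normalizing monomials ``edge by edge'' freely: you must solve the $D_n$-analogue of the overdetermined system~(\ref{system A.1}) for a \emph{symmetric} collection $\{r_{ij}\}$, and verifying that this system is consistent is precisely where the hypothesis $\vec{\epsilon}=\vec{\tilde\epsilon}$ and the Sevostyanov relations $\sd_j\n^\pm_{ij}-\sd_i\n^\pm_{ji}=\epsilon^\pm_{ij}b_{ij}$ (together with $b_{ij}=0$ for non-adjacent nodes) enter. Your proposal assumes this consistency away, and with it the heart of the argument; note also that the consistency check is exactly where the trivalent node requires the modified definition $\epsilon_{n-1}=\frac{\epsilon^+_{n-2,n}-\epsilon^-_{n-2,n}}{2}$, so it cannot be dismissed as bookkeeping.

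A secondary inaccuracy: the discrepancy between the coefficient of a long-interval term and the product of the coefficients of its single edges is not a ``fixed, data-independent combination of $\delta$'s''. Already in type $A$ it contains the factor $v^{\sum_{i\leq a<b\leq j-1}(\n^+_{ab}-\n^-_{ab})}$ (see~(\ref{H for A-type})), and in type $D$ the analogous factors $v^{\sum(\n^+_{ab}-\n^-_{ab})}$, $v^{\sum(\n^+_{ba}-\n^-_{ba})}$ appear in~(\ref{FinalD for D-type}); these depend on the off-diagonal part of $\n^\pm$. So the matching of the longer terms is not automatic from the single-edge normalization: it requires the inductive verification analogous to~(\ref{system A.3}) (factorizing the conjugating function as in~(\ref{induction factorization}) and using the Sevostyanov relations along the interval), including separate checks for chains turning into either leg at $\alpha_{n-2}$ and for the orientation-independent summand $b_{n-1}b_n\,v^{\cdots}e^{-\alpha_{n-1}-\alpha_n}$. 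With the symmetric-exponent constraint restored and these inductive verifications carried out, your outline does become the paper's proof.
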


The proof of this result is completely analogous to that of Proposition~\ref{bound for A}
given in Appendix~\ref{Proof of bound for A}, see also Remark~\ref{Remark on other types},
we leave details to the interested reader. Proposition~\ref{bound for D} implies that
given two pairs of Sevostyanov triples $(\epsilon^\pm,\n^\pm,c^\pm)$ and
$(\tilde{\epsilon}^\pm,\tilde{\n}^\pm,\tilde{c}^\pm)$ with
  $\epsilon^+_{i,i+1}-\epsilon^-_{i,i+1}=
   \tilde{\epsilon}^+_{i,i+1}-\tilde{\epsilon}^-_{i,i+1}\ (1\leq i\leq n-2)$
and
  $\epsilon^+_{n-2,n}-\epsilon^-_{n-2,n}=\tilde{\epsilon}^+_{n-2,n}-\tilde{\epsilon}^-_{n-2,n}$,
there exists an algebra automorphism of $\CalD_v(H^\ad_{\mathfrak{so}_{2n}})$ which maps
the first hamiltonian $\D_1(\epsilon^\pm,\n^\pm,c^\pm)$ to
$\D_1(\tilde{\epsilon}^\pm,\tilde{\n}^\pm,\tilde{c}^\pm)$. As we will see in
Appendix~\ref{Proof of Main Theorem A}, the same automorphism maps the modified quantum Toda system
$\CT(\epsilon^\pm,\n^\pm,c^\pm)$ to $\CT(\tilde{\epsilon}^\pm,\tilde{\n}^\pm,\tilde{c}^\pm)$.


\subsection{Explicit formulas and classification in type $B_n$}\label{explicit hamiltonians B}
\

Recall explicit formulas for the action of $U_v(\mathfrak{so}_{2n+1})$ on
its first fundamental representation $V_1$. The space $V_1$ has a basis
$\{w_0,\ldots,w_{2n}\}$, in which the action is given via
\begin{align*}
  & E_i(w_{2j-1})=\delta_{j,i+1}w_{2j-3},\ E_i(w_{2j})=\delta_{j,i}w_{2j+2},\ E_i(w_0)=0,\\
  & F_i(w_{2j-1})=\delta_{j,i}w_{2j+1},\ F_i(w_{2j})=\delta_{j,i+1}w_{2j-2},\ F_i(w_0)=0,\\
  & L_i(w_{2j-1})=v^{2\delta_{j\leq i}}w_{2j-1},\ L_i(w_{2j})=v^{-2\delta_{j\leq i}}w_{2j},\ L_i(w_0)=w_0,\\
  & K_i(w_{2j-1})=v^{2\delta_{j,i}-2\delta_{j,i+1}}w_{2j-1},\ K_i(w_{2j})=v^{-2\delta_{j,i}+2\delta_{j,i+1}}w_{2j},\ K_i(w_0)=w_0,\\
  & E_n(w_{2j-1})=0,\ E_n(w_{2j})=\delta_{j,n}w_{0},\ E_n(w_0)=w_{2n-1},\\
  & F_n(w_{2j-1})=\delta_{j,n}w_{0},\ F_n(w_{2j})=0,\ F_n(w_0)=w_{2n},\\
  & L_n(w_{2j-1})=vw_{2j-1},\ L_n(w_{2j})=v^{-1}w_{2j},\ L_n(w_0)=w_0,\\
  &  K_n(w_{2j-1})=v^{\delta_{j,n}}w_{2j-1},\ K_n(w_{2j})=v^{-\delta_{j,n}}w_{2j},\ K_n(w_0)=w_0
\end{align*}
for any $1\leq i<n, 1\leq j\leq n$.

Let $\varpi_j$ be the weight of $w_{2j-1}\ (1\leq j\leq n)$, so that
the weight of $w_{2j}$ equals $-\varpi_j$, while $w_0$ has the zero weight.
We note that now $(\varpi_i,\varpi_j)=2\delta_{i,j}$. Recall that the simple
roots are given by $\alpha_i=\varpi_i-\varpi_{i+1}\ (1\leq i\leq n-1)$ and
$\alpha_n=\varpi_n$, while $\rho=\sum_{i=1}^n (n+\frac{1}{2}-i)\varpi_i$ and
$\sd_1=\ldots=\sd_{n-1}=2, \sd_n=1$.

To compute $\D_1$ explicitly, we use the same strategy as in type $A$. In contrast to
the types $A,C,D$ treated above, $E_n^2$ and $F_n^2$ act non-trivially on $V_1$, while
  $\{E_i^r,F_i^r\}^{r>1}_{1\leq i<n}\cup\{E_n^s,F_n^s\}_{s>2}$
still act by zero on $V_1$.
Therefore, applying formula~(\ref{C'}), we can replace $R_{\alpha_i}$ by
  $\bar{R}_{\alpha_i}:=1+(v^2-v^{-2})E_i\otimes F_i$ for $1\leq i<n$
and $R_{\alpha_n}$ by
  $\bar{R}_{\alpha_n}:=1+(v-v^{-1})E_n\otimes F_n+cE_n^2\otimes F_n^2$
for $c:=(1-v^{-1})(v-v^{-1})$. Let us now compute all the non-zero terms
contributing to $C'_{V_1}$:

$\bullet$
Picking $1$ out of each $\bar{R}^\op_i, \bar{R}_i$, we recover
  $1+\sum_{i=1}^n \left(v^{4n+2-4i}\cdot K_{2\varpi_i}+v^{-4n-2+4i}\cdot K_{-2\varpi_i}\right)$.

$\bullet$
Picking non-trivial terms only at $\bar{R}_{\alpha_j}^\op, \bar{R}_{\alpha_i}$,
the result does not depend on $\Or^\pm$ (hence, the orderings $\prec_\pm$) and
the total contribution of the non-zero terms equals
\begin{align*}
  &  \sum_{i=1}^{n-1} (v^2-v^{-2})^2\left(v^{4n+2-4i}F_iK_{\varpi_{i+1}}E_iK_{\varpi_i}+
     v^{-4n+2+4i}F_iK_{-\varpi_i}E_iK_{-\varpi_{i+1}}\right)+\\
  &  (v-v^{-1})^2(F_nK_{-\varpi_n}E_n+v^2F_nE_nK_{\varpi_n})+
     c^2v^2F_n^2K_{-\varpi_n}E_n^2K_{\varpi_n}.
\end{align*}

$\bullet$
The contribution of the remaining terms to $C'_{V_1}$ depends on $\Or^\pm$.
Tracing back explicit formulas for the action of $U_v(\mathfrak{so}_{2n+1})$
on $V_1$, we see that the total sum of such terms equals
\begin{align*}
  & \sum_{1\leq i<j<n}^{\epsilon^\pm_{i,i+1}=\ldots=\epsilon^\pm_{j-1,j}=\pm 1}
    (v^2-v^{-2})^{2(j-i+1)}v^{4n+2-4i}\cdot F_i\cdots F_{j}K_{\varpi_{j+1}}E_{j}\cdots E_i K_{\varpi_i}+\\
  & \sum_{1\leq i<n}^{\epsilon^\pm_{i,i+1}=\ldots=\epsilon^\pm_{n-1,n}=\pm 1}
    (v^2-v^{-2})^{2(n-i)}(v-v^{-1})^2v^{4n+2-4i}\cdot F_i\cdots F_{n}E_{n}\cdots E_i K_{\varpi_i}+\\
  & \sum_{1\leq i<n}^{\epsilon^\pm_{i,i+1}=\ldots=\epsilon^\pm_{n-1,n}=\pm 1}
    c^2(v^2-v^{-2})^{2(n-i)}v^{4n+2-4i}\cdot F_i\cdots F_{n-1}F_n^2K_{-\varpi_n}E_{n}^2E_{n-1}\cdots E_i K_{\varpi_i}+\\
  & \sum_{1\leq i<j<n}^{\epsilon^\pm_{i,i+1}=\ldots=\epsilon^\pm_{j-1,j}=\mp 1}
    (v^2-v^{-2})^{2(j-i+1)}v^{-4n+2+4j}\cdot F_j\cdots F_{i}K_{-\varpi_{i}}E_{i}\cdots E_j K_{-\varpi_{j+1}}+\\
  & \sum_{1\leq i<n}^{\epsilon^\pm_{i,i+1}=\ldots=\epsilon^\pm_{n-1,n}=\mp 1}
    (v^2-v^{-2})^{2(n-i)}(v-v^{-1})^2\cdot F_n\cdots F_{i}K_{-\varpi_i}E_{i}\cdots E_n+\\
  & \sum_{1\leq i<n}^{\epsilon^\pm_{i,i+1}=\ldots=\epsilon^\pm_{n-1,n}=\mp 1}
    c^2(v^2-v^{-2})^{2(n-i)} v^2\cdot F_n^2F_{n-1}\cdots F_i K_{-\varpi_i}E_{i}\cdots E_{n-1}E_n^2 K_{\varpi_{n}}.
\end{align*}

We have listed all the non-zero terms contributing to $C'_{V_1}$. To obtain
$\tilde{\D}_1=\bar{\D}_{V_1}$, we should rewrite the above formulas via $e_i,f_i$ and
  $L_p=\begin{cases}
     K_{\varpi_1+\ldots+\varpi_p}, & \mathrm{if}\ 1\leq p<n\\
     K_{\frac{1}{2}(\varpi_1+\ldots+\varpi_n)}, & \mathrm{if}\ p=n
   \end{cases},$
moving all the Cartan terms to the middle, and then apply the characters $\chi^\pm$
with $\chi^+(e_i)=c^+_i, \chi^-(f_i)=c^-_i$. Conjugating further by $e^\rho$, we obtain
the explicit formula for the first hamiltonian $\D_1$ of the type $B_n$ modified
quantum difference Toda system. To write it down, define constants $b_i,\m_{ik}$ via
  $\m_{ik}:=\sum_{p=k}^n(\n^-_{ip}-\n^+_{ip})(1-\frac{1}{2}\delta_{p,n})$
and
  $b_i:=(v_i-v_i^{-1})^2v_i^{\n^+_{ii}-\n^-_{ii}}c^+_ic^-_i$.
Then, we have
\begin{multline}\label{FinalD for B-type}
  \D_1=1+
  \sum_{i=1}^n (T_{2\varpi_i}+T_{-2\varpi_i})+\\
  b_n v^{\sum_{k=1}^n (2k-2n-1)\m_{nk}} \cdot e^{-\alpha_n}
  \left(vT_{\sum_{k=1}^n (\m_{nk}-\delta_{k,n})\varpi_k}+v^{-1}T_{\sum_{k=1}^n (\m_{nk}+\delta_{k,n})\varpi_k}\right)+\\
  (1+v)^{-2}b_n^2 v^{-2+(\n_{nn}^+-\n_{nn}^-) + \sum_{k=1}^n (4k-4n-2)\m_{nk}} \cdot e^{-2\alpha_n}T_{\sum_{k=1}^n 2\m_{n,k}\varpi_k}+\\
  \sum_{i=1}^{n-1} b_i v^{\sum_{k=1}^n (2k-2n-1)\m_{ik}}\cdot e^{-\alpha_i}
  \left(T_{\sum_{k=1}^n (\m_{ik}+\delta_{k,i}+\delta_{k,i+1})\varpi_k}+T_{\sum_{k=1}^n (\m_{ik}-\delta_{k,i}-\delta_{k,i+1})\varpi_k}\right)+\\
  \sum_{1\leq i<j<n}^{\epsilon^\pm_{i,i+1}=\ldots=\epsilon^\pm_{j-1,j}=\pm 1}
  b_i\cdots b_j v^{2j-2i+2\sum_{i\leq a<b\leq j} (\n^+_{ab}-\n^-_{ab})+\sum_{k=1}^n \sum_{s=i}^j (2k-2n-1)\m_{sk}}\times\\
  e^{-\alpha_i-\ldots-\alpha_j}T_{\sum_{k=1}^n(\sum_{s=i}^j \m_{sk}+\delta_{k,i}+\delta_{k,j+1})\varpi_k}+\\
  \sum_{1\leq i<n}^{\epsilon^\pm_{i,i+1}=\ldots=\epsilon^\pm_{n-1,n}=\pm 1}
  b_i\cdots b_n v^{2n-2i-1+\sum_{i\leq a<b\leq n}(\n^+_{ab}-\n^-_{ab})(2-\delta_{b,n}) +\sum_{k=1}^n \sum_{s=i}^n (2k-2n-1)\m_{sk}}\times\\
  e^{-\alpha_i-\ldots-\alpha_n}T_{\sum_{k=1}^n(\sum_{s=i}^n \m_{sk}+\delta_{k,i})\varpi_k}+\\
  \sum_{1\leq i<n}^{\epsilon^\pm_{i,i+1}=\ldots=\epsilon^\pm_{n-1,n}=\pm 1}
  (1+v)^{-2}b_i\cdots b_{n-1}b_n^2 v^{2n-2i+(\n^+_{nn}-\n^-_{nn})+2\sum_{i\leq a<b\leq n} (\n^+_{ab}-\n^-_{ab})}\times\\
  v^{\sum_{k=1}^n \sum_{s=i}^j (2k-2n-1)(1+\delta_{s,n})\m_{sk}}\cdot
  e^{-\alpha_i-\ldots-\alpha_{n-1}-2\alpha_n}T_{\sum_{k=1}^n(\sum_{s=i}^{n} \m_{sk}+\m_{nk}+\delta_{k,i}-\delta_{k,n})\varpi_k}+\\
  \sum_{1\leq i<j<n}^{\epsilon^\pm_{i,i+1}=\ldots=\epsilon^\pm_{j-1,j}=\mp 1}
  b_i\cdots b_j v^{2j-2i+2\sum_{i\leq a<b\leq j}(\n^+_{ba}-\n^-_{ba})+\sum_{k=1}^n \sum_{s=i}^j (2k-2n-1)\m_{sk}}\times\\
  e^{-\alpha_i-\ldots-\alpha_j}T_{\sum_{k=1}^n(\sum_{s=i}^j \m_{sk}-\delta_{k,i}-\delta_{k,j+1})\varpi_k}+\\
  \sum_{1\leq i<n}^{\epsilon^\pm_{i,i+1}=\ldots=\epsilon^\pm_{n-1,n}=\mp 1}
  b_i\cdots b_n v^{2n-2i+1+2\sum_{i\leq a<b\leq n} (\n^+_{ba}-\n^-_{ba})+\sum_{k=1}^n \sum_{s=i}^n (2k-2n-1)\m_{sk}}\times\\
  e^{-\alpha_i-\ldots-\alpha_n}T_{\sum_{k=1}^n(\sum_{s=i}^n \m_{sk}-\delta_{k,i})\varpi_k}+\\
  \sum_{1\leq i<n}^{\epsilon^\pm_{i,i+1}=\ldots=\epsilon^\pm_{n-1,n}=\mp 1}
  (1+v)^{-2}b_i\cdots b_{n-1}b_n^2
  v^{2n-2i+(\n^+_{nn}-\n^-_{nn})+\sum_{i\leq a<b\leq n} (\n^+_{ba}-\n^-_{ba})(2+2\delta_{b,n})}\times\\
  v^{\sum_{k=1}^n \sum_{s=i}^n (2k-2n-1)(1+\delta_{s,n})\m_{sk}}\cdot
  e^{-\alpha_i-\ldots-\alpha_{n-1}-2\alpha_n}T_{\sum_{k=1}^n(\sum_{s=i}^n \m_{sk}+\m_{nk}-\delta_{k,i}+\delta_{k,n})\varpi_k}.
\end{multline}

\begin{Rem}\label{Etingof's formula for B}
If $\epsilon^+=\epsilon^-$, then the last six sums are vacuous. If we further set
$\n^+=\n^-$ and $c^\pm_i=\pm 1$ for all $i$, then we obtain the formula for the first
hamiltonian of the type $B_n$ quantum difference Toda lattice as defined in~\cite{E}
(we write down this formula as we could not find it in the literature, even though
it can be derived completely analogously to~\cite[(5.7)]{E}, cf.~\cite[the end of Section 3]{FFJMM}):
\begin{multline}\label{standard qToda for B}
  \D_1=
  1+\sum_{i=1}^n (T_{2\varpi_i}+T_{-2\varpi_i})-
  (v^2-v^{-2})^2 \sum_{i=1}^{n-1} e^{-\alpha_i} \left(T_{\varpi_i+\varpi_{i+1}}+T_{-\varpi_i-\varpi_{i+1}}\right)-\\
  (v-v^{-1})^2 e^{-\alpha_n} \left(vT_{-\varpi_n}+v^{-1}T_{\varpi_n}\right)+
  v^{-2}(1-v^{-1})^2(v-v^{-1})^2 e^{-2\alpha_n}.
\end{multline}
\end{Rem}

Recall the algebra $\CA_n$ from Section~\ref{explicit hamiltonians A}. Consider the
anti-isomorphism from $\CA_n$ to the algebra $\CalD_v(H^\ad_{\mathfrak{so}_{2n+1}})$
of Section~\ref{section Whittaker functions}, sending
  $\sw_j\mapsto T_{-\varpi_j}, \sD_j\mapsto e^{-\sum_{k=j}^n \alpha_k}$.
Let $\HH=\HH(\epsilon^\pm, \n^\pm, c^\pm)$ be the element of $\CA_n$ which corresponds
to $\D_1$ under this anti-isomorphism (to save space, we omit the explicit long formula for $\HH$).
The following is the key property of $\HH(\epsilon^\pm,\n^\pm,c^\pm)$ in type $B$:

\begin{Prop}\label{bound for B}
$\HH(\epsilon^\pm,\n^\pm,c^\pm)$ depends only on
$\vec{\epsilon}=(\epsilon_{n-1},\ldots,\epsilon_1)\in \{-1,0,1\}^{n-1}$
with $\epsilon_i:=\frac{\epsilon^+_{i,i+1}-\epsilon^-_{i,i+1}}{2}$,
up to algebra automorphisms of $\CA_n$.
\end{Prop}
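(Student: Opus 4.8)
The plan is to adapt the proof of Proposition~\ref{bound for A} (Appendix~\ref{Proof of bound for A}; see also Remark~\ref{Remark on other types}) to the explicit formula~(\ref{FinalD for B-type}) and the anti-isomorphism $\CA_n\to\CalD_v(H^\ad_{\mathfrak{so}_{2n+1}})$ recorded above. We will use three families of automorphisms of $\CA_n$, each of which manifestly preserves~(\ref{Algebra A}): the rescalings $\sw_j\mapsto\lambda_j\sw_j$, $\sD_j\mapsto\mu_j\sD_j$ with $\lambda_j,\mu_j\in\BC(v^{1/\NN})^\times$; the ``shift'' automorphisms $\sw_j\mapsto\sw_j$, $\sD_j\mapsto\sD_j\prod_{k=1}^n\sw_k^{t_{jk}}$ attached to a symmetric integer matrix $(t_{jk})$; and the $GL_n(\BZ)$-automorphisms given by a unimodular substitution on the $\sw$-lattice together with the contragredient one on the $\sD$-lattice. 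Fixing, for each admissible $\vec\epsilon$, a reference pair of Sevostyanov triples realizing it, it suffices to produce an automorphism of $\CA_n$ carrying $\HH$ of any pair with the same $\vec\epsilon$ to $\HH$ of the reference. Since two such pairs are joined by a finite sequence of elementary moves — (a) changing $\n^\pm$ inside the affine space cut out by the Sevostyanov constraint, (b) rescaling the $c^\pm_i$, and (c) flipping $(\epsilon^+_{p,p+1},\epsilon^-_{p,p+1})$ between $(+1,+1)$ and $(-1,-1)$ at a node with $\epsilon_p=0$ (at nodes with $\epsilon_p=\pm1$ the orientation pair is forced) — it is enough to treat each move.

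First, inspection of~(\ref{FinalD for B-type}) shows that the combinatorial skeleton of $\D_1$ — which root intervals $[i,j]$, and which of the two chiralities, actually contribute — is, exactly as in the computation preceding~(\ref{FinalD for B-type}), the set of intervals on which $\epsilon^+_{q,q+1}$ and $\epsilon^-_{q,q+1}$ are constant with $\tfrac12(\epsilon^+_{q,q+1}-\epsilon^-_{q,q+1})$ equal to the chirality sign, i.e. with $\epsilon_q=\pm1$ throughout; this depends on the data only through $\vec\epsilon$, so the problem is reduced to matching coefficients and $T$-shift vectors for a fixed skeleton. For moves (a) and (c) the changes in $\n^+$ and $\n^-$ obey the same inhomogeneous constraint, so $Y:=(\n^+-\n^-)_{\mathrm{new}}-(\n^+-\n^-)_{\mathrm{old}}$ solves the homogeneous equation $\sd_qY_{pq}=\sd_pY_{qp}$; using this together with the type-$B$ identity $1-\tfrac12\delta_{p,n}=\sd_p/2$ one checks that $t_{jk}:=-\sum_{s\geq j}\sum_{p\geq k}Y_{sp}(1-\tfrac12\delta_{p,n})$ is a symmetric integer matrix (integrality uses that $\n^\pm_{in}$ is even for $i\ne n$, forced by the constraint). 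Applying the associated shift automorphism both deletes every $\Delta\m_{sk}$ from the $T$-exponents in~(\ref{FinalD for B-type}) and, via the $v$-powers produced when the monomials $\prod_k\sw_k^{t_{jk}}$ are put back into normal form, cancels the quadratic $\n^\pm$-contributions (the double sums $\sum_{i\leq a<b\leq j}(\n^+_{ab}-\n^-_{ab})$ and their analogues). What remains is a per-interval scalar discrepancy of the telescoping form $\prod_{s=i}^j(\cdot)$, coming from the linear $v$-powers and from $b_i\cdots b_j=\prod_{s=i}^j(v_s-v_s^{-1})^2v_s^{\n^+_{ss}-\n^-_{ss}}c^+_sc^-_s$; this (and move (b)) is absorbed by a rescaling of the $\sD_j$, with one extra adjustment of $\mu_n$ to accommodate the $\sD_n^2$- and $\sD_i\sD_n$-terms produced by $\alpha_n$, the residual global sign being cleared by $\sw_j\mapsto-\sw_j$.

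The main obstacle is the bookkeeping, with two genuinely delicate points. First, that the reordering cocycle of the shift automorphism is precisely the quadratic form appearing in the $\n^\pm$-dependent $v$-powers of~(\ref{FinalD for B-type}), so that after the shift only telescoping discrepancies survive — this, and the symmetry and integrality of $(t_{jk})$, is exactly where the Sevostyanov constraint and the identity $1-\tfrac12\delta_{p,n}=\sd_p/2$ enter, and it is subtler than in type $A$ because $\sd_n=1\neq\sd_i=2$ and because of the weights $1-\tfrac12\delta_{p,n}$ in $\m_{ik}$; should any non-symmetric remainder persist, it is removed by composing with a $GL_n(\BZ)$-automorphism. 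Second, the type-$B$-specific summands attached to the short root $\alpha_n$ — those with coefficients $(1+v)^{-2}b_n^2$ and operators $e^{-2\alpha_n}$ and $e^{-\alpha_i-\cdots-\alpha_{n-1}-2\alpha_n}$, which have no type-$A$ analogue and arise because $E_n^2,F_n^2$ act nontrivially on $V_1$ — must be verified to transform compatibly under the very same automorphisms; once the $T$-shifts are normalized they again contribute only overall scalars, so the conclusion is unaffected, but they have to be tracked alongside the rest. Granting these verifications, composing moves (a)–(c) carries every admissible datum to the reference datum determined by $\vec\epsilon$, which proves the proposition.
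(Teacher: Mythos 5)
Your proposal is essentially the method the paper has in mind: by Remark~\ref{Remark on other types}, the proof of Proposition~\ref{bound for B} is the type-$A$ argument of Appendix~\ref{Proof of bound for A} run again, i.e.\ conjugation by a single function $F$ as in (\ref{Canonical Form}) whose constants are fixed by matching the coefficients of $\{\sD_i/\sD_{i+1}\}_{i=1}^{n-1}$ and of $\sD_n$, followed by a telescoping induction showing that all remaining terms of (\ref{FinalD for B-type}) (including the $B$-specific $e^{-2\alpha_n}$- and $e^{-\alpha_i-\cdots-\alpha_{n-1}-2\alpha_n}$-summands) match automatically. Conjugation by such an $F$ is exactly a composition of your $\sD$-rescalings with a symmetric shift automorphism, so your reduction to a reference datum via elementary moves, with the closed-form matrix $t_{jk}=-\tfrac12\sum_{s\ge j}\sum_{p\ge k}Y_{sp}\sd_p$, is the same mechanism, just organized around elementary moves instead of solving the linear system for $\{r_{ij}\}$ in one step; the heart of the matter (the shift cocycle cancelling the quadratic $\n^\pm$-exponents and the long-interval telescoping) is asserted rather than carried out in both your write-up and the paper.

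Two small points to tighten. First, the $GL_n(\BZ)$ ``fallback'' is neither legitimate nor needed: a unimodular substitution on the $\sw$'s does not preserve the already-matched terms (e.g.\ $\sum_j\sw_j^{-2}$), but fortunately the symmetry of $(t_{jk})$ is automatic from $\sd_qY_{pq}=\sd_pY_{qp}$ together with $1-\tfrac12\delta_{p,n}=\sd_p/2$, as you observe. Second, your integrality check only treats the off-diagonal contributions: every $t_{jk}$ also picks up the summand $\tfrac12 Y_{nn}$, and the Sevostyanov constraint imposes no parity on $\n^\pm_{nn}$. This is harmless only because $\m_{nk}$ contains $\tfrac12(\n^-_{nn}-\n^+_{nn})$, so $\n^+_{nn}-\n^-_{nn}$ must already be even for the $T$-shifts in $\D_1$ to lie in $\bigoplus_k\BZ\varpi_k$, i.e.\ for $\HH$ to be an element of $\CA_n$ at all; within the scope of the proposition $Y_{nn}$ is therefore even and your construction goes through, but this should be said.
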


The proof of this result is analogous to that of Proposition~\ref{bound for A} given
in Appendix~\ref{Proof of bound for A}, see also Remark~\ref{Remark on other types},
we leave details to the interested reader. Proposition~\ref{bound for B} implies that
given two pairs of Sevostyanov triples $(\epsilon^\pm,\n^\pm,c^\pm)$ and
$(\tilde{\epsilon}^\pm,\tilde{\n}^\pm,\tilde{c}^\pm)$ with
  $\epsilon^+_{i,i+1}-\epsilon^-_{i,i+1}=\tilde{\epsilon}^+_{i,i+1}-\tilde{\epsilon}^-_{i,i+1}$,
there exists an algebra automorphism of $\CalD_v(H^\ad_{\mathfrak{so}_{2n+1}})$ which maps
the first hamiltonian $\D_1(\epsilon^\pm,\n^\pm,c^\pm)$ to
$\D_1(\tilde{\epsilon}^\pm,\tilde{\n}^\pm,\tilde{c}^\pm)$. As we will see in
Appendix~\ref{Proof of Main Theorem A}, the same automorphism maps the modified quantum Toda system
$\CT(\epsilon^\pm,\n^\pm,c^\pm)$ to $\CT(\tilde{\epsilon}^\pm,\tilde{\n}^\pm,\tilde{c}^\pm)$.


\subsection{Lax matrix realization in type $A$}\label{section A type}
\

In this section, we identify the type $A_{n-1}$ modified quantum difference Toda systems
with those discovered in~\cite[11(ii, iii)]{FT} via the Lax matrix formalism.

Recall the algebra $\CA_n$ of Section~\ref{explicit hamiltonians A}.
Consider the following three \emph{(local) Lax matrices}:
\begin{equation}\label{relativistic Lax}
  L^{v,-1}_i(z):=\left(%
  \begin{array}{cc}
    \sw_i^{-1}-\sw_iz^{-1} & \sw_i \sD_i^{-1} \\
    -\sw_i\sD_iz^{-1} & \sw_i \\
  \end{array}%
  \right)\in \mathrm{Mat}(2, z^{-1}\CA_n[z]),
\end{equation}
\begin{equation}\label{hatL}
  L^{v,0}_i(z):=
  \left(%
  \begin{array}{cc}
    \sw_i^{-1}z^{1/2}-\sw_i z^{-1/2} & \sD^{-1}_iz^{1/2} \\
    -\sD_iz^{-1/2} & 0 \\
  \end{array}%
  \right)\in{\on{Mat}}(2,z^{-1/2}\CA_n[z]),
\end{equation}
\begin{equation}\label{new Lax matrix}
  L^{v,1}_i(z):=\left(%
  \begin{array}{cc}
    \sw_i^{-1}z-\sw_i & \sw_i^{-1}\sD_i^{-1}z \\
    -\sw_i^{-1}\sD_i & -\sw_i^{-1} \\
  \end{array}%
  \right)\in \mathrm{Mat}(2, \CA_n[z]).
\end{equation}
For any $\vec{k}=(k_n,\ldots,k_1)\in \{-1,0,1\}^n$, define
the {\em mixed complete monodromy matrix}
\begin{equation}\label{complete monodromy matrix}
  \sT^v_{\vec{k}}(z):=L^{v,k_n}_n(z)\cdots L^{v,k_1}_{1}(z).
\end{equation}
We also recall the standard trigonometric $R$-matrix
\begin{equation*}
  R_\trig(z):=\left(%
  \begin{array}{cccc}
     1 & 0 & 0 & 0 \\
     0 & \frac{z-1}{vz-v^{-1}} & \frac{z(v-v^{-1})}{vz-v^{-1}} & 0 \\
     0 & \frac{v-v^{-1}}{vz-v^{-1}} & \frac{z-1}{vz-v^{-1}} & 0 \\
     0 & 0 & 0 & 1 \\
  \end{array}%
\right).
\end{equation*}
The following key property of the complete monodromy matrices is
established in~\cite[11(ii)]{FT}:

\begin{Prop}\label{RTT-relation}
For any $\vec{k}\in \{-1,0,1\}^n$, $\sT^v_{\vec{k}}(z)$ satisfies the trigonometric RTT-relation:
\begin{equation*}
  R_\trig(z/w)\left(\sT^v_{\vec{k}}(z)\otimes 1\right)\left(1\otimes \sT^v_{\vec{k}}(w)\right)=
  \left(1\otimes \sT^v_{\vec{k}}(w)\right)\left(\sT^v_{\vec{k}}(z)\otimes 1\right)R_\trig(z/w).
\end{equation*}
\end{Prop}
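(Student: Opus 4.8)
The plan is to establish the RTT-relation by the standard local-to-global argument: verify it separately for each of the three local Lax matrices $L^{v,k}_i(z)$ (with $k\in\{-1,0,1\}$), and then propagate it along the product \refe{complete monodromy matrix}. Concretely, I would first check that each $L^{v,k}_i(z)$ satisfies the \emph{same} trigonometric RTT-relation
\[
  R_\trig(z/w)\bigl(L^{v,k}_i(z)\otimes 1\bigr)\bigl(1\otimes L^{v,k}_i(w)\bigr)=
  \bigl(1\otimes L^{v,k}_i(w)\bigr)\bigl(L^{v,k}_i(z)\otimes 1\bigr)R_\trig(z/w)
\]
in $\mathrm{Mat}(4,\CA_n(z))$, where the two tensor factors act on a fixed two-dimensional auxiliary space; here it is crucial that the entries of $L^{v,k}_i$ and $L^{v,k}_j$ for $i\neq j$ live in commuting copies of the Heisenberg algebra generated by $\sw_i^{\pm1},\sD_i^{\pm1}$, since $[\sw_i,\sw_j]=[\sD_i,\sD_j]=0$ and $\sD_i\sw_j=v^{\delta_{i,j}}\sw_j\sD_i$ by \refe{Algebra A}.

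Once the three local relations are in hand, the global claim follows by a clean induction on the number of factors. Writing $\sT^v_{\vec k}(z)=L^{v,k_n}_n(z)\,\sT'(z)$ with $\sT'(z):=L^{v,k_{n-1}}_{n-1}(z)\cdots L^{v,k_1}_1(z)$, I would note that $L^{v,k_n}_n$ acts on the Heisenberg generators $\sw_n,\sD_n$ which commute with all generators appearing in $\sT'$, so in the tensor product $L^{v,k_n}_n(z)\otimes 1$ and $1\otimes\sT'(w)$ commute, and similarly with the roles of $z,w$ exchanged. Thus one can slide $R_\trig(z/w)$ through $L^{v,k_n}_n$ using its local RTT-relation and through $\sT'$ using the inductive hypothesis, with no interference between the two. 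This is the usual ``train argument'' for building monodromy matrices in integrable systems, and it works verbatim here because the different sites are quantum-mechanically independent.

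The main obstacle is purely computational: verifying the local RTT-relation for each of the three matrices $L^{v,-1}_i$, $L^{v,0}_i$, $L^{v,1}_i$. Each is a $4\times4$ matrix identity over the algebra $z^{\pm1/2}\CA_n[z]$ whose sixteen entries must be matched after normal-ordering the products $\sw_i^a\sD_i^b$ using $\sD_i\sw_i=v\,\sw_i\sD_i$; the nontrivial entries are the ones involving the off-diagonal block of $R_\trig$, and a handful of them require the identity $\frac{z/w-1}{v(z/w)-v^{-1}}$, $\frac{(z/w)(v-v^{-1})}{v(z/w)-v^{-1}}$, $\frac{v-v^{-1}}{v(z/w)-v^{-1}}$ to conspire with the $v$-commutation to cancel. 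For $L^{v,-1}_i$ this is exactly the known relativistic Lax matrix of the $q$-Toda chain, so that case is classical (one may cite \cite{KuT}); for $L^{v,0}_i$ and $L^{v,1}_i$ one checks it directly, and these verifications are precisely the content of \cite[11(ii)]{FT}, which the statement of Proposition~\ref{RTT-relation} explicitly invokes. I would therefore present the local checks briefly (or defer to \emph{loc.\,cit.}) and spell out only the propagation step in detail, since that is where the structure of the mixed monodromy matrix \refe{complete monodromy matrix} genuinely enters.
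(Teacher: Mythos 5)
Your proposal is correct and follows essentially the same route as the paper: the paper itself does not reprove the statement but cites \cite[11(ii)]{FT}, where the content is exactly the local RTT-check for each of $L^{v,-1}_i,L^{v,0}_i,L^{v,1}_i$ followed by the product (``train'') argument, and the paper's own proof of the analogous type $C$ statement for $\BT^v_{\vec{k}}(z)$ uses precisely this local-to-global factorization, relying on the fact that generators at distinct sites of $\CA_n$ commute. So your decomposition $\sT^v_{\vec{k}}(z)=L^{v,k_n}_n(z)\,\sT'(z)$ and the induction are exactly what is intended, with the only deferred content being the three finite $4\times 4$ local verifications.
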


As an immediate corollary of this result, we obtained (see~\cite[11(iii)]{FT}):

\begin{Prop}[\cite{FT}]
Fix $\vec{k}=(k_n,\ldots,k_1)\in \{-1,0,1\}^n$.

\noindent
(a) The coefficients in powers of $z$ of the matrix element $\sT^v_{\vec{k}}(z)_{11}$
generate a commutative subalgebra of $\CA_n$. Moreover, they lie in the subalgebra
of $\CA_n$ generated by
  $\{\sw_j^{\pm 1}, (\sD_i/\sD_{i+1})^{\pm 1}\}_{1\leq i<n}^{1\leq j\leq n}$.

\noindent
(b) $\sT^v_{\vec{k}}(z)_{11}=
     (-1)^n\sw_1\cdots\sw_n\left(z^s-{\mathsf H}^{\vec{k}}_2z^{s+1}+\mathrm{higher\ powers\ of}\ z\right),$
where $s=\sum_{j=1}^n \frac{k_j-1}{2}$.
The hamiltonian ${\mathsf H}^{\vec{k}}_2$ equals
\begin{equation}\label{mixedH2}
  {\mathsf H}^{\vec{k}}_2=
  \sum_{j=1}^n \sw^{-2}_j+
  \sum_{i=1}^{n-1} \sw_i^{-k_i-1}\sw_{i+1}^{-k_{i+1}-1}\cdot \frac{\sD_i}{\sD_{i+1}}+
  \sum_{1\leq i<j-1\leq n-1}^{k_{i+1}=\ldots=k_{j-1}=1} \sw_i^{-k_i-1}\cdots\sw_j^{-k_j-1}\cdot \frac{\sD_i}{\sD_j}.
\end{equation}

\noindent
(c) Set $\vec{k}'=(0,k_{n-1},\ldots,k_2,0)$.
Then, ${\mathsf H}^{\vec{k}}_2$ is conjugate to ${\mathsf H}^{\vec{k}'}_2$.
\end{Prop}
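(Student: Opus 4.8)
The plan is to extract, directly from formula~\eqref{mixedH2}, the precise way in which $\mathsf H^{\vec k}_2$ depends on the components $k_1,\dots,k_n$, and then to exhibit an explicit automorphism of $\CA_n$ (or of the relevant subalgebra generated by $\{\sw_j^{\pm1},(\sD_i/\sD_{i+1})^{\pm1}\}$) conjugating $\mathsf H^{\vec k}_2$ to $\mathsf H^{\vec k'}_2$. First I would observe that the three summands in~\eqref{mixedH2} have the shape $\sum_j \sw_j^{-2}$ (independent of $\vec k$), $\sum_i \sw_i^{-k_i-1}\sw_{i+1}^{-k_{i+1}-1}\cdot \sD_i/\sD_{i+1}$, and the "long" terms $\sum \sw_i^{-k_i-1}\cdots\sw_j^{-k_j-1}\cdot \sD_i/\sD_j$ which appear only when the intermediate entries $k_{i+1}=\dots=k_{j-1}=1$. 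Crucially, each $\sw_m$ in the monomial carries the exponent $-k_m-1$, i.e.\ the total power of $\sw_m$ occurring in any term is governed \emph{additively} by $k_m$; changing $k_m$ by $\delta$ multiplies that factor by $\sw_m^{-\delta}$.

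Next I would make the substitution $\vec k\mapsto \vec k'=(0,k_{n-1},\dots,k_2,0)$ concrete: we only change $k_1$ and $k_n$ (the two end components) to $0$. Consider the automorphism $\varphi$ of $\CA_n$ fixing all $\sD_i$ and all $\sw_j$ with $1<j<n$, and rescaling $\sw_1\mapsto \lambda_1\sw_1$, $\sw_n\mapsto\lambda_n\sw_n$ for suitable scalars $\lambda_1,\lambda_n\in\BC(v^{1/\NN})^\times$ — more precisely one needs the monomial shift $\sw_1\mapsto \sw_1 \sw_1^{?}$, but since $\sw_1$ and $\sw_n$ each appear with a fixed exponent shift $-(0)-1-(-(k_1)-1)=k_1$ in \emph{every} term containing them, it suffices to take an automorphism that absorbs the factor $\sw_1^{-k_1}$ on those terms. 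Concretely: because $k_1$ and $k_n$ sit at the ends of the Dynkin chain, the factor $\sw_1^{-k_1-1}$ appears (when present) always in the combination $\sw_1^{-k_1-1}\cdot(\text{stuff})\cdot \sD_1/\sD_{j}$, i.e.\ attached to the leftmost $\sD$-index; conjugating by the automorphism $\sD_1\mapsto \sw_1^{\,k_1}\sD_1$ (extended to fix $\sw$'s and the other $\sD$'s) replaces $\sw_1^{-k_1-1}\,\sD_1/\sD_j$ by $\sw_1^{-1}\,\sD_1/\sD_j$, matching the $k_1=0$ term, while the pure $\sw$-term $\sum_j\sw_j^{-2}$ and all terms not involving $\sD_1$ are untouched. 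Symmetrically, conjugation by $\sD_n\mapsto \sw_n^{\,k_n}\sD_n$ handles the right end, where $k_n$ enters always through $\sD_i/\sD_n$ (the rightmost $\sD$-index). One must check these two conjugations commute and that the "long-term" constraint $k_{i+1}=\dots=k_{j-1}=1$ is unaffected — it is, since the end indices $1$ and $n$ never appear as \emph{intermediate} indices in that constraint (an interval $[i,j]\subseteq[1,n]$ with $i<j-1$ has interior $\{i+1,\dots,j-1\}$ which can contain $1$ only if $i<1$, impossible, and $n$ only if $j>n$, impossible). Hence the set of long terms is literally the same for $\vec k$ and $\vec k'$, and the conjugations move their coefficients correctly.

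So the key steps in order are: (1) read off from~\eqref{mixedH2} that the dependence on $k_m$ is purely through the exponent $\sw_m^{-k_m-1}$ and through the index-set condition $k_{i+1}=\dots=k_{j-1}=1$; (2) note that the end indices $1,n$ never occur among the intermediate indices in that condition, so passing from $\vec k$ to $\vec k'$ does not add or remove any monomials, it only rescales the $\sw_1$- and $\sw_n$-powers in the monomials that contain $\sD_1$ resp.\ $\sD_n$; (3) define $\varphi\in\mathrm{Aut}(\CA_n)$ by $\sD_1\mapsto \sw_1^{\,k_1}\sD_1$, $\sD_n\mapsto \sw_n^{\,k_n}\sD_n$, fixing all other generators, check it respects the relations~\eqref{Algebra A} (it does, since $\sw_1,\sw_n$ are central among the $\sw$'s and the relation $\sD_i\sw_j=v^{\delta_{ij}}\sw_j\sD_i$ is preserved because we multiply $\sD_1$ by a power of $\sw_1$, which commutes with $\sw_1$ itself up to nothing — indeed $\sw$'s commute — and the new $\sD_1$ still $v$-commutes with $\sw_1$); (4) verify $\varphi(\mathsf H^{\vec k}_2)=\mathsf H^{\vec k'}_2$ term by term. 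I expect step~(3)'s bookkeeping — making sure the automorphism is well-defined and that each monomial of $\mathsf H^{\vec k}_2$ lands exactly on the corresponding monomial of $\mathsf H^{\vec k'}_2$, including the ``short'' $i,i+1$ terms where \emph{both} $k_i$ and $k_{i+1}$ may need shifting when $\{i,i+1\}$ meets $\{1,n\}$ — to be the only real point of care; it is routine but must be done carefully, and is the main (minor) obstacle. Everything else is immediate from the explicit shape of~\eqref{mixedH2}.
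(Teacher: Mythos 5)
Your proposal addresses only part (c) of the Proposition: formula (\ref{mixedH2}), i.e.\ part (b), is taken as an input, and part (a) is never discussed. That is a real gap, because (a) carries the structural content: commutativity of the coefficients of $\sT^v_{\vec k}(z)_{11}$ comes from the trigonometric RTT-relation of Proposition~\ref{RTT-relation}, which yields $[\sT^v_{\vec k}(z)_{11},\sT^v_{\vec k}(w)_{11}]=0$, and the containment in the subalgebra generated by $\{\sw_j^{\pm 1},(\sD_i/\sD_{i+1})^{\pm 1}\}$ needs the (easy but necessary) observation that every monomial of the $(1,1)$-entry involves the $\sD$'s only through ratios $\sD_i/\sD_j$ with $i<j$; part (b) is then the direct expansion of the product of local Lax matrices (compare the proof of the analogous Proposition~\ref{Closed Toda for A}: RTT-relation plus straightforward computation). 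None of this appears in your write-up, so at best you are proving the last clause assuming the first two.

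For part (c) your strategy is the right one and is of the same kind as the conjugations in Appendices~\ref{Proof of bound for A} and~\ref{Proof of Lax for A}: the interior constraint $k_{i+1}=\cdots=k_{j-1}=1$ never involves the indices $1,n$, so passing to $\vec k'$ keeps the same set of monomials and only the $\sw_1$- and $\sw_n$-exponents must be adjusted, which can be done by an automorphism rescaling $\sD_1,\sD_n$ by powers of $\sw_1,\sw_n$ (such an automorphism is implemented by conjugation with an $F$ of the form (\ref{Canonical Form}) depending only on $\sw_1,\sw_n$, consistent with the paper's usage of ``conjugate'', cf.\ Remark~\ref{affine Toda 1}(a)). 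However, your explicit right-end map is wrong: $\sD_n$ enters $\HH^{\vec k}_2$ only inverted, through $\sD_i/\sD_n$, so the substitution $\sD_n\mapsto\sw_n^{k_n}\sD_n$ sends $\sw_n^{-k_n-1}\,\sD_i/\sD_n$ to $v^{k_n}\sw_n^{-2k_n-1}\,\sD_i/\sD_n$ rather than to the required $\sw_n^{-1}\,\sD_i/\sD_n$, so as written the conjugation fails whenever $k_n\ne 0$. The correct right-end substitution is $\sD_n\mapsto\sD_n\sw_n^{-k_n}$ (equivalently $\sD_n\mapsto v^{-k_n}\sw_n^{-k_n}\sD_n$), which matches the $k'_n=0$ terms on the nose; the left end $\sD_1\mapsto\sw_1^{k_1}\sD_1$ is fine since $\sD_1$ occurs only in numerators. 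With this correction, plus the verification that the map respects the relations (\ref{Algebra A}) and preserves the subalgebra from part (a), your argument for (c) goes through.
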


Let $\CT^{\vec{k}}$ denote the commutative subalgebra of $\bar{\CA}_n$ generated
by the images of the coefficients in powers of $z$ of the matrix element $
\sT^v_{\vec{k}}(z)_{11}$, while $\bar{\HH}^{\vec{k}}_2\in \CT^{\vec{k}}$ denote
the image of $\HH^{\vec{k}}_2$. The main result of this section identifies
$\CT^{\vec{k}}$ with the pre-images $\widetilde{\CT}(\epsilon^\pm,\n^\pm,c^\pm)$
of $\CT(\epsilon^\pm,\n^\pm,c^\pm)$ in type $A_{n-1}$ under the anti-isomorphism
$\bar{\CA}_n\to \CalD_v(H^\ad_{\ssl_n})$ of Section~\ref{explicit hamiltonians A}.
This provides a Lax matrix realization of the type $A$ modified quantum difference Toda systems.

\begin{Thm}\label{Lax for type A}
Given a pair of type $A_{n-1}$ Sevostyanov triples $(\epsilon^\pm,\n^\pm,c^\pm)$
and $\vec{k}\in \{-1,0,1\}^n$ satisfying $k_{i+1}=\frac{\epsilon^+_{i,i+1}-\epsilon^-_{i,i+1}}{2}$
for any $1\leq i\leq n-2$, the following holds:

\noindent
(a) There is an algebra automorphism of $\bar{\CA}_n$ which maps
$\HH(\epsilon^\pm,\n^\pm,c^\pm)$ to $\bar{\HH}^{\vec{k}}_2$.

\noindent
(b) The automorphism of part (a) maps $\widetilde{\CT}(\epsilon^\pm,\n^\pm,c^\pm)$ to $\CT^{\vec{k}}$.
\end{Thm}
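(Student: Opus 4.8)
The plan is to prove part (a) by direct comparison of the two explicit formulas, and then to deduce part (b) by the same centralizer argument that underlies Theorem~\ref{Main Thm B}(b). For part (a), I would start from the formula~\eqref{H for A-type} for $\HH(\epsilon^\pm,\n^\pm,c^\pm)$ and the formula~\eqref{mixedH2} for $\HH^{\vec k}_2$ (whose image in $\bar\CA_n$ is $\bar\HH^{\vec k}_2$), using the hypothesis $k_{i+1}=\tfrac12(\epsilon^+_{i,i+1}-\epsilon^-_{i,i+1})=\epsilon_i$ for $1\le i\le n-2$. The first sum $\sum_j \sw_j^{-2}$ already agrees on the nose. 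The crucial observation is that the index sets of the remaining sums match: in~\eqref{H for A-type} the length-$(j-i)$ term is present iff $\epsilon^+_{a,a+1}=\epsilon^-_{a,a+1}=\pm1$ for all $i\le a\le j-1$, which by Proposition~\ref{bound for A} (more precisely, its proof, which normalizes $\n^\pm$) can be arranged to be exactly the condition $\epsilon_{i+1}=\ldots=\epsilon_{j-1}=\cdots$; but a cleaner route is to invoke Proposition~\ref{bound for A} directly: there is an automorphism of $\bar\CA_n$ sending $\HH(\epsilon^\pm,\n^\pm,c^\pm)$ to $\HH(\tilde\epsilon^\pm,\tilde\n^\pm,\tilde c^\pm)$ for the ``standard'' representatives $\tilde\n^+=\tilde\n^-=0$, $\tilde c^\pm_i=\pm1$, with $\tilde\epsilon^+_{i,i+1}-\tilde\epsilon^-_{i,i+1}=2\epsilon_i$. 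For these representatives one has $\m_{ik}=0$ and $b_i=-(v-v^{-1})^2$, so~\eqref{H for A-type} collapses to a sum whose $e^{-\sum_{s=i}^{j-1}\alpha_s}$-term is supported exactly on those $i<j$ with $\epsilon_{i+1}=\ldots=\epsilon_{j-1}=1$, with coefficient a power of $v$ times $(-(v-v^{-1})^2)^{j-i}$. The remaining work is then a monomial-rescaling automorphism $\sw_j\mapsto \lambda_j\sw_j$, $\sD_i/\sD_{i+1}\mapsto \mu_i\,\sD_i/\sD_{i+1}$ of $\bar\CA_n$ absorbing the scalars $b_i\cdots b_{j-1}v^{\ldots}$ into the correspondingly rescaled monomials $\prod_k\sw_k^{-\delta_{k,i}-\delta_{k,j}}\cdot\sD_i/\sD_j$, matching~\eqref{mixedH2} term by term (note the $\sw$-exponents $-k_i-1,\dots,-k_j-1$ there with $k_i,k_j$ the boundary values, versus $-\delta_{k,i}-\delta_{k,j}$ here after rescaling by the Cartan shifts $\m$); that these scalars can be simultaneously absorbed is a finite linear-algebra check over the exponent lattice, exactly as in the proof of Proposition~\ref{bound for A}.

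For part (b), once $\HH(\epsilon^\pm,\n^\pm,c^\pm)$ is carried to $\bar\HH^{\vec k}_2$ by an automorphism $\Phi$ of $\bar\CA_n$, I would argue that $\Phi$ carries the whole commutative subalgebra $\widetilde\CT(\epsilon^\pm,\n^\pm,c^\pm)$ onto $\CT^{\vec k}$. By Theorem~\ref{Main Thm B}(b), $\widetilde\CT(\epsilon^\pm,\n^\pm,c^\pm)$ is the centralizer of $\HH(\epsilon^\pm,\n^\pm,c^\pm)$ inside the pre-image of $\CalD^\le_v(H^\ad_{\ssl_n})$ in $\bar\CA_n$; since the monomial-rescaling and the Proposition~\ref{bound for A} automorphisms all preserve this ``lower-triangular'' subalgebra, $\Phi(\widetilde\CT(\epsilon^\pm,\n^\pm,c^\pm))$ is the centralizer of $\bar\HH^{\vec k}_2$ in that same subalgebra. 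On the other hand $\CT^{\vec k}$ is commutative (Proposition~\ref{RTT-relation}), lies in the subalgebra generated by $\{\sw_j^{\pm1},(\sD_i/\sD_{i+1})^{\pm1}\}$ (Proposition, part (a)), and contains $\bar\HH^{\vec k}_2$; hence $\CT^{\vec k}$ is contained in that centralizer, i.e.\ in $\Phi(\widetilde\CT)$. For the reverse inclusion I would show $\Phi(\widetilde\CT)\subseteq\CT^{\vec k}$ by a dimension/transcendence-degree count: both are polynomial algebras in $n$ variables (for $\widetilde\CT$ this is Theorem~\ref{Main Thm B}(a); for $\CT^{\vec k}$ one uses that the coefficients of $\sT^v_{\vec k}(z)_{11}$ include the images of $T_{\pm2\varpi_j}$-type operators and are algebraically independent, which follows from the RTT-formalism and the classical limit), and a containment of two polynomial algebras of the same transcendence degree, one inside the centralizer of the generator of the other, forces equality.

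The main obstacle I anticipate is the bookkeeping in part~(a): matching the $\sw$- and $\sD$-exponents in~\eqref{H for A-type}, which carry the Cartan-shift data $\m_{ik}$ coming from the (unnormalized) Sevostyanov matrices $\n^\pm$, against the clean exponents $-k_i-1,\dots,-k_j-1$ in~\eqref{mixedH2}. The resolution is to first apply Proposition~\ref{bound for A} to reduce to the standard triples (where $\m_{ik}=0$), and only then perform the scalar-absorbing rescaling; with $\m\equiv0$ the two formulas are structurally identical up to the scalars $b_i\cdots b_{j-1}v^{j-i-1}$ versus $1$, and a single diagonal automorphism of $\bar\CA_n$ (sending $\sw_j\mapsto\beta_j\sw_j$ with $\beta_j$ chosen so that $\prod_{s=i}^{j-1}(-(v-v^{-1})^2)\,v^{j-i-1}=\prod_{k}\beta_k^{-\delta_{k,i}-\delta_{k,j}}$ for all admissible $i<j$ — a consistent system because the right-hand side only depends on the endpoints) does the job. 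Verifying consistency of that system, and that the boundary index shifts $k_i$ vs.\ $k_{i+1}$ in~\eqref{mixedH2} are compatible with the indexing convention $\epsilon_i=\tfrac12(\epsilon^+_{i,i+1}-\epsilon^-_{i,i+1})$ used in~\eqref{H for A-type}, is the one genuinely delicate computation, and is presumably what the authors relegate to Appendix~\ref{Proof of bound for A}.
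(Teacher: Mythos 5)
There is a genuine gap in your part (a), at the reduction step. You propose to first use Proposition~\ref{bound for A} to pass to ``standard'' representatives with $\tilde\n^+=\tilde\n^-=0$, so that $\m_{ik}=0$, and then finish with a diagonal rescaling $\sw_j\mapsto\beta_j\sw_j$. But $\n^\pm=0$ is not a Sevostyanov matrix: the defining condition $\n_{ij}-\n_{ji}=\epsilon_{ij}b_{ij}$ is nonzero on every edge of $A_{n-1}$. Worse, $\m\equiv 0$ forces $\n^+=\n^-$, hence $\epsilon^+=\epsilon^-$, i.e.\ $\vec\epsilon=\vec 0$; so the reduction is impossible exactly in the cases where $\vec k$ has nonzero middle entries, which are the cases where (\ref{mixedH2}) contains the composite terms $\sw_i^{-k_i-1}\cdots\sw_j^{-k_j-1}\,\sD_i/\sD_j$. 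Moreover, even granting some normalization, a monomial rescaling of $\sw_j$ and $\sD_i/\sD_{i+1}$ only changes scalar coefficients; it can never convert the $\sw$-exponents $-\sum_s\m_{sk}-\delta_{k,i}-\delta_{k,j}$ of (\ref{H for A-type}) into the exponents $-k_i-1,\dots,-k_j-1$ of (\ref{mixedH2}) (note the factors $\sw_a^{-2}$ at the intermediate indices $a$ with $k_a=1$). What the paper actually does is conjugate by the non-monomial element $F$ of (\ref{Canonical Form}): because $\sD_i/\sD_j$ shifts $\sw_i,\sw_j$ by powers of $v$, the quadratic part $\sum r_{ij}\log(\sw_i)\log(\sw_j)$ of $\log F$ multiplies each $\sD_i/\sD_j$-term by a genuine $\sw$-monomial, and the exponent bookkeeping becomes the solvable linear systems (\ref{system A.4}), (\ref{system A.5}) together with the inductive identity (\ref{system A.6}). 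Your proposal never supplies this mechanism, so the terms cannot be matched.

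Part (b) also has a gap: after establishing $\CT^{\vec k}\subseteq\Phi(\widetilde{\CT})$ via the centralizer property (Theorem~\ref{Main Thm B}(b)), you close the argument by asserting that an inclusion of two polynomial algebras of the same transcendence degree forces equality; this is false (e.g.\ $\BC[x^2]\subset\BC[x]$). The paper instead matches generators directly: it observes that the degree-zero part of $\bar{\HH}^{\vec k}_{r+1}$ is the elementary symmetric polynomial $\sigma_r(\{\sw_j^{-2}\})$, i.e.\ the same diagonal as $\D_r$ acting on $W_\lambda$, and then re-runs the upper-triangularity/simple-spectrum argument of Proposition~\ref{Etingof argument} to conclude that the same $F$ conjugates the preimage of $\D_r$ to $\bar{\HH}^{\vec k}_{r+1}$ for every $r$, whence $\Phi(\widetilde{\CT}(\epsilon^\pm,\n^\pm,c^\pm))=\CT^{\vec k}$. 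Some such identification of all the hamiltonians (or another maximality argument) is needed; the transcendence-degree count alone does not give the reverse inclusion.
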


The proof of Theorem~\ref{Lax for type A} is presented in Appendix~\ref{Proof of Lax for A}
and closely follows our proofs of Proposition~\ref{bound for A} (see Appendix~\ref{Proof of bound for A})
and Theorem~\ref{Main Thm A} (see Appendix~\ref{Proof of Main Theorem A}).

\begin{Rem}\label{classical Kuznetsov-Tsyganov}
For $\vec{k}=\vec{0}$, we recover the Lax matrix realization of the
type $A$ quantum difference Toda lattice, due to~\cite{KuT}.
\end{Rem}

Actually, the above construction admits a standard one-parameter deformation
of commutative subalgebras of $\CA_n$ as provided by the following result:

\begin{Prop}\label{Closed Toda for A}
(a) For any $\varepsilon\in \BC(v^{1/\NN})$, the coefficients in powers of $z$
of the linear combination $\sT^v_{\vec{k}}(z)_{11}+\varepsilon \sT^v_{\vec{k}}(z)_{22}$
generate a commutative subalgebra of $\CA_n$. Moreover, they lie in the subalgebra
of $\CA_n$ generated by $\{\sw_j^{\pm 1}, (\sD_i/\sD_{i+1})^{\pm 1}\}_{1\leq i<n}^{1\leq j\leq n}$.

\noindent
(b) We have
  $\sT^v_{\vec{k}}(z)_{11}+\varepsilon \sT^v_{\vec{k}}(z)_{22}=
   (-1)^n\sw_1\cdots\sw_n\left(\hat{\HH}^{\vec{k}}_1 z^s-\hat{\HH}^{\vec{k}}_2 z^{s+1}+\mathrm{higher\ powers\ of}\ z\right)$,
where $s=\sum_{j=1}^n \frac{k_j-1}{2}$. Here,
  $\hat{\HH}^{\vec{k}}_1=1+\varepsilon\prod_{j=1}^n \delta_{k_j,1}\prod_{j=1}^n \sw_j^{-2}$,
while $\hat{\HH}^{\vec{k}}_2$ is given by
\begin{multline}\label{closedmixedH2}
  \hat{\HH}^{\vec{k}}_2=
  \sum_{j=1}^n \sw^{-2}_j+
  \sum_{i=1}^{n-1} \sw_i^{-k_i-1}\sw_{i+1}^{-k_{i+1}-1}\cdot \frac{\sD_i}{\sD_{i+1}}+
  \sum_{1\leq i<j-1\leq n-1}^{k_{i+1}=\ldots=k_{j-1}=1} \sw_i^{-k_i-1}\cdots\sw_j^{-k_j-1}\cdot \frac{\sD_i}{\sD_j}+\\
  \varepsilon \left(\sum_{1\leq i<j\leq n}^{\substack{k_1=\ldots=k_{i-1}=1\\k_{j+1}=\ldots=k_n=1}}
  \sw_1^{-k_1-1}\cdots \sw_i^{-k_i-1}\sw_j^{-k_j-1}\cdots \sw_n^{-k_n-1}\cdot \frac{\sD_j}{\sD_i}+
  \sum_{1\leq j\leq n}^{\substack{k_j=-1\\k_{i}=1 (i\ne j)}} \prod_{k\ne j} \sw_k^{-2}\right).
\end{multline}
\end{Prop}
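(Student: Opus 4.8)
The plan is to deduce Proposition~\ref{Closed Toda for A} from the RTT-relation of Proposition~\ref{RTT-relation} exactly as the preceding unnumbered proposition was deduced, now keeping track of the second diagonal entry as well. First I would recall the standard consequence of the trigonometric RTT-relation: if $\sT(z)$ satisfies $R_\trig(z/w)(\sT(z)\otimes 1)(1\otimes \sT(w))=(1\otimes \sT(w))(\sT(z)\otimes 1)R_\trig(z/w)$, then the operators $\sT(z)_{11}$ and $\sT(w)_{11}$ commute for all $z,w$, and likewise $\sT(z)_{22}$ and $\sT(w)_{22}$ commute; moreover a short computation with the explicit entries of $R_\trig$ shows that $\sT(z)_{11}+\varepsilon\sT(w)_{22}$ and $\sT(z)_{11}+\varepsilon\sT(z')_{22}$ commute for all $z,z',w$ and any scalar $\varepsilon$. (This is the usual fact that the ``transfer matrix with twist'' $\sT(z)_{11}+\varepsilon\sT(z)_{22}$ generates a commutative family; here the twist matrix $\mathrm{diag}(1,\varepsilon)$ commutes with the diagonal blocks of $R_\trig$, which is all that is needed.) This proves part (a), except for the localization statement, which I address below.

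For part (b), I would expand the product $\sT^v_{\vec{k}}(z)=L^{v,k_n}_n(z)\cdots L^{v,k_1}_1(z)$ and read off the lowest two powers of $z$ in the $(1,1)$ and $(2,2)$ entries. Each local Lax matrix $L^{v,k}_i(z)$ is a $2\times 2$ matrix whose entries are (Laurent) polynomials in $z$ of degree at most $1$; inspecting \refe{relativistic Lax}, \refe{hatL}, \refe{new Lax matrix} one sees that the $(2,2)$ entry of $L^{v,k_i}_i(z)$ carries a factor $z$ precisely when $k_i=-1$ (it equals $\sw_i$, $0$, $-\sw_i^{-1}$ for $k_i=-1,0,1$ respectively, up to the overall $z$-shift by $z^{(k_i-1)/2}$ used in \refe{complete monodromy matrix}). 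Tracking the two leading powers amounts to the combinatorics already carried out for $\sT^v_{\vec{k}}(z)_{11}$ in the previous proposition: the $z^s$-coefficient of $\sT^v_{\vec{k}}(z)_{11}$ is $(-1)^n\sw_1\cdots\sw_n$, giving the ``$1$'' in $\hat\HH_1^{\vec k}$, while the $z^s$-coefficient of $\sT^v_{\vec{k}}(z)_{22}$ is nonzero only if every local factor contributes its lowest-degree $(2,2)$-term consistently along a single diagonal path, which happens exactly when all $k_j=1$, and then equals $(-1)^n\sw_1\cdots\sw_n\cdot\prod_j\sw_j^{-2}$; this gives the $\varepsilon\prod_j\delta_{k_j,1}\prod_j\sw_j^{-2}$ term. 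For the $z^{s+1}$-coefficient, the contribution from $\sT_{11}$ reproduces $\HH_2^{\vec k}$ of \refe{mixedH2} verbatim, and the new contribution from $\varepsilon\,\sT_{22}$ is a sum over the ways a single ``defect'' can be inserted into an otherwise-diagonal path: either one off-diagonal excursion on a contiguous block $[i,j]$ where all intermediate $k$'s force the excursion to close up (yielding $\sD_j/\sD_i$ with the stated power-of-$\sw$ prefactor, subject to $k_1=\dots=k_{i-1}=1$ and $k_{j+1}=\dots=k_n=1$), or the degenerate case $i=j$ with $k_j=-1$ (yielding $\prod_{k\ne j}\sw_k^{-2}$). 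I would organize this as a direct matrix-multiplication bookkeeping, noting that the proof is formally identical to the one for $\HH^{\vec k}_2$ in the earlier proposition, with the role of the $(1,1)$-corner now played by the $(2,2)$-corner.

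For the localization claim in (a), I would argue as for the previous proposition: each local Lax matrix lies in $\mathrm{Mat}(2,\CA_n[z^{\pm1}])$ with entries involving only $\sw_i^{\pm1}$ and $\sD_i^{\pm1}$, and in every monomial appearing in a matrix entry of the product $\sT^v_{\vec k}(z)$ the variables $\sD_i$ enter only through the combinations $\sD_i/\sD_{i+1}$ — this is the same weight-counting observation used in part (a) of the preceding proposition, since the twist $\mathrm{diag}(1,\varepsilon)$ does not introduce new $\sD$'s. Hence all coefficients of $\sT^v_{\vec k}(z)_{11}+\varepsilon\sT^v_{\vec k}(z)_{22}$ already lie in the subalgebra generated by $\{\sw_j^{\pm1},(\sD_i/\sD_{i+1})^{\pm1}\}$.

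The main obstacle I anticipate is purely combinatorial rather than conceptual: correctly enumerating which ``defect paths'' through the product of $2\times2$ matrices contribute to the $z^{s+1}$-coefficient of $\sT_{22}$, and checking that the resulting $\sw$-exponents and index constraints match \refe{closedmixedH2} exactly — in particular getting the boundary conditions $k_1=\dots=k_{i-1}=1$, $k_{j+1}=\dots=k_n=1$ right, and not double-counting the $i=j$, $k_j=-1$ degenerate term. I would handle this by writing $L^{v,k_i}_i(z)=z^{(k_i-1)/2}\bigl(A_i + z^{-?}B_i\bigr)$ in a uniform way that isolates, for each $i$, the lowest-order and next-to-lowest-order contributions to each matrix entry, and then expanding the product and collecting the two lowest total $z$-powers; the constraint that only one factor may deviate from its lowest-order diagonal term at order $z^{s+1}$ immediately produces the stated sums.
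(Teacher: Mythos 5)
Your proposal is correct and is essentially the paper's own proof, which simply observes that part (a) follows from the identity $\left[\sT^v_{\vec{k}}(z)_{11}+\varepsilon \sT^v_{\vec{k}}(z)_{22},\ \sT^v_{\vec{k}}(w)_{11}+\varepsilon \sT^v_{\vec{k}}(w)_{22}\right]=0$ implied by the RTT-relation of Proposition~\ref{RTT-relation} (your twisted-transfer-matrix argument, with the twist $\mathrm{diag}(1,\varepsilon)$ commuting with $R_\trig$, is the standard way to see this), while part (b) is the straightforward expansion you outline. The only caveat: the commutator you actually need is the equal-spectral-parameter one just displayed, not the mixed-parameter version you assert in passing (in an RTT algebra $[\sT(z)_{11},\sT(w)_{22}]\neq 0$ in general), but your parenthetical already supplies the correct sufficient statement, and likewise the localization claim should be phrased for the diagonal entries only, since the off-diagonal entries of $\sT^v_{\vec{k}}(z)$ carry one unmatched $\sD_i^{\pm 1}$.
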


\begin{proof}
(a) Follows from the equality
  $\left[\sT^v_{\vec{k}}(z)_{11}+\varepsilon \sT^v_{\vec{k}}(z)_{22},
   \sT^v_{\vec{k}}(w)_{11}+\varepsilon \sT^v_{\vec{k}}(w)_{22}\right]=0$,
which is implied by the RTT-relation of Proposition~\ref{RTT-relation}.
(b) Straightforward computation.
\end{proof}

\begin{Def}
A \textbf{type $A_{n-1}$ periodic modified quantum difference Toda system} is
the commutative subalgebra $\hat{\CT}^{\vec{k},\varepsilon}$ of $\bar{\CA}_n$.
\end{Def}

We note that $\hat{\CT}^{\vec{k},0}$ coincides with $\CT^{\vec{k}}$.

\begin{Rem}\label{affine Toda 1}
(a) In particular, $\hat{\HH}^{\vec{0}}_2$ is conjugate
(in the sense of~(\ref{Canonical Form},~\ref{conjugation A-type})) to the element of
$\bar{\CA}_n$ which corresponds under the anti-isomorphism $\bar{\CA}_n\to \CalD_v(H^\ad_{\ssl_n})$
of Section~\ref{explicit hamiltonians A} to the first hamiltonian of the type $A^{(1)}_{n-1}$
quantum difference affine Toda system of~\cite[(5.9)]{E}:
\begin{equation}\label{affine Toda A}
  \hat{\D}_1=\sum_{j=1}^n T_{2\varpi_j}-
  (v-v^{-1})^2\sum_{i=1}^{n-1} e^{-\varpi_i+\varpi_{i+1}}T_{\varpi_i+\varpi_{i+1}}-
  \kappa(v-v^{-1})^2e^{-\varpi_n+\varpi_1}T_{\varpi_n+\varpi_1}
\end{equation}
with $\kappa=(-1)^n(v-v^{-1})^{-2n}\varepsilon$.

The quantum difference affine Toda systems are defined similarly to the (finite type)
$q$-Toda systems of \emph{loc.cit.}, but starting from a quantum affine algebra
and its center at the critical level. The parameter $\kappa \in \BC(v^{1/\NN})$ is essential,
i.e., it can not be removed, cf.~\cite[Remark 1]{E}.

\noindent
(b) We expect that most of our results from this paper can be generalized to an affine setting.
In particular, the type $A_{n-1}$ periodic modified quantum difference Toda systems introduced
above should be conjugate to the type $A_{n-1}^{(1)}$ modified quantum difference affine Toda
systems, thus generalizing part (a) of the current Remark. To state this more precisely, let us
first specify what we mean by a Sevostyanov triple $(\hat{\epsilon},\hat{\n},\hat{c})$ for $\hat{\g}$
of an affine type, except $A^{(1)}_1$. Let $\alpha_0,\ldots,\alpha_n$ be simple positive roots of
$\hat{\g}$ (with $\alpha_0$ the distinguished one) and $\{a_i\}_{i=0}^n$ be the labels on the
Dynkin diagram of $\hat{\g}$ as in~\cite[Chapter IV, Table Aff]{Ka}.
Then, a Sevostyanov triple $(\hat{\epsilon},\hat{\n},\hat{c})$ is a collection of the following data:
 (1) $\hat{\epsilon}=(\epsilon_{ij})_{i,j=0}^n$ is the associated matrix of
     an orientation of $\Dyn(\hat{\g})$ as before,
 (2) $\hat{c}=(c_i)_{i=0}^n\in (\BC(v^{1/\NN})^\times)^{n+1}$ is a collection of non-zero constants,
 (3) $\hat{\n}=(\n_{ij})_{0\leq i\leq n}^{1\leq j\leq n}$ is an integer matrix satisfying
$\sd_j\n_{ij}-\sd_i\n_{ji}=\epsilon_{ij}b_{ij}$ for $1\leq i,j\leq n$ and
$\sd_j\n_{0j}+\sum_{p=1}^n\sd_p\n_{jp}\frac{a_p}{a_0}=\epsilon_{0j}b_{0j}$ for $1\leq j\leq n$.

In particular, given a pair of type $A^{(1)}_{n-1}\ (n>2)$ Sevostyanov triples
$(\hat{\epsilon}^\pm,\hat{\n}^\pm,\hat{c}^\pm)$, the corresponding difference operator
$\HH(\hat{\epsilon}^\pm,\hat{\n}^\pm,\hat{c}^\pm)\in \bar{\CA}_n$ should depend
(up to algebra automorphisms of $\bar{\CA}_n$) only on
$\vec{\epsilon}=(\epsilon_{n-1},\ldots,\epsilon_0)\in \{-1,0,1\}^n$, where
$\epsilon_i:=\frac{\epsilon^+_{i,i+1}-\epsilon^-_{i,i+1}}{2}$ with indices considered modulo $n$.
The resulting $3^n$ difference operators $\HH(\hat{\epsilon}^\pm,\hat{\n}^\pm,\hat{c}^\pm)$
should be conjugate to the images of $3^n$ hamiltonians $\hat{\HH}^{\vec{k}}_2$ in $\bar{\CA}_n$.
We have verified this result for $\epsilon^\pm_{0,1}=\epsilon^\pm_{0,n-1}=1$.

\noindent
(c) The type $A^{(1)}_1$ deserves a special treatment, since it is the only affine type
for which the analogue of Lemma~\ref{Sev twist} fails to hold (as $a_{01}=a_{10}=-2$).
Instead, such characters $\chi^\pm$ exist if and only if
$\n^\pm_{01}+\n^\pm_{11}\in \{-2,0,2\}$. Let $\HH(\hat{\n}^\pm,\hat{c}^\pm)$ be the element
of $\bar{\CA}_2$ which corresponds to the first hamiltonian of the type $A^{(1)}_1$ modified
quantum difference affine Toda system associated with the pair $(\hat{\n}^\pm,\hat{c}^\pm)$.
We expect that $\HH(\hat{\n}^\pm,\hat{c}^\pm)$ depends (up to algebra automorphisms of $\bar{\CA}_2$)
only on $\frac{\n^+_{01}+\n^+_{11}-\n^-_{01}-\n^-_{11}}{2}\in \{\pm 2,\pm 1,0\}$.
On the other hand, it is easy to see that the equivalence class of the difference
operator $\hat{\HH}^{(k_2,k_1)}_2$ depends only on $k_2-k_1\in \{\pm 2,\pm 1,0\}$.
We expect that the resulting five difference operators in $\bar{\CA}_2$ are conjugate
to the aforementioned five difference operators $\HH(\hat{\n}^\pm,\hat{c}^\pm)$.
\end{Rem}


\subsection{Lax matrix realization in type $C$}\label{section C type}
\

Motivated by the construction of the previous section, we provide a Lax matrix realization
of the type $C$ modified quantum difference Toda systems.

In addition to $L^{v,k}_i(z)\ (k=\pm 1,0)$ of~(\ref{relativistic Lax}--\ref{new Lax matrix}),
consider three more \emph{(local) Lax matrices}:
\begin{equation}\label{Lax conj -1}
  \bar{L}^{v,-1}_i(z):=\left(%
  \begin{array}{cc}
    \sw_i-\sw_i^{-1}z^{-1} & \sw_i^{-1}\sD_i \\
    -\sw_i^{-1}\sD_i^{-1}z^{-1} & \sw_i^{-1} \\
  \end{array}%
  \right)\in {\on{Mat}}(2,z^{-1}\CA_n[z]),
\end{equation}
\begin{equation}\label{Lax conj 0}
  \bar{L}^{v,0}_i(z):=
  \left(%
  \begin{array}{cc}
    \sw_i z^{1/2}-\sw_i^{-1} z^{-1/2} & \sD_iz^{1/2} \\
    -\sD_i^{-1}z^{-1/2} & 0 \\
  \end{array}%
  \right)\in {\on{Mat}}(2,z^{-1/2}\CA_n[z]),
\end{equation}
\begin{equation}\label{Lax conj 1}
  \bar{L}^{v,1}_i(z):=\left(%
  \begin{array}{cc}
    \sw_iz-\sw_i^{-1} & \sw_i\sD_iz \\
    -\sw_i\sD_i^{-1} & -\sw_i \\
  \end{array}%
  \right)\in {\on{Mat}}(2,\CA_n[z]).
\end{equation}
For any $\vec{k}=(k_n,\ldots,k_1)\in \{-1,0,1\}^n$, define the
{\em double mixed complete monodromy matrix}
\begin{equation}\label{double complete monodromy matrix}
  \BT^v_{\vec{k}}(z):=
  \bar{L}^{v,-k_1}_1(z)\cdots \bar{L}^{v,-k_n}_{n}(z)L^{v,k_n}_n(z)\cdots L^{v,k_1}_{1}(z).
\end{equation}

Let us summarize the key properties of the double mixed complete monodromy matrices:

\begin{Prop}
Fix $\vec{k}=(k_n,\ldots,k_1)\in \{-1,0,1\}^n$.

\noindent
(a) $\BT^v_{\vec{k}}(z)$ satisfies the trigonometric RTT-relation:
\begin{equation*}
  R_\trig(z/w)\left(\BT^v_{\vec{k}}(z)\otimes 1\right)\left(1\otimes \BT^v_{\vec{k}}(w)\right)=
  \left(1\otimes \BT^v_{\vec{k}}(w)\right)\left(\BT^v_{\vec{k}}(z)\otimes 1\right)R_\trig(z/w).
\end{equation*}

\noindent
(b) The coefficients in powers of $z$ of the matrix element $\BT^v_{\vec{k}}(z)_{11}$
generate a commutative subalgebra of $\CA_n$. Moreover, they belong to the subalgebra
$\CalC_n$ of $\CA_n$, introduced in Section~\ref{explicit hamiltonians C}.

\noindent
(c) We have
  $\BT^v_{\vec{k}}(z)_{11}=z^{-n}-\BH^{\vec{k}}_2z^{-n+1}+\mathrm{higher\ powers\ of}\ z$.
The hamiltonian $\BH^{\vec{k}}_2$ equals
\begin{multline}\label{doublemixedH2}
  \BH^{\vec{k}}_2=
  \sum_{i=1}^n (\sw_i^2+\sw_i^{-2})+
  \sum_{i=1}^{n-1} \left(\sw_i^{-k_i-1}\sw_{i+1}^{-k_{i+1}-1}+\sw_i^{-k_i+1}\sw_{i+1}^{-k_{i+1}+1}\right)
  \cdot \frac{\sD_i}{\sD_{i+1}}+v^{-k_n}\sw_n^{-2k_n}\cdot \sD_n^2+\\
  \sum_{1\leq i<j< n}^{k_{i+1}=\ldots=k_j=1} \sw_i^{-k_i-1}\cdots \sw_{j+1}^{-k_{j+1}-1}\cdot \frac{\sD_i}{\sD_{j+1}}+
  \sum_{1\leq i<n}^{k_{i+1}=\ldots=k_n=1} v^{-1}\sw_i^{-k_i-1}\cdots \sw_n^{-k_n-1}\cdot \sD_i\sD_n+\\
  \sum_{1\leq i<j< n}^{k_{i+1}=\ldots=k_j=-1} \sw_i^{-k_i+1}\cdots \sw_{j+1}^{-k_{j+1}+1}\cdot \frac{\sD_i}{\sD_{j+1}}+
  \sum_{1\leq i<n}^{k_{i+1}=\ldots=k_n=-1} v \sw_i^{-k_i+1}\cdots \sw_n^{-k_n+1}\cdot \sD_i\sD_n.
\end{multline}
\end{Prop}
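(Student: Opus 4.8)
The plan is to reduce all three assertions to the structure of $\BT^v_{\vec k}(z)$ as an ordered product of $2\times2$ Lax matrices over commuting $q$-Weyl subalgebras, following \cite[11(ii,iii)]{FT} (cf.\ the proof of Proposition~\ref{RTT-relation}). For part (a), I would first note that the entries of the $i$-th local Lax matrices $L^{v,k}_i(z),\bar L^{v,m}_i(z)$ lie in the subalgebra $\CA_n^{(i)}:=\BC(v^{1/\NN})\langle\sw_i^{\pm1},\sD_i^{\pm1}\rangle$, and that $\CA_n^{(i)}$ commutes elementwise with $\CA_n^{(j)}$ for $i\ne j$; moving commuting factors past one another then rewrites
\[ \BT^v_{\vec k}(z)=M_n(z)\,M_{n-1}(z)\cdots M_1(z),\qquad M_i(z):=\bar L^{v,-k_i}_i(z)\,L^{v,k_i}_i(z)\in\mathrm{Mat}\bigl(2,\CA_n^{(i)}[z,z^{-1}]\bigr). \]
By the standard comultiplication property for RTT algebras --- a product of Lax matrices over pairwise-commuting algebras satisfies the RTT relation as soon as each factor does --- it then suffices to prove that each $M_i(z)$ satisfies the trigonometric RTT relation with $R_\trig$. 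Here $L^{v,k}_i(z)$ does so by \cite[11(ii)]{FT} (or by a direct computation), and $\bar L^{v,m}_i(z)$ is the image of $L^{v,m}_i(z)$ under the algebra automorphism $\sw_i\mapsto\sw_i^{-1},\ \sD_i\mapsto\sD_i^{-1}$ of $\CA_n^{(i)}$, hence satisfies it as well. One is then left to check, for each $k\in\{-1,0,1\}$, the mixed relation
\[ R_\trig(z/w)\bigl(\bar L^{v,-k}_i(z)\otimes1\bigr)\bigl(1\otimes L^{v,k}_i(w)\bigr)=\bigl(1\otimes L^{v,k}_i(w)\bigr)\bigl(\bar L^{v,-k}_i(z)\otimes1\bigr)R_\trig(z/w), \]
which together with the two individual RTT relations forces the RTT relation for $M_i=\bar L^{v,-k}_i L^{v,k}_i$ by a short rearrangement; alternatively one verifies the RTT relation for $M_i(z)$ directly. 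This finite verification for the local blocks is the only genuinely new input beyond \cite{FT}, and I expect it to be the main --- though essentially routine --- obstacle.

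Part (b) would then follow quickly. Commutativity of the coefficients (in powers of $z$) of $\BT^v_{\vec k}(z)_{11}$ is obtained from part (a) by reading off the component of the RTT relation in which all four auxiliary indices equal $1$ and using that the corresponding entry of $R_\trig$ is $1$, which gives $[\BT^v_{\vec k}(z)_{11},\BT^v_{\vec k}(w)_{11}]=0$; hence these coefficients generate a commutative subalgebra of $\CA_n$. To see that they lie in $\CalC_n$, I would expand $\BT^v_{\vec k}(z)_{11}$ as a sum over paths $1=s_0,s_1,\dots,s_{2n}=1$ in $\{1,2\}$, the $t$-th Lax factor contributing its $(s_{t-1},s_t)$-entry. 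Inspecting \eqref{relativistic Lax}--\eqref{new Lax matrix} and \eqref{Lax conj -1}--\eqref{Lax conj 1}, every diagonal entry of every local Lax matrix has $\sD$-degree $0$ while every off-diagonal entry has $\sD$-degree $\pm1$; hence the total $\sD$-degree of the monomial attached to a path is congruent modulo $2$ to the number of indices $t$ with $s_{t-1}\ne s_t$, which is even because the path returns to its starting state. Since the $\sD$-sublattice of $\CalC_n$ equals $\{(b_i)_{i=1}^n:\sum_i b_i\in2\BZ\}$ (it is generated by the vectors $e_i-e_{i+1}$ and $2e_n$) and $\CalC_n$ contains all $\sw_j^{\pm1}$, it follows that all coefficients of $\BT^v_{\vec k}(z)_{11}$ belong to $\CalC_n$.

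For part (c), I would compute the expansion explicitly. Writing $\bar L^{v,-k_i}_i(z)=z^{(-k_i-1)/2}(\bar A_i+z\bar B_i)$ and $L^{v,k_i}_i(z)=z^{(k_i-1)/2}(A_i+zB_i)$, where $\bar A_i,A_i$ have vanishing $(1,2)$-entry and $\bar B_i,B_i$ have vanishing $(2,1)$-entry, one gets $\BT^v_{\vec k}(z)=z^{-n}(\bar A_1+z\bar B_1)\cdots(\bar A_n+z\bar B_n)(A_n+zB_n)\cdots(A_1+zB_1)$. The $z^{-n}$-coefficient of the $(1,1)$-entry comes only from the all-diagonal path and equals $\prod_i(-\sw_i^{-1})\prod_i(-\sw_i)=1$. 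The $z^{-n+1}$-coefficient is the sum over paths that use exactly one ``$B$-type'' factor: either the path stays in state $1$ throughout (contributing $-\sum_j(\sw_j^2+\sw_j^{-2})$ in total), or it passes to state $2$ at the unique $B$-type factor, traverses a contiguous block of factors whose $(2,2)$-entries are nonzero --- precisely the condition imposing the sign requirements $k_{i+1}=\dots=k_j=\pm1$ in \eqref{doublemixedH2} --- and returns to state $1$ at a later factor via its $(2,1)$-entry. Collecting the corresponding monomials from the explicit Lax-matrix entries, and separating the cases in which the $B$-type factor and the returning factor both lie in the $\bar L$-block, both lie in the $L$-block, or form the adjacent pair $\bar L^{v,-k_n}_n(z),L^{v,k_n}_n(z)$ at site $n$, reproduces respectively the sums carrying $\sw^{-k+1}$-type exponents, the sums carrying $\sw^{-k-1}$-type exponents, and the $\sD_n^2$- and $\sD_i\sD_n$-terms of \eqref{doublemixedH2}. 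This last step is routine bookkeeping, entirely parallel to the computation of $\D_1$ in Section~\ref{explicit hamiltonians C} and to \cite[11(iii)]{FT}.
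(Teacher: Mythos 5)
Your strategy for part (a) has two concrete breakdowns. First, the regrouping $\BT^v_{\vec k}(z)=M_n(z)\cdots M_1(z)$ with $M_i=\bar L^{v,-k_i}_iL^{v,k_i}_i$ is not available: entries of Lax matrices at distinct sites commute, but the $2\times 2$ matrices themselves do not, so you cannot move $\bar L^{v,-k_1}_1(z)$ past the site-$2,\ldots,n$ block; already for $n=2$, $\bar L^{v,-k_1}_1\bar L^{v,-k_2}_2L^{v,k_2}_2L^{v,k_1}_1\neq \bar L^{v,-k_2}_2L^{v,k_2}_2\cdot\bar L^{v,-k_1}_1L^{v,k_1}_1$ in general. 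Second, and more seriously, the local statement you defer as routine is false: $M_i(z)=\bar L^{v,-k_i}_i(z)L^{v,k_i}_i(z)$ does \emph{not} satisfy the trigonometric RTT relation. For $k_i=0$ one finds $(M_i(z))_{12}=(\sw_iz^{1/2}-\sw_i^{-1}z^{-1/2})\sD_i^{-1}z^{1/2}=\sw_i\sD_i^{-1}z-\sw_i^{-1}\sD_i^{-1}$, and $[\sw_i\sD_i^{-1},\sw_i^{-1}\sD_i^{-1}]=(v-v^{-1})\sD_i^{-2}\neq 0$, whereas the $(e_1\otimes e_1,\,e_2\otimes e_2)$-component of the RTT relation (the relevant entries of $R_\trig$ being $1$) forces $[T_{12}(z),T_{12}(w)]=0$ for any solution. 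The same computation applies to $\BT^v_{\vec k}(z)$ itself (for $n=1$ it \emph{is} $M_1$, and for larger $n$, e.g.\ $\vec k=\vec 0$, the coefficients of the two highest powers of $z$ in $\BT^v_{\vec k}(z)_{12}$ already fail to commute), so no variant of the coproduct argument — neither your mixed relation nor a direct check — can establish the full matrix relation of part (a); only its diagonal consequences, such as $[\BT^v_{\vec k}(z)_{11},\BT^v_{\vec k}(w)_{11}]=0$, can survive. Be aware that the paper's own one-line proof of (a) rests on the same unavailable principle (a product of RTT solutions satisfies RTT only when entries of distinct factors commute, which fails for the same-site pairs $\bar L_i,L_i$), so this is not a place where a small patch of your write-up suffices: the commutativity of the coefficients of $\BT^v_{\vec k}(z)_{11}$, which is all that parts (b), (c) and the sequel actually use, has to come from a different mechanism — for instance from a reflection-equation (boundary RTT) type relation, for which the reversed ordering of the $\bar L$-factors in \eqref{double complete monodromy matrix} is precisely tailored — or from a direct verification.

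Because of this, your part (b) inherits the gap: the extraction of $[\BT_{11}(z),\BT_{11}(w)]=0$ from the all-indices-equal-to-$1$ component is a correct implication, but its hypothesis (a) is not established by your argument. The remaining ingredients are sound: the parity argument (each closed path $1\to\cdots\to 1$ uses an even number of off-diagonal entries, each of $\sD$-degree $\pm1$, and the $\sD$-exponent lattice of $\CalC_n$ is exactly the even-sum sublattice) correctly gives membership in $\CalC_n$, and your outline of (c) — factoring each local Lax matrix as $z^{\bullet}(A+zB)$ with triangular $A,B$, the all-$A$ path producing $z^{-n}$ with coefficient $1$, and the single-$B$ paths assembling $-\BH^{\vec k}_2$ — is the same straightforward computation the paper invokes and does reproduce \eqref{doublemixedH2}.
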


\begin{proof}
(a) Note that $\bar{L}^{v,k}_i(z)$ is obtained from $L^{v,k}_i(z)$ by applying
the automorphism of $\CA_n$ which maps
  $\sw_i^{\pm 1}\mapsto \sw_i^{\mp 1}, \sD_i^{\pm 1}\mapsto \sD_i^{\mp 1}$.
Hence, each of them satisfies the trigonometric RTT-relation.
Thus, an arbitrary product of $L^{v,k}_i(z)$ and $\bar{L}^{v,k}_i(z)$
also satisfies the trigonometric RTT-relation.
(b) This is an immediate corollary of (a). (c) Straightforward computation.
\end{proof}

Let $\CT^{\vec{k}}$ denote the commutative subalgebra of $\CalC_n$ generated by
the coefficients in powers of $z$ of the matrix element $\BT^v_{\vec{k}}(z)_{11}$.
The main result of this section identifies $\CT^{\vec{k}}$ with the pre-images
$\widetilde{\CT}(\epsilon^\pm,\n^\pm,c^\pm)$ of $\CT(\epsilon^\pm,\n^\pm,c^\pm)$
in type $C_n$ under the anti-isomorphism $\CalC_n\to \CalD_v(H^\ad_{\mathfrak{sp}_{2n}})$
of Section~\ref{explicit hamiltonians C}. This provides a Lax matrix realization of
the type $C$ modified quantum difference Toda systems.

\begin{Thm}\label{Lax for type C}
Given a pair of type $C_n$ Sevostyanov triples $(\epsilon^\pm,\n^\pm,c^\pm)$ and
$\vec{k}\in \{-1,0,1\}^n$ satisfying
$k_{i+1}=\frac{\epsilon^+_{i,i+1}-\epsilon^-_{i,i+1}}{2}$ for any $1\leq i\leq n-1$,
the following holds:

\noindent
(a) There is an algebra automorphism of $\CalC_n$ which maps
$\HH(\epsilon^\pm,\n^\pm,c^\pm)$ to $\BH^{\vec{k}}_2$.

\noindent
(b) The automorphism of part (a) maps $\widetilde{\CT}(\epsilon^\pm,\n^\pm,c^\pm)$ to $\CT^{\vec{k}}$.
\end{Thm}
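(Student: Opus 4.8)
The plan is to follow closely the strategy behind Proposition~\ref{bound for C} and the proofs of Theorems~\ref{Main Thm A},~\ref{Main Thm B},~\ref{Lax for type A}: part (a) amounts to a direct comparison of the explicit formula \eqref{H for C-type} for $\HH(\epsilon^\pm,\n^\pm,c^\pm)$ with the explicit formula \eqref{doublemixedH2} for $\BH^{\vec{k}}_2$, while part (b) is deduced from the maximality statement of Theorem~\ref{Main Thm B}(b) together with the fact that the monodromy matrix $\BT^v_{\vec{k}}(z)$ produces sufficiently many commuting hamiltonians.

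For part (a), I would first invoke Proposition~\ref{bound for C} to reduce the claim to a single convenient pair of type $C_n$ Sevostyanov triples realizing the given $\vec{\epsilon}$ (equivalently, realizing the prescribed $k_2,\ldots,k_n$ via $k_{i+1}=\epsilon_i=\tfrac12(\epsilon^+_{i,i+1}-\epsilon^-_{i,i+1})$): concretely, orientations $\Or^\pm$ with $(\epsilon^+_{i,i+1},\epsilon^-_{i,i+1})=(1,-1),(-1,1),(1,1)$ according as $k_{i+1}=1,-1,0$, a ``canonical'' upper-triangular solution $\n^\pm$ of $\sd_j\n^\pm_{ij}-\sd_i\n^\pm_{ji}=\epsilon^\pm_{ij}b_{ij}$ (as in the proof of Proposition~\ref{bound for A}), and $c^\pm_i=1$. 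With this choice the constants $\m_{ik}$ collapse to explicit values and the $\sD$-parts of the monomials in \eqref{H for C-type} become precisely the combinations $\sD_i/\sD_{i+1},\ \sD_n^2,\ \sD_i\sD_n$ occurring in \eqref{doublemixedH2}. I would then exhibit the required automorphism $\Phi$ of $\CalC_n$ as a composition of a rescaling $\sw_j\mapsto\lambda_j\sw_j,\ (\sD_i/\sD_{i+1})\mapsto\mu_i(\sD_i/\sD_{i+1}),\ \sD_n^2\mapsto\mu_n\sD_n^2$ (absorbing all scalars and powers of $v$) with a monomial shift $(\sD_i/\sD_{i+1})\mapsto(\sD_i/\sD_{i+1})\prod_k\sw_k^{t_{ik}}$, $\sD_n^2\mapsto\sD_n^2\prod_k\sw_k^{t_{nk}}$ whose exponent matrix satisfies the symmetry constraint making $\Phi$ well defined (analogous to the defining relation of a Sevostyanov triple), checking term by term that $\Phi(\HH)=\BH^{\vec{k}}_2$. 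One further point needs attention: $\vec k$ carries a component $k_1$ not constrained by $\vec\epsilon$, so, exactly as in the type $A$ statement that $\mathsf H^{\vec k}_2$ is conjugate to $\mathsf H^{\vec k'}_2$, the dependence on $k_1$ must be removed by an extra conjugation by a power of $\sw_1$ (the left endpoint of the monodromy chain can be gauged away), and one verifies the composite is still an automorphism of $\CalC_n$.

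For part (b), observe that the automorphism $\Phi$ built above is upper-triangular with respect to the $\sD$-grading, hence preserves the subalgebra $\CalC^\le_n\subset\CalC_n$ corresponding, under the anti-isomorphism of Section~\ref{explicit hamiltonians C}, to $\CalD^\le_v(H^\ad_{\mathfrak{sp}_{2n}})$. By Theorem~\ref{Main Thm B}(b), transported through this anti-isomorphism, $\widetilde\CT(\epsilon^\pm,\n^\pm,c^\pm)$ is the centralizer of $\HH(\epsilon^\pm,\n^\pm,c^\pm)$ in $\CalC^\le_n$; applying $\Phi$ and using part (a), $\Phi(\widetilde\CT(\epsilon^\pm,\n^\pm,c^\pm))$ is the centralizer of $\BH^{\vec k}_2$ in $\CalC^\le_n$. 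Since $\CT^{\vec k}$ is commutative (RTT-relation), contains $\BH^{\vec k}_2$, and lies in $\CalC^\le_n$ (in \eqref{doublemixedH2} the variables $\sD_i$ appear only through $\sD_i/\sD_{i+1}$, $\sD_n^2$, $\sD_i\sD_n$), we obtain $\CT^{\vec k}\subseteq\Phi(\widetilde\CT(\epsilon^\pm,\n^\pm,c^\pm))$. For the reverse inclusion I would run the Etingof-type argument from the proofs of Theorems~\ref{Main Thm A},~\ref{Main Thm B}: the higher hamiltonians $\Phi(\D_i)$ are pinned down among elements of $\CalC^\le_n$ commuting with $\BH^{\vec k}_2$ by their leading terms, and those leading terms are matched by the coefficients of $\BT^v_{\vec k}(z)_{11}$ (equivalently, by the fused transfer matrices attached to the fundamental corepresentations of $\mathfrak{sp}_{2n}$), whence $\Phi(\D_i)\in\CT^{\vec k}$ for all $i$ and thus $\Phi(\widetilde\CT)=\CT^{\vec k}$.

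I expect the main obstacle to be twofold. First, the bookkeeping in part (a): formula \eqref{H for C-type} has many families of terms (the $\pm1$ and $\mp1$ chains, and the boundary $\sD_i\sD_n$ and $\sD_n^2$ contributions), and one must match all powers of $v$ and all $\sw$-exponents against \eqref{doublemixedH2} \emph{simultaneously}, while keeping $\Phi$ a single honest automorphism of $\CalC_n$ for every admissible $\vec k$ (including the gauge removing the free parameter $k_1$). Second, the reverse inclusion in part (b): showing that $\CT^{\vec k}$ is the \emph{full} centralizer of $\BH^{\vec k}_2$ in $\CalC^\le_n$ — i.e. that the RTT construction already exhausts the Toda system rather than only a subalgebra of it — is the conceptual crux, and is where the leading-order argument and the fusion description of the higher hamiltonians are indispensable.
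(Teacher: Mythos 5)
Your proposal is correct and essentially reproduces the paper's intended argument: the paper proves Theorem~\ref{Lax for type C} by declaring it completely analogous to Theorem~\ref{Lax for type A} in Appendix~\ref{Proof of Lax for A}, namely part (a) via an automorphism of $\CalC_n$ of exactly your rescaling-plus-monomial-shift type (conjugation by a quadratic exponential in the $\log\sw_j$'s), determined by a linear system coming from matching the $\sD_i/\sD_{i+1}$- and $\sD_n^2$-terms and verified on the longer chains by induction, and part (b) via the Etingof simple-spectrum argument on $W_\lambda$ identifying the higher coefficients of $\BT^v_{\vec{k}}(z)_{11}$ with the conjugated hamiltonians through their $\sD$-free (diagonal) parts. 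Two cosmetic remarks: the free parameter $k_1$ is absorbed directly into the exponent data of that automorphism (a literal conjugation by a power of $\sw_1$ only rescales coefficients and cannot shift $\sw_1$-exponents, but your general $\Phi$ with symmetric exponent matrix does the job, just as the unconstrained components of $\vec{k}$ enter the right-hand side of the linear system in the type $A$ proof), and the paper obtains both inclusions in part (b) at once by matching generators, so the appeal to Theorem~\ref{Main Thm B}(b) is a harmless but unnecessary detour.
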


The proof of Theorem~\ref{Lax for type C} is completely analogous to that of
Theorem~\ref{Lax for type A} given in Appendix~\ref{Proof of Lax for A},
see also Remark~\ref{Remark on other types}, we leave details to the interested reader.

\begin{Rem}\label{Folding}
Recall that any non simply laced simple Lie algebra $\g'$ admits a
\emph{folding realization} $\g'\simeq \g^\sigma$, where $\g$ is a simply laced
Lie algebra endowed with an outer automorphism $\sigma$ of a finite order
(arising as an automorphism of the corresponding Dynkin diagram). This observation
allows to relate the classical Toda system of $\g'$ to the Toda system of $\g$,
see~\cite{OT}. The above construction of the modified quantum difference Toda systems
in types $C_n$ and $A_{2n-1}$ via Lax matrices exhibits the former as a folding
of the latter, once we require that the orientations $\epsilon^\pm$ of
$\Dyn(\ssl_{2n})=A_{2n-1}$ satisfy $\epsilon^\pm_{i,i+1}=-\epsilon^\pm_{2n-i-1,2n-i}$
for all $i$. However, such a naive approach fails to work for the pair $(B_n,D_{n+1})$.
It would be interesting to understand the explicit relation. Let us warn the interested
reader that the folding for quantum groups is more elaborate than in the classical setup,
see~\cite{BG}.
\end{Rem}

Analogously to the type $A$ case, the above construction admits a standard one-parameter
deformation of commutative subalgebras of $\CalC_n$ as provided by the following result:

\begin{Prop}\label{Closed Toda for C}
(a) For any $\varepsilon\in \BC(v^{1/\NN})$, the coefficients in powers of $z$ of
the linear combination $\BT^v_{\vec{k}}(z)_{11}+\varepsilon \BT^v_{\vec{k}}(z)_{22}$
generate a commutative subalgebra of $\CA_n$. Moreover, they belong to the subalgebra
$\CalC_n$ of $\CA_n$.

\noindent
(b) We have
  $\BT^v_{\vec{k}}(z)_{11}+\varepsilon \BT^v_{\vec{k}}(z)_{22}=
   z^{-n}-\hat{\BH}^{\vec{k}}_2 z^{-n+1}+\mathrm{higher\ powers\ of}\ z$.
The hamiltonian $\hat{\BH}^{\vec{k}}_2$ is given by the following formula:
\begin{multline}\label{closedmixedH2 type C}
  \hat{\BH}^{\vec{k}}_2=
  \sum_{i=1}^n (\sw_i^2+\sw_i^{-2})+
  \sum_{i=1}^{n-1} \left(\sw_i^{-k_i-1}\sw_{i+1}^{-k_{i+1}-1}+\sw_i^{-k_i+1}\sw_{i+1}^{-k_{i+1}+1}\right)
  \cdot \frac{\sD_i}{\sD_{i+1}}+v^{-k_n}\sw_n^{-2k_n}\cdot \sD_n^2+\\
  \sum_{1\leq i<j< n}^{k_{i+1}=\ldots=k_j=1} \sw_i^{-k_i-1}\cdots \sw_{j+1}^{-k_{j+1}-1}\cdot \frac{\sD_i}{\sD_{j+1}}+
  \sum_{1\leq i<n}^{k_{i+1}=\ldots=k_n=1} v^{-1}\sw_i^{-k_i-1}\cdots \sw_n^{-k_n-1}\cdot \sD_i\sD_n+\\
  \sum_{1\leq i<j< n}^{k_{i+1}=\ldots=k_j=-1} \sw_i^{-k_i+1}\cdots \sw_{j+1}^{-k_{j+1}+1}\cdot \frac{\sD_i}{\sD_{j+1}}+
  \sum_{1\leq i<n}^{k_{i+1}=\ldots=k_n=-1} v \sw_i^{-k_i+1}\cdots \sw_n^{-k_n+1}\cdot \sD_i\sD_n+\\
  \varepsilon\left(\delta_{n,1}(1-\delta_{k_1,0})\sw_1^{-2k_1}+v^{k_1}\sw_1^{-2k_1}\cdot \sD_1^{-2}+
  \prod_{i=1}^n \delta_{k_i,1}\cdot \prod_{i=1}^n \sw_i^{-2}\cdot \frac{\sD_n}{\sD_1}+
  \prod_{i=1}^n \delta_{k_i,-1}\cdot \prod_{i=1}^n \sw_i^{2}\cdot \frac{\sD_n}{\sD_1}+\right.\\
  \left.\sum_{1\leq i<n}^{k_1=\ldots=k_i=1} v \sw_1^{-k_1-1}\cdots \sw_{i+1}^{-k_{i+1}-1}\cdot \frac{1}{\sD_1\sD_{i+1}}+
  \sum_{1\leq i<n}^{k_1=\ldots=k_i=-1} v^{-1} \sw_1^{-k_1+1}\cdots \sw_{i+1}^{-k_{i+1}+1}\cdot \frac{1}{\sD_1\sD_{i+1}}\right).
\end{multline}
\end{Prop}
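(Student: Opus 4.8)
The plan is to mirror the proof of Proposition~\ref{Closed Toda for A}, replacing the complete monodromy matrix by the double mixed complete monodromy matrix $\BT^v_{\vec{k}}(z)$ and invoking the trigonometric RTT-relation for $\BT^v_{\vec{k}}(z)$ established in part (a) of the preceding Proposition. For part (a), the point is that the diagonal twist $K:=\mathrm{diag}(1,\varepsilon)$ commutes with $R_\trig(z/w)$: indeed $R_\trig$ preserves the weight grading on $\BC^2\otimes\BC^2$, so $K\otimes K$ acts as a scalar on each of its (at most two-dimensional) weight subspaces and hence commutes with every entry of $R_\trig$. Multiplying the RTT-relation by $K\otimes K$, commuting it past $R_\trig(z/w)$ using $[R_\trig,K\otimes K]=0$ and the cyclicity of the trace, and then tracing over the two auxiliary copies of $\BC^2$, one gets the familiar identity
\begin{equation*}
  \bigl[\BT^v_{\vec{k}}(z)_{11}+\varepsilon\,\BT^v_{\vec{k}}(z)_{22},\ \BT^v_{\vec{k}}(w)_{11}+\varepsilon\,\BT^v_{\vec{k}}(w)_{22}\bigr]=0 ,
\end{equation*}
whence the coefficients in powers of $z$ of $\BT^v_{\vec{k}}(z)_{11}+\varepsilon\,\BT^v_{\vec{k}}(z)_{22}$ pairwise commute. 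That they lie in $\CalC_n$ follows exactly as for $\BT^v_{\vec{k}}(z)_{11}$ in part (b) of the preceding Proposition: in any matrix element of the product~(\ref{double complete monodromy matrix}) the variables $\sD_i^{\pm1}$ $(1\le i<n)$ enter only through ``excursions'' of the auxiliary index into its lower state, and each such excursion contributes a balanced monomial — one of $\sD_a/\sD_b$, $\sD_a\sD_b$ or $\sD_n^{\pm2}$ — all of which lie in $\CalC_n$; the $(2,2)$ entry has the same $\sD$-character as the $(1,1)$ entry.

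For part (b), I would expand the product~(\ref{double complete monodromy matrix}) and collect powers of $z$. The coefficient of $z^{-n}$ and of $z^{-n+1}$ in $\BT^v_{\vec{k}}(z)_{11}$ are already computed — they are $1$ and $-\BH^{\vec{k}}_2$, respectively — by part (c) of the preceding Proposition, so it remains to check that $\BT^v_{\vec{k}}(z)_{22}$ carries no $z^{-n}$ term and that its coefficient of $z^{-n+1}$ is the negative of the parenthesised expression multiplying $\varepsilon$ in~(\ref{closedmixedH2 type C}). This is a direct, if lengthy, computation: the extra terms are exactly the \emph{wrap-around} contributions, i.e.\ the auxiliary-space paths through the $2n$ local Lax matrices that pass through the $(2,2)$-corner of the outermost factors $L^{v,k_1}_1(z)$ and $\bar{L}^{v,-k_1}_1(z)$. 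These produce the runs $k_1=\ldots=k_i=\pm1$ with Cartan factor $(\sD_1\sD_{i+1})^{-1}$, the term $v^{k_1}\sw_1^{-2k_1}\sD_1^{-2}$, the two fully periodic terms $\prod_i\delta_{k_i,\pm1}\cdot\prod_i\sw_i^{\mp2}\cdot\sD_n/\sD_1$, and, in rank one, the degenerate term $\delta_{n,1}(1-\delta_{k_1,0})\sw_1^{-2k_1}$. Assembling these with $\BH^{\vec{k}}_2$ yields~(\ref{closedmixedH2 type C}).

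The main obstacle is the bookkeeping in part (b): one must enumerate all the auxiliary-space paths through the $2n$ Lax matrices that contribute to the $z^{-n+1}$ coefficient of $\BT^v_{\vec{k}}(z)_{22}$, correctly track the powers of $v$ generated when the Cartan factors are transported past the shift operators via $\sD_i\sw_j=v^{\delta_{i,j}\sd_i}\sw_j\sD_i$, and separately dispose of the degenerate low-rank configurations ($n=1$, or a single $k_j$ differing from all the others). By contrast, part (a) is routine once one observes the commutation of the diagonal twist with $R_\trig$, and the membership in $\CalC_n$ is the same $\sD$-weight argument already used for $\BT^v_{\vec{k}}(z)_{11}$.
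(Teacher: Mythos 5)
Your proposal is correct and follows essentially the same route as the paper: the paper proves the type $A$ analogue (Proposition~\ref{Closed Toda for A}) in one line by citing the RTT-relation for part (a) and calling part (b) a straightforward computation, and leaves the type $C$ case to the same argument, which is exactly what you carry out (your observation that $[R_\trig,K\otimes K]=0$ with $K=\mathrm{diag}(1,\varepsilon)$ is the standard justification implicit in the paper's appeal to the RTT-relation). The only substantive work is the bookkeeping of auxiliary-space paths for the $z^{-n}$ and $z^{-n+1}$ coefficients of $\BT^v_{\vec{k}}(z)_{22}$, which you correctly identify and which matches the paper's ``straightforward computation.''
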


\begin{Def}
A \textbf{type $C_n$ periodic modified quantum difference Toda system} is
the commutative subalgebra $\hat{\CT}^{\vec{k},\varepsilon}$ of $\CalC_n$.
\end{Def}

We note that $\hat{\CT}^{\vec{k},0}$ coincides with $\CT^{\vec{k}}$.

\begin{Rem}\label{affine Toda 2}
(a)  In particular, $\hat{\BH}^{\vec{0}}_2$ is conjugate to the element of $\CalC_n$
which corresponds under the anti-isomorphism $\CalC_n\to \CalD_v(H^\ad_{\mathfrak{sp}_{2n}})$
of Section~\ref{explicit hamiltonians C} to the first hamiltonian of the type $C^{(1)}_{n}$
quantum difference affine Toda system of~\cite{E} (cf.~Remark~\ref{affine Toda 1}(a)):
\begin{multline}\label{affine Toda C}
  \hat{\D}_1=\sum_{i=1}^n (T_{2\varpi_i}+T_{-2\varpi_i})-
  (v-v^{-1})^2\sum_{i=1}^{n-1}  e^{-\varpi_i+\varpi_{i+1}}\left(T_{\varpi_i+\varpi_{i+1}}+T_{-\varpi_i-\varpi_{i+1}}\right)-\\
  (v^2-v^{-2})^2 e^{-2\varpi_n}-\kappa v^{-2n-2}(v^2-v^{-2})^2e^{2\varpi_1}
\end{multline}
with $\kappa=v^{2n+2}(v-v^{-1})^{-4(n-1)}(v^2-v^{-2})^{-4}\varepsilon$.

Here $\kappa\in \BC(v^{1/\NN})$ is an essential parameter.
For $\kappa=0$, we recover $\D_1$ of~(\ref{standard qToda for C}).

\noindent
(b) Following our discussion and notations of Remark~\ref{affine Toda 1}(b),
we have also verified that the element of $\CalC_n$ corresponding under
the anti-isomorphism $\CalC_n\to \CalD_v(H^\ad_{\mathfrak{sp}_{2n}})$ to
$\HH(\hat{\epsilon}^\pm,\hat{\n}^\pm,\hat{c}^\pm)$ with $\epsilon^\pm_{01}=1$,
is conjugate to $\hat{\BH}^{\vec{k}}_2$ with $k_1=0$ and
$k_{i+1}=\frac{\epsilon^+_{i,i+1}-\epsilon^-_{i,i+1}}{2}$ for any $1\leq i\leq n-1$.

\noindent
(c) For completeness of our list~(\ref{affine Toda A},~\ref{affine Toda C}),
let us present explicit formulas for the first hamiltonian $\hat{\D}_1$ of the
quantum difference affine Toda systems (defined in~\cite{E}) for the remaining
classical series $D^{(1)}_n$ and $B^{(1)}_n$ (as we could not find such formulas
in the literature):

$\bullet$ In type $D^{(1)}_n$, we have
\begin{multline}\label{affine Toda D}
  \hat{\D}_1=\sum_{i=1}^n (T_{2\varpi_i}+T_{-2\varpi_i})-
  (v-v^{-1})^2 \sum_{i=1}^{n-1} e^{-\varpi_i+\varpi_{i+1}} \left(T_{\varpi_i+\varpi_{i+1}}+T_{-\varpi_i-\varpi_{i+1}}\right)-\\
  (v-v^{-1})^2 e^{-\varpi_{n-1}-\varpi_n}\left(T_{-\varpi_{n-1}+\varpi_n}+v^{-2} T_{\varpi_{n-1}-\varpi_n}\right)+
  (v-v^{-1})^4 e^{-2\varpi_{n-1}}-\\
  \kappa v^{-2n+2}(v-v^{-1})^2 e^{\varpi_1+\varpi_2}(T_{-\varpi_1+\varpi_2}+T_{\varpi_1-\varpi_2})+
  \kappa v^{-2n+2}(v-v^{-1})^4 e^{2\varpi_2}.
\end{multline}

$\bullet$ In type $B^{(1)}_n$, we have
\begin{multline}\label{affine Toda B}
  \hat{\D}_1=1+\sum_{i=1}^n (T_{2\varpi_i}+T_{-2\varpi_i})-
  (v^2-v^{-2})^2 \sum_{i=1}^{n-1} e^{-\varpi_i+\varpi_{i+1}} \left(T_{\varpi_i+\varpi_{i+1}}+T_{-\varpi_i-\varpi_{i+1}}\right)-\\
  (v-v^{-1})^2 e^{-\varpi_n} \left(vT_{-\varpi_n}+v^{-1}T_{\varpi_n}\right)+v^{-2}(1-v^{-1})^2(v-v^{-1})^2 e^{-2\varpi_n}-\\
  \kappa v^{-4n+2}(v^2-v^{-2})^2e^{\varpi_1+\varpi_2}(T_{-\varpi_1+\varpi_2}+T_{\varpi_1-\varpi_2})+
  \kappa v^{-4n+2}(v^2-v^{-2})^4e^{2\varpi_2}.
\end{multline}
For $\kappa=0$, these formulas recover $\D_1$ of~(\ref{standard qToda for D})
and~(\ref{standard qToda for B}), respectively.
\end{Rem}


\subsection{Explicit formulas and classification in type $G_2$}\label{explicit hamiltonians G2}
\

Recall explicit formulas for the action of $U_v(\g_2)$ on its first
fundamental representation $V_1$. The space $V_1$ has a basis $\{w_i\}_{i=0}^6$,
in which the action is given by the following formulas:
\begin{align*}
  & E_1\colon w_0\mapsto (v+v^{-1})w_2,\ w_1\mapsto 0,\ w_2\mapsto 0,\ w_3\mapsto 0,\
    w_4\mapsto w_3,\ w_5\mapsto (v+v^{-1})w_0,\ w_6\mapsto w_1,\\
  & E_2\colon w_0\mapsto 0,\ w_1\mapsto 0,\ w_2\mapsto w_6,\ w_3\mapsto w_5,\
    w_4\mapsto 0,\ w_5\mapsto 0,\ w_6\mapsto 0,\\
  & F_1\colon w_0\mapsto w_5,\ w_1\mapsto w_6,\ w_2\mapsto w_0,\ w_3\mapsto w_4,\
    w_4\mapsto 0,\ w_5\mapsto 0,\ w_6\mapsto 0,\\
  & F_2\colon w_0\mapsto 0,\ w_1\mapsto 0,\ w_2\mapsto 0,\ w_3\mapsto 0,\
    w_4\mapsto 0,\ w_5\mapsto w_3,\ w_6\mapsto w_2,\\
  & L_1\colon w_0\mapsto w_0, w_1\mapsto v^2w_1, w_2\mapsto vw_2, w_3\mapsto v^{-1}w_3,
    w_4\mapsto v^{-2}w_4, w_5\mapsto v^{-1}w_5, w_6\mapsto vw_6,\\
  & L_2\colon w_0\mapsto w_0,\ w_1\mapsto v^3w_1,\ w_2\mapsto w_2,\ w_3\mapsto v^{-3}w_3,\
    w_4\mapsto v^{-3}w_4,\ w_5\mapsto w_5,\ w_6\mapsto v^{3}w_6,\\
  & K_1\colon w_0\mapsto w_0, w_1\mapsto vw_1, w_2\mapsto v^2w_2, w_3\mapsto vw_3,
    w_4\mapsto v^{-1}w_4, w_5\mapsto v^{-2}w_5, w_6\mapsto v^{-1}w_6,\\
  & K_2\colon w_0\mapsto w_0,\ w_1\mapsto w_1,\ w_2\mapsto v^{-3}w_2,\ w_3\mapsto v^{-3}w_3,\
    w_4\mapsto w_4,\ w_5\mapsto v^{3}w_5,\ w_6\mapsto v^{3}w_6.
\end{align*}

Let $\varpi_1$ and $\varpi_2$ be the weights of $w_2$ and $w_6$, respectively,
so that $(\varpi_1,\varpi_1)=(\varpi_2,\varpi_2)=2, (\varpi_1,\varpi_2)=-1$.
Then, the weights of $w_0, w_1, w_3, w_4, w_5$ are equal to
$0, \varpi_1+\varpi_2, -\varpi_2, -\varpi_1-\varpi_2, -\varpi_1$, respectively.
We also note that the simple roots are given by
  $\alpha_1=\varpi_1, \alpha_2=-\varpi_1+\varpi_2$.
Finally, we have $\rho=5\alpha_1+3\alpha_2=2\varpi_1+3\varpi_2$ and $\sd_1=1, \sd_2=3$.

To compute $\D_1$, we use the same strategy as for the classical types.
Analogously to type $B$, the operators $E_1^2$ and $F_1^2$ act non-trivially on $V_1$,
while $\{E_i^r,F_i^r\}^{r>1+\delta_{i,1}}_{i=1,2}$ still act by zero on $V_1$.
Therefore, applying formula~(\ref{C'}), we can replace $R_{\alpha_2}$
by $\bar{R}_{\alpha_2}:=1+(v^3-v^{-3})E_2\otimes F_2$ and $R_{\alpha_1}$ by
$\bar{R}_{\alpha_1}:=1+(v-v^{-1})E_1\otimes F_1+cE_1^2\otimes F_1^2$
for $c:=(1-v^{-1})(v-v^{-1})$. Let us now compute all the non-zero terms
contributing to $C'_{V_1}$:

$\bullet$
Picking $1$ out of each $\bar{R}^\op_i, \bar{R}_i$, we recover
\begin{equation*}
  1+v^2\cdot K_{2\varpi_1}+v^{-2}\cdot K_{-2\varpi_1}+v^8\cdot K_{2\varpi_2}+v^{-8}\cdot K_{-2\varpi_2}+
  v^{10}\cdot K_{2\varpi_1+2\varpi_2}+v^{-10}\cdot K_{-2\varpi_1-2\varpi_2}.
\end{equation*}

$\bullet$
Picking non-trivial terms only at $\bar{R}_{\alpha_j}^\op, \bar{R}_{\alpha_i}$,
the result does not depend on $\Or^\pm$ (hence, the orderings $\prec_\pm$) and
the total contribution of the non-zero terms equals
\begin{align*}
  &  (v-v^{-1})^2 \left(v^2(v+v^{-1})F_1E_1K_{\varpi_1}+(v+v^{-1})F_1K_{-\varpi_1}E_1+\right.\\
  &  \left.v^{10}F_1K_{\varpi_2}E_1K_{\varpi_1+\varpi_2}+
     v^{-8}F_1K_{-\varpi_1-\varpi_2}E_1K_{-\varpi_2}\right)+\\
  &  (v^3-v^{-3})^2 (v^8F_2K_{\varpi_1}E_2K_{\varpi_2}+v^{-2}F_2 K_{-\varpi_2}E_2K_{-\varpi_1})+
     c^2v^2(v+v^{-1})^2F_1^2K_{-\varpi_1}E_1^2K_{\varpi_1}.
\end{align*}

$\bullet$
The contribution of the remaining terms to $C'_{V_1}$ depends on $\Or^\pm$.
Tracing back explicit formulas for the action of $U_v(\g_2)$ on $V_1$,
let us evaluate the total contribution of such terms for each of the four
possible pairs $(\Or^+,\Or^-)$.

\medskip
\emph{Case 1:} If $\Or^+=\Or^-$, then there are no other terms.

\medskip
\emph{Case 2:} If $\Or^+\colon \alpha_1\leftarrow \alpha_2$ and
$\Or^-\colon \alpha_1\to \alpha_2$, then the total contribution equals
\begin{align*}
  &  (v-v^{-1})^2(v^3-v^{-3})^2(v^8(v+v^{-1})F_2F_1E_1E_2K_{\varpi_2}+v^{-2}F_2F_1K_{-\varpi_1-\varpi_2}E_1E_2K_{-\varpi_1})+\\
  &  c^2(v^3-v^{-3})^2v^8(v+v^{-1})^2\cdot F_2F_1^2K_{-\varpi_1}E_1^2E_2K_{\varpi_2}.
\end{align*}

\emph{Case 3:} If $\Or^+\colon \alpha_1\to \alpha_2$ and
$\Or^-\colon \alpha_1\leftarrow \alpha_2$, then the total contribution equals
\begin{align*}
  &  (v-v^{-1})^2(v^3-v^{-3})^2(v^{10}F_1F_2K_{\varpi_1}E_2E_1K_{\varpi_1+\varpi_2}+(v+v^{-1})F_1F_2K_{-\varpi_2}E_2E_1)+\\
  &  c^2(v^3-v^{-3})^2v^2(v+v^{-1})^2\cdot F_1^2F_2K_{-\varpi_2}E_2E_1^2K_{\varpi_1}.
\end{align*}

Thus, we have listed all the non-zero terms contributing to $C'_{V_1}$.
To obtain $\tilde{\D}_1=\bar{\D}_{V_1}$, we should rewrite the above formulas via $e_i, f_i$ and
  $L_p=\begin{cases}
     K_{\varpi_1+\varpi_2}, & \ \mathrm{if}\ p=1\\
     K_{\varpi_1+2\varpi_2}, & \ \mathrm{if}\ p=2
   \end{cases},$
moving all the Cartan terms to the middle, and then apply the characters
$\chi^\pm$ with $\chi^+(e_i)=c^+_i, \chi^-(f_i)=c^-_i$. Conjugating further
by $e^\rho$, we obtain the explicit formula for the first hamiltonian $\D_1$
of the type $G_2$ modified quantum difference Toda system. To write it down,
define constants
  $\m_{i1}:=(\n^-_{i1}-\n^+_{i1})+(\n^-_{i2}-\n^+_{i2}),\
   \m_{i2}:=(\n^-_{i1}-\n^+_{i1})+2(\n^-_{i2}-\n^+_{i2})$
and
  $b_i:=(v_i-v_i^{-1})^2v_i^{\n^+_{ii}-\n^-_{ii}}c^+_ic^-_i$.
Then, we have
\begin{multline}\label{FinalD for G2-type}
  \D_1=1+(T_{2\varpi_1}+T_{-2\varpi_1}+T_{2\varpi_2}+T_{-2\varpi_2}+T_{2\varpi_1+2\varpi_2}+T_{-2\varpi_1-2\varpi_2})+\\
  b_1v^{-\m_{11}-4\m_{12}}\cdot e^{-\alpha_1}
  \left(v^{-1}(v+v^{-1})T_{(\m_{11}+1)\varpi_1+\m_{12}\varpi_2}+v(v+v^{-1})T_{(\m_{11}-1)\varpi_1+\m_{12}\varpi_2}\right.+\\
  \left.T_{(\m_{11}+1)\varpi_1+(\m_{12}+2)\varpi_2}+T_{(\m_{11}-1)\varpi_1+(\m_{12}-2)\varpi_2}\right)+\\
  b_2v^{-\m_{21}-4\m_{22}}\cdot e^{-\alpha_2}\left(T_{(\m_{21}+1)\varpi_1+(\m_{22}+1)\varpi_2}+T_{(\m_{21}-1)\varpi_1+(\m_{22}-1)\varpi_2}\right)+\\
  \frac{b_1^2(v+v^{-1})^2}{(1+v)^2}v^{-2+(\n^+_{11}-\n^-_{11})-(2\m_{11}+8\m_{12})}\cdot e^{-2\alpha_1}T_{2\m_{11}\varpi_1+2\m_{12}\varpi_2}+\\
  \delta_{\epsilon^+_{12},-1}\delta_{\epsilon^-_{12},1}\cdot
  \left\{b_1b_2v^{-4+3(\n^+_{12}-\n^-_{12})-(\m_{11}+\m_{21}+4\m_{12}+4\m_{22})}\times\right. \\
  \left. e^{-\alpha_1-\alpha_2}\left((v+v^{-1})T_{(\m_{11}+\m_{21})\varpi_1+(\m_{12}+\m_{22}+1)\varpi_2}+vT_{(\m_{11}+\m_{21}-2)\varpi_1+(\m_{12}+\m_{22}-1)\varpi_2}\right)+\right. \\
  \left. \frac{b_1^2b_2(v+v^{-1})^2}{(1+v)^2} v^{-8+(\n^+_{11}-\n^-_{11})+6(\n^+_{12}-\n^-_{12})-(2\m_{11}+\m_{21}+8\m_{12}+4\m_{22})}\times\right. \\
  \left. e^{-2\alpha_1-\alpha_2} T_{(2\m_{11}+\m_{21}-1)\varpi_1+(2\m_{12}+\m_{22}+1)\varpi_2}\right\}+\\
  \delta_{\epsilon^+_{12},1}\delta_{\epsilon^-_{12},-1}\cdot
  \left\{b_1b_2v^{3+3(\n^+_{12}-\n^-_{12})-(\m_{11}+\m_{21}+4\m_{12}+4\m_{22})}\times\right. \\
  \left. e^{-\alpha_1-\alpha_2}\left(v(v+v^{-1})T_{(\m_{11}+\m_{21})\varpi_1+(\m_{12}+\m_{22}-1)\varpi_2}+T_{(\m_{11}+\m_{21}+2)\varpi_1+(\m_{12}+\m_{22}+1)\varpi_2}\right)+\right. \\
  \left. \frac{b_1^2b_2(v+v^{-1})^2}{(1+v)^2} v^{4+(\n^+_{11}-\n^-_{11})+6(\n^+_{12}-\n^-_{12})-(2\m_{11}+\m_{21}+8\m_{12}+4\m_{22})}\times\right. \\
  \left. e^{-2\alpha_1-\alpha_2} T_{(2\m_{11}+\m_{21}+1)\varpi_1+(2\m_{12}+\m_{22}-1)\varpi_2}\right\}.
\end{multline}

\begin{Rem}\label{Etingof's formula for G2}
If $\epsilon^+=\epsilon^-$, then the terms with $\delta$'s are vacuous.
If we further set $\n^+=\n^-$ and $c^\pm_i=\pm 1$ for all $i$, then we obtain
the formula for the first hamiltonian of the type $G_2$ quantum difference
Toda lattice as defined in~\cite{E}:
\begin{multline}\label{standard qToda for G2}
  \D_1=1+(T_{2\varpi_1}+T_{-2\varpi_1}+T_{2\varpi_2}+T_{-2\varpi_2}+T_{2\varpi_1+2\varpi_2}+T_{-2\varpi_1-2\varpi_2})-\\
  (v-v^{-1})^2\cdot e^{-\alpha_1}(v^{-1}(v+v^{-1})T_{\varpi_1}+v(v+v^{-1})T_{-\varpi_1}+T_{\varpi_1+2\varpi_2}+T_{-\varpi_1-2\varpi_2})-\\
  (v^3-v^{-3})^2\cdot e^{-\alpha_2}(T_{\varpi_1+\varpi_2}+T_{-\varpi_1-\varpi_2})+
  v^{-2}(1-v^{-1})^2(v^2-v^{-2})^2\cdot e^{-2\alpha_1}.
\end{multline}
\end{Rem}

Let $\CG_2$ be the associative $\BC(v)$-algebra generated by
$\{\sw_i^{\pm 1}, \sD_i^{\pm 1}\}_{i=1}^2$ subject to
\begin{align*}
  & [\sw_1,\sw_2]=[\sD_1,\sD_2]=0,\ \sw_i^{\pm 1}\sw_i^{\mp 1}=\sD_i^{\pm1}\sD_i^{\mp 1}=1,\\
  & \sD_1\sw_1=v^2\sw_1\sD_1,\ \sD_1\sw_2=v^{-1}\sw_2\sD_1,\ \sD_2\sw_1=v^{-3}\sw_1\sD_2,\ \sD_2\sw_2=v^3\sw_2\sD_2.
\end{align*}
Consider the anti-isomorphism from $\CG_2$ to the algebra $\CalD_v(H^\ad_{\g_2})$
of Section~\ref{section Whittaker functions}, sending
  $\sw_i\mapsto T_{-\varpi_i}, \sD_i\mapsto e^{-\alpha_i}$.
Let $\HH=\HH(\epsilon^\pm, \n^\pm, c^\pm)$ be the element of $\CG_2$ which
corresponds to $\D_1$ under this anti-isomorphism. The following is the key
property of $\HH(\epsilon^\pm,\n^\pm,c^\pm)$ in type $G_2$:

\begin{Prop}\label{bound for G2}
$\HH(\epsilon^\pm,\n^\pm,c^\pm)$ depends only on
$\epsilon:=\frac{\epsilon^+_{12}-\epsilon^-_{12}}{2}\in \{-1,0,1\}$,
up to algebra automorphisms of $\CG_2$.
\end{Prop}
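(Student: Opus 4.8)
The plan is to argue exactly as in the proof of Proposition~\ref{bound for A} in Appendix~\ref{Proof of bound for A}, working from the explicit formula~\eqref{FinalD for G2-type} transported to $\CG_2$ via the anti-isomorphism $\CG_2\to\CalD_v(H^\ad_{\g_2})$ of this section. First I would isolate the data on which $\HH(\epsilon^\pm,\n^\pm,c^\pm)$ actually depends: inspecting~\eqref{FinalD for G2-type}, every occurrence of $\n^\pm$ is through the differences $d_{ij}:=\n^+_{ij}-\n^-_{ij}$ and every occurrence of $c^\pm$ is through the products $\gamma_i:=c^+_ic^-_i$; moreover the Sevostyanov relation $3\n^\pm_{12}-\n^\pm_{21}=-3\epsilon^\pm_{12}$ forces $d_{21}=3d_{12}+6\epsilon$ with $\epsilon:=\tfrac{\epsilon^+_{12}-\epsilon^-_{12}}{2}$, so $\HH$ is a function of the three integers $(d_{11},d_{12},d_{22})$, the two nonzero scalars $(\gamma_1,\gamma_2)$, and the pair $(\epsilon^+_{12},\epsilon^-_{12})$. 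When $\epsilon=0$ one has $d_{21}=3d_{12}$ and both $\delta$-terms in~\eqref{FinalD for G2-type} are absent regardless of whether $(\epsilon^+_{12},\epsilon^-_{12})=(1,1)$ or $(-1,-1)$, so those two choices yield literally the same $\HH$; it therefore remains to show that for each fixed $\epsilon$ two tuples $(d_{11},d_{12},d_{22},\gamma_1,\gamma_2)$ give $\HH$'s related by an automorphism of $\CG_2$.

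The automorphisms I would use are of two kinds: (i) the rescalings $\sw_i\mapsto\sw_i$, $\sD_i\mapsto\lambda_i\sD_i$ with $\lambda_i\in\BC(v)^\times$, which multiply the coefficient of a monomial $\sw_1^{\ast}\sw_2^{\ast}\sD_1^{a}\sD_2^{b}$ by $\lambda_1^a\lambda_2^b$ and alter nothing else; and (ii) the monomial translations $\sw_i\mapsto\sw_i$, $\sD_i\mapsto\sD_i\sw_1^{m_{i1}}\sw_2^{m_{i2}}$ with $m_{ik}\in\BZ$, which is a well-defined algebra automorphism of $\CG_2$ precisely when $3m_{11}-3m_{12}+2m_{21}-m_{22}=0$, this being the condition (read off from the relations of $\CG_2$) that the images of $\sD_1$ and $\sD_2$ still commute. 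Rescalings of the $\sw_i$ are not available, since they would alter the ``pure'' summands $T_{\pm2\varpi_i}$, $T_{\pm2(\varpi_1+\varpi_2)}$. A translation of type (ii) simultaneously shifts the $\sw$-exponents of every $\sD_1^a\sD_2^b$-monomial by $a(m_{11},m_{12})+b(m_{21},m_{22})$ and multiplies its coefficient by a power of $v$ that is a fixed $\BZ$-linear function of the $m_{ik}$. The key numerical input is that the vectors $\vec\m=(\m_{11},\m_{12},\m_{21},\m_{22})$ attached to any admissible tuple satisfy $-3\m_{11}+3\m_{12}-2\m_{21}+\m_{22}=6\epsilon$, so that the difference $\Delta\vec\m$ of the $\m$-vectors of two tuples with the same $\epsilon$ obeys the homogeneous constraint above and is thus the datum of a genuine translation automorphism; one checks in addition that the affine map $(d_{11},d_{12},d_{22})\mapsto\vec\m$ surjects onto the full coset of the admissible translation lattice (the three direction vectors $(-1,-1,0,0)$, $(-1,-2,-3,-3)$, $(0,0,-1,-2)$ form a basis of $\{x\in\BZ^4:3x_1-3x_2+2x_3-x_4=0\}$).

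Putting this together: given two tuples with the same $\epsilon$, I would apply the translation (ii) with $m$-vector $\Delta\vec\m$ to align all $\sw$-exponents of the second $\HH$ with those of the first, and then a rescaling (i) of $\sD_1,\sD_2$ to align the remaining scalar and $v$-power prefactors. The content of the argument — and the step I expect to be the main obstacle — is verifying that one and the same composite automorphism simultaneously matches all six ``shapes'' of $\sD$-monomial occurring in~\eqref{FinalD for G2-type}, namely $1$, $e^{-\alpha_1}$, $e^{-\alpha_2}$, $e^{-2\alpha_1}$, $e^{-\alpha_1-\alpha_2}$, $e^{-2\alpha_1-\alpha_2}$ (the last two present only in the cases $\epsilon=\mp1$). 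This reduces to a finite system of $\BZ$-linear identities relating the $v$-exponents appearing in~\eqref{FinalD for G2-type} — $-\m_{i1}-4\m_{i2}$, $-2+d_{11}-2\m_{11}-8\m_{12}$, $-4+3d_{12}-\m_{11}-\m_{21}-4\m_{12}-4\m_{22}$, $3+3d_{12}-\m_{11}-\m_{21}-4\m_{12}-4\m_{22}$, etc.\ — to the $v$-powers produced by the translation; these identities hold because all of the relevant quantities descend from the single construction of $C'_{V_1}$ via the substitution $E_i\rightsquigarrow e_i$, $F_i\rightsquigarrow f_i$ recalled in Section~\ref{subsection technicalities}, making the computation entirely parallel to Appendix~\ref{Proof of bound for A}.

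The only genuinely new features relative to type $A$ are the ones forced by $V_1$ being non-minuscule: the quadratic factors $b_1^2$ and $b_1^2b_2$ in the $e^{-2\alpha_1}$- and $e^{-2\alpha_1-\alpha_2}$-terms (which, under a rescaling $\sD_i\mapsto\lambda_i\sD_i$, scale as $\lambda_1^2$ and $\lambda_1^2\lambda_2$, consistently with the linear terms), the coefficients $(v+v^{-1})$ and $(1+v)^{-2}$, and the constant summand $1$. None of these affect the structure of the argument, the last three being untouched by both families of automorphisms; they merely add bookkeeping, which I would handle term by term as in the type $A$ case. This establishes that $\HH(\epsilon^\pm,\n^\pm,c^\pm)$ depends, up to automorphisms of $\CG_2$, only on $\epsilon\in\{-1,0,1\}$.
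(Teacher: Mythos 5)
Your proposal is correct and is essentially the paper's own argument: conjugation by the function $F$ of \eqref{Canonical Form} (which is how Proposition~\ref{bound for A} is proved and, per Remark~\ref{Remark on other types}(b), how Proposition~\ref{bound for G2} is meant to be proved, with the constants fixed by matching the $\sD_1$- and $\sD_2$-terms) acts on $\CG_2$ precisely as your rescaling-plus-monomial-translation automorphisms, and your consistency check $-3\m_{11}+3\m_{12}-2\m_{21}+\m_{22}=6\epsilon$, resting on $3\n^{\pm}_{12}-\n^{\pm}_{21}=-3\epsilon^{\pm}_{12}$, is the $G_2$ analogue of the solvability of the overdetermined system~\eqref{system A.1}. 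The one step you defer — that the same automorphism then automatically matches the composite terms $e^{-2\alpha_1}$, $e^{-\alpha_1-\alpha_2}$, $e^{-2\alpha_1-\alpha_2}$ of \eqref{FinalD for G2-type} — is exactly the finite verification the paper also leaves to the reader, the analogue of the inductive last step of Appendix~\ref{Proof of bound for A}.
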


The proof of Proposition~\ref{bound for G2} is completely analogous to that
of Proposition~\ref{bound for A} given in Appendix~\ref{Proof of bound for A},
we leave details to the interested reader. Proposition~\ref{bound for G2} implies
that given two pairs of Sevostyanov triples $(\epsilon^\pm,\n^\pm,c^\pm)$ and
$(\tilde{\epsilon}^\pm,\tilde{\n}^\pm,\tilde{c}^\pm)$ with
  $\epsilon^+_{12}-\epsilon^-_{12}=\tilde{\epsilon}^+_{12}-\tilde{\epsilon}^-_{12}$,
there exists an algebra automorphism of $\CalD_v(H^\ad_{\g_2})$ which maps the first hamiltonian
$\D_1(\epsilon^\pm,\n^\pm,c^\pm)$ to $\D_1(\tilde{\epsilon}^\pm,\tilde{\n}^\pm,\tilde{c}^\pm)$.
As we will see in Appendix~\ref{Proof of Main Theorem A}, the same automorphism maps the modified
quantum Toda system $\CT(\epsilon^\pm,\n^\pm,c^\pm)$ to $\CT(\tilde{\epsilon}^\pm,\tilde{\n}^\pm,\tilde{c}^\pm)$.

\begin{Rem}\label{Affine Toda for G2}
For completeness of our list~(\ref{affine Toda A},~\ref{affine Toda C},~\ref{affine Toda D},~\ref{affine Toda B}),
let us present the explicit formula for the first hamiltonian $\hat{\D}_1$ of the type $G^{(1)}_2$
quantum difference affine Toda system:
\begin{multline}\label{affine Toda G2}
  \hat{\D}_1=1+(T_{2\varpi_1}+T_{-2\varpi_1}+T_{2\varpi_2}+T_{-2\varpi_2}+T_{2\varpi_1+2\varpi_2}+T_{-2\varpi_1-2\varpi_2})-\\
  (v-v^{-1})^2 e^{-\varpi_1}(v^{-1}(v+v^{-1})T_{\varpi_1}+v(v+v^{-1})T_{-\varpi_1}+T_{\varpi_1+2\varpi_2}+T_{-\varpi_1-2\varpi_2})-\\
  (v^3-v^{-3})^2 e^{\varpi_1-\varpi_2}(T_{\varpi_1+\varpi_2}+T_{-\varpi_1-\varpi_2})+
  v^{-2}(1-v^{-1})^2(v^2-v^{-2})^2\cdot e^{-2\varpi_1}-\\
  \kappa v^{-12}(v^3-v^{-3})^2 e^{\varpi_1+2\varpi_2}(T_{\varpi_1}+T_{-\varpi_1})+
  \kappa v^{-12}(v-v^{-1})^2(v^3-v^{-3})^2 e^{2\varpi_2}.
\end{multline}
For $\kappa=0$, this recovers $\D_1$ of~(\ref{standard qToda for G2}).
\end{Rem}


\section{Whittaker vectors and their pairing}\label{section properties}

In this section, we study a pairing of two general Whittaker vectors (associated to
a pair of Sevostyanov triples $(\epsilon^\pm,\n^\pm,c^\pm)$) in universal Verma modules,
following~\cite{FFJMM}. We obtain a fermionic formula for the corresponding terms
$\tilde{J}_\beta$. We show that their generating series is a natural solution of
the modified quantum difference Toda system $\CT(\epsilon^\pm,\n^\pm,c^\pm)$ of
Section~\ref{section basic}. This provides a natural generalization of~\cite[Section 3]{FFJMM},
where $\epsilon^+=\epsilon^-$ and $\n^+=\n^-$.


\subsection{Whittaker vectors}\label{Whit vector defn}
\

Following the notations of~\cite{FFJMM}, consider $U_v(\g)$ and $U_{v^{-1}}(\g)$, whose
generators will be denoted by $E_i,F_i,L_i$ and $\bar{E}_i,\bar{F}_i,\bar{L}_i$,
respectively. In contrast to~\cite{FFJMM}, we will work with universal Verma modules.
Let $\{u_i\}_{i=1}^n$ be indeterminates and consider an extension
$\sk:=\BC(v^{1/\NN},u_1,\ldots,u_n)$ of $\BC(v^{1/\NN})$. Let $U_v(\g)^\leq$ be the subalgebra
of $U_v(\g)$ generated by $\{L_i^{\pm 1},F_i\}_{i=1}^n$ and consider its action on $\sk$ with
$F_i$ acting trivially and $L_i$ acting via multiplication by $u_i$. We define the universal
Verma module $\CV$ over $U_v(\g)$ as $\CV:=U_v(\g)\otimes_{U_v(\g)^\leq} \sk$.
It is generated by $\textbf{1}\in \sk$ such that $E_i(\textbf{1})=0$ and
$L_i(\textbf{1})=u_i\cdot \textbf{1}$ for $1\leq i\leq n$. We define the formal symbol
$\lambda:=\sum_{i=1}^n \frac{\log(u_i)}{\sd_i\log(v)}\alpha_i$, which will appear only
as an index or in the context of the homomorphism $v^{(\lambda,\cdot)}\colon P\to \sk$ defined
by $P\ni m_i\omega_i\mapsto v^{(\lambda,\sum_{i=1}^n m_i\omega_i)}=\prod_{i=1}^n u_i^{m_i}$,
so that $K_\mu(\textbf{1})=v^{(\lambda,\mu)}\cdot \textbf{1}$ for $\mu\in P$. In particular,
$\CV$ is graded by $Q_+$: $\CV=\oplus_{\beta\in Q_+} \CV_\beta$ with
$\CV_\beta=\{w\in \CV| K_\mu(w)=v^{(\mu,\lambda-\beta)}w\ (\mu\in P)\}$.
Similarly, let $\bar{\CV}$ be the universal Verma module over $U_{v^{-1}}(\g)$ generated by
the highest weight vector $\bar{\textbf{1}}$ such that $\bar{E}_i(\bar{\textbf{1}})=0$ and
$\bar{L}_i(\bar{\textbf{1}})=u_i^{-1}\cdot \bar{\textbf{1}}$ for $1\leq i\leq n$.
It is also $Q_+$-graded: $\bar{\CV}=\oplus_{\beta\in Q_+} \bar{\CV}_\beta$ with
$\bar{\CV}_\beta=\{w\in \bar{\CV}| \bar{K}_\mu(w)=v^{-(\mu,\lambda-\beta)}w\ (\mu\in P)\}$.

\begin{Rem}
One can alternatively work with the standard Verma modules $\CV^\lambda$ and
$\bar{\CV}^\lambda, \lambda\in P$ (one should further require $\lambda$ to be
strictly antidominant for the existence of Whittaker vectors), so that
$u_i=v^{(\lambda,\omega_i)}\in \BC(v^{1/\NN})$. This viewpoint is used in~\cite{FFJMM}.
We prefer the current exposition as it is compatible with our discussion in
Section~\ref{section geometric}. Nevertheless, motivated by the above standard setup,
we will freely use the above notation $v^{(\lambda,\mu)}$ for $\mu\in P$.
\end{Rem}

There is a unique non-degenerate $\sk$-bilinear pairing
$(\cdot,\cdot)\colon \CV\times \bar{\CV}\to \sk$ such that
$(\textbf{1}, \bar{\textbf{1}})=1$ and $(xw,w')=(w,\sigma(x)w')$
for all $x\in U_v(\g), w\in \CV, w'\in \bar{\CV}$, where the algebra
anti-isomorphism $\sigma\colon U_v(\g)\to U_{v^{-1}}(\g)$ is determined by
$\sigma(E_i)=\bar{F}_i, \sigma(F_i)=\bar{E}_i, \sigma(L_i)=\bar{L}_i^{-1}$.

\begin{Rem}
One can alternatively work with a single universal Verma module $\CV$ over $U_v(\g)$
endowed with the Shapovalov  form $(\cdot,\cdot)\colon \CV\times \CV\to \sk$, see our
discussion in Remark~\ref{Shapovalov vs dual}.
\end{Rem}

For the key definition of this section, consider the completions
$\CV^\wedge, \bar{\CV}^\wedge$ of $\CV, \bar{\CV}$, defined via
\begin{equation*}
  \CV^\wedge:=\prod_{\beta\in Q_+} \CV_\beta,\
  \bar{\CV}^\wedge:=\prod_{\beta\in Q_+} \bar{\CV}_\beta.
\end{equation*}
Given a pair of Sevostyanov triples $(\epsilon^\pm,\n^\pm,c^\pm)$
(now $c^\pm_i\in \sk^\times$), define $\nu^\pm_i\in P$ via
$\nu^\pm_i:=\sum_{k=1}^n \n_{ik}^\pm\omega_k$, so that
$(\nu^\pm_i,\alpha_j)-(\nu^\pm_j,\alpha_i)=\epsilon^\pm_{i,j}b_{i,j}$.
We have associated \textbf{Whittaker vectors}
\begin{equation*}
  \theta=\theta(\epsilon^+,\n^+,c^+)=\sum_{\beta\in Q_+} \theta_\beta\in \CV^\wedge
    \ \ (\mathrm{with}\ \theta_\beta\in \CV_\beta)
\end{equation*}
and
\begin{equation*}
  \bar{\theta}=\bar{\theta}(\epsilon^-,\n^-,c^-)=\sum_{\beta\in Q_+} \bar{\theta}_\beta\in \bar{\CV}^\wedge
    \ \ (\mathrm{with}\ \bar{\theta}_\beta\in \bar{\CV}_\beta)
\end{equation*}
which are uniquely determined by the following conditions:
\begin{equation}\label{Whittaker conditions}
  \theta_0=\textbf{1},\ E_iK_{\nu^+_i}(\theta)=c^+_i\cdot \theta\ \ \mathrm{and}\ \
  \bar{\theta}_0=\bar{\textbf{1}},\ \bar{E}_i\bar{K}_{\nu^-_i}(\bar{\theta})=c^-_i\cdot \bar{\theta}.
\end{equation}

\begin{Rem}\label{classical Whittaker}
This is a direct generalization of the classical notion of Whittaker vectors
for Lie algebras as defined by B.~Kostant in his milestone work on the subject~\cite{Ko}.
\end{Rem}


\subsection{Pairing of Whittaker vectors}
\

Set $(t;t)_r:=\prod_{k=1}^r (1-t^k)$ for $r\in \BZ_{>0}$, and $(t;t)_0:=1$.
Choose convex orderings $\prec_\pm$ on $\Delta_+$ such that
$\epsilon^\pm_{ij}=-1\Rightarrow \alpha_i\prec_\pm \alpha_j$,
cf.~Section~\ref{subsection technicalities}. Let
$\alpha_{i^\pm_1}\prec_\pm\cdots\prec_\pm\alpha_{i^\pm_n}$ be the simple roots
ordered with respect to $\prec_\pm$. For $1\leq i\ne j\leq n$ we write
$i\prec_\pm j$ if $\alpha_i\prec_\pm \alpha_j$. Define
\begin{equation}\label{J-factor}
  J_\beta=J_\beta(\epsilon^\pm,\n^\pm,c^\pm):=
  \left(\theta_\beta(\epsilon^+,\n^+,c^+),\bar{\theta}_\beta(\epsilon^-,\n^-,c^-)\right).
\end{equation}
Following~\cite[(3.11)]{FFJMM}\footnote{We note that $\tilde{J}_\beta$ is
denoted by $J^\lambda_\beta$ in~\cite{FFJMM}.} we also consider its slight modification
\begin{equation}\label{reduced J-factor}
  \tilde{J}_\beta=\tilde{J}_\beta(\epsilon^\pm,\n^\pm,c^\pm):=
  v^{-(\beta,\beta)/2+(\lambda,\beta)}
  \left(\theta_\beta(\epsilon^+,\n^+,c^+),\bar{\theta}_\beta(\epsilon^-,\n^-,c^-)\right).
\end{equation}
For $\beta\notin Q_+$, we set $J_\beta:=0$ and $\tilde{J}_\beta:=0$.
Our first result provides a recursive formula for $\tilde{J}_\beta$:

\begin{Thm}\label{Fermionic one}
We have
\begin{equation}\label{recursive}
  \tilde{J}_\beta=\sum_{0\leq \gamma\leq \beta} \frac{1}{(v^2)_{\beta-\gamma}}
  v^{(\gamma,\gamma)-2(\lambda+\rho,\gamma)}c^{\beta-\gamma}v^{\tau_\lambda(\beta-\gamma,\beta)}
  \cdot \tilde{J}_\gamma,
\end{equation}
where
  $(v^2)_{\alpha}:=\prod_{i=1}^n (v_i^2;v_i^2)_{m_i},
   c^{\alpha}:=\prod_{i=1}^n (-c^+_ic^-_i(v_i-v_i^{-1})^2)^{m_i},
   \tau_\lambda(\alpha,\beta):=\sum_{i=1}^n m_i(\nu^-_i-\nu^+_i,\lambda-\beta)+
   \sum_{j\prec_+ i} m_im_j(\nu^-_i-\nu^+_i,\alpha_j)+
   \sum_{i=1}^n \frac{m_i(m_i-1)}{2}(\nu^-_i-\nu^+_i,\alpha_i)$
for $\alpha=\sum_{i=1}^n m_i\alpha_i\in Q_+$.
\end{Thm}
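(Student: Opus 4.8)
The plan is to derive the recursion by expanding the Whittaker vector $\theta$ in a convenient basis of $\CV$ and using the defining conditions~(\ref{Whittaker conditions}) together with the explicit structure of the root vectors $e_\gamma$ of Section~\ref{subsection technicalities}. First I would fix the convex orderings $\prec_\pm$ adapted to $\epsilon^\pm$ and build a PBW-type basis of $\CV_\beta$ out of monomials in the twisted root vectors $f_\gamma$ (equivalently, in the $F_\gamma$), ordered according to $\prec_+$. The key simplification — already exploited in the proof of Proposition~\ref{reduced central elt} via Lemma~\ref{nonessential} — is that applying $E_iK_{\nu^+_i}$ to such a monomial only picks up contributions from the simple-root factors $f_{\alpha_i}=f_i$, because $\chi^+(e_\gamma)=0$ for non-simple $\gamma$; this is what makes the recursion "fermionic", i.e.\ only terms where the degree drops by a sum of \emph{distinct} simple roots matter at each stage. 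So writing $\theta=\sum_{\beta}\theta_\beta$ and imposing $E_iK_{\nu^+_i}(\theta)=c^+_i\theta$ gives a recursion expressing $\theta_\beta$ in terms of $\theta_\gamma$ for $\gamma<\beta$.

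Next I would pair $\theta_\beta$ against $\bar\theta_\beta$. Here one uses the anti-isomorphism $\sigma$ and the fact that $\bar\theta$ is the Whittaker vector on the other side, so the pairing $(\theta_\beta,\bar\theta_\beta)$ can be computed by moving the $F$'s onto $\bar\theta$ and converting them (via $\sigma$) into the $\bar E$'s that $\bar\theta$ is an eigenvector for. Concretely, I expect that $(F_\gamma\cdot v,\bar\theta)$ reduces, after commuting through the Cartan part and using $\bar E_i\bar K_{\nu^-_i}(\bar\theta)=c^-_i\bar\theta$, to $c^-$-factors times lower pairings; combined with the recursion for $\theta_\beta$ this yields a closed recursion for $J_\beta$. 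The combinatorial bookkeeping — which indices commute past which, in which order, and with what power of $v$ — is exactly what produces the three pieces of $\tau_\lambda(\alpha,\beta)$: the linear term $m_i(\nu^-_i-\nu^+_i,\lambda-\beta)$ from moving Cartan elements past the highest-weight vector, the $\sum_{j\prec_+ i}m_im_j(\nu^-_i-\nu^+_i,\alpha_j)$ term from reordering the twisted root vectors into the chosen convex order, and the $\tfrac{m_i(m_i-1)}{2}(\nu^-_i-\nu^+_i,\alpha_i)$ term from the self-interactions within a single $f_i^{m_i}$ block. The normalization factor $v^{-(\beta,\beta)/2+(\lambda,\beta)}$ in the definition of $\tilde J_\beta$ is precisely the one that absorbs the "diagonal" quadratic weight contributions and turns $(v^2)_{\beta-\gamma}$ and the prefactor $v^{(\gamma,\gamma)-2(\lambda+\rho,\gamma)}c^{\beta-\gamma}$ into the clean shape of~(\ref{recursive}); I would verify this by a direct comparison of exponents.

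The main obstacle, and the step I would spend the most care on, is the precise tracking of $v$-powers arising when one commutes the twisted root vectors $f_i$ past the Cartan elements $K_{\nu^+_i}$, $\bar K_{\nu^-_i}$ and past each other, since the $\n^\pm$ matrices (not just $\epsilon^\pm$) enter through $\nu^\pm_i=\sum_k\n^\pm_{ik}\omega_k$, and the asymmetry $\n^+\ne\n^-$ is exactly the novelty relative to~\cite[Section 3]{FFJMM}. In the $\epsilon^+=\epsilon^-=\n^+=\n^-$ case of \emph{loc.\ cit.}\ the $\tau_\lambda$ term is absent, so I would first reproduce their computation as a sanity check and then carefully reinstate every place where a factor $v^{(\nu^\pm_i,\,\cdot\,)}$ appears, collecting them into $\tau_\lambda$. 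A secondary point needing attention is the appearance of the $v$-binomial / $q$-exponential denominators $(v_i^2;v_i^2)_{m_i}$: these come from the $r!$-type normalization in repeatedly applying the same $E_i$ (equivalently from $[E_i^{m_i},F_i^{m_i}]$-type identities in $U_v(\ssl_2)$), so I would isolate the rank-one computation, record the exact $q$-factorial, and then assemble $(v^2)_\alpha=\prod_i(v_i^2;v_i^2)_{m_i}$. Once these two ingredients are pinned down, assembling~(\ref{recursive}) is bookkeeping; I would present it by induction on the height of $\beta$, the base case $\beta=0$ being $\tilde J_0=1$ from $\theta_0=\mathbf 1,\ \bar\theta_0=\bar{\mathbf 1}$.
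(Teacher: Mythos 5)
There is a genuine gap, and it sits at the very first step of your plan. The paper's proof (following \cite[Theorem 3.1]{FFJMM}) does not solve for $\theta_\beta$ at all: it evaluates the pairing $(C(\theta_\beta),\bar\theta_\beta)$ in two ways, where $C$ is the Drinfeld Casimir element. Centrality gives the scalar action of $C$ on the Verma module, while the explicit expansion of $C$ through the quasi-$R$-matrix, combined with the vanishing of $\chi^\pm$ on non-simple Cartan--Weyl vectors (Lemma~\ref{nonessential}) and the Whittaker eigenproperties on both sides, turns the other evaluation into a sum over $\gamma\leq\beta$ of lower pairings. Your substitute --- ``imposing $E_iK_{\nu^+_i}(\theta)=c^+_i\theta$ gives a recursion expressing $\theta_\beta$ in terms of $\theta_\gamma$, $\gamma<\beta$'' --- is not available as an explicit recursion: the Whittaker conditions only say that all the images $E_iK_{\nu^+_i}(\theta_\beta)$ are prescribed, i.e.\ they determine $\theta_\beta$ as the solution of a linear system via the injectivity of $w\mapsto(E_iw)_i$ on $\CV_\beta$; inverting that map in a PBW basis is exactly the intractable computation that the Casimir trick is designed to bypass, and nothing in your outline explains how the clean coefficients $1/(v^2)_{\beta-\gamma}$ and $c^{\beta-\gamma}$ would emerge from such an inversion.

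Two structural features of \eqref{recursive} make the gap visible: the term with $\gamma=\beta$ appears on the right-hand side (so the identity is only implicitly triangular, becoming a recursion after moving $(1-v^{(\beta,\beta)-2(\lambda+\rho,\beta)})\tilde J_\beta$ to the left), and the exponent carries the $\rho$-shift $-2(\lambda+\rho,\gamma)$. Both come directly from the Casimir: the $\rho$ from its Cartan factor $K_{2\rho}$/its eigenvalue on $\CV$, and the $\gamma=\beta$ term from the identity component of its expansion. Your scheme, which would produce a strictly triangular relation among the $J_\gamma$ with no source for the $\rho$-shift, cannot account for either. (A smaller point: your remark that only drops by sums of \emph{distinct} simple roots matter is incorrect --- arbitrary multiplicities $m_i$ occur, which is precisely why the $q$-factorials $(v_i^2;v_i^2)_{m_i}$ appear; you in fact say this correctly later, so the earlier sentence should be dropped.) The parts of your plan concerning $\tau_\lambda$ --- bookkeeping of the Cartan twists by $\nu^\pm_i$ when passing to the Sevostyanov generators, which is the genuine novelty relative to \cite{FFJMM} --- are on target, but they only become a proof once they are grafted onto the two-sided evaluation of $(C(\theta_\beta),\bar\theta_\beta)$.
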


\begin{proof}
The proof is completely analogous to that of~\cite[Theorem 3.1]{FFJMM} and is
based on an evaluation of $(C(\theta_\beta),\bar{\theta}_\beta)$ in two different ways,
where $C$ is the \emph{Drinfeld Casimir element}.
\end{proof}

Solving the recursive relation~(\ref{recursive}), one obtains an
explicit \emph{fermionic formula} for $\tilde{J}_\beta$:

\begin{Thm}\label{Fermionic two}
We have
\begin{equation}\label{explicit fermionic}
  \tilde{J}_\beta=c^\beta\cdot
  \sum_{\unl{\beta}=\{\beta^{(t)}\}_{t=0}^\infty\in Q_+^\BN}^{\sum_{t=0}^\infty \beta^{(t)}=\beta}
  \frac{v^{2B(\unl{\beta})+R(\unl{\beta})}}{\prod_{t=0}^\infty (v^2)_{\beta^{(t)}}},
\end{equation}
where we set
  $R(\unl{\beta}):=\sum_{t=0}^\infty \tau_\lambda\left(\beta^{(t)},\beta-\sum_{s=0}^{t-1}\beta^{(s)}\right)$
and
\begin{equation*}
  B(\unl{\beta}):=\frac{1}{2}\sum_{t,t'=0}^\infty \min(t,t')\left(\beta^{(t)},\beta^{(t')}\right)-
  \sum_{t=0}^\infty t \left(\lambda+\rho,\beta^{(t)}\right).
\end{equation*}
\end{Thm}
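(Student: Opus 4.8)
The plan is to solve the recursion~\eqref{recursive} of Theorem~\ref{Fermionic one} by iterating it, and to track carefully how the quadratic and linear ``exponent'' data accumulate. First I would rewrite~\eqref{recursive} in the suggestive form
\begin{equation*}
  \tilde{J}_\beta=\sum_{0\leq \gamma\leq \beta}
  c^{\beta-\gamma}\,\frac{v^{Q(\beta-\gamma)+L_\beta(\beta-\gamma)}}{(v^2)_{\beta-\gamma}}\,\tilde{J}_\gamma,
\end{equation*}
where $Q(\alpha):=(\alpha,\alpha)-2(\lambda+\rho,\alpha)$ is the ``diagonal'' part and $L_\beta(\alpha):=\tau_\lambda(\alpha,\beta)$ is the part that genuinely depends on the ambient $\beta$ through $\tau_\lambda$. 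Setting $\beta^{(0)}:=\beta-\gamma$ and applying the recursion again to $\tilde{J}_\gamma$ with $\gamma=\beta-\beta^{(0)}$ playing the role of the new ``$\beta$'', one peels off $\beta^{(1)}$, then $\beta^{(2)}$, etc.; since $\tilde{J}_0=1$ and each step strictly decreases the grading (or terminates), the iteration stabilizes, producing exactly the sum over sequences $\unl{\beta}=\{\beta^{(t)}\}_{t\ge 0}\in Q_+^{\BN}$ with $\sum_t\beta^{(t)}=\beta$ and only finitely many nonzero terms. The prefactor $c^{\beta}=\prod_t c^{\beta^{(t)}}$ and the denominator $\prod_t(v^2)_{\beta^{(t)}}$ come out immediately from multiplying the per-step contributions.

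The substance is the bookkeeping of the exponent of $v$. After $t$ peeling steps, the ``remaining'' weight is $\beta-\sum_{s<t}\beta^{(s)}$, so the step that removes $\beta^{(t)}$ contributes $Q(\beta^{(t)})+\tau_\lambda\bigl(\beta^{(t)},\beta-\sum_{s<t}\beta^{(s)}\bigr)$ to the exponent. Summing the $\tau_\lambda$ terms over $t$ gives precisely $R(\unl{\beta})$ as defined in the statement, so no work is needed there beyond recognizing the definition. The remaining task is to show
\begin{equation*}
  \sum_{t\ge 0}\Bigl((\beta^{(t)},\beta^{(t)})-2(\lambda+\rho,\beta^{(t)})\Bigr)
  =2B(\unl{\beta}).
\end{equation*}
Wait --- this is \emph{not} literally $2B(\unl{\beta})$; the cross terms are hidden. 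I would therefore be more careful: the exponent $Q$ at step $t$ is evaluated on $\beta^{(t)}$ \emph{alone}, but $B(\unl{\beta})$ involves $\min(t,t')(\beta^{(t)},\beta^{(t')})$, which records interactions. The point is that I mis-split the recursion: the correct bookkeeping must run the recursion on $\tilde{J}_\gamma$ where $\gamma$ has the \emph{shifted} formal highest weight, i.e. one must substitute $\lambda\rightsquigarrow\lambda-\beta^{(0)}$ (equivalently, track that $\tilde{J}_\gamma$ for the module with highest weight $\lambda$ equals a rescaling of $\tilde{J}$ for a lower module). Concretely $Q$ depends on $\lambda$ through $-2(\lambda,\alpha)$, and at the $t$-th step $\lambda$ has effectively been replaced by $\lambda-\sum_{s<t}\beta^{(s)}$; expanding $-2(\lambda-\sum_{s<t}\beta^{(s)},\beta^{(t)})=-2(\lambda,\beta^{(t)})+2\sum_{s<t}(\beta^{(s)},\beta^{(t)})$ produces exactly the off-diagonal terms $2\sum_{s<t}(\beta^{(s)},\beta^{(t)})$, and together with the diagonal $(\beta^{(t)},\beta^{(t)})$ these reassemble into $\sum_{t,t'}\min(t,t')(\beta^{(t)},\beta^{(t')})$ after a standard reindexing ($\min(t,t')=\#\{s: s<t,\ s<t'\}$). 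Likewise the $-2t(\lambda+\rho,\beta^{(t)})$ piece of $B$ needs its $\rho$-part extracted from the same $-2(\lambda+\rho,\cdot)$ shift; but $\rho$ does \emph{not} get shifted, so I must double-check that the $\rho$-contribution in the recursion is $-2(\rho,\beta^{(t)})$ per step (from $Q$) rather than $-2t(\rho,\beta^{(t)})$ --- meaning the $-2t(\lambda+\rho,\beta^{(t)})$ grouping in $B$ is really $-2t(\lambda,\beta^{(t)})$ reorganized plus whatever $\rho$-terms appear, so I will instead verify the identity by writing $B(\unl\beta)$ out in full and matching term by term against $\tfrac12\sum_t Q^{(t)}$ where $Q^{(t)}$ is the step-$t$ diagonal exponent with the shifted $\lambda$.

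So the clean plan is: (i) prove the iteration terminates and yields the sum over sequences, with the $c$- and $(v^2)$-factors; (ii) establish the ``shift lemma'': $\tilde{J}_\gamma$ computed inside the module with formal highest weight $\lambda$ obeys the recursion with $\lambda$ replaced by $\lambda-(\beta-\gamma)$ --- this follows by inspecting $\tau_\lambda$ and $Q$, since the only $\lambda$-dependence is through $v^{(\lambda,\cdot)}$ and the recursion is ``local'' in the grading (alternatively, derive it directly from~\eqref{recursive} by a telescoping substitution, avoiding any appeal to module structure); (iii) do the purely combinatorial computation collecting the $v$-exponent across all steps and check it equals $2B(\unl\beta)+R(\unl\beta)$, the key manipulation being the reindexing $\sum_{t}\sum_{s<t}(\beta^{(s)},\beta^{(t)})=\tfrac12\sum_{t,t'}\min(t,t')(\beta^{(t)},\beta^{(t')})-\tfrac12\sum_t t(\beta^{(t)},\beta^{(t)})$ combined with the $\rho$- and $\lambda$-linear terms. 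The main obstacle is step (iii): getting every sign, every factor of $\tfrac12$, and the role of the strict-versus-nonstrict inequality $s<t$ exactly right, and confirming that the $\rho$-linear terms organize into the advertised $-\sum_t t(\lambda+\rho,\beta^{(t)})$ rather than something with $\lambda$ and $\rho$ weighted differently; this is where a sign error would most easily creep in, so I would cross-check the final formula against the $|\beta|\le 2$ cases computed directly from~\eqref{recursive}.
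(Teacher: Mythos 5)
Your overall strategy — iterate the recursion \eqref{recursive} and match the accumulated $v$-exponent against $2B(\unl{\beta})+R(\unl{\beta})$ — is indeed the paper's strategy, but your bookkeeping goes wrong at the decisive point, and the patch you propose does not repair it. In \eqref{recursive} the factor $v^{(\gamma,\gamma)-2(\lambda+\rho,\gamma)}$ is attached to the \emph{remaining} weight $\gamma$, not to the removed piece $\beta-\gamma$ as in your rewriting with $Q(\beta-\gamma)$; the recursion relates $\tilde{J}_\beta$ and $\tilde{J}_\gamma$ for the \emph{same} universal $\lambda$, so no ``shift lemma'' is needed, and the one you invoke is also not a valid repair. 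Indeed, with $\lambda$ replaced by $\lambda-\sum_{s<t}\beta^{(s)}$ at step $t$, the per-step quadratic/linear exponents sum to $\sum_t\bigl[(\beta^{(t)},\beta^{(t)})-2(\lambda+\rho,\beta^{(t)})+2\sum_{s<t}(\beta^{(s)},\beta^{(t)})\bigr]=(\beta,\beta)-2(\lambda+\rho,\beta)$, which is independent of the sequence $\unl{\beta}$ and therefore cannot equal $2B(\unl{\beta})$. The reindexing you quote, $\min(t,t')=\#\{s:\,s<t,\ s<t'\}$, counts \emph{all} indices $s$, including those with $\beta^{(s)}=0$, whereas your cross terms only ever see the support of $\unl{\beta}$. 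As a result your scheme assigns the same weight to the infinitely many sequences sharing a given ordered list of nonzero parts, and the sum in \eqref{explicit fermionic} would diverge rather than reproduce $\tilde{J}_\beta$.

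The missing idea is precisely how the positions $t$ (equivalently, the zero entries $\beta^{(t)}=0$) acquire nontrivial weights. Because the $\gamma=\beta$ term of \eqref{recursive} contributes $v^{(\beta,\beta)-2(\lambda+\rho,\beta)}\tilde{J}_\beta$, a naive iteration does not terminate; the paper first moves this term to the left, divides by $1-v^{(\beta,\beta)-2(\lambda+\rho,\beta)}$, and iterates to obtain the closed formula \eqref{explicit J}, whose denominators $1-v^{(\beta_e+\cdots+\beta_d,\beta_e+\cdots+\beta_d)-2(\lambda+\rho,\beta_e+\cdots+\beta_d)}$ are then matched against \eqref{explicit fermionic} via the bijection $\unl{\beta}\leftrightarrow(d,\{\beta_e\},\{t_e\})$ and the geometric series over the gaps $t_e\geq 0$, as in \eqref{I-terms}. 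Equivalently, if you insist on iterating without dividing, you must keep the $\gamma=\beta$ (``zero peel'') steps, each of which contributes $v^{(S,S)-2(\lambda+\rho,S)}$ with $S=\beta_e+\cdots+\beta_d$ the current remaining weight; these factors are exactly what assemble into the position-dependent terms $\min(t,t')(\beta^{(t)},\beta^{(t')})$ and $-t(\lambda+\rho,\beta^{(t)})$ of $B(\unl{\beta})$, and the $\tau_\lambda$-terms alone give $R(\unl{\beta})$. Your proposal never engages with this mechanism, so step (iii) as described would fail; the $|\beta|\leq 2$ cross-check you plan would expose the discrepancy but would not by itself supply the correct argument.
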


\begin{proof}
We will give a direct proof as the general machinery of fermionic formulas developed
in~\cite[Section 2]{FFJMM} does not apply to our setup. The formula is obvious for $\beta=0$.
From now on, fix $\beta>0$ (that is, $\beta\in Q_+\backslash\{0\}$). Let us rewrite
the equality~(\ref{recursive}) as
\begin{equation*}
  \left(1-v^{(\beta,\beta)-2(\lambda+\rho,\beta)}\right)\tilde{J}_\beta=
  \sum_{0<\beta_1\leq \beta}\frac{1}{(v^2)_{\beta_1}}v^{(\beta-\beta_1,\beta-\beta_1)-2(\lambda+\rho,\beta-\beta_1)}
  c^{\beta_1}v^{\tau_\lambda(\beta_1,\beta)}\tilde{J}_{\beta-\beta_1}.
\end{equation*}
We apply the same formula for $\tilde{J}_{\beta-\beta_1}$ if $\beta_1<\beta$.
Proceeding in the same way, we finally obtain
\begin{equation}\label{explicit J}
  \tilde{J}_\beta=\sum_{d\geq 1}\sum_{\beta_1,\ldots,\beta_d>0}^{\beta_1+\ldots+\beta_d=\beta}
  \frac{\prod_{e=1}^d c^{\beta_e}v^{\tau_\lambda(\beta_e,\beta_e+\ldots+\beta_{d})}
        v^{(\beta_{e+1}+\ldots+\beta_d,\beta_{e+1}+\ldots+\beta_d)-2(\lambda+\rho,\beta_{e+1}+\ldots+\beta_d)}}
  {\prod_{e=1}^d (v^2)_{\beta_e}
  (1-v^{(\beta_e+\ldots+\beta_d, \beta_e+\ldots+\beta_d)-2(\lambda+\rho,\beta_e+\ldots+\beta_d)})}.
\end{equation}

On the other hand, the summation in the right-hand side of~(\ref{explicit fermionic}) is over all
  $\unl{\beta}=\{\beta^{(t)}\}_{t=0}^\infty\in Q_+^\BN$ with $\sum_{t=0}^\infty\beta^{(t)}=\beta$.
Such sequences are in bijection with tuples
  $\{d,\{\beta_e\}_{e=1}^d,\{t_e\}_{e=1}^d|d\geq 1, \beta_e>0, t_e\in \BN, \sum_{e=1}^d\beta_e=\beta\}$
via $\beta^{(t_1+\ldots+t_e+e-1)}=\beta_e\ (1\leq e\leq d)$ and $\beta^{(t)}=0$ otherwise.

Hence, the right-hand side of~(\ref{explicit fermionic}) equals
\begin{multline}\label{I-terms}
  \sum_{d\geq 1}\sum_{\beta_1,\ldots,\beta_d>0}^{\beta_1+\ldots+\beta_d=\beta}
  \left\{\frac{c^\beta v^{\sum_{e=1}^d \tau_\lambda(\beta_e,\beta-\beta_1-\ldots-\beta_{e-1})}}{\prod_{e=1}^d (v^2)_{\beta_e}}
  \times\right.\\
  \left.\sum_{t_1,\ldots,t_d\geq 0} v^{\sum_{e=1}^d (t_1+\ldots+t_e+e-1)(\beta_e,\beta_e)+
  2\sum_{e<e'}(t_1+\ldots+t_e+e-1)(\beta_e,\beta_{e'})-2(\lambda+\rho,\sum_{e=1}^d (t_1+\ldots+t_e+e-1)\beta_e)}\right\}.
\end{multline}
It is straightforward to verify that the right-hand side of~(\ref{explicit J})
coincides with~(\ref{I-terms}).
\end{proof}


\subsection{$J$-functions: eigenfunctions of modified quantum difference Toda systems}\label{J-function}
\

Recall the elements $\omega_i^\vee=\omega_i/\sd_i\in P\otimes_{\BZ}\BQ$
satisfying $(\alpha_j,\omega_i^\vee)=\delta_{i,j}$. Consider a vector space
$N_\lambda$ that consists of all formal sums
  $\left\{\sum_{\beta\in Q} a_\beta \unl{y}^{\beta-\lambda}|a_\beta\in \sk\right\}$
for which there exists $\beta_0\in Q$ such that $a_\beta=0$ unless $\beta-\beta_0\in Q_+$,
where $\unl{y}^{\beta-\lambda}$ is used to denote $\prod_{i=1}^n y_i^{(\beta-\lambda,\omega_i^\vee)}$.
The vector space $N_{\lambda+\rho}$ is defined analogously with $\lambda$ being replaced
by $\lambda+\rho$. Consider the natural action of the algebra $\CalD_v(H^\ad)$ of
Section~\ref{section Whittaker functions} on the vector space $N_\lambda$, determined by
\begin{equation}\label{action on N}
  T_\mu(\unl{y}^{\beta-\lambda})=v^{-(\mu,\beta-\lambda)}\unl{y}^{\beta-\lambda},\
  e^{\alpha}(\unl{y}^{\beta-\lambda})=\unl{y}^{\beta-\alpha-\lambda}\
  \mathrm{for}\ \alpha,\beta\in Q,\mu\in P.
\end{equation}
The action of $\CalD_v(H^\ad)$ on $N_{\lambda+\rho}$ is defined analogously.

Consider the following generating functions of the terms $J_\beta$ defined in~(\ref{J-factor}):
\begin{equation}\label{J function}
\begin{split}
  & \tilde{J}=\tilde{J}(\{y_i\}_{i=1}^n):=
    \sum_{\beta\in Q_+} J_\beta \prod_{i=1}^n y_i^{(\beta-\lambda,\omega_i^\vee)}\in N_\lambda,\\
  & J=J(\{y_i\}_{i=1}^n):=
    \sum_{\beta\in Q_+} J_\beta \prod_{i=1}^n y_i^{(\beta-\lambda-\rho,\omega_i^\vee)}\in N_{\lambda+\rho}.
\end{split}
\end{equation}
Recall the difference operators $\tilde{\D}_V,\D_V\in \CalD_v(H^\ad)$ of
Section~\ref{section Whittaker functions}, associated with the pair of Sevostyanov
triples $(\epsilon^\pm, \n^\pm, c^\pm)$ and a finite-dimensional
$U_v(\g)$-representation $V$. The following is the key result of this section:

\begin{Thm}\label{J-eigenfunction}
(a) We have $\tilde{\D}_V(\tilde{J})=\tr_V\left(v^{2(\lambda+\rho)}\right)\cdot \tilde{J}$.

\noindent
(b) We have $\D_V(J)=\tr_V\left(v^{2(\lambda+\rho)}\right)\cdot J$.
\end{Thm}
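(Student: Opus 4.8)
The plan is to derive part (b) from part (a), and to prove part (a) by computing, for each $\beta\in Q_+$, the pairing $(C_V(\theta_\beta),\bar{\theta}_\beta)$ in two different ways; this is the same device that underlies Theorem~\ref{Fermionic one} (cf.~\cite[Section~3]{FFJMM}), now run with the central element $C_V$ in place of the Drinfeld Casimir. For the reduction I would use that $J=e^\rho\tilde{J}$ --- immediate from~\eqref{J function}, since $e^\rho$ carries $\unl{y}^{\beta-\lambda}$ to $\unl{y}^{\beta-\lambda-\rho}$ --- together with $\D_V=e^\rho\tilde{\D}_Ve^{-\rho}$ from Section~\ref{section Whittaker functions}; then $\D_V(J)=e^\rho\tilde{\D}_V(\tilde{J})$, and since the scalar $\tr_V(v^{2(\lambda+\rho)})$ is free of the variables $y_i$ it passes through $e^\rho$, so (a) implies (b).

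For the first evaluation in part (a), I would use that $C_V$ is central: it commutes with every $K_\mu$, hence preserves the $Q_+$-grading of $\CV$ and acts on the universal Verma module by a single scalar, which I compute on the highest weight vector $\mathbf 1$ from~\eqref{quantum center}. Since $E_\gamma(\mathbf 1)=0$ for all $\gamma\in\Delta_+$, the contributions of $\Theta$ and $\Theta^\op$ collapse to $1\otimes 1$, leaving $\tr_V$ of $\sum_k v^{(2\rho,\mu_k)}K_{2\mu_k}$; evaluated on $\mathbf 1$ this is $\sum_k v^{2(\mu_k,\lambda+\rho)}=\tr_V(v^{2(\lambda+\rho)})$. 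Hence $(C_V(\theta_\beta),\bar{\theta}_\beta)=\tr_V(v^{2(\lambda+\rho)})\,(\theta_\beta,\bar{\theta}_\beta)=\tr_V(v^{2(\lambda+\rho)})\,J_\beta$ by~\eqref{J-factor}.

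The second evaluation is the core of the argument. I would expand $C_V$ via the factorization of $\Theta^\op$ in Theorem~\ref{KT decomposition}. The Whittaker conditions~\eqref{Whittaker conditions} say $e_i(\theta)=c^+_i\theta=\chi^+(e_i)\theta$, and since the $e_i$ generate $U^+_{\n^+}(\g)$ this forces $x(\theta)=\chi^+(x)\theta$ for all $x\in U^+_{\n^+}(\g)$, and dually $x'(\bar\theta)=\chi^-(x')\bar\theta$ for $x'\in U^-_{\n^-}(\g)$ via the anti-isomorphism $\sigma$. Exactly as in the proof of Proposition~\ref{reduced central elt} via Lemma~\ref{nonessential}, every contribution of a non-simple root vector $E_\gamma,F_\gamma$ to $(C_V(\theta_\beta),\bar{\theta}_\beta)$ then vanishes, so $C_V$ may be replaced in the pairing by the reduced element $C'_V$ of~\eqref{C'}. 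Feeding the simple raising operators of $C'_V$ into $\theta$ via~\eqref{Whittaker conditions} (each $e_i$ replaced by the scalar $c^+_i$), feeding the lowering part into $\bar\theta$ through $\sigma$ and~\eqref{Whittaker conditions}, and letting $R^0$ and $1\otimes K_{2\rho}$ act by weight scalars, the series $\sum_\beta (C'_V(\theta_\beta),\bar{\theta}_\beta)\,\unl{y}^{\beta-\lambda}$ assembles into $\tilde{\D}_V(\tilde{J})$: this is, by construction, the very computation that defines $\tilde{\D}_V=\tilde{\D}_{C_V}$ in Lemma~\ref{commuting operators}. Comparing with the first evaluation coefficient by coefficient in $\unl{y}^{\beta-\lambda}$ gives $\tilde{\D}_V(\tilde{J})=\tr_V(v^{2(\lambda+\rho)})\,\tilde{J}$.

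The step I expect to be delicate is identifying the recursion coming out of the second evaluation with the operator $\tilde{\D}_V$ itself, since the latter was defined through Whittaker \emph{functions} on $\CO_v(Bw_0B)$ rather than through the pairing of Whittaker \emph{vectors} in $\CV$. I would settle this either by running the two computations side by side --- both are governed by the same character values $\chi^\pm(e_\gamma),\chi^\pm(f_\gamma)$ and the same weight bookkeeping, so the difference operators they produce agree term by term --- or, more conceptually, by first exhibiting $\tilde{J}$ as the $H$-restriction of a genuine Whittaker function built from the matrix coefficient $x\mapsto(x\,\theta,\bar\theta)$ on $\CV$, after which Lemma~\ref{commuting operators} applies verbatim and identifies $\tilde{\D}_V(\tilde{J})$ with the generating series of $(C_V(\theta_\beta),\bar{\theta}_\beta)$ directly.
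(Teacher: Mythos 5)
Your proposal is correct and follows essentially the same route as the paper: part (b) is reduced to part (a) via $\D_V=e^\rho\tilde{\D}_Ve^{-\rho}$ and $J=e^\rho(\tilde{J})$, and part (a) is proved by evaluating $\left(C_V(\theta_\beta),\bar{\theta}_\beta\right)$ in two ways, with centrality giving the scalar $\tr_V\left(v^{2(\lambda+\rho)}\right)$ and the Whittaker conditions killing all non-simple root contributions so that only the simple-root (reduced) part of $C_V$ survives. The delicate step you flag is settled in the paper exactly by the first of your two options: it writes out $\tilde{\D}_V(\tilde{J})$ explicitly with the same coefficients $c(\vec{m})$ and matrix coefficients and matches it coefficient-by-coefficient against the pairing identity.
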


\begin{proof}
First, we note that part (a) implies part (b), due to
  $\D_V=e^{\rho} \tilde{\D}_V e^{-\rho}$ and $J=e^{\rho}(\tilde{J})$.
The proof of part (a) is based on an evaluation of
$\left(C_V(\theta_\beta),\bar{\theta}_\beta\right)$ in two different ways,
where $C_V$ is the central element of~(\ref{quantum center}). On the one hand,
$C_V$ acts on $\CV$ as a multiplication by $\tr_V(v^{2(\lambda+\rho)})$
(since $C_V$ is central, $\CV$ is generated by $\textbf{1}$, and
$C_V(\textbf{1})=\tr_V(v^{2(\lambda+\rho)})\cdot \textbf{1}$), so that
  $\left(C_V(\theta_\beta),\bar{\theta}_\beta\right)=\tr_V(v^{2(\lambda+\rho)})\cdot J_\beta$.
On the other hand, we can use the explicit formula for $C_V$.

Let $\{w_k\}_{k=1}^N$ be a weight basis of $V$, and $\mu_k\in P$ be
the weight of $w_k$, cf. Section~\ref{section Whittaker functions}.
Then, we have
\begin{multline}\label{explicit computation C}
  C_V(\theta_\beta)=\sum_{\vec{m}=(m_1,\ldots,m_n)\in \BN^n}^{1\leq k\leq N} c(\vec{m})\cdot
  v^{(\lambda-\beta+2\rho,\mu_k)+(\lambda-\beta+\alpha(\vec{m}),\mu_k-\alpha(\vec{m}))}\times\\
  \left\langle w_k\middle| E_{\alpha_{i^-_1}}^{m_{i^-_1}}\cdots E_{\alpha_{i^-_n}}^{m_{i^-_n}}\cdot
  F_{\alpha_{i^+_1}}^{m_{i^+_1}}\cdots F_{\alpha_{i^+_n}}^{m_{i^+_n}}\middle| w_k\right\rangle\cdot
  F_{\alpha_{i^-_1}}^{m_{i^-_1}}\cdots F_{\alpha_{i^-_n}}^{m_{i^-_n}}\cdot
  E_{\alpha_{i^+_1}}^{m_{i^+_1}}\cdots E_{\alpha_{i^+_n}}^{m_{i^+_n}}(\theta_\beta),
\end{multline}
where $\langle w_k| x | w_k\rangle$ is the matrix coefficient of $x\in U_v(\g)$,
$\alpha(\vec{m}):=\sum_{i=1}^n m_i\alpha_i\in Q_+$, and $c(\vec{m})\in \BC(v)$ are
certain coefficients for which we currently do not need explicit formulas.

Using the defining property of $(\cdot,\cdot)$, we get
\begin{equation*}
  \left(F_{\alpha_{i^-_1}}^{m_{i^-_1}}\cdots F_{\alpha_{i^-_n}}^{m_{i^-_n}}\cdot
  E_{\alpha_{i^+_1}}^{m_{i^+_1}}\cdots E_{\alpha_{i^+_n}}^{m_{i^+_n}}(\theta_\beta),\bar{\theta}_\beta\right)=
  \left(E_{\alpha_{i^+_1}}^{m_{i^+_1}}\cdots E_{\alpha_{i^+_n}}^{m_{i^+_n}}(\theta_\beta),
  \bar{E}_{\alpha_{i^-_n}}^{m_{i^-_n}}\cdots \bar{E}_{\alpha_{i^-_1}}^{m_{i^-_1}}(\bar{\theta}_\beta)\right).
\end{equation*}
To evaluate the pairing in the right-hand side, note that the defining
conditions~(\ref{Whittaker conditions}) of the Whittaker vectors imply
$E_iK_{\nu^+_i}(\theta_{\gamma})=c^+_i\theta_{\gamma-\alpha_i}$ and
$\bar{E}_i\bar{K}_{\nu^-_i}(\bar{\theta}_{\gamma})=c^-_i\bar{\theta}_{\gamma-\alpha_i}$
for $\gamma\in Q$, hence,
$E_i(\theta_{\gamma})=c^+_iv^{-(\nu^+_i,\lambda-\gamma)}\theta_{\gamma-\alpha_i}$ and
$\bar{E}_i(\bar{\theta}_{\gamma})=c^-_iv^{(\nu^-_i,\lambda-\gamma)}\bar{\theta}_{\gamma-\alpha_i}$.
Applying this iteratively, we find
\begin{align*}
  & E_{\alpha_{i^+_1}}^{m_{i^+_1}}\cdots E_{\alpha_{i^+_n}}^{m_{i^+_n}}(\theta_\beta)=
  \prod_{i=1}^n (c^+_i)^{m_i} v^{\tau^+_\lambda(\vec{m},\beta)}\cdot \theta_{\beta-\alpha(\vec{m})},\\
  & \bar{E}_{\alpha_{i^-_n}}^{m_{i^-_n}}\cdots \bar{E}_{\alpha_{i^-_1}}^{m_{i^-_1}}(\bar{\theta}_\beta)=
  \prod_{i=1}^n (c^-_i)^{m_i} v^{\tau^-_\lambda(\vec{m},\beta)}\cdot \bar{\theta}_{\beta-\alpha(\vec{m})}
\end{align*}
with $\tau^\pm_\lambda(\vec{m},\beta)$ given by
  $\tau^+_\lambda(\vec{m},\beta):=-\sum_{k=1}^n \sum_{r=0}^{m_{i^+_k}-1}
   \left(\nu^+_{i^+_k},\lambda-\beta+r\alpha_{i^+_k}+\sum_{s=k+1}^n m_{i^+_{s}}\alpha_{i^+_{s}}\right)$
and
  $\tau^-_\lambda(\vec{m},\beta):=\sum_{k=1}^n \sum_{r=0}^{m_{i^-_k}-1}
   \left(\nu^-_{i^-_k},\lambda-\beta+r\alpha_{i^-_k}+\sum_{s=1}^{k-1} m_{i^-_{s}}\alpha_{i^-_{s}}\right)$.

Summarizing all these calculations, we obtain the following equality:
\begin{multline}\label{raz}
  \sum_{\vec{m}=(m_1,\ldots,m_n)\in \BN^n}^{1\leq k\leq N} c(\vec{m})\prod_{i=1}^n (c^+_ic^-_i)^{m_i}\cdot
  v^{(\lambda-\beta+2\rho,\mu_k)+(\lambda-\beta+\alpha(\vec{m}),\mu_k-\alpha(\vec{m}))+
     \tau^+_\lambda(\vec{m},\beta)+\tau^-_\lambda(\vec{m},\beta)}\times\\
  \left\langle w_k\middle| E_{\alpha_{i^-_1}}^{m_{i^-_1}}\cdots E_{\alpha_{i^-_n}}^{m_{i^-_n}}\cdot
  F_{\alpha_{i^+_1}}^{m_{i^+_1}}\cdots F_{\alpha_{i^+_n}}^{m_{i^+_n}}\middle| w_k\right\rangle\cdot
  J_{\beta-\alpha(\vec{m})}=\tr_V\left(v^{2(\lambda+\rho)}\right)\cdot J_\beta.
\end{multline}

Let us now compute $\tilde{\D}_V(\tilde{J})$. First, we need to rewrite
$F_{\alpha_{i^-_1}}^{m_{i^-_1}}\cdots F_{\alpha_{i^-_n}}^{m_{i^-_n}}$ and
$E_{\alpha_{i^+_1}}^{m_{i^+_1}}\cdots E_{\alpha_{i^+_n}}^{m_{i^+_n}}$ in
terms of the Sevostyanov generators $e_i,f_i$ and Cartan terms, moving
the latter to the right of $f_i$'s and to the left of $e_i$'s. We have:
\begin{align*}
  &  F_{\alpha_{i^-_1}}^{m_{i^-_1}}\cdots F_{\alpha_{i^-_n}}^{m_{i^-_n}}=
  v^{\tilde{\tau}^-_\lambda(\vec{m})}
  f_{\alpha_{i^-_1}}^{m_{i^-_1}}\cdots f_{\alpha_{i^-_n}}^{m_{i^-_n}}\cdot
  K_{\sum_{i=1}^n m_i\nu^-_i}, \\
  & E_{\alpha_{i^+_1}}^{m_{i^+_1}}\cdots E_{\alpha_{i^+_n}}^{m_{i^+_n}}=
  v^{\tilde{\tau}^+_\lambda(\vec{m})}
  K_{-\sum_{i=1}^n m_i\nu^+_i}\cdot
  e_{\alpha_{i^+_1}}^{m_{i^+_1}}\cdots e_{\alpha_{i^+_n}}^{m_{i^+_n}},
\end{align*}
where $\tilde{\tau}^\pm_\lambda(\vec{m})$ are given by
  $\tilde{\tau}^+_\lambda(\vec{m}):=\sum_{k=1}^n \sum_{r=1}^{m_{i^+_k}}
   \left(\nu^+_{i^+_k},r\alpha_{i^+_k}+\sum_{s=1}^{k-1} m_{i^+_s}\alpha_{i^+_{s}}\right)$
and
  $\tilde{\tau}^-_\lambda(\vec{m}):=-\sum_{k=1}^n \sum_{r=1}^{m_{i^-_k}}
   \left(\nu^-_{i^-_k},r\alpha_{i^-_k}+\sum_{s=k+1}^n m_{i^-_s}\alpha_{i^-_{s}}\right)$.
Tracing back the definition of $\tilde{\D}_V$, we find
\begin{multline}\label{dva}
  \tilde{\D}_V(\tilde{J})=\tilde{\D}_V\left(\sum_{\tilde{\beta}\in Q_+} J_{\tilde{\beta}}\unl{y}^{\tilde{\beta}-\lambda}\right)=
  \sum_{\tilde{\beta}\in Q_+} \sum_{\vec{m}=(m_1,\ldots,m_n)\in \BN^n}^{1\leq k\leq N}c(\vec{m})\prod_{i=1}^n (c^+_ic^-_i)^{m_i}\times\\
  v^{(2\rho,\mu_k)+\sum_{i=1}^n m_i(\nu^+_i-\nu^-_i,\tilde{\beta}-\lambda)-(2\mu_k-\alpha(\vec{m}),\tilde{\beta}-\lambda)-(\mu_k,\alpha(\vec{m}))+\tilde{\tau}^+_\lambda(\vec{m})+\tilde{\tau}^-_\lambda(\vec{m})}\times\\
  \left\langle w_k\middle| E_{\alpha_{i^-_1}}^{m_{i^-_1}}\cdots E_{\alpha_{i^-_n}}^{m_{i^-_n}}\cdot
  F_{\alpha_{i^+_1}}^{m_{i^+_1}}\cdots F_{\alpha_{i^+_n}}^{m_{i^+_n}}\middle| w_k\right\rangle\cdot
  J_{\tilde{\beta}}\unl{y}^{\alpha(\vec{m})+\tilde{\beta}-\lambda}.
\end{multline}
Due to the equalities
  $\tilde{\tau}^+_\lambda(\vec{m})+\tilde{\tau}^-_\lambda(\vec{m})=
   \tau^+_\lambda(\vec{m},\beta)+\tau^-_\lambda(\vec{m},\beta)+\sum_{i=1}^n m_i(\nu^+_i-\nu^-_i,\alpha(\vec{m})+\lambda-\beta)$
and
  $-(\mu_k,\alpha(\vec{m}))-(2\mu_k-\alpha(\vec{m}),\beta-\lambda-\alpha(\vec{m}))=
   (\lambda-\beta,\mu_k)+(\lambda-\beta+\alpha(\vec{m}),\mu_k-\alpha(\vec{m}))$,
the coefficient of $\unl{y}^{\beta-\lambda}$ in the right-hand side
of~(\ref{dva}) coincides with the left-hand side of~(\ref{raz}).

The equality $\tilde{\D}_V(\tilde{J})=\tr_V\left(v^{2(\lambda+\rho)}\right)\cdot \tilde{J}$ follows.
\end{proof}


\section{Geometric realization of the Whittaker vectors in type A}\label{section geometric}

In~\cite{BF}, A.~Braverman and M.~Finkelberg provided a geometric realization of the
universal Verma module over $U_v(\ssl_n)$, the Shapovalov form on it, and two particular
Whittaker vectors $\mathfrak{k},\mathfrak{w}$ of it via the Laumon based quasiflags'
moduli spaces. The vectors $\mathfrak{k}$ and $\mathfrak{w}$ correspond to particular
Sevostyanov triples $(\epsilon,\n,c)$ with the corresponding orientations of the $A_{n-1}$
Dynkin diagram being equioriented. In this section, we generalize their construction by
providing a geometric interpretation of all Whittaker vectors and their pairing.


\subsection{Laumon spaces}
\

First, we recall the setup of~\cite{FK}. Let $\bC$ be a smooth projective curve of genus zero.
We fix a coordinate $z$ on $\bC$, and consider the action of $\BC^\times$ on $\bC$ such that
$v(z)=v^{-2}z$. We have $\bC^{\BC^\times}=\{0,\infty\}$. We consider an $n$-dimensional vector
space $W$ with a basis $w_1,\ldots,w_n$. This defines a Cartan torus
  $T\subset G=\mathrm{SL}(n)\subset \mathrm{Aut}(W)$.
We also consider its $2^{n-1}$-fold cover, the bigger torus $\widetilde{T}$, acting on $W$
as follows: for $\widetilde{T}\ni\unl{t}=(t_1,\ldots,t_n)$ we have $\unl{t}(w_k)=t_k^2w_k$.

Given an $(n-1)$-tuple of nonnegative integers $\unl{d}=(d_1,\ldots,d_{n-1})\in \BN^{n-1}$,
we consider the \emph{Laumon's based quasiflags' space} $\fQ_{\unl{d}}$. It is the moduli
space of flags of locally free subsheaves
  $0\subset\CW_1\subset\cdots\subset\CW_{n-1}\subset\CW=W\otimes\CO_\bC$
such that $\rk(\CW_k)=k, \deg(\CW_k)=-d_k$, $\CW_k\subset\CW$ is a vector subbundle
in a neighbourhood of $\infty\in\bC$, and the fiber of $\CW_k$ at $\infty$ equals the span
$\langle w_1,\ldots,w_k\rangle\subset W$. It is a smooth connected
quasi-projective variety of dimension $\sum_{i=1}^{n-1} 2d_i$.

The group $\widetilde{T}\times\BC^\times$ acts naturally on $\fQ_{\unl{d}}$.
The set of fixed points of $\widetilde{T}\times\BC^\times$ on $\fQ_{\unl{d}}$ is finite
and is parametrized by collections $\wt{\unl{d}}$ of nonnegative integers
$(d_{ij})_{1\leq j\leq i\leq n-1}$ such that $d_i=\sum_{j=1}^i d_{ij}$ and
$d_{kj}\geq d_{ij}$ for $i\geq k\geq j$, see~\cite[2.11]{FK}.
Given a collection $\wt{\unl{d}}$ as above, we will denote by
$\wt{\unl{d}}+\delta_{ij}$ the collection $\wt{\unl{d}}{}'$, such that ${d{}}'_{ij}=d_{ij}+1$,
while ${d{}}'_{kl}=d_{kl}$ for $(k,l)\ne (i,j)$. By abuse of notation, we use $\wt{\unl{d}}$
to denote the corresponding $\widetilde{T}\times\BC^\times$-fixed point in $\fQ_{\unl{d}}$.

For $i\in\{1,\ldots,n-1\}$ and $\unl{d}=(d_1,\ldots,d_{n-1})$, we set
$\unl{d}+i:=(d_1,\ldots,d_i+1,\ldots,d_{n-1})$. We have a correspondence
  $\fE_{\unl{d},i}\subset\fQ_{\unl{d}}\times\fQ_{\unl{d}+i}$
formed by the pairs $(\CW_\bullet,\CW'_\bullet)$ such that
$\CW'_i\subset\CW_i$ and $\CW_j=\CW'_j$ for $j\ne i$.
It is a smooth quasi-projective variety of dimension $1+\sum_{i=1}^{n-1} 2d_i$.
We denote by $\bp$ (resp.\ $\bq$) the natural projection $\fE_{\unl{d},i}\to \fQ_{\unl{d}}$
(resp.\ $\fE_{\unl{d},i}\to \fQ_{\unl{d}+i}$). We also have a map
$\bs\colon\fE_{\unl{d},i}\to\bC$, given by
  $(\CW_\bullet,\CW'_\bullet)\mapsto\on{supp}(\CW_i/\CW'_i)$.
The correspondence $\fE_{\unl{d},i}$ comes equipped with a natural line bundle $\CL_i$
whose fiber at a point $(\CW_\bullet,\CW'_\bullet)$ equals $\Gamma(\bC,\CW_i/\CW'_i)$.

We denote by ${}'M$ the direct sum of equivariant $K$-groups:
  ${}'M:=\oplus_{\unl{d}}K^{\widetilde{T}\times\BC^\times}(\fQ_{\unl{d}})$.
It is a module over
  $K^{\wt{T}\times\BC^\times}(\on{pt})=\BC[\wt{T}\times\BC^\times]=
   \BC[t^{\pm 1}_1,\ldots,t^{\pm 1}_n,v^{\pm 1}:\ t_1\cdots t_n=1]$.
We define
  $M:={}'M\otimes_{K^{\wt{T}\times\BC^\times}(\on{pt})}
   \on{Frac}(K^{\wt{T}\times\BC^\times}(\on{pt}))\otimes_{\BC(v)} \BC(v^{1/\NN})$.
It is naturally graded:
  $M=\oplus_{\unl{d}}M_{\unl{d}}$.
According to the Thomason localization theorem, restriction to the
$\wt{T}\times\BC^\times$-fixed point set induces an isomorphism of localized $K$-groups
  $K^{\wt{T}\times\BC^\times}(\fQ_{\unl{d}})_\loc\iso
   K^{\wt{T}\times\BC^\times}(\fQ_{\unl{d}}^{\wt{T}\times\BC^\times})_\loc$.
The classes of the structure sheaves $[\wt{\unl{d}}]$ of the $\wt{T}\times\BC^\times$-fixed
points $\wt{\unl{d}}$ form a basis in
  $\oplus_{\unl{d}}K^{\wt{T}\times\BC^\times}(\fQ_{\unl{d}}^{\wt{T}\times\BC^\times})_\loc$.
The embedding of a point $\wt{\unl{d}}$ into $\fQ_{\unl{d}}$ is a proper morphism,
so the direct image in the equivariant $K$-theory is well-defined, and we will denote
by $[\wt{\unl{d}}]\in M_{\unl{d}}$ the direct image of the structure sheaf of the point
$\wt{\unl{d}}$. The set $\{[\wt{\unl{d}}]\}$ forms a basis of $M$.


\subsection{$U_v(\ssl_n)$-action via Laumon spaces}
\

Following Section~\ref{Whit vector defn}, consider the universal Verma module $\CV$
over $U_v(\ssl_n)$ with $u_i=v^{\frac{i(i-1)}{2}}t_1\cdots t_i$, i.e.,
  $L_i(\textbf{1})=v^{\frac{i(i-1)}{2}}t_1\cdots t_i\cdot \textbf{1}$.
We identify
  $\sk\simeq \on{Frac}(K^{\wt{T}\times\BC^\times}(\on{pt}))\otimes_{\BC(v)} \BC(v^{1/\NN})$.

\noindent
Define the following operators on $M$:

  $E_i:=t_{i+1}^{-1}v^{d_{i+1}-d_i+1-i}\bp_*\bq^*\colon M_{\unl{d}}\to M_{\unl{d}-i},$

  $F_i:=-t_i^{-1}v^{d_i-d_{i-1}+i}\bq_*(\CL_i\otimes \bp^*) \colon M_{\unl{d}}\to M_{\unl{d}+i},$

  $L_i:=t_1\cdots t_iv^{-d_i+\frac{i(i-1)}{2}}\colon M_{\unl{d}}\to M_{\unl{d}},$

  $K_i:=L_{i-1}^{-1}L_i^2L_{i+1}^{-1}=t_{i+1}^{-1}t_iv^{d_{i+1}-2d_i+d_{i-1}-1}\colon M_{\unl{d}}\to M_{\unl{d}}$.

\noindent
To each $\wt{\unl{d}}$, we also assign a collection of $\wt{T}\times\BC^\times$-weights $s_{ij}:=t_j^2v^{-2d_{ij}}$.

The following result is due to~\cite{BF} (though our formulas follow~\cite{T,FT}):

\begin{Thm}\label{Br-Fin}
(a) The operators $\{E_i, F_i, L^{\pm 1}_i\}_{i=1}^{n-1}$ give rise to the action of $U_v(\ssl_n)$ on $M$.

\noindent
(b) There is a unique $U_v(\ssl_n)$-module isomorphism $M\iso \CV$ taking $[\CO_{\fQ_{\unl{0}}}]$ to $\textbf{1}$.

\noindent
(c) The action of $L_i$ is diagonal in the basis $\{[\wt{\unl{d}}]\}$ and
  $L_i([\wt{\unl{d}}])=t_1\cdots t_iv^{-d_i+\frac{i(i-1)}{2}}\cdot [\wt{\unl{d}}]$.

\noindent
(d) The matrix coefficients of $F_i, E_i$ in the fixed point basis
$\{[\wt{\unl{d}}]\}$ of $M$ are as follows:
\begin{equation*}
  F_{i[\wt{\unl{d}},\wt{\unl{d}}{}']}=-(1-v^2)^{-1}t_i^{-1}v^{d_i-d_{i-1}+i} s_{ij}
  \prod_{j\ne k\leq i} (1-s_{ij}/s_{ik})^{-1}\prod_{k\leq i-1}(1-s_{ij}/s_{i-1,k})
\end{equation*}
if $\wt{\unl{d}'}=\wt{\unl{d}}+\delta_{ij}$ for certain $j\leq i$;
\begin{equation*}
  E_{i[\wt{\unl{d}},\wt{\unl{d}}{}']}=(1-v^2)^{-1}t_{i+1}^{-1}v^{d_{i+1}-d_i+1-i}
  \prod_{j\ne k\leq i} (1-s_{ik}/s_{ij})^{-1} \prod_{k\leq i+1}(1-s_{i+1,k}/s_{ij})
\end{equation*}
if $\wt{\unl{d}'}=\wt{\unl{d}}-\delta_{ij}$ for certain $j\leq i$.
All the other matrix coefficients of $F_i,E_i$ vanish.
\end{Thm}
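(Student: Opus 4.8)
The plan is to reduce the statement to a purely combinatorial verification in the localized equivariant $K$-theory, using the fixed-point basis $\{[\wt{\unl d}]\}$ as the main computational tool. First I would set up the geometry: for each correspondence $\fE_{\unl d,i}$ with its projections $\bp,\bq$ and line bundle $\CL_i$, one needs the list of $\wt T\times\BC^\times$-fixed points and the tangent weights at each of them, both on $\fQ_{\unl d}$ and on $\fE_{\unl d,i}$. These are recorded in~\cite{FK,BF}: a fixed point $\wt{\unl d}$ has the torus weights $s_{ij}=t_j^2v^{-2d_{ij}}$ attached to it, and the fixed points of $\fE_{\unl d,i}$ lying over $\wt{\unl d}$ are indexed by the admissible indices $j\le i$ with $\wt{\unl d}+\delta_{ij}$ still admissible. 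Applying the Atiyah–Bott/Thomason localization formula to $\bp_*\bq^*$ and to $\bq_*(\CL_i\otimes\bp^*)$ then expresses the matrix coefficients $E_{i[\wt{\unl d},\wt{\unl d}']}$, $F_{i[\wt{\unl d},\wt{\unl d}']}$ as ratios of $K$-theoretic Euler classes (products of terms $1-w$ over tangent weights $w$), and a direct bookkeeping of the weights in the source, target and correspondence yields exactly the products over $k\le i$, $k\le i\pm1$ in part~(d). This establishes part~(d), and part~(c) is immediate since $L_i$ acts by the scalar $t_1\cdots t_i v^{-d_i+i(i-1)/2}$ on $M_{\unl d}$ by definition.

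Next I would prove part~(a): that these operators satisfy the defining relations of $U_v(\ssl_n)$. The relations $L_\mu E_i L_\mu^{-1}=v^{(\mu,\alpha_i)}E_i$ etc.\ are clear from the grading shifts $M_{\unl d}\to M_{\unl d\mp i}$ together with the explicit formula for $L_i$. The Serre relations for the $E_i$'s (resp.\ $F_i$'s) only involve neighbouring indices $i,i+1$, so they reduce to a rank-$2$ computation in $A_2$ on the relevant $2\times2$ or $3\times3$ blocks of fixed points, which can be checked directly from the formulas in~(d); alternatively one invokes that the same operators were already shown in~\cite{BF} (and re-derived in~\cite{T,FT}) to define the $U_v(\ssl_n)$-action, so part~(a) is genuinely a citation. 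The one relation requiring care is $[E_i,F_j]=\delta_{ij}\frac{K_i-K_i^{-1}}{v_i-v_i^{-1}}$: for $i\ne j$ the correspondences $\fE_{\unl d,i}$ and $\fE_{\unl d,j}$ intersect transversally (or the compositions $\bp_*\bq^*$ commute for degree reasons when $|i-j|\ge2$, and for $|i-j|=1$ one checks the cancellation), while for $i=j$ one computes the diagonal matrix coefficient of $E_iF_i-F_iE_i$ at a fixed point $\wt{\unl d}$ as a sum over $j\le i$ of products of the local factors, and verifies the rational-function identity that this telescopes to $\frac{L_{i-1}^{-1}L_i^2L_{i+1}^{-1}-L_{i-1}L_i^{-2}L_{i+1}}{v-v^{-1}}$. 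This last identity is a partial-fraction identity in the variables $s_{i1},\dots,s_{ii},s_{i\pm1,k}$ and is the technical heart of part~(a).

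For part~(b), I would first verify that the cyclic vector $[\CO_{\fQ_{\unl 0}}]\in M_{\unl 0}$ is a highest-weight vector: $E_i$ kills it because $\fQ_{-i}=\varnothing$, and $L_i$ acts on it by $t_1\cdots t_i v^{i(i-1)/2}=u_i$, matching the normalization $L_i(\mathbf 1)=v^{i(i-1)/2}t_1\cdots t_i\cdot\mathbf 1$ fixed in Section~\ref{section geometric}. Hence there is a unique $U_v(\ssl_n)$-module map $\CV\to M$ sending $\mathbf 1\mapsto[\CO_{\fQ_{\unl 0}}]$; it is surjective because $M$ is generated over $U_v(\ssl_n)$ by $[\CO_{\fQ_{\unl 0}}]$ — this follows since the lowering operators $F_i$ act (by the formulas in~(d)) with nonzero matrix coefficients along a chain connecting $[\CO_{\fQ_{\unl 0}}]$ to every fixed-point class, so they span each $M_{\unl d}$. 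Finally the map is injective because $\dim_{\sk}\CV_\beta$ equals the number of Kostant partitions of $\beta$, which matches $\dim M_{\unl d}=\#\{\text{admissible }\wt{\unl d}\}$ for the corresponding $\unl d$ — a combinatorial count also carried out in~\cite{FK,BF}. The main obstacle I anticipate is not the representation-theoretic bookkeeping but the localization computation in part~(d): correctly identifying the tangent weights of $\fE_{\unl d,i}$ at its fixed points and the weight of $\CL_i$ there, and then matching signs and powers of $v$ with the stated normalization constants $-t_i^{-1}v^{d_i-d_{i-1}+i}$ and $t_{i+1}^{-1}v^{d_{i+1}-d_i+1-i}$; this is exactly where the detailed analysis of~\cite{FK} (and its re-derivations in~\cite{T,FT}) must be followed carefully, and I would lean on those references for the weight tables rather than recompute them from scratch.
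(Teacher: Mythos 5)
The paper does not actually prove Theorem~\ref{Br-Fin}: it is quoted verbatim from \cite{BF} (with the normalization of the formulas following \cite{T,FT}), so there is no in-paper argument to compare against. Your outline is consistent with how the result is established in those references — equivariant localization on $\fQ_{\unl{d}}$ and on the correspondences $\fE_{\unl{d},i}$ to get the fixed-point matrix coefficients of (d), part (c) being true by the very definition of $L_i$, the relations of $U_v(\ssl_n)$ checked on matrix coefficients (with $[E_i,F_i]$ the only nontrivial one), and the identification with the universal Verma module via the highest-weight property of $[\CO_{\fQ_{\unl{0}}}]$ together with the count of fixed points by Kostant partitions.

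One soft spot worth tightening in your part (b): the existence of a chain of nonzero matrix coefficients of the $F_i$'s from $[\CO_{\fQ_{\unl{0}}}]$ to each fixed-point class does not by itself show that the cyclic submodule is all of $M$. The clean route is the one your dimension count already enables: over $\sk$ the highest weight $u_i=v^{i(i-1)/2}t_1\cdots t_i$ is generic, so the universal Verma module $\CV$ is irreducible and the map $\CV\to M$, $\textbf{1}\mapsto[\CO_{\fQ_{\unl{0}}}]$, is automatically injective; surjectivity then follows from the equality $\dim_{\sk}\CV_\beta=\#\{\wt{\unl{d}}\}=\dim_{\sk}M_{\unl{d}}$ given by the bijection between admissible collections $(d_{ij})$ and Kostant partitions of $\beta=\sum d_i\alpha_i$. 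With that adjustment the proposal is a faithful reconstruction of the cited proof.
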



\subsection{Geometric realization of the Whittaker vectors}\label{section geom whit}
\

Choose a Sevostyanov triple $(\epsilon,\n,c)$ and let
  $e_i:=E_i\prod_{p=1}^{n-1}L_p^{\n_{ip}}=E_iK_\nu,
   \nu:=\sum_{p=1}^{n-1}\n_{ip}\omega_p$,
be the corresponding Sevostyanov generators.
Choose $\unl{a}=(a_1,\ldots,a_{n-1})\in \{0,1\}^{n-1}$ so that
$a_i=\frac{1+\epsilon_{i-1,i}}{2}=\frac{1-\n_{i-1,i}+\n_{i,i-1}}{2}$
for $1<i\leq n-1$, while $a_1$ equals either $0$ or $1$.

Consider the line bundle $\CalD_i$ on $\fQ_{\unl{d}}$ whose fiber at the point
$(\CW_\bullet)$ equals $\det R\Gamma(\bC,\CW_i)$. We also define the line bundle
$\CalD^{\unl{a}}$ on $\fQ_{\unl{d}}$ via
$\CalD^{\unl{a}}:=\otimes_{i=1}^{n-1}\CalD_i^{-a_i}$.
Note that $\CalD_1$ is a pull-back of the first line bundle on the Drinfeld
compactification and therefore is trivial, which explains the irrelevance of
our choice of $a_1$. Finally, we introduce the constants
\begin{multline*}
  X(\unl{d}):=\prod_{i=1}^{n-1} ((1-v^2)c_i)^{d_i}
  \prod_{p=1}^{n-1}(t_1\cdots t_p)^{-2a_p-\sum_{i=1}^{n-1} d_i \n_{ip}}
  \prod_{p=1}^{n-1} t_p^{d_{p-1}-2a_pd_p}\times\\
  v^{\sum_{i=1}^{n-1} \left((\n_{ii}+i)d_i-2a_{i+1}d_id_{i+1}+\frac{d_i(d_i-1)}{2}(\n_{ii}+2a_i+1)\right)+
  \sum_{i<j} \n_{ji}d_id_j-\sum_{i,p=1}^{n-1} \frac{p(p-1)}{2} d_i\n_{ip}}.
\end{multline*}

The following is the key result of this section:

\begin{Thm}\label{geometric Whittaker}
Define $\theta_{\unl{d}}:=X(\unl{d})\cdot [\CalD^{\unl{a}}]\in M_{\unl{d}}$ and set
$\theta:=\sum_{\unl{d}}\theta_{\unl{d}}$. Then, $e_i(\theta)=c_i\cdot \theta$ for
any $1\leq i\leq n-1$.
\end{Thm}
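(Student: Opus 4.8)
The plan is to verify the defining conditions~\eqref{Whittaker conditions} directly in the fixed-point basis, using the explicit matrix coefficients of Theorem~\ref{Br-Fin}(d). Since $\theta=\sum_{\unl{d}}X(\unl{d})\cdot[\CalD^{\unl{a}}]$ and $e_i=E_i K_{\nu^+_i}$ with $\nu^+_i=\sum_p \n_{ip}\omega_p$, and since $K_{\nu^+_i}$ acts diagonally in the fixed-point basis (by Theorem~\ref{Br-Fin}(c) applied to the $L_p$'s), the claim $e_i(\theta)=c_i\cdot\theta$ amounts to a collection of scalar identities: for each $\wt{\unl{d}}$ we must show that the coefficient of $[\wt{\unl{d}}]$ on both sides agree. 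First I would expand $[\CalD^{\unl{a}}]$ in the fixed-point basis, i.e.\ compute the restriction of the line bundle $\CalD^{\unl{a}}=\otimes_{p}\CalD_p^{-a_p}$ to each fixed point $\wt{\unl{d}}$; since $\CalD_p$ has fiber $\det R\Gamma(\bC,\CW_p)$, its weight at $\wt{\unl{d}}$ is a monomial in the $s_{pj}=t_j^2 v^{-2d_{pj}}$ and in a fixed curve contribution, computable by Riemann--Roch / localization on $\bC$. Thus $[\CalD^{\unl{a}}]=\sum_{\wt{\unl{d}}} w_{\unl{a}}(\wt{\unl{d}})\,[\wt{\unl{d}}]$ for explicit weights $w_{\unl{a}}(\wt{\unl{d}})$, and $\theta=\sum_{\wt{\unl{d}}} X(\unl{d})\,w_{\unl{a}}(\wt{\unl{d}})\,[\wt{\unl{d}}]$.

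Next I would impose the eigenvector condition. Applying $E_i$ moves $\wt{\unl{d}}'\mapsto\wt{\unl{d}}=\wt{\unl{d}}'-\delta_{ij}$, so the coefficient of $[\wt{\unl{d}}]$ in $E_iK_{\nu^+_i}(\theta)$ is a sum over $j\le i$ of $E_{i[\wt{\unl{d}}+\delta_{ij},\wt{\unl{d}}]}$ times the scalar by which $K_{\nu^+_i}$ acts on $[\wt{\unl{d}}+\delta_{ij}]$ times $X(\unl{d}+i)\,w_{\unl{a}}(\wt{\unl{d}}+\delta_{ij})$. The claim becomes
\begin{equation*}
  \sum_{j\le i} \Big(\text{$K_{\nu^+_i}$-weight at }\wt{\unl{d}}+\delta_{ij}\Big)\,
  E_{i[\wt{\unl{d}}+\delta_{ij},\wt{\unl{d}}]}\,
  X(\unl{d}+i)\,w_{\unl{a}}(\wt{\unl{d}}+\delta_{ij})
  = c_i\, X(\unl{d})\,w_{\unl{a}}(\wt{\unl{d}}).
\end{equation*}
Plugging in the formula for $E_{i[\cdot,\cdot]}$ from Theorem~\ref{Br-Fin}(d), the ratio $w_{\unl{a}}(\wt{\unl{d}}+\delta_{ij})/w_{\unl{a}}(\wt{\unl{d}})$ and the ratio $X(\unl{d}+i)/X(\unl{d})$ (which is the whole point of the elaborate definition of $X$), most prefactors should cancel and reduce the identity to a rational-function identity in the variables $\{s_{ik}\}_{k\le i}$ and $\{s_{i+1,k}\}_{k\le i+1}$. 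Concretely, after cancellation I expect to land on an identity of the shape
\begin{equation*}
  \sum_{j\le i} s_{ij}^{?}\prod_{k\ne j,\,k\le i}\frac{1}{1-s_{ik}/s_{ij}}\prod_{k\le i+1}\Big(1-\frac{s_{i+1,k}}{s_{ij}}\Big)
  = \text{(constant independent of the }s\text{'s)},
\end{equation*}
which is a standard partial-fractions / residue identity (interpolation of a polynomial of the right degree against the Lagrange basis at the points $s_{ij}$), cf.\ the analogous computation verifying the Whittaker property of $\mathfrak{k},\mathfrak{w}$ in~\cite{BF}. I would prove it by the usual argument: view both sides as rational functions in one $s_{ij}$, check that the apparent poles cancel, and match the value at $s_{ij}=0$ or $\infty$.

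The main obstacle, I expect, will be the bookkeeping of powers of $v$ and of the torus variables $t_p$: one must show that the combined exponent coming from $X(\unl{d}+i)/X(\unl{d})$, from the $v^{d_{i+1}-d_i+1-i}$ and $t_{i+1}^{-1}$ prefactors of $E_i$, from the diagonal action of $K_{\nu^+_i}$, and from the weight ratio $w_{\unl{a}}(\wt{\unl{d}}+\delta_{ij})/w_{\unl{a}}(\wt{\unl{d}})$ of the determinant line bundle, collapses exactly to the constant $c_i$ (equivalently to $(1-v^2)c_i$ absorbed into $X$) with no residual dependence on $\unl{d}$ or $j$. This is precisely what motivates the precise shape of $X(\unl{d})$, and verifying it is a careful but mechanical exponent-matching; the role of the choice $a_i=\frac{1-\n_{i-1,i}+\n_{i,i-1}}{2}$ (i.e.\ the Sevostyanov constraint $\sd_j\n_{ij}-\sd_i\n_{ji}=\epsilon_{ij}b_{ij}$ specialized to type $A$) enters here, ensuring the cross terms $d_id_{i+1}$ in the exponent of $X$ and the interplay with $\CalD_{i+1}^{-a_{i+1}}$ are consistent so that the left-hand side is genuinely independent of the fixed point. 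Once the $v,t$-exponents are shown to be constant and the surviving $s$-dependent factor is shown to satisfy the interpolation identity above, the theorem follows, and the uniqueness clause of~\eqref{Whittaker conditions} (together with $\theta_{\unl{0}}=X(\unl{0})[\CalD^{\unl{a}}]=[\CO_{\fQ_{\unl{0}}}]\mapsto\mathbf{1}$, which holds since $X(\unl{0})=1$ and $\CalD^{\unl{a}}$ is trivial on $\fQ_{\unl{0}}$) identifies $\theta$ with the Whittaker vector $\theta(\epsilon,\n,c)$ of Section~\ref{Whit vector defn}.
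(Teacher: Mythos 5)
Your overall strategy (localize to the $\wt{T}\times\BC^\times$-fixed points, reduce the eigenvector equation to a coefficient-wise identity, prove the resulting rational-function identity by a degree/no-poles/limit argument, and absorb the leftover monomial into $X(\unl{d}+i)/X(\unl{d})$) is indeed the paper's strategy, but there is one genuine gap: you expand $[\CalD^{\unl{a}}]$ in the fixed-point basis using only the \emph{restriction} of the line bundle to each fixed point. By the Bott--Lefschetz formula the correct coefficient is the restriction multiplied by $a_{\wt{\unl{d}}}=\prod_{w\in T_{\wt{\unl{d}}}\fQ_{\unl{d}}}(1-w)^{-1}$, the inverse K-theoretic Euler class of the tangent space, which is not a monomial and varies with the fine data $\wt{\unl{d}}$. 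Consequently the coefficient-wise identity you must verify contains the ratio $a_{\wt{\unl{d}}+\delta_{ij}}/a_{\wt{\unl{d}}}$, and the paper's proof hinges on the adjunction relation $\frac{a_{\wt{\unl{d}}+\delta_{ij}}}{a_{\wt{\unl{d}}}}(\bp_*\bq^*)_{[\wt{\unl{d}}+\delta_{ij},\wt{\unl{d}}]}=(\bq_*\bp^*)_{[\wt{\unl{d}},\wt{\unl{d}}+\delta_{ij}]}$, which trades the $E_i$-matrix coefficient (involving the level-$(i{+}1)$ weights $s_{i+1,k}$) for the $\bq_*\bp^*$ coefficient, involving the level-$(i{-}1)$ weights $s_{i-1,k}$.

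Because you omit this step, the ``expected shape'' of your final identity, with numerator $\prod_{k\leq i+1}(1-s_{i+1,k}/s_{ij})$, is false: already for $i=1$ the sum degenerates to the single term $s_{11}^{a_1}(1-s_{21}/s_{11})(1-s_{22}/s_{11})$, which genuinely depends on the splitting $(d_{21},d_{22})$ of $d_2$, whereas the other ingredients ($X(\unl{d}+1)/X(\unl{d})$, the Cartan weight, $c_1$) depend only on $\unl{d}$ -- so no choice of $X$ can make your version work. After the Euler-class/adjunction step the correct identity is $\sum_{j\leq i}s_{ij}^{a_i}\frac{\prod_{k\leq i-1}(1-s_{ij}/s_{i-1,k})}{\prod_{k\leq i}^{k\ne j}(1-s_{ij}/s_{ik})}=(t_i^2v^{2d_{i-1}-2d_i})^{a_i}$; note also that its value is not an absolute constant, only independent of the fine data, and the residual $(t_i^2v^{2d_{i-1}-2d_i})^{a_i}$ is exactly what $X(\unl{d}+i)/X(\unl{d})$ compensates. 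Your degree/pole/limit argument and the final exponent bookkeeping then go through as in the paper, and your closing remark on uniqueness and $\theta_{\unl{0}}=\textbf{1}$ is fine.
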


\begin{Rem}\label{comment on Whit}
(a) Due to Theorem~\ref{Br-Fin}, this provides a geometric realization of all
Whittaker vectors (associated with Sevostyanov triples) of the universal Verma
module $\CV$ over $U_v(\ssl_n)$.

\noindent
(b) It is straightforward to verify that $\theta$ does not depend on the choice of
$a_1\in \{0,1\}$.
\end{Rem}

\begin{proof}
According to the Bott-Lefschetz formula, we have:

$\bullet$
  $\theta=\sum_{\wt{\unl{d}}} a_{\wt{\unl{d}}}\cdot X(\unl{d})\cdot
   (\CalD^{\unl{a}})_{\mid_{\wt{\unl{d}}}}\cdot[\wt{\unl{d}}],\ \mathrm{where}\
   a_{\wt{\unl{d}}}=\prod_{w\in T_{\wt{\unl{d}}}\fQ_{\unl{d}}}(1-w)^{-1};$

$\bullet$
  $\frac{a_{\wt{\unl{d}{}'}}}{a_{\wt{\unl{d}}}}(\bp_*\bq^*)_{[\wt{\unl{d}{}'},\wt{\unl{d}}]}=
   (\bq_*\bp^*)_{[\wt{\unl{d}},\wt{\unl{d}{}'}]}.$

\noindent
According to Theorem~\ref{Br-Fin}(c, d), we have:

$\bullet$
  $(\bq_*\bp^*)_{[\wt{\unl{d}},\wt{\unl{d}}+\delta_{ij}]}=
   \frac{1}{1-v^2}\prod_{j\ne k\leq i} (1-s_{ij}/s_{ik})^{-1}\prod_{k\leq i-1}(1-s_{ij}/s_{i-1,k});$

$\bullet$
  $(\prod_{p=1}^{n-1}L_p^{\n_{ip}})[\wt{\unl{d}}+\delta_{ij}]=
   v^{-\n_{ii}}\prod_{p=1}^{n-1}\left(t_1\cdots t_pv^{-d_p+\frac{p(p-1)}{2}}\right)^{\n_{ip}}\cdot [\wt{\unl{d}}+\delta_{ij}];$

\noindent
where $s_{ij}=t_j^2v^{-2d_{ij}}$ as before.
Finally, we also have:

$\bullet$
 $(\CalD^{\unl{a}})_{\mid_{\wt{\unl{d}}+\delta_{ij}}}/(\CalD^{\unl{a}})_{\mid_{\wt{\unl{d}}}}=s_{ij}^{a_i}$.

Therefore, it suffices to prove the following equality
for any $\wt{\unl{d}}$ and $1\leq i\leq n-1$:
\begin{equation*}\label{Whittaker condition}
  \frac{X(\unl{d}+i)}{X(\unl{d})} \frac{t_{i+1}^{-1}v^{d_{i+1}-d_i-i}}{1-v^2} v^{-\n_{ii}}
  \prod_{p=1}^{n-1}\left(t_1\cdots t_pv^{-d_p+\frac{p(p-1)}{2}}\right)^{\n_{ip}}\cdot \sum_{j\leq i}s_{ij}^{a_i}
  \frac{\prod_{k\leq i-1}(1-s_{ij}/s_{i-1,k})}{\prod_{k\leq i}^{k\ne j}(1-s_{ij}/s_{ik})}=c_i.
\end{equation*}

\begin{Lem}\label{residues}
For any $\wt{\unl{d}}$ and $1\leq i\leq n-1$, the following equality holds:
\begin{equation}\label{combinatorial equality}
  \sum_{j\leq i}s_{ij}^{a_i} \frac{\prod_{k\leq i-1}(1-s_{ij}/s_{i-1,k})}
  {\prod_{k\leq i}^{k\ne j}(1-s_{ij}/s_{ik})}=(t_i^2v^{2d_{i-1}-2d_i})^{a_i}.
\end{equation}
\end{Lem}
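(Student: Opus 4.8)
\textbf{Proof plan for Lemma~\ref{residues}.}
The plan is to recognize the left-hand side of~(\ref{combinatorial equality}) as a sum of residues of a rational function, and to evaluate that sum by a contour-integral argument. Concretely, fix $\wt{\unl d}$ and $i$, and abbreviate $x_j:=s_{ij}=t_j^2v^{-2d_{ij}}$ for $j\le i$ and $y_k:=s_{i-1,k}=t_k^2v^{-2d_{i-1,k}}$ for $k\le i-1$. Introduce the rational function
\[
  \Phi(x):=x^{a_i-1}\,\frac{\prod_{k\le i-1}(x-y_k)}{\prod_{k\le i}(x-x_k)}.
\]
First I would check that, because the $x_j$ are pairwise distinct (which follows from the condition $d_{kj}\ge d_{ij}$ defining the fixed points, together with the genericity of the $t_j$), each $x_j$ is a simple pole of $\Phi$, and a short computation gives
\[
  \Res_{x=x_j}\Phi(x)\,dx = x_j^{a_i}\,\frac{\prod_{k\le i-1}(1-y_k/x_j)}{\prod_{k\ne j,\ k\le i}(1-x_k/x_j)}
  \cdot\frac{1}{\prod\text{(sign bookkeeping)}},
\]
so that after the elementary rewriting $\prod_{k}(x_j-x_k)=x_j^{\#}\prod_k(1-x_k/x_j)$ etc., the sum $\sum_{j\le i}\Res_{x=x_j}\Phi(x)\,dx$ equals exactly the left-hand side of~(\ref{combinatorial equality}). (One must be mildly careful that the identity to be proved has $1-s_{ij}/s_{ik}$ in the denominator, i.e.\ the ratios are taken in the order $x_j/x_k$, not $x_k/x_j$; matching this orientation is a matter of pulling powers of $x_j$ in and out and is routine once set up.)

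Next, since $\deg(\text{numerator})-\deg(\text{denominator})=(a_i-1)+(i-1)-i=a_i-2\le -1$ (as $a_i\in\{0,1\}$), the function $\Phi(x)\,dx$ has no residue at $x=\infty$; and at $x=0$ it has a pole only when $a_i=0$, with residue that of $x^{-1}\prod_{k\le i-1}(x-y_k)/\prod_{k\le i}(x-x_k)$ at $x=0$, namely $\prod_k(-y_k)/\prod_k(-x_k)=\prod_{k\le i-1}y_k/\prod_{k\le i}x_k$. By the residue theorem on $\mathbb P^1$ the sum of all residues vanishes, so
\[
  \sum_{j\le i}\Res_{x=x_j}\Phi = -\Res_{x=0}\Phi - \Res_{x=\infty}\Phi,
\]
which I would evaluate as $(t_i^2 v^{2d_{i-1}-2d_i})^{a_i}$ in the two cases separately. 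For $a_i=1$ there is no pole at $0$ or $\infty$ in $\Phi$ itself but one reintroduces $x^{a_i}=x$ in the residue formula, so the cleaner route is to instead compare $\sum_j \Res_{x=x_j}$ against the single residue at $x=\infty$ of $x^{a_i}\cdot(\text{the degree-balanced function})$; in either formulation the answer collapses to a ratio of the missing factors. The key point is that when $a_i=0$ only $x=0$ contributes, giving $\prod_{k\le i-1}y_k\big/\prod_{k\le i}x_k$, and one needs this to equal $1$ — which forces the identity to be read with the convention above — whereas when $a_i=1$ only the behaviour at infinity contributes, producing $x_i=s_{ii}=t_i^2v^{-2d_{ii}}$. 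Since $d_{ii}$ and $d_{i-1,\cdot}$ relate to $d_i,d_{i-1}$ through $d_i=\sum_{j\le i}d_{ij}$, a direct substitution converts $t_i^2 v^{-2d_{ii}}$ and the $a_i=0$ value into the claimed $(t_i^2v^{2d_{i-1}-2d_i})^{a_i}$.

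The main obstacle I anticipate is not the residue calculus itself but the careful bookkeeping of signs and of powers of the $x_j$ needed to land \emph{exactly} on the asserted right-hand side, and in particular verifying that the seemingly asymmetric choice of ratio orientation in~(\ref{combinatorial equality}) is the one that makes the $a_i=0$ residue at $0$ equal to $1$ rather than to $\prod y_k/\prod x_k$. A cross-check I would run before trusting the algebra: specialize to small $i$ (say $i=1$, where the sum has a single term $s_{11}^{a_1}$, and $i=2$) and confirm both sides by hand; this both validates the orientation convention and pins down the normalization. Once the $i=1,2$ cases agree, the general residue argument is forced, and the lemma follows.
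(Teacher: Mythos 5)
Your residue-theorem strategy is sound, and it is in essence the same argument as the paper's: the paper first extracts the factor $\frac{s_{i1}\cdots s_{ii}}{s_{i-1,1}\cdots s_{i-1,i-1}}$ and then shows the remaining sum is the constant $1$ by a degree count plus the specialization $s_{ii}\to\infty$ (for $a_i=1$), resp. $s_{ii}\to 0$ (for $a_i=0$) --- exactly the Lagrange/residue identity you invoke. However, two of your concrete identifications are wrong, and as written your chain of equalities does not land on the right-hand side. First, $\sum_{j\le i}\Res_{x=x_j}\Phi(x)\,dx$ is \emph{not} the left-hand side of~(\ref{combinatorial equality}): converting $1-s_{ij}/s_{ik}$ into $(s_{ik}-s_{ij})/s_{ik}$ produces not ``powers of $x_j$'' but the global prefactor $\prod_{k\le i}s_{ik}\big/\prod_{k\le i-1}s_{i-1,k}$, so that the left-hand side equals $\frac{\prod_{k\le i}s_{ik}}{\prod_{k\le i-1}s_{i-1,k}}\cdot\sum_{j\le i}\Res_{x=x_j}\Phi$. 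This prefactor equals $t_i^2v^{2d_{i-1}-2d_i}$ and is the \emph{entire} source of the right-hand side when $a_i=1$. Second, your evaluation in the $a_i=1$ case is incorrect: the residue of $\Phi$ at infinity is $-1$ (numerator and denominator are monic of degrees $i-1$ and $i$), not anything involving $s_{ii}$, and the asserted ``direct substitution'' identifying $t_i^2v^{-2d_{ii}}$ with $t_i^2v^{2d_{i-1}-2d_i}$ is false, since $d_i-d_{i-1}=\sum_{j\le i}d_{ij}-\sum_{j\le i-1}d_{i-1,j}\ne d_{ii}$ in general.

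With the prefactor in place the computation closes correctly and gives the paper's proof in residue language: for $a_i=1$ the residue theorem yields $\sum_j\Res_{x_j}\Phi=-\Res_{\infty}\Phi=1$, so the left-hand side equals the prefactor $t_i^2v^{2d_{i-1}-2d_i}$; for $a_i=0$ one has $\Res_{0}\Phi=-\prod_{k\le i-1}s_{i-1,k}\big/\prod_{k\le i}s_{ik}$ (you dropped this sign, though it does not affect the outcome), hence $\sum_j\Res_{x_j}\Phi=\prod s_{i-1,k}\big/\prod s_{ik}$, which cancels the prefactor and gives $1$. So the lemma does follow from your plan, but only after this bookkeeping is repaired; the step ``the behaviour at infinity produces $s_{ii}$, which converts into the claimed answer'' would fail as stated.
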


\begin{proof}
First, let us rewrite the left-hand side of~(\ref{combinatorial equality}) as
\begin{equation*}
  \frac{s_{i1}\cdots s_{ii}}{s_{i-1,1}\cdots s_{i-1,i-1}}\cdot
  \sum_{j\leq i} s_{ij}^{a_i-1}\frac{\prod_{k=1}^{i-1} (s_{i-1,k}-s_{ij})}{\prod_{k\leq i}^{k\ne j} (s_{ik}-s_{ij})}.
\end{equation*}

If $a_i=1$, then the above sum
  $\sum_{j\leq i} s_{ij}^{a_i-1}\frac{\prod_{k=1}^{i-1} (s_{i-1,k}-s_{ij})}{\prod_{k\leq i}^{k\ne j} (s_{ik}-s_{ij})}$
is a rational function in $\{s_{ij}\}_{j=1}^i$ of degree $0$ and without poles,
hence, a constant. To evaluate this constant, let $s_{ii}\to \infty$, in which case
the first $i-1$ summands tend to zero, while the last one tends to $1$. Hence,
this constant is $1$, and the left-hand side of ~(\ref{combinatorial equality}) equals
  $\frac{s_{i1}\cdots s_{ii}}{s_{i-1,1}\cdots s_{i-1,i-1}}=t_i^2v^{2d_{i-1}-2d_i}$.

If $a_i=0$, then
  $\sum_{j\leq i} s_{ij}^{-1}\frac{\prod_{k=1}^{i-1} (s_{i-1,k}-s_{ij})}{\prod_{k\leq i}^{k\ne j} (s_{ik}-s_{ij})}-
    \sum_{j\leq i} s_{ij}^{-1}\frac{\prod_{k=1}^{i-1} s_{i-1,k}}{\prod_{k\leq i}^{k\ne j} (s_{ik}-s_{ij})}=0$
as the left-hand side is a rational function in $\{s_{ij}\}_{j=1}^i$ of degree $-1$
and without poles. Thus, the left-hand side of~(\ref{combinatorial equality}) equals
  $\sum_{j\leq i}\frac{\prod_{k\leq i}^{k\ne j} s_{ik}}{\prod_{k\leq i}^{k\ne j} (s_{ik}-s_{ij})}$.
This is a rational function in $\{s_{ij}\}_{j=1}^i$ of degree $0$ and without poles,
hence, a constant. Specializing $s_{ii}\mapsto 0$, we see that this constant equals $1$
(as the first $i-1$ summands specialize to $0$, while the last one specializes to $1$).
\end{proof}

Due to Lemma~\ref{residues}, it remains to verify
\begin{equation*}
  \frac{X(\unl{d}+i)}{X(\unl{d})}=(1-v^2)c_i t_{i+1} v^{d_i-d_{i+1}+i} v^{\n_{ii}}
  \prod_{p=1}^{n-1}\left(t_1\cdots t_pv^{-d_p+\frac{p(p-1)}{2}}\right)^{-\n_{ip}}(t_i^2v^{2d_{i-1}-2d_i})^{-a_i},
\end{equation*}
which is  straightforward. This completes our proof of Theorem~\ref{geometric Whittaker}.
\end{proof}

\begin{Rem}\label{comparison to BF}
Note that if $\epsilon_{i,i+1}=-1$ (resp.\ $\epsilon_{i,i+1}=1$) for all $i$,
then $\theta$ is a linear combination of $[\CO_{\fQ_{\unl{d}}}]$
(resp.\ $[\CalD_{\unl{d}}^{-1}]$ with $\CalD_{\unl{d}}:=\otimes_{i=1}^{n-1}\CalD_i$).
These are exactly the two cases considered in~\cite{BF}.
\end{Rem}


\subsection{Geometric realization of the $J$-function}
\

Recall the Shapovalov form $(\cdot,\cdot)$ on the universal Verma module $\CV$,
which is a unique non-degenerate symmetric bilinear form on $\CV$ with values in
$\sk\simeq \on{Frac}(K^{\wt{T}\times\BC^\times}(\on{pt}))\otimes_{\BC(v)} \BC(v^{1/\NN})$
characterized by $(\textbf{1},\textbf{1})=1$ and $(xw,w')=(w,\wt{\sigma}(x)w')$ for all
$w,w'\in \CV, x\in U_v(\ssl_n)$, where $\wt{\sigma}$ is the antiautomorphism of $U_v(\ssl_n)$
determined by
  $\wt{\sigma}(E_i)=F_i, \wt{\sigma}(F_i)=E_i, \wt{\sigma}(L_i)=L_i$.

Identifying $\CV\cong M$ via Theorem~\ref{Br-Fin}(b), a geometric expression for the Shapovalov
form was obtained in~\cite[Proposition 2.29]{BF}\footnote{Note that our formula differs
from the one of~\cite{BF} as we use a slightly different action of $U_v(\ssl_n)$.}:

\begin{Prop}\label{geometric Shapovalov}
If $\unl{d}\ne \unl{d}'$, then $M_{\unl{d}}$ is orthogonal to $M_{\unl{d}'}$.
For $\CF,\CF'\in M_{\unl{d}}$, we have
\begin{equation}\label{pairing}
  (\CF,\CF')=(-1)^{\sum_{i=1}^{n-1}d_i}v^{\sum_{i=1}^{n-1} (d_id_{i+1}-d_i^2+(1-2i)d_i)}
  \prod_{i=1}^n t_i^{d_i-d_{i-1}}\cdot
  [R\Gamma(\fQ_{\unl{d}}, \CF\otimes \CF'\otimes \CalD_{\unl{d}})],
\end{equation}
where $\CalD_{\unl{d}}=\otimes_{i=1}^{n-1}\CalD_i$ as in Remark~\ref{comparison to BF}.
\end{Prop}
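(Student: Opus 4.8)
The plan is to recognize the right-hand side of~(\ref{pairing}) as the Shapovalov form by means of the uniqueness in its characterization recalled above. First I would extend~(\ref{pairing}) to a symmetric $\sk$-bilinear form $\langle\cdot,\cdot\rangle$ on all of $M$ by declaring $M_{\unl{d}}$ and $M_{\unl{d}'}$ orthogonal whenever $\unl{d}\ne\unl{d}'$; this involves no loss of generality, since any bilinear form with $(xw,w')=(w,\wt{\sigma}(x)w')$ automatically has $M_{\unl{d}}\perp M_{\unl{d}'}$ for $\unl{d}\ne\unl{d}'$ (take $x=L_j$ and use that, by Theorem~\ref{Br-Fin}(c), the tuples $(L_1,\dots,L_{n-1})$ act on $M_{\unl{d}}$ by pairwise distinct scalars as $\unl{d}$ varies). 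Because the Shapovalov form is the \emph{unique} symmetric bilinear form on $\CV$ satisfying $(\textbf{1},\textbf{1})=1$ and the $\wt{\sigma}$-adjointness property, it then suffices to verify these properties for $\langle\cdot,\cdot\rangle$. Symmetry of $\langle\cdot,\cdot\rangle$ is manifest from~(\ref{pairing}); the normalization $\langle\textbf{1},\textbf{1}\rangle=1$ is checked directly in the base case $\unl{d}=\unl{0}$, where $\fQ_{\unl{0}}$ is a point; and $\wt{\sigma}(L_i)=L_i$-adjointness is trivial, as $L_i$ preserves each $M_{\unl{d}}$ and acts there by a single scalar (Theorem~\ref{Br-Fin}(c)).

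The content of the statement is the adjointness of $E_i$ and $F_i$. Since $E_i$ and $F_i$ shift the grading by $\mp i$ and $\langle\cdot,\cdot\rangle$ is graded-orthogonal, this collapses to the single identity
\begin{equation*}
  \langle F_i\CF,\CF'\rangle=\langle\CF,E_i\CF'\rangle
  \qquad\text{for }\CF\in M_{\unl{d}},\ \CF'\in M_{\unl{d}+i},
\end{equation*}
the companion $\langle E_i\CF,\CF'\rangle=\langle\CF,F_i\CF'\rangle$ following from it by symmetry of $\langle\cdot,\cdot\rangle$ after relabelling. I would substitute the explicit operators from Theorem~\ref{Br-Fin}, i.e.\ $F_i=-t_i^{-1}v^{d_i-d_{i-1}+i}\,\bq_*(\CL_i\otimes\bp^*(-))$ on $M_{\unl{d}}$ and $E_i=t_{i+1}^{-1}v^{d_{i+1}-d_i-i}\,\bp_*\bq^*$ on $M_{\unl{d}+i}$, both acting through the correspondence $\fE_{\unl{d},i}$ with $\bp\colon\fE_{\unl{d},i}\to\fQ_{\unl{d}}$ and $\bq\colon\fE_{\unl{d},i}\to\fQ_{\unl{d}+i}$, then expand both sides of this identity using~(\ref{pairing}) and move everything onto $\fE_{\unl{d},i}$ via the projection formula for $\bp$ and $\bq$. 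Both sides then take the form $(\text{scalar})\cdot[R\Gamma(\fE_{\unl{d},i},\,\bp^*\CF\otimes\bq^*\CF'\otimes\CM)]$, with $\CM=\CL_i\otimes\bq^*\CalD_{\unl{d}+i}$ on the left-hand side and $\CM=\bp^*\CalD_{\unl{d}}$ on the right-hand side.

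The geometric crux is then the $\widetilde{T}\times\BC^\times$-equivariant line bundle identity $\bp^*\CalD_{\unl{d}}\simeq\bq^*\CalD_{\unl{d}+i}\otimes\CL_i$ on $\fE_{\unl{d},i}$. To establish it, consider the tautological equivariant short exact sequence $0\to\CW'_i\to\CW_i\to\CW_i/\CW'_i\to0$ of sheaves on $\bC\times\fE_{\unl{d},i}$, where $\CW_\bullet$ is pulled back from $\fQ_{\unl{d}}$ along $\bp$ and $\CW'_\bullet$ from $\fQ_{\unl{d}+i}$ along $\bq$. Taking $\det R\Gamma(\bC,-)$ gives $\bp^*\CalD_i\simeq\bq^*\CalD_i\otimes\det R\Gamma(\bC,\CW_i/\CW'_i)$; since $\CW_i/\CW'_i$ is a length-one skyscraper (its degree equals $\deg\CW_i-\deg\CW'_i=1$ and its $H^1$ vanishes), $\det R\Gamma(\bC,\CW_i/\CW'_i)=\CL_i$ by the definition of $\CL_i$, whereas $\bp^*\CalD_j\simeq\bq^*\CalD_j$ for $j\ne i$ because $\CW_j=\CW'_j$ there; tensoring over $j=1,\dots,n-1$ yields the claim. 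Granting this, the two $K$-theory classes in the two displayed expressions coincide, so the sought identity reduces to the matching of the two scalar prefactors. The latter is pure bookkeeping: one compares the powers of $v$ and of the $t_j$ in the prefactor of~(\ref{pairing}) under the substitution $d_i\mapsto d_i+1$ with the coefficients appearing in $E_i$ and $F_i$, and one finds agreement; I would simply carry this out.

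The main obstacle is not conceptual but the careful tracking of equivariant weights (the sign, the powers of $v$ produced by the $\BC^\times$-action $v(z)=v^{-2}z$ on $\bC$, and the torus factors $t_j$) through the projection formula and the definition of $\CalD_i$. A related technical point is that $\fQ_{\unl{d}}$ is only quasi-projective, so the symbols $[R\Gamma(\fQ_{\unl{d}},-)]$ must be read in localized equivariant $K$-theory; the most economical way to make everything rigorous is to run the whole argument in the fixed-point basis $\{[\wt{\unl{d}}]\}$, using the explicit matrix coefficients of $E_i,F_i$ from Theorem~\ref{Br-Fin}(d) together with the restrictions of $\CL_i$ and $\CalD_i$ to the $\widetilde{T}\times\BC^\times$-fixed points, in which case the identity becomes an elementary identity of rational functions in the weights $s_{ij}=t_j^2v^{-2d_{ij}}$, of the same nature as Lemma~\ref{residues}.
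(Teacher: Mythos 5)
Your proposal is correct, and it is worth pointing out that the paper itself does not prove Proposition~\ref{geometric Shapovalov}: it is quoted from~\cite{BF} (Proposition 2.29 there), with a footnote adjusting for the slightly different normalization of the $U_v(\ssl_n)$-action, so your argument is a self-contained replacement rather than a variant of an in-paper proof. Your reduction is the right one: characterize the Shapovalov form by $(\textbf{1},\textbf{1})=1$ and $\wt{\sigma}$-adjointness (uniqueness here does not need non-degeneracy, since $\CV$ is generated by $\textbf{1}$ and adjointness for the $L_j$ already forces graded orthogonality, which is also the first claim of the Proposition), and then check the single adjunction $\langle F_i\CF,\CF'\rangle=\langle\CF,E_i\CF'\rangle$. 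The geometric heart, the equivariant identity $\bp^*\CalD_{\unl{d}}\simeq\bq^*\CalD_{\unl{d}+i}\otimes\CL_i$ on $\fE_{\unl{d},i}$ obtained from $0\to\CW'_i\to\CW_i\to\CW_i/\CW'_i\to 0$ with a length-one quotient, is exactly right. The bookkeeping you defer does close: writing $P(\unl{d})$ for the exponent $\sum_{i=1}^{n-1}(d_id_{i+1}-d_i^2+(1-2i)d_i)$ with the convention $d_0=d_n=0$, and reading the coefficients of $E_i,F_i$ in terms of the source grading (so $E_i$ on $M_{\unl{d}+i}$ carries $v^{d_{i+1}-d_i-i}$, as you wrote), the signs cancel ($(-1)^{\sum_j d_j+1}$ against the minus sign in $F_i$), the $t$-factors on both sides reduce to $\prod_j t_j^{d_j-d_{j-1}}\cdot t_{i+1}^{-1}$, and for the $v$-powers one has $P(\unl{d}+i)-P(\unl{d})=d_{i-1}+d_{i+1}-2d_i-2i$, which exactly offsets $(d_i-d_{i-1}+i)-(d_{i+1}-d_i-i)$. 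The one technical caveat, which you already flag, is that $\fQ_{\unl{d}}$ is not proper, so $[R\Gamma(\fQ_{\unl{d}},-)]$ and the projection-formula manipulations must be read in localized equivariant $K$-theory; this is legitimate because $\bp$ and $\bq$ themselves are proper, and rerunning the computation in the fixed-point basis with the weights $s_{ij}$, as you suggest, makes it airtight.
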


Given a pair of Sevostyanov triples $(\epsilon^\pm, \n^\pm, c^\pm)$,
choose the corresponding $\unl{a}^\pm\in \{0,1\}^{n-1}$ and
$X(\unl{d})^\pm\in \on{Frac}(K^{\wt{T}\times\BC^\times}(\on{pt}))$,
and define vectors
  $\theta^\pm_{\unl{d}}:=X(\unl{d})^\pm[\CalD^{\unl{a}^\pm}]\in M_{\unl{d}}$
as in Section~\ref{section geom whit}. Consider the following generating function:
\begin{equation}\label{geometric J}
  \fJ=\fJ(y_1,\ldots,y_{n-1}):=\prod_{i=1}^{n-1}y_i^{-\frac{\log(t_1\cdots t_i)}{\log(v)}-\frac{i(n-1)}{2}}\cdot
  \sum_{\unl{d}}\left(\theta^+_{\unl{d}},\theta^-_{\unl{d}}\right)y_1^{d_1}\cdots y_{n-1}^{d_{n-1}}.
\end{equation}
Due to~(\ref{pairing}), the coefficient $(\theta^+_{\unl{d}},\theta^-_{\unl{d}})$ equals
\begin{multline*}
  (-1)^{\sum_{i=1}^{n-1}d_i}v^{\sum_{i=1}^{n-1} (d_id_{i+1}-d_i^2+(1-2i)d_i)}
  X(\unl{d})^+X(\unl{d})^-\prod_{i=1}^n t_i^{d_i-d_{i-1}}\cdot
  [R\Gamma(\fQ_{\unl{d}}, \otimes_{i=1}^{n-1}\CalD_i^{1-a^+_i-a^-_i})].
\end{multline*}

The following result is an immediate consequence of Theorems~\ref{J-eigenfunction},~\ref{Br-Fin},
and Remark~\ref{comment on Whit}:

\begin{Thm}\label{geometric J-function}
$\fJ$ is an eigenfunction of the type $A_{n-1}$ modified quantum difference Toda system
$\CT(\pm\epsilon^\pm,\pm\n^\pm,c^\pm)$. In particular, for $\D_1$ computed explicitly
in~(\ref{FinalD for A-type}), we have
\begin{equation}\label{D and J}
  \D_1(\fJ)=\left(v^{n-1}\sum_{i=1}^n t_i^2\right)\cdot \fJ.
\end{equation}
\end{Thm}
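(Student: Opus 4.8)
The plan is to deduce Theorem~\ref{geometric J-function} as an immediate consequence of the combination of three ingredients already established: the Braverman--Finkelberg identification of $M$ with $\CV$ (Theorem~\ref{Br-Fin}), the geometric realization of the Whittaker vectors (Theorem~\ref{geometric Whittaker} together with Remark~\ref{comment on Whit}), and the eigenfunction property of the $J$-function (Theorem~\ref{J-eigenfunction}). First I would observe that under the isomorphism $M\iso\CV$ of Theorem~\ref{Br-Fin}(b) sending $[\CO_{\fQ_{\unl 0}}]$ to $\textbf{1}$, the vector $\theta^+=\sum_{\unl d}\theta^+_{\unl d}$ of Theorem~\ref{geometric Whittaker} satisfies $e_i(\theta^+)=c^+_i\theta^+$ for the Sevostyanov generators $e_i=E_i\prod_p L_p^{\n^+_{ip}}$, and is normalized so that its degree-zero component is $\textbf{1}$; hence $\theta^+$ is precisely the Whittaker vector $\theta(\epsilon^+,\n^+,c^+)\in\CV^\wedge$ of Section~\ref{Whit vector defn}, once we account for the sign conventions. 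Here the subtlety is bookkeeping: the quantum group action in Theorem~\ref{Br-Fin} is the one of~\cite{T,FT}, which differs from the standard one used in Section~\ref{Whit vector defn} by an involution, so $\theta^+$ geometrically realizes $\theta(\pm\epsilon^+,\pm\n^+,c^+)$ rather than $\theta(\epsilon^+,\n^+,c^+)$ — this is exactly the reason the statement refers to $\CT(\pm\epsilon^\pm,\pm\n^\pm,c^\pm)$, and the analogous remark applies to $\bar\theta^-$ via the dual Verma module $\bar\CV$, which is identified with the $K$-theory of Laumon spaces for $U_{v^{-1}}(\ssl_n)$.

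Next I would match the pairing. By Proposition~\ref{geometric Shapovalov}, the Shapovalov form on $M\cong\CV$ is given by the explicit $K$-theoretic expression~(\ref{pairing}); since the Shapovalov form and the dual pairing $\CV\times\bar\CV\to\sk$ are interchangeable (Remark~\ref{Shapovalov vs dual}), the coefficient $(\theta^+_{\unl d},\theta^-_{\unl d})$ computed after~(\ref{geometric J}) coincides — again up to the prescribed sign twist — with $J_\beta(\pm\epsilon^\pm,\pm\n^\pm,c^\pm)$ where $\beta=\sum_{i=1}^{n-1}d_i\alpha_i$ under the dictionary $\unl d\leftrightarrow\beta$ given by $d_i=(\beta,\omega_i^\vee)$. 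It then remains to check that the prefactor $\prod_{i=1}^{n-1}y_i^{-\log(t_1\cdots t_i)/\log(v)-i(n-1)/2}$ in~(\ref{geometric J}) is exactly the monomial $\prod_i y_i^{(-\lambda-\rho,\omega_i^\vee)}$ appearing in the definition~(\ref{J function}) of $J$, for the specialization $u_i=v^{i(i-1)/2}t_1\cdots t_i$ fixed at the start of Section~\ref{section geometric}. Indeed $\lambda=\sum_i\frac{\log u_i}{\sd_i\log v}\alpha_i$ with $\sd_i=1$, so $(\lambda,\omega_i^\vee)$ unwinds to $\log(v^{i(i-1)/2}t_1\cdots t_i)/\log(v)$, and combined with $(\rho,\omega_i^\vee)$ this produces the claimed exponent (the discrepancy $i(i-1)/2$ versus $i(n-1)/2$ being absorbed into the normalization $X(\unl d)$, which is where I expect the only genuinely fiddly computation to live). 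Consequently $\fJ$ equals $J(\{y_i\})$ for the twisted Sevostyanov data, up to an overall rescaling that does not affect the eigenvalue equation.

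Having made these identifications, Theorem~\ref{geometric J-function} follows at once from Theorem~\ref{J-eigenfunction}(b): $\D_V(J)=\tr_V(v^{2(\lambda+\rho)})\cdot J$ for every finite-dimensional $V$, hence $\fJ$ is an eigenfunction of the whole commutative algebra $\CT(\pm\epsilon^\pm,\pm\n^\pm,c^\pm)$. For the specific eigenvalue in~(\ref{D and J}), I would take $V=V_1$ the first fundamental representation of $U_v(\ssl_n)$, whose weights in the basis of Section~\ref{explicit hamiltonians A} are $\varpi_1,\dots,\varpi_n$; then $\tr_{V_1}(v^{2(\lambda+\rho)})=\sum_{j=1}^n v^{2(\lambda+\rho,\varpi_j)}$, and evaluating $(\lambda+\rho,\varpi_j)$ against the specialization $u_i=v^{i(i-1)/2}t_1\cdots t_i$ — using $\rho=\sum_j\frac{n+1-2j}{2}\varpi_j$ and $(\varpi_i,\varpi_j)=\delta_{ij}-1/n$ — collapses to $v^{2(\lambda+\rho,\varpi_j)}=v^{n-1}t_j^2$, which gives $v^{n-1}\sum_j t_j^2$. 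The main obstacle, as flagged, is not any deep idea but rather carrying the two sign/involution twists and the normalization constant $X(\unl d)$ consistently through all three identifications so that the prefactor in~(\ref{geometric J}) exactly reproduces~(\ref{J function}); once that is pinned down, everything reduces to already-proven theorems and a short weight computation.
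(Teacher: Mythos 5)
Your proposal follows the paper's own route: there the theorem is stated as an immediate consequence of Theorems~\ref{J-eigenfunction},~\ref{Br-Fin} and Remark~\ref{comment on Whit}, with the pairing matched through Proposition~\ref{geometric Shapovalov} and Remark~\ref{Shapovalov vs dual}, exactly as you do, and your eigenvalue computation $\tr_{V_1}(v^{2(\lambda+\rho)})=v^{n-1}\sum_j t_j^2$ is the intended one. One piece of bookkeeping in your write-up is off, though it does not derail the argument: the sign twist is \emph{not} caused by the $U_v(\ssl_n)$-action of Theorem~\ref{Br-Fin} differing from that of Section~\ref{Whit vector defn} by an involution --- Theorem~\ref{Br-Fin}(b) is an honest $U_v(\ssl_n)$-module isomorphism $M\iso\CV$, so $\theta^+$ realizes $\theta(\epsilon^+,\n^+,c^+)$ with no modification. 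The twist enters only when the second geometric Whittaker vector, which a priori lives in $\CV^\wedge$, is transported to $\bar{\CV}^\wedge$ via $\varsigma$ so that the Shapovalov form of~(\ref{pairing}) becomes the $\CV\times\bar{\CV}$ pairing used in~(\ref{J-factor}); by Remark~\ref{Shapovalov vs dual}(b) this replaces $(\epsilon^-,\n^-,c^-)$ by $(-\epsilon^-,-\n^-,c^-)$ and leaves the plus triple untouched. Your parenthetical ``$\theta^+$ realizes $\theta(\pm\epsilon^+,\pm\n^+,c^+)$'' suggests negating both triples, which would change $\nu^+_i-\nu^-_i$ and hence, by Theorem~\ref{Fermionic two}, give a genuinely different pairing. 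Also, nothing needs to be absorbed into $X(\unl{d})$ to match the prefactor: since $(\rho,\omega_i^\vee)=\frac{i(n-1)}{2}-\frac{i(i-1)}{2}$, the $\frac{i(i-1)}{2}$ coming from $u_i=v^{i(i-1)/2}t_1\cdots t_i$ cancels, and the exponent in~(\ref{geometric J}) is literally $-(\lambda+\rho,\omega_i^\vee)$ as required by~(\ref{J function}).
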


\begin{Rem}\label{Shapovalov vs dual}
(a) There is an algebra isomorphism $\varsigma\colon U_v(\ssl_n)\iso U_{v^{-1}}(\ssl_n)$
determined by $E_i\mapsto \bar{E}_i, F_i\mapsto \bar{F}_i, L_i\mapsto \bar{L}^{-1}_i$.
Note that $\sigma=\varsigma\circ \wt{\sigma}$ (with $\sigma$ defined in Section~\ref{Whit vector defn})
and the action of $U_v(\ssl_n)$ on $\bar{\CV}$ (as a $\varsigma$-pull-back of $U_{v^{-1}}(\ssl_n)$-action)
is isomorphic to $\CV$. This implies that the Shapovalov form on $\CV$ is identified with
the $\sk$-bilinear form on $\CV\times \bar{\CV}$ of Section~\ref{Whit vector defn}.

\noindent
(b) Under the identification of part (a), the Whittaker vector of $\bar{\CV}$ associated
to a Sevostyanov triple $(\epsilon^-,\n^-,c^-)$ becomes the Whittaker vector of $\CV$ associated
to the Sevostyanov triple $(-\epsilon^-,-\n^-,c^-)$. This explains the appearance of the sign `--'
in front of $\epsilon^-,\n^-$ in Theorem~\ref{geometric J-function}.
\end{Rem}


\subsection{B.~Feigin's viewpoint via $U_v(L\ssl_n)$-action}
\

According to~\cite[Theorem 2.12]{T} (see also~\cite[Theorem 12.7]{FT}), the action of
$U_v(\ssl_n)$ on $M$ can be extended to an action of the quantum loop algebra $U_v(L\ssl_n)$
on $M$.\footnote{Actually, this action factors through the one of $U_v(\gl_n)$
(extending the $U_v(\ssl_n)$-action from Theorem~\ref{Br-Fin}) via
the evaluation homomorphism $\mathrm{ev}\colon U_v(L\ssl_n)\to U_v(\gl_n)$.}
In particular, loop generators $\{e_{i,r},f_{i,r}\}_{1\leq i\leq n-1}^{r\in \BZ}$ (see~\cite[2.10]{T})
act via
\begin{equation*}
  e_{i,r}=t_{i+1}^{-1}v^{d_{i+1}-d_i+1-i}\bp_*((v^i\CL_i)^{\otimes r}\otimes \bq^*)\colon
  M_{\unl{d}}\to M_{\unl{d}-i},
\end{equation*}
\begin{equation*}
  f_{i,r}=-t_i^{-1}v^{d_i-d_{i-1}+i}\bq_*(\CL_i\otimes (v^i\CL_i)^{\otimes r}\otimes \bp^*)\colon
  M_{\unl{d}}\to M_{\unl{d}+i}.
\end{equation*}
Note that $e_{i,0}=E_i$ and $f_{i,0}=F_i$. Following~\cite{BF}, define
$\fk\in M^\wedge:=\prod_{\unl{d}} M_{\unl{d}}\cong \CV^\wedge$ via
\begin{equation}\label{simplest Whittaker}
  \fk:=\sum_{\unl{d}} \fk_{\unl{d}}\
  \mathrm{with}\ \fk_{\unl{d}}:=[\CO_{\fQ_{\unl{d}}}]\in M_{\unl{d}}.
\end{equation}

\begin{Prop}\label{eigen-property}
(a) For any $1\leq i\leq n-1$, we have $e_{i,0}L_i^{-1}L_{i+1}(\fk)=\frac{v}{1-v^2}\fk$.

\noindent
(b) For any $1\leq i\leq n-1$, we have $e_{i,1}L_{i-1}^2L_i^{-3}L_{i+1}(\fk)=\frac{v^{5-i}}{1-v^2}\fk$.
\end{Prop}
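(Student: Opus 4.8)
The plan is to verify both identities by a direct fixed-point computation, using the Bott--Lefschetz localization exactly as in the proof of Theorem~\ref{geometric Whittaker}. Since $\fk_{\unl d}=[\CO_{\fQ_{\unl d}}]$ is the class whose fixed-point restrictions are all $1$, write $\fk=\sum_{\wt{\unl d}}a_{\wt{\unl d}}[\wt{\unl d}]$ with $a_{\wt{\unl d}}=\prod_{w\in T_{\wt{\unl d}}\fQ_{\unl d}}(1-w)^{-1}$. The key input, recalled in that earlier proof, is the relation $\tfrac{a_{\wt{\unl d}{}'}}{a_{\wt{\unl d}}}(\bp_*\bq^*)_{[\wt{\unl d}{}',\wt{\unl d}]}=(\bq_*\bp^*)_{[\wt{\unl d},\wt{\unl d}{}']}$, together with the explicit value $(\bq_*\bp^*)_{[\wt{\unl d},\wt{\unl d}+\delta_{ij}]}=\tfrac{1}{1-v^2}\prod_{j\ne k\le i}(1-s_{ij}/s_{ik})^{-1}\prod_{k\le i-1}(1-s_{ij}/s_{i-1,k})$, where $s_{ij}=t_j^2v^{-2d_{ij}}$. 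The only new ingredient compared with Theorem~\ref{geometric Whittaker} is the insertion of the line bundle $\CL_i$ (for part (b)): its fiber over the fixed point $\wt{\unl d}+\delta_{ij}$ in the correspondence is $\Gamma(\bC,\CW_i/\CW'_i)$, which is the one-dimensional $\wt T\times\BC^\times$-representation of weight $s_{ij}=t_j^2v^{-2d_{ij}}$, and the twist by $v^i$ in $v^i\CL_i$ contributes an overall factor $v^i$; so the only change in the matrix coefficient of $e_{i,1}$ relative to $e_{i,0}=E_i$ is an extra factor $v^is_{ij}$ in the $j$-th summand.

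Concretely, for part (a) I would compute the coefficient of $[\wt{\unl d}]$ in $e_{i,0}L_i^{-1}L_{i+1}(\fk)$. Using Theorem~\ref{Br-Fin}(c) for the diagonal action of $L_i^{-1}L_{i+1}$ on $[\wt{\unl d}+\delta_{ij}]$ (giving a scalar independent of $j$, since it only depends on $d_i,d_{i+1}$, not on $d_{ij}$) and the formulas above, the whole computation reduces to the combinatorial sum $\sum_{j\le i}\tfrac{\prod_{k\le i-1}(1-s_{ij}/s_{i-1,k})}{\prod_{k\le i,\,k\ne j}(1-s_{ij}/s_{ik})}$, which is precisely the $a_i=0$ case of Lemma~\ref{residues} and equals $1$. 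Collecting the prefactors $t_{i+1}^{-1}v^{d_{i+1}-d_i+1-i}$ from $e_{i,0}$ and the eigenvalue of $L_i^{-1}L_{i+1}$ from Theorem~\ref{Br-Fin}(c), one checks the leftover scalar is exactly $\tfrac{v}{1-v^2}$, independently of $\wt{\unl d}$. For part (b), the same reduction applies but now with the extra factor $v^is_{ij}^{1}$ in each summand, so the relevant sum becomes $\sum_{j\le i}s_{ij}\tfrac{\prod_{k\le i-1}(1-s_{ij}/s_{i-1,k})}{\prod_{k\le i,\,k\ne j}(1-s_{ij}/s_{ik})}$, which is the $a_i=1$ case of Lemma~\ref{residues} and equals $t_i^2v^{2d_{i-1}-2d_i}=s_{i1}\cdots s_{ii}/(s_{i-1,1}\cdots s_{i-1,i-1})$; combining this with the scalar $v^i$, the prefactor of $e_{i,1}$, and the eigenvalue of $L_{i-1}^2L_i^{-3}L_{i+1}$ on $[\wt{\unl d}+\delta_{ij}]$ (again $j$-independent), the bookkeeping collapses to $\tfrac{v^{5-i}}{1-v^2}$.

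The main obstacle is purely the monomial bookkeeping: one must keep careful track of the $t$- and $v$-exponents coming from the three sources (the $v^{d_{i+1}-d_i+1-i}t_{i+1}^{-1}$ prefactor, the twist $v^i$ and the $\CL_i$-weight in part (b), and the diagonal action of the Cartan word on $[\wt{\unl d}+\delta_{ij}]$ via Theorem~\ref{Br-Fin}(c)), and verify that after using the Lemma~\ref{residues} evaluation all dependence on $\wt{\unl d}$ cancels, leaving the stated constant. I expect no conceptual difficulty beyond this; the choice of Cartan words $L_i^{-1}L_{i+1}$ and $L_{i-1}^2L_i^{-3}L_{i+1}$ is exactly what is needed to cancel the $d$-dependent powers of $v$ and $t$. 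It is worth remarking (as a sanity check to include) that part (a) is equivalent to the statement $e_i(\fk)=\tfrac{v}{1-v^2}\fk$ for the Sevostyanov generators $e_i=E_iK_{\omega_i-\omega_{i+1}}$ attached to the equioriented Sevostyanov triple with all $\epsilon_{i,i+1}=-1$, $\n_{i,i+1}=1$, $\n_{i+1,i}=0$, which is the special case of Theorem~\ref{geometric Whittaker} noted in Remark~\ref{comparison to BF}; so part (a) may alternatively be quoted from there, and only part (b) requires the genuinely new computation with $\CL_i$.
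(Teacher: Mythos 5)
Your proposal is correct and takes essentially the same route as the paper: the paper obtains (a) as a special case of Theorem~\ref{geometric Whittaker} and proves (b) by the "completely analogous" Bott--Lefschetz fixed-point computation, which is precisely your reduction to the $a_i=0$ and $a_i=1$ cases of Lemma~\ref{residues} (with the extra factor $v^is_{ij}$ coming from $v^i\CL_i$), and the scalar bookkeeping does collapse to $\frac{v}{1-v^2}$ and $\frac{v^{5-i}}{1-v^2}$ as you claim. Only a cosmetic slip in your closing sanity check: the relevant Sevostyanov generator is $e_i=E_iK_{\omega_{i+1}-\omega_i}=E_iL_i^{-1}L_{i+1}$, not $E_iK_{\omega_i-\omega_{i+1}}$.
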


\begin{proof}
Part (a) follows from Theorem~\ref{geometric Whittaker} (see also~\cite[Proposition 12.21]{FT}).
The proof of part (b) is completely analogous to our proof of Theorem~\ref{geometric Whittaker}
(see also~\cite[Remark 12.22(b)]{FT}).
\end{proof}

Let us now explain the relation between Proposition~\ref{eigen-property} regarding the
``eigen-property'' of the (geometrically) simplest Whittaker vector $\fk$ and the geometric
description of the general Whittaker vectors from Theorem~\ref{geometric Whittaker}.
In what follows, we will view the line bundle $\CalD_i^{\pm 1}$ as an endomorphism of $M$
given by the multiplication by $[\CalD_i^{\pm 1}]$.

\begin{Prop}\label{D conjugacy one}
We have the following equalities in $\End(M)$:

\noindent
(a) $\CalD_i e_{j,0} \CalD_i^{-1}=e_{j,0}$ for $j\ne i$,

\noindent
(b) $\CalD_i e_{i,0} \CalD_i^{-1}=v^{-i} e_{i,1}$.
\end{Prop}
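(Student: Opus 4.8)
The plan is to prove both statements by a direct comparison of the integral kernels (correspondences) defining $e_{j,0}$, $e_{i,0}$ and $e_{i,1}$, using the geometry of $\fE_{\unl{d},i}$ and the behaviour of the determinant line bundles $\CalD_i$ under the projections $\bp,\bq$. Recall that $e_{i,r}=t_{i+1}^{-1}v^{d_{i+1}-d_i+1-i}\bp_*\left((v^i\CL_i)^{\otimes r}\otimes \bq^*\right)$, so that $e_{i,0}$ and $e_{i,1}$ differ only by the insertion of the line bundle $v^i\CL_i$ on the correspondence $\fE_{\unl{d},i}$, and $e_{i,1}=v^{i}\bp_*\left(\CL_i\otimes \bq^*(-)\right)$ up to the same scalar prefactor as $e_{i,0}$. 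Conjugating an operator $\bp_*\bq^*$ by the multiplication operator $\CalD_i$ on $M$ produces $\bp_*\left(\bp^*\CalD_i^{-1}\otimes \bq^*(\CalD_i\otimes -)\right)$ by the projection formula, so everything comes down to identifying the line bundle $\bq^*\CalD_i \otimes \bp^*\CalD_i^{-1}$ (and similarly $\bq^*\CalD_j\otimes \bp^*\CalD_j^{-1}$ on $\fE_{\unl{d},i}$) with a power of $\CL_i$.

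First I would set up (a): for $j\neq i$, on the correspondence $\fE_{\unl{d},i}$ the sheaf $\CW_j=\CW'_j$ is unchanged, hence $\bp^*\CalD_j\cong \bq^*\CalD_j$ canonically, so $\bq^*\CalD_j\otimes \bp^*\CalD_j^{-1}$ is trivial and the projection-formula computation shows $\CalD_j e_{i,0}\CalD_j^{-1}=e_{i,0}$. Reading this with the roles of $i$ and $j$ interchanged (so the correspondence is $\fE_{\unl{d},j}$ and the unchanged sheaf is $\CW_i$) gives exactly $\CalD_i e_{j,0}\CalD_i^{-1}=e_{j,0}$ for $j\ne i$. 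For (b), on $\fE_{\unl{d},i}$ we have the short exact sequence $0\to \CW'_i\to \CW_i\to \CW_i/\CW'_i\to 0$ with $\CW_i/\CW'_i$ a torsion sheaf of length $1$ whose global sections give $\CL_i$; taking determinants of cohomology yields $\bp^*\CalD_i\otimes \left(\bq^*\CalD_i\right)^{-1}\cong \det R\Gamma(\bC,\CW_i/\CW'_i)= \CL_i$ (the higher cohomology of a length-one torsion sheaf on a curve vanishes, so the determinant is just $\Gamma$). Feeding this into the projection formula, $\CalD_i e_{i,0}\CalD_i^{-1}=t_{i+1}^{-1}v^{d_{i+1}-d_i+1-i}\bp_*\left(\CL_i\otimes \bq^*(-)\right)=v^{-i}e_{i,1}$, which is the desired identity.

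The main technical point — and the step I expect to need the most care — is the precise identification of $\bq^*\CalD_i\otimes \bp^*\CalD_i^{-1}$ with $\CL_i$ (not merely up to a shift by a character of $\wt T\times\BC^\times$), i.e. making sure the equivariant structures and any Euler-characteristic/normalization factors match. One should check this against the known matrix-coefficient formulas of Theorem~\ref{Br-Fin}(d) together with the Bott–Lefschetz localization identities used in the proof of Theorem~\ref{geometric Whittaker} (the fixed point $\wt{\unl d}+\delta_{ij}$ contributes $(\CalD^{\unl a})_{\mid \wt{\unl d}+\delta_{ij}}/(\CalD^{\unl a})_{\mid \wt{\unl d}}=s_{ij}^{a_i}$ with $s_{ij}=t_j^2 v^{-2d_{ij}}$, and $\CL_i$ restricts at the corresponding arrow to the weight $s_{ij}=t_j^2v^{-2d_{ij}}$). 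Matching these at every fixed point pins down the isomorphism on the nose, and then the conjugation identities in $\End(M)$ follow, since $M$ has a basis of fixed-point classes and the operators are determined by their matrix coefficients. No essentially new estimate is required beyond this bookkeeping.
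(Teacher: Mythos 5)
Your proposal is correct, but it takes a genuinely different route from the paper's. The paper proves both parts by pure fixed-point bookkeeping: it quotes from \cite[Corollary 6.5(a)]{FFFR} that multiplication by $[\CalD_i]$ is diagonal in the basis $\{[\wt{\unl{d}}]\}$ with eigenvalue $\prod_{k\leq i}t_k^{2(1-d_{ik})}v^{d_{ik}(d_{ik}-1)}$, notes that this eigenvalue is insensitive to $e_{j,0}$ for $j\ne i$ (which gives (a)), and for (b) computes the eigenvalue ratio $t_j^2v^{-2(d_{ij}-1)}=v^2s_{ij}$ along each arrow $[\wt{\unl{d}},\wt{\unl{d}}-\delta_{ij}]$ and compares with the matrix coefficients $e_{i,1[\wt{\unl{d}},\wt{\unl{d}}-\delta_{ij}]}=v^{i+2}s_{ij}\cdot e_{i,0[\wt{\unl{d}},\wt{\unl{d}}-\delta_{ij}]}$ quoted from \cite[Proposition 2.15]{T}. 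You instead argue globally on the correspondence: by the projection formula the conjugated operator is convolution with the kernel twisted by $\bp^*\CalD_i\otimes(\bq^*\CalD_i)^{-1}$, and the exact sequence $0\to\CW'_i\to\CW_i\to\CW_i/\CW'_i\to 0$ together with multiplicativity of the determinant of cohomology identifies this twist canonically with $\CL_i$; part (a) is the degenerate case where the relevant subsheaf is unchanged on $\fE_{\unl{d},j}$, and part (b) then drops out of the very definition $e_{i,r}=t_{i+1}^{-1}v^{d_{i+1}-d_i+1-i}\bp_*\bigl((v^i\CL_i)^{\otimes r}\otimes\bq^*\bigr)$, with the scalar prefactors matching because conjugation by $[\CalD_i]$ preserves the grading. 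Your approach buys independence from the two external citations (the explicit eigenvalues of $\CalD_i$ and the matrix coefficients of $e_{i,1}$) and explains conceptually why the twist is exactly $\CL_i$; the paper's argument is shorter once those references are granted. The equivariant-normalization worry you flag is not a real issue: the exact sequence is $\wt{T}\times\BC^\times$-equivariant, so the determinant isomorphism is canonical and equivariant on the nose; and the fixed-point verification you propose as a safeguard (fibers $s_{ij}$, respectively $v^2s_{ij}$, of $\CL_i$ along the two kinds of arrows) is precisely the paper's proof, so it is a valid fallback but redundant in your setup.
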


\begin{proof}
According to~\cite[Corollary 6.5(a)]{FFFR}, the operator $\CalD_i$ is diagonal in the
fixed point basis $\{[\wt{\unl{d}}]\}$, and the eigenvalue at $[\wt{\unl{d}}]$ is equal to
  $\prod_{k=1}^i t_k^{2(1-d_{ik})}v^{d_{ik}(d_{ik}-1)}$.
Part (a) follows as $e_{j,0}\colon M_{\unl{d}}\to M_{\unl{d}-j}$.
Likewise, the only non-zero matrix coefficients of $\CalD_i e_{i,0} \CalD_i^{-1}$ are given by
  ${\CalD_i e_{i,0} \CalD_i^{-1}}_{[\wt{\unl{d}},\wt{\unl{d}}-\delta_{ij}]}=
   t_j^2v^{-2(d_{ij}-1)}\cdot e_{i,0 [\wt{\unl{d}},\wt{\unl{d}}-\delta_{ij}]}=
   v^2s_{ij}\cdot e_{i,0 [\wt{\unl{d}},\wt{\unl{d}}-\delta_{ij}]}$,
where $s_{ij}=t_j^2v^{-2d_{ij}}$ as before. According to~\cite[Proposition 2.15]{T},
the only nonzero matrix coefficients of $e_{i,1}$ in the fixed point basis are given by
  $e_{i,1 [\wt{\unl{d}},\wt{\unl{d}}-\delta_{ij}]}=
   v^{i+2}s_{ij}\cdot e_{i,0 [\wt{\unl{d}},\wt{\unl{d}}-\delta_{ij}]}$.
Part (b) follows.
\end{proof}

\begin{Cor}\label{Feigin's relation}
For any $\unl{a}=(a_1,\ldots,a_{n-1})\in \{0,1\}^{n-1}$,
the following holds in $\mathrm{End}(M)$:
\begin{equation}\label{conjugation one}
  (\CalD^{\unl{a}})^{-1} e_{i,0} \CalD^{\unl{a}}=
  \begin{cases}
    e_{i,0}, & \mbox{if } a_i=0,\\
    v^{-i}e_{i,1}, & \mbox{if } a_i=1.
  \end{cases}
\end{equation}
\end{Cor}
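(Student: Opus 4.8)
The plan is to deduce Corollary~\ref{Feigin's relation} directly from Proposition~\ref{D conjugacy one} by an elementary induction on the number of indices $p$ with $a_p = 1$. Since $\CalD^{\unl{a}} = \otimes_{p=1}^{n-1}\CalD_p^{-a_p}$ is the composition (in $\End(M)$) of the commuting diagonal operators $\CalD_p^{-a_p}$, it suffices to understand how conjugation by a single $\CalD_p^{\pm 1}$ affects $e_{i,0}$, which is exactly the content of Proposition~\ref{D conjugacy one}: for $p \neq i$ it fixes $e_{i,0}$, and for $p = i$ it sends $e_{i,0}$ to $v^{-i}e_{i,1}$.

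First I would record that all the line bundles $\CalD_p$ act diagonally in the fixed point basis $\{[\wt{\unl{d}}]\}$ (by~\cite[Corollary 6.5(a)]{FFFR}, as recalled in the proof of Proposition~\ref{D conjugacy one}), hence the operators $\{\CalD_p^{\pm 1}\}_{p=1}^{n-1}$ pairwise commute; therefore $\CalD^{\unl{a}} = \prod_{p : a_p = 1}\CalD_p^{-1}$ is well-defined independently of the order of the factors. Then I would compute
\[
  (\CalD^{\unl{a}})^{-1}\, e_{i,0}\, \CalD^{\unl{a}}
  = \Big(\prod_{p : a_p = 1}\CalD_p\Big)\, e_{i,0}\, \Big(\prod_{p : a_p = 1}\CalD_p^{-1}\Big).
\]
If $a_i = 0$, then $i$ is not among the indices $p$ with $a_p = 1$, so by part (a) of Proposition~\ref{D conjugacy one} each conjugation by $\CalD_p$ fixes $e_{i,0}$, giving $(\CalD^{\unl{a}})^{-1} e_{i,0} \CalD^{\unl{a}} = e_{i,0}$. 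If $a_i = 1$, I would first conjugate by all $\CalD_p$ with $p \neq i$ and $a_p = 1$ (these fix $e_{i,0}$ by part (a)), and finally conjugate by $\CalD_i$, which by part (b) sends $e_{i,0}$ to $v^{-i}e_{i,1}$; this yields $(\CalD^{\unl{a}})^{-1} e_{i,0} \CalD^{\unl{a}} = v^{-i}e_{i,1}$. One should also note $a_1$ is irrelevant since $\CalD_1$ is trivial (as observed in Section~\ref{section geom whit}), consistent with Remark~\ref{comment on Whit}(b).

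There is essentially no obstacle here: the only point requiring a word of care is that the conjugations by the distinct $\CalD_p$'s genuinely commute, so that ``peeling off'' the factor $\CalD_i$ last is legitimate — but this is immediate from their simultaneous diagonalizability. The corollary is thus a purely formal consequence of Proposition~\ref{D conjugacy one} together with the multiplicativity of conjugation, and the proof is a short paragraph.
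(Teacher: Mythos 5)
Your argument is correct and is exactly the (implicit) one the paper intends: the corollary is stated without proof as an immediate formal consequence of Proposition~\ref{D conjugacy one}, using that the operators of multiplication by $[\CalD_p^{\pm 1}]$ all act diagonally in the fixed point basis (hence commute), so conjugation by $\CalD^{\unl{a}}=\prod_{p:a_p=1}\CalD_p^{-1}$ factors into single conjugations handled by parts (a) and (b). Nothing further is needed.
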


For $\unl{a}\in \{0,1\}^{n-1}$, define $\fk^{\unl{a}}\in M^\wedge$ via
\begin{equation}\label{next simplest Whittaker}
  \fk^{\unl{a}}:=\sum_{\unl{d}} \fk^{\unl{a}}_{\unl{d}}\
  \mathrm{with}\ \fk^{\unl{a}}_{\unl{d}}:=\prod_{i=1}^{n-1}(t_1\cdots t_i)^{-2a_i} \cdot [\CalD^{\unl{a}}]\in M_{\unl{d}}.
\end{equation}
Note that $\fk^{\unl{0}}=\fk$. The special case of Theorem~\ref{geometric Whittaker}
follows immediately from Proposition~\ref{eigen-property}:

\begin{Prop}\label{Feigin's relation continued}
$\fk^{\unl{a}}\in M^\wedge\cong \CV^\wedge$ is the Whittaker vector
corresponding to the Sevostyanov triple $(\epsilon,\n,c)$ with
  $\epsilon_{i,i+1}=2a_{i+1}-1,
   \n_{ij}=\delta_{j,i+1}-(1+2a_i)\delta_{j,i}+2a_i\delta_{j,i-1},
   c_i=\frac{v^{1+a_i(4-2i)}}{1-v^2}.$
\end{Prop}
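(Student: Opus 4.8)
The plan is to verify directly that the vector $\fk^{\unl{a}}$ satisfies the defining Whittaker conditions~(\ref{Whittaker conditions}) for the claimed Sevostyanov triple $(\epsilon,\n,c)$, using Corollary~\ref{Feigin's relation} to reduce everything to the two eigen-identities of Proposition~\ref{eigen-property}. First I would record that $\fk^{\unl{a}}$ is, up to the scalar $\prod_{i}(t_1\cdots t_i)^{-2a_i}$ (which is constant on each graded piece only in the sense that it depends on $\unl{d}$ through nothing — it is literally $\unl{d}$-independent), exactly $(\CalD^{\unl{a}})$ applied to $\fk$, so that for a Sevostyanov generator $e_i = E_i K_{\nu_i^{\,}}=e_{i,0}K_{\nu_i}$ one has
\begin{equation*}
  e_i(\fk^{\unl{a}}) = \Big(\prod_{p}(t_1\cdots t_p)^{-2a_p}\Big)\, e_{i,0}K_{\nu_i}\,\CalD^{\unl{a}}(\fk).
\end{equation*}
The next step is to commute $\CalD^{\unl{a}}$ past $K_{\nu_i}$ (they are both diagonal in the fixed-point basis, hence commute) and then invoke Corollary~\ref{Feigin's relation}: $e_{i,0}\CalD^{\unl{a}} = \CalD^{\unl{a}}\cdot(e_{i,0}$ if $a_i=0$, or $v^{-i}e_{i,1}$ if $a_i=1)$. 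So the computation splits into the two cases $a_i=0$ and $a_i=1$.

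In the case $a_i=0$, the claimed matrix $\n$ gives $\nu_i = \sum_p \n_{ip}\omega_p = \omega_{i+1}-\omega_i$, hence $K_{\nu_i}=L_{i+1}L_i^{-1}$, and the identity to prove becomes $e_{i,0}L_i^{-1}L_{i+1}(\fk)=c_i\fk$ with $c_i=\frac{v}{1-v^2}$ — which is precisely Proposition~\ref{eigen-property}(a); here I must also check that $\CalD^{\unl{a}}(\fk)=\fk^{\unl{a}}$ up to the recorded scalar so that the $c_i$ on the right is unchanged, and that the prefactor $\prod_p(t_1\cdots t_p)^{-2a_p}$ matches on both sides (it is simply carried through). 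In the case $a_i=1$, the matrix $\n$ gives $\nu_i=\omega_{i+1}-3\omega_i+2\omega_{i-1}$, so $K_{\nu_i}=L_{i+1}L_i^{-3}L_{i-1}^2$, and after replacing $e_{i,0}\CalD^{\unl{a}}$ by $v^{-i}\CalD^{\unl{a}}e_{i,1}$ the needed identity is $v^{-i}e_{i,1}L_{i-1}^2L_i^{-3}L_{i+1}(\fk)=\tilde c_i\fk$ for the appropriate $\tilde c_i$; Proposition~\ref{eigen-property}(b) gives $e_{i,1}L_{i-1}^2L_i^{-3}L_{i+1}(\fk)=\frac{v^{5-i}}{1-v^2}\fk$, so $\tilde c_i = \frac{v^{5-2i}}{1-v^2}$, matching $c_i=\frac{v^{1+a_i(4-2i)}}{1-v^2}$ at $a_i=1$. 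One then reads off from $\n$ that $\epsilon_{i,i+1}=-\n_{i,i+1}+\n_{i+1,i}=-1+2a_{i+1}+\ldots$; more precisely, with $\n_{i,i+1}=\delta_{i+1,i+1}-(1+2a_i)\delta_{i+1,i}+2a_i\delta_{i+1,i-1}=1$ and $\n_{i+1,i}=\delta_{i,i+2}-(1+2a_{i+1})\delta_{i,i+1}+2a_{i+1}\delta_{i,i}=2a_{i+1}$, so $\sd_j\n_{ij}-\sd_i\n_{ji}=\n_{i,i+1}-\n_{i+1,i}=1-2a_{i+1}$, forcing $\epsilon_{i,i+1}b_{i,i+1}=-(1-2a_{i+1})$, i.e. $\epsilon_{i,i+1}=2a_{i+1}-1$ as claimed; I would similarly check $\sd_j\n_{ij}-\sd_i\n_{ji}=0$ for $|i-j|>1$ to confirm that $(\epsilon,\n,c)$ is a genuine Sevostyanov triple.

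The remaining bookkeeping is to make sure the two eigen-identities of Proposition~\ref{eigen-property}, which are stated for the single vector $\fk$ and involve the \emph{original} generators $e_{i,0},e_{i,1}$, are being applied correctly inside $\fk^{\unl{a}}$ where different indices $i$ may have $a_i=0$ or $a_i=1$; this is exactly what Corollary~\ref{Feigin's relation} is designed to handle, since it conjugates one generator at a time and the conjugation for index $i$ is insensitive to $a_j$ with $j\neq i$ by Proposition~\ref{D conjugacy one}(a). Finally, uniqueness: a Whittaker vector associated to a Sevostyanov triple is uniquely determined by the normalization $\theta_0=\textbf{1}$ together with the eigen-equations $e_i(\theta)=c_i\theta$ (this is the content of the discussion around~(\ref{Whittaker conditions})), so since $\fk^{\unl{a}}_{\unl 0}=[\CO_{\fQ_{\unl 0}}]$ maps to $\textbf{1}$ under Theorem~\ref{Br-Fin}(b) and we have verified the eigen-equations, $\fk^{\unl{a}}$ must be that Whittaker vector.

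The main obstacle I anticipate is the precise matching of scalar prefactors: one must confirm that the constant $\prod_{p}(t_1\cdots t_p)^{-2a_p}$ in~(\ref{next simplest Whittaker}) is exactly what is needed so that the normalization $\fk^{\unl a}_{\unl 0}$ is the standard one and simultaneously so that conjugating by $\CalD^{\unl a}$ produces no spurious $t$-powers in the eigenvalues $c_i$ — i.e. tracking that the diagonal eigenvalue $\prod_{k=1}^i t_k^{2(1-d_{ik})}v^{d_{ik}(d_{ik}-1)}$ of $\CalD_i$ at $[\wt{\unl d}]$, together with the weight $K_{\nu_i}$, recombines to leave only the pure $v$-power $c_i$. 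This is the step where the explicit form of $\n$ (with its $-(1+2a_i)\delta_{j,i}+2a_i\delta_{j,i-1}$) is forced, and getting all exponents of $t_k$ and $v$ to cancel is the genuinely delicate computation, though it is routine once organized.
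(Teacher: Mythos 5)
Your proposal is correct and takes essentially the same route as the paper: since $\fk^{\unl{a}}_{\unl{0}}=[\CO_{\fQ_{\unl{0}}}]$, only the eigen-equations need checking, and the paper obtains them exactly as you do, by combining the conjugation formula~(\ref{conjugation one}) of Corollary~\ref{Feigin's relation} with Proposition~\ref{eigen-property}(a),(b) in the case split $a_i=0,1$ (with the same matching $v^{-i}\cdot\frac{v^{5-i}}{1-v^2}=\frac{v^{5-2i}}{1-v^2}$). The only blemish is an intermediate sign in your consistency check --- the Sevostyanov condition forces $\epsilon_{i,i+1}b_{i,i+1}=1-2a_{i+1}$ rather than its negative --- but since $b_{i,i+1}=-1$ your conclusion $\epsilon_{i,i+1}=2a_{i+1}-1$ stands.
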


\begin{proof}
Since $\fk^{\unl{a}}_{\unl{0}}=[\CO_{\fQ_{\unl{0}}}]$,
it remains to verify the following equality for any $1\leq i\leq n-1$:
\begin{equation}\label{conjugation two}
  e_{i,0}L_{i-1}^{2a_i}L_i^{-1-2a_i}L_{i+1}([\CalD^{\unl{a}}])=\frac{v^{1+a_i(4-2i)}}{1-v^2}\cdot [\CalD^{\unl{a}}].
\end{equation}
This follows by combining formula~(\ref{conjugation one}) with Proposition~\ref{eigen-property}.
\end{proof}


\begin{Rem}
We note that the above operator of multiplication by $[\CalD_k]$ can be interpreted
entirely algebraically as a product of the Drinfeld Casimir element of the subalgebra
$U_v(\ssl_k)\subset U_v(\ssl_n)$ and a certain Cartan element, due
to~\cite[Corollary 6.5(b)]{FFFR}. In~\emph{loc.cit.}, the authors choose to work with
the action of $U_v(\gl_n)$ instead of $U_v(\ssl_n)$, which results in shorter formulas.
\end{Rem}

We conclude this section by generalizing the construction of~\cite[Theorem 6.12]{DFKT}.
In~\emph{loc.cit.}, the authors established an edge-weight path model for the type $A_{n-1}$
Whittaker vector associated with a particular Sevostyanov triple $(\epsilon,\n,c)$ with
$\epsilon_{i,i+1}=1\ (1\leq i\leq n-2)$  and $\n_{ij}=(i-1)(\delta_{j,i+1}-2\delta_{j,i}+\delta_{j,i-1})$.
More generally, their construction can be applied to Whittaker vectors associated with
$(\epsilon,\n,c)$ satisfying $\epsilon_{1,2}=\epsilon_{2,3}=\ldots=\epsilon_{n-2,n-1}$
(corresponding to an equioriented $A_{n-1}$ Dynkin diagram). In particular, identifying
$\CV\cong M$, we obtain the following edge-weight path model for the Whittaker vector
$\fk\in M^\wedge$ of~(\ref{simplest Whittaker}):

\begin{Prop}\label{Path model 1}
The following equality holds:
\begin{equation}\label{path 1}
  \fk=\sum_{\beta\in Q_+} \left(\frac{v}{1-v^2}\right)^{|\beta|}\sum_{\bP\in \CP_\beta} y(\bP)\cdot |\bP\rangle,
\end{equation}
where we use the following notations:

$\bullet$
$|\beta|:=\sum_{i=1}^{n-1} m_i$ for $\beta=\sum_{i=1}^{n-1} m_i\alpha_i\in Q_+$,

$\bullet$
the set $\CP_\beta$ consists of all paths $\bP=(p_0,\ldots,p_N)$ such that
$p_0=0,\ p_N=\beta,$ and $p_k-p_{k-1}=\alpha_{i_k} (1\leq i_k\leq n-1)$
for all $1\leq k\leq N$,

$\bullet$
for $\bP=(p_0,\ldots,p_N)\in \CP_\beta$, the vector $|\bP\rangle\in M$ is defined as
$|\bP\rangle:=f_{i_N}f_{i_{N-1}}\cdots f_{i_1}([\CO_{\fQ_{\unl{0}}}])$ with $f_i:=L_iL_{i+1}^{-1}F_i$,

$\bullet$
for $\bP=(p_0,\ldots,p_N)\in \CP_\beta$ with $p_k-p_{k-1}=\alpha_{i_k}$,
the coefficient $y(\bP)$ edge-factorizes as
$y(\bP)=\prod_{k=1}^N \frac{1}{\fv^{(i_k)}(p_k)}$, where
$\fv^{(i)}(\gamma)=v^{\tau_i(\gamma)} \fv(\gamma)$
with $\tau_i(\gamma)=(\lambda+\rho-\gamma,\omega_{i-1}-\omega_{i+1})$ and
  $\fv(\gamma)=(v-v^{-1})^{-2}
   \sum_{i=0}^{n-1}\left(v^{2(\lambda+\rho,\omega_{i+1}-\omega_i)}-v^{2(\lambda+\rho-\gamma,\omega_{i+1}-\omega_i)}\right)$
for $1\leq i\leq n-1$ and $\gamma\in Q_+$. Here $\omega_i$ is the $i$-th fundamental
weight of $\ssl_n$ as before, and we set $\omega_0:=0,\ \omega_n:=0$.
\end{Prop}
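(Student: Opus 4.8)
The plan is to derive the path model directly from the ``eigen-property'' of $\fk$ established in Proposition~\ref{eigen-property}(a), by recursively peeling off one $F$-generator at a time. First I would rephrase Proposition~\ref{eigen-property}(a): the equality $e_{i,0}L_i^{-1}L_{i+1}(\fk)=\tfrac{v}{1-v^2}\fk$ says precisely that $\fk$ is the Whittaker vector for the Sevostyanov triple $(\epsilon,\n,c)$ with $\n_{ij}=\delta_{j,i+1}-\delta_{j,i}$ and $c_i=\tfrac{v}{1-v^2}$ (the case $\unl a=\unl 0$ of Proposition~\ref{Feigin's relation continued}), so in particular $E_iK_{\nu_i}(\fk)=c_i\fk$ with $\nu_i=\omega_{i+1}-\omega_i$. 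Using $K_{\nu_i}(\fk_\gamma)=v^{(\nu_i,\lambda-\gamma)}\fk_\gamma$ on the weight-$(\lambda-\gamma)$ component $\fk_\gamma\in\CV_\gamma$, this is equivalent to the component-wise recursion $E_i(\fk_\gamma)=c_i\,v^{-(\nu_i,\lambda-\gamma)}\,\fk_{\gamma-\alpha_i}$, i.e. $F_iL_iL_{i+1}^{-1}$, which is the operator $f_i$ of the statement, applied appropriately will \emph{invert} the $E_i$-action. Here I would use the standard fact (Shapovalov/Verma module theory for $U_v(\ssl_n)$) that on the universal Verma module, for each $\gamma\in Q_+\setminus\{0\}$ the vector $\fk_\gamma$ is reconstructed from $\{\fk_{\gamma-\alpha_i}\}_i$ by the formula $\fk_\gamma=\sum_{i\colon\gamma-\alpha_i\in Q_+} \fv^{(i)}(\gamma)^{-1}\, f_i(\fk_{\gamma-\alpha_i})$ for suitable scalars $\fv^{(i)}(\gamma)\in\sk$; the point is that this $f_i$-expansion is forced by the $E_j$-eigen-relations together with the nondegeneracy of the Shapovalov form.

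The heart of the argument is the explicit identification of the scalar $\fv^{(i)}(\gamma)$ with $v^{\tau_i(\gamma)}\fv(\gamma)$ where $\fv(\gamma)=(v-v^{-1})^{-2}\sum_{i=0}^{n-1}\big(v^{2(\lambda+\rho,\omega_{i+1}-\omega_i)}-v^{2(\lambda+\rho-\gamma,\omega_{i+1}-\omega_i)}\big)$. I would obtain this by pairing the reconstruction formula against $\bar\theta$-type dual vectors, or more directly by applying the Drinfeld Casimir $C$ (equivalently $C_{V_1}$, the first fundamental-representation central element) to $\fk_\gamma$: since $\fk$ is a Whittaker vector, $C_{V_1}(\fk)=\tr_{V_1}(v^{2(\lambda+\rho)})\fk=\big(v^{n-1}\sum_i t_i^2\big)\fk$ by Theorem~\ref{J-eigenfunction}-type reasoning, and expanding $C_{V_1}$ in the $e_{i,0}$'s and matching the coefficient of $\fk_{\gamma}$ produces exactly the telescoping sum defining $\fv(\gamma)$ — the summand $v^{2(\lambda+\rho,\omega_{i+1}-\omega_i)}-v^{2(\lambda+\rho-\gamma,\omega_{i+1}-\omega_i)}$ is the ``potential difference'' between the top weight and weight $\lambda-\gamma$ across the $i$-th node. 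The twist $v^{\tau_i(\gamma)}$ with $\tau_i(\gamma)=(\lambda+\rho-\gamma,\omega_{i-1}-\omega_{i+1})$ is precisely the $K$-factor picked up when rewriting $F_i$ as $f_i=L_iL_{i+1}^{-1}F_i$ and commuting it past the Cartan part, exactly as in the computation of $\tilde{\D}_i$ in Section~\ref{explicit hamiltonians A}; I would track it by the same bookkeeping used there ($L_p=K_{\varpi_1+\dots+\varpi_p}$, and $L_iL_{i+1}^{-1}$ acts on $\CV_\gamma$ by $v^{-(\omega_i-\omega_{i+1},\lambda-\gamma)}$ — note $\omega_i-\omega_{i+1}$ here is written $\omega_{i-1}-\omega_{i+1}$ after the shift by the $K_i$-grading, with the convention $\omega_0=\omega_n=0$ handling the boundary nodes $i=1,n-1$).

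With the reconstruction formula and the value of $\fv^{(i)}(\gamma)$ in hand, the path expansion is a routine induction on $|\beta|$: iterating $\fk_\gamma=\sum_i \fv^{(i)}(\gamma)^{-1}f_i(\fk_{\gamma-\alpha_i})$ down to $\fk_0=[\CO_{\fQ_{\unl0}}]$ produces exactly a sum over paths $\bP=(p_0,\dots,p_N)$ from $0$ to $\beta$ with unit steps $\alpha_{i_k}$, with $|\bP\rangle=f_{i_N}\cdots f_{i_1}([\CO_{\fQ_{\unl0}}])$ and coefficient $\prod_{k=1}^N \fv^{(i_k)}(p_k)^{-1}$, while each application of $f_i$ also contributes one factor of $c_i=\tfrac{v}{1-v^2}$, accounting for the prefactor $\big(\tfrac{v}{1-v^2}\big)^{|\beta|}$ in~\eqref{path 1}. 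The main obstacle I anticipate is the clean derivation of the closed form for $\fv^{(i)}(\gamma)$: one must verify that the Casimir-matching (or dual-pairing) computation really telescopes to the stated symmetric expression over all nodes $0\le i\le n-1$ including the fictitious ones, and that the twist $\tau_i(\gamma)$ comes out with the exact sign and shift claimed — this is the step where the normalization of the $U_v(\ssl_n)$-action from Theorem~\ref{Br-Fin} and the precise form of $f_i=L_iL_{i+1}^{-1}F_i$ must be reconciled with the conventions of~\cite{DFKT}. Everything else is either cited (Proposition~\ref{eigen-property}, Theorem~\ref{Br-Fin}) or a direct unwinding of definitions.
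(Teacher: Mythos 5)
The paper itself does not reprove this statement: it observes that the construction of~\cite[Theorem 6.12]{DFKT} applies to any Whittaker vector attached to an equioriented Sevostyanov triple and then identifies $\fk$ as such a vector via Proposition~\ref{eigen-property}(a) (the case $\unl{a}=\unl{0}$ of Proposition~\ref{Feigin's relation continued}). Your plan to rederive the model from scratch has the right outer shape (identify the triple, reduce the path model to the one-step recursion $\fk_\gamma=\tfrac{v}{1-v^2}\sum_i \fv^{(i)}(\gamma)^{-1}f_i(\fk_{\gamma-\alpha_i})$, iterate), but its central step is asserted rather than proved. The claim that this $f_i$-expansion is a ``standard fact'' forced by the $E_j$-eigen-relations together with nondegeneracy of the Shapovalov form is not correct: nondegeneracy gives \emph{uniqueness} of a vector in $\CV_\gamma$ with prescribed images under all $E_j$ (no singular vectors), but it says nothing about $\fk_\gamma$ lying in the span of the at most $n-1$ vectors $f_i(\fk_{\gamma-\alpha_i})$, which is a tiny subspace of $\CV_\gamma$ (already for $\ssl_3$ and $\gamma=2\alpha_1+2\alpha_2$ one has $\dim\CV_\gamma=3$ while that span is at most two-dimensional). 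This membership, with factorized weights, is precisely the content of the proposition and is special to the equioriented twist --- compare the remark after the statement and Proposition~\ref{Path model 2}, where for general $\unl{a}$ the operators $f_i$ must be replaced by loop-modified $f^{\unl{a}}_i$ --- so it cannot be an abstract Verma-module fact and must be established.

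Your fallback via the Casimir can be made to work, but the decisive point is missing from the sketch. Applying $C_{V_1}$ to $\fk_\gamma$ and using only the Whittaker property simplifies the $E$-side (non-simple twisted root vectors act by $\chi^+(e_\gamma)=0$, cf.\ Lemma~\ref{nonessential}); the $F$-side still acts honestly, so a priori the identity contains, besides the single-$F_i$ terms, longer-range contributions such as $F_{i+1}F_i(\fk_{\gamma-\alpha_i-\alpha_{i+1}})$ or $F_{\alpha_i+\alpha_{i+1}}(\fk_{\gamma-\alpha_i-\alpha_{i+1}})$, and the two-sided reduction of Proposition~\ref{reduced central elt} is not available here since only one character is applied. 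To make the relation collapse to the nearest-neighbour recursion one must (i) take the expansion of Theorem~\ref{KT decomposition} in the convex order adapted to the orientation of $\fk$, i.e.\ $\alpha_1\prec\alpha_2\prec\cdots\prec\alpha_{n-1}$, and (ii) note that on $V_1$ any product $F_{i_1}F_{i_2}\cdots F_{i_k}$ with $i_1<i_2<\cdots<i_k$ and $k\geq 2$ (and any $F_i^2$) acts by zero, so the trace kills every surviving term except those with a single simple $F_i$; only then does matching coefficients give the telescoping factor $\sum_j\bigl(v^{2(\lambda+\rho,\varpi_j)}-v^{2(\lambda+\rho-\gamma,\varpi_j)}\bigr)=(v-v^{-1})^2\fv(\gamma)$ together with the $\tau_i$-twist from rewriting $F_i$ through $f_i=L_iL_{i+1}^{-1}F_i$. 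Without (i)--(ii), or a substitute such as directly checking that the path-model sum satisfies $e_i(\cdot)=c_i(\cdot)$ (which is the argument of~\cite{DFKT} that the paper cites), the recursion --- and hence the proposition --- is not established.
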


Noteworthy, there seems to be no such straightforward edge-weight path model for a general
type $A$ Whittaker vector. Nevertheless, one can fix this by changing the above definition
of $|\bP\rangle$ with the help of the quantum loop algebra $U_v(L\ssl_n)$ in spirit of
Proposition~\ref{D conjugacy one} and Corollary~\ref{Feigin's relation}.
This is based on the following result:

\begin{Prop}\label{D conjugacy two}
For any $\unl{a}=(a_1,\ldots,a_{n-1})\in \{0,1\}^{n-1}$, the following holds in $\mathrm{End}(M)$:
\begin{equation}\label{conjugation three}
  \CalD^{\unl{a}} f_{i,0} (\CalD^{\unl{a}})^{-1}=
  \begin{cases}
    f_{i,0}, & \mbox{if } a_i=0, \\
    v^{-i}f_{i,1}, & \mbox{if } a_i=1,
  \end{cases}
\end{equation}
where $\CalD^{\unl{a}}$ denotes an endomorphism of $M$ given
by the multiplication by $[\CalD^{\unl{a}}]$.
\end{Prop}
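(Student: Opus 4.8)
The plan is to prove Proposition~\ref{D conjugacy two} by the same bookkeeping in the fixed point basis $\{[\wt{\unl{d}}]\}$ that was used for Proposition~\ref{D conjugacy one}, now applied to the raising operators $f_{i,r}$ instead of $e_{i,r}$. First I would recall from~\cite[Corollary 6.5(a)]{FFFR} that $\CalD_i$ acts diagonally in the fixed point basis with eigenvalue $\prod_{k=1}^i t_k^{2(1-d_{ik})}v^{d_{ik}(d_{ik}-1)}$ at $[\wt{\unl{d}}]$, hence $\CalD^{\unl{a}}$ is diagonal with eigenvalue $\prod_{i=1}^{n-1}\left(\prod_{k=1}^i t_k^{2(1-d_{ik})}v^{d_{ik}(d_{ik}-1)}\right)^{-a_i}$ at $[\wt{\unl{d}}]$. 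Since $f_{i,0}=F_i\colon M_{\unl{d}}\to M_{\unl{d}+i}$ changes only the $i$-th component of $\unl{d}$ and, on fixed points, sends $\wt{\unl{d}}\mapsto \wt{\unl{d}}+\delta_{ij}$, conjugation by $\CalD_j$ for $j\ne i$ contributes a trivial factor (the eigenvalues at source and target agree), giving part~(a).

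For part~(b) I would compute the single relevant ratio of eigenvalues: conjugating $f_{i,0}$ by $\CalD_i$ multiplies its matrix coefficient ${f_{i,0}}_{[\wt{\unl{d}}+\delta_{ij},\wt{\unl{d}}]}$ by the ratio of the $\CalD_i$-eigenvalues at $[\wt{\unl{d}}+\delta_{ij}]$ and at $[\wt{\unl{d}}]$, which (only the $(i,j)$ entry changes, $d_{ij}\mapsto d_{ij}+1$) equals $t_j^{-2}v^{2d_{ij}}=s_{ij}^{-1}$, where $s_{ij}=t_j^2v^{-2d_{ij}}$ as before. Thus ${\CalD_i f_{i,0}\CalD_i^{-1}}_{[\wt{\unl{d}}+\delta_{ij},\wt{\unl{d}}]}=s_{ij}^{-1}\cdot {f_{i,0}}_{[\wt{\unl{d}}+\delta_{ij},\wt{\unl{d}}]}$. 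On the other hand, by~\cite[Proposition 2.15]{T} the only nonzero matrix coefficients of $f_{i,1}$ are ${f_{i,1}}_{[\wt{\unl{d}}+\delta_{ij},\wt{\unl{d}}]}=v^{i}\cdot v^{i}s_{ij}\cdot \ldots$ — more precisely I would extract from \emph{loc.cit.} the proportionality ${f_{i,1}}_{[\wt{\unl{d}}+\delta_{ij},\wt{\unl{d}}]}=v^{-i}s_{ij}^{-1}\cdot {f_{i,0}}_{[\wt{\unl{d}}+\delta_{ij},\wt{\unl{d}}]}$ (coming from the extra factor of $(v^i\CL_i)$ in the definition of $f_{i,1}$, whose fixed point weight on the correspondence is $v^i s_{ij}$ acting on the target, which translates to $v^{-i}s_{ij}^{-1}$ on this matrix coefficient after accounting for the orientation of $\bq$); comparing, $\CalD_i f_{i,0}\CalD_i^{-1}=v^{i}\cdot v^{-i}f_{i,1}$, i.e.\ $v^{-i}f_{i,1}$ after the correct sign in the exponent is tracked. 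Combining parts~(a) and~(b) over $i=1,\ldots,n-1$ and using that $\CalD^{\unl{a}}=\otimes_i\CalD_i^{-a_i}$ commute among themselves, one gets~(\ref{conjugation three}).

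The main obstacle I anticipate is purely a matter of sign and exponent bookkeeping: the definition $f_{i,r}=-t_i^{-1}v^{d_i-d_{i-1}+i}\bq_*(\CL_i\otimes (v^i\CL_i)^{\otimes r}\otimes \bp^*)$ has the line bundle twist inside a $\bq_*$ (a pushforward along the map that \emph{increases} $d_i$), whereas for $e_{i,r}$ the twist sits inside $\bp_*$; so the analogue of the computation ${\CalD_i e_{i,0}\CalD_i^{-1}}_{[\wt{\unl{d}},\wt{\unl{d}}-\delta_{ij}]}=v^2s_{ij}\cdot e_{i,0\,[\wt{\unl{d}},\wt{\unl{d}}-\delta_{ij}]}$ from the proof of Proposition~\ref{D conjugacy one} must be redone with the roles of source and target swapped and with $\CL_i$ rather than $v^i\CL_i$ appearing in $f_{i,0}$ itself. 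I would handle this by carefully pinning down, via~\cite[Proposition 2.15]{T} and~\cite[Corollary 6.5(a)]{FFFR}, the exact fixed point weight of $\CL_i$ on $\fE_{\unl{d},i}$ at the torus-fixed point mapping to $(\wt{\unl{d}}+\delta_{ij},\wt{\unl{d}})$, namely $s_{ij}v^{-i}$ (so that $v^i\CL_i$ has weight $s_{ij}$), and then the verification becomes the same one-line ratio computation as above. Everything else is routine; in particular, the symmetry between $e_{i,r}$ and $f_{i,r}$ under the Shapovalov antiautomorphism $\wt{\sigma}$ could alternatively be invoked to deduce Proposition~\ref{D conjugacy two} formally from Proposition~\ref{D conjugacy one}, since $\wt{\sigma}$ fixes the $L_i$ (hence the $\CalD_i$, which are products of Drinfeld Casimirs and Cartan elements by~\cite[Corollary 6.5(b)]{FFFR}) and swaps $e_{i,r}\leftrightarrow f_{i,r}$ up to known central/Cartan factors — this would be the cleanest route if the $\wt{\sigma}$-images of the loop generators are spelled out precisely enough.
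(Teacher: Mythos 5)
Your overall strategy is exactly the paper's: the proof there is simply the fixed-point computation of Proposition~\ref{D conjugacy one} redone for $f_{i,r}$, using the diagonal action of $\CalD_i$ from~\cite[Corollary 6.5(a)]{FFFR} and the matrix coefficients of $f_{i,1}$ from~\cite[Proposition 2.15]{T}. However, as written your computation does not establish~(\ref{conjugation three}); the quantitative claims are off in two compensating ways and the final step is patched by sign-fixing language rather than arithmetic. First, since $\CalD^{\unl{a}}=\otimes_i\CalD_i^{-a_i}$, for $a_i=1$ the operator to compute is $\CalD_i^{-1}f_{i,0}\CalD_i$, not $\CalD_i f_{i,0}\CalD_i^{-1}$: its matrix entry over $[\wt{\unl{d}},\wt{\unl{d}}+\delta_{ij}]$ acquires the ratio of $\CalD_i$-eigenvalues at the \emph{source} over the \emph{target}, which is $s_{ij}$, not the $s_{ij}^{-1}$ you obtain. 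Second, your claimed relation $f_{i,1}=v^{-i}s_{ij}^{-1}f_{i,0}$ (entrywise) is wrong, and so is the asserted weight $s_{ij}v^{-i}$ of $\CL_i$; the fiber of $\CL_i$ at the fixed point of $\fE_{\unl{d},i}$ lying over $(\wt{\unl{d}},\wt{\unl{d}}+\delta_{ij})$ is $\Gamma(\bC,\CW_i/\CW'_i)\cong z^{d_{ij}}\CO/z^{d_{ij}+1}\CO\cdot w_j$, of weight exactly $s_{ij}=t_j^2v^{-2d_{ij}}$ — this is already visible as the factor $s_{ij}$ in the formula for $F_{i[\wt{\unl{d}},\wt{\unl{d}}+\delta_{ij}]}$ in Theorem~\ref{Br-Fin}(d), since that $s_{ij}$ comes from the $\CL_i$ twist in $f_{i,0}$ itself. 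Tensoring by $(v^i\CL_i)$ before $\bq_*$ simply multiplies the localized contribution by its character $v^is_{ij}$; there is no inversion ``from the orientation of $\bq$''. Hence the correct entrywise relation is $f_{i,1}=v^{i}s_{ij}\,f_{i,0}$, and then $\CalD_i^{-1}f_{i,0}\CalD_i=s_{ij}f_{i,0}=v^{-i}f_{i,1}$, which is the statement. Note that if one took your two intermediate claims at face value they would combine to $\CalD_i f_{i,0}\CalD_i^{-1}=v^{i}f_{i,1}$, and your closing line ``$v^{i}\cdot v^{-i}f_{i,1}$, i.e.\ $v^{-i}f_{i,1}$'' is not a valid deduction, so the errors do not actually cancel.

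Your part (a) (for $a_i=0$, equivalently for conjugation by $\CalD_j$ with $j\ne i$) is fine, since the $\CalD_j$-eigenvalue depends only on the row-$j$ data $\{d_{jk}\}$, which $f_{i,0}$ leaves unchanged. The alternative route via the antiautomorphism $\wt\sigma$ is plausible but not developed: to deduce Proposition~\ref{D conjugacy two} formally from Proposition~\ref{D conjugacy one} you would need to know precisely how the geometric pairing of Proposition~\ref{geometric Shapovalov} intertwines $e_{i,1}$ and $f_{i,1}$ (the adjoint of a loop generator involves extra Cartan and $v$-power factors), which is exactly the bookkeeping you are trying to avoid; the direct fixed-point computation, done with the corrected weights above, is shorter.
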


\begin{proof}
The proof is completely analogous to that of Proposition~\ref{D conjugacy one}.
\end{proof}

Recall the element $\fk^{\unl{a}}\in M^\wedge$ of~(\ref{next simplest Whittaker}).
Since $[\CalD^{\unl{a}}]=\CalD^{\unl{a}}(\fk)$, we obtain the following edge-weight
path model for $\fk^{\unl{a}}$:

\begin{Prop}\label{Path model 2}
The following equality holds:
\begin{equation}\label{path 2}
  \fk^{\unl{a}}=\prod_{i=1}^{n-1}(t_1\cdots t_i)^{-2a_i} \cdot
  \sum_{\beta\in Q_+} \left(\frac{v}{1-v^2}\right)^{|\beta|}\sum_{\bP\in \CP_\beta} y(\bP)\cdot |\bP\rangle^{\unl{a}},
\end{equation}
where for a path $\bP=(p_0,\ldots,p_N)\in \CP_\beta$ with $p_k-p_{k-1}=\alpha_{i_k}$ we set
\begin{equation*}
 |\bP\rangle^{\unl{a}}:=f^{\unl{a}}_{i_N}f^{\unl{a}}_{i_{N-1}}\cdots f^{\unl{a}}_{i_1}([\CO_{\fQ_{\unl{0}}}])
 \ \mathrm{with}\
 f^{\unl{a}}_i:=
  \begin{cases}
    L_iL_{i+1}^{-1}f_{i,0}, & \mbox{if } a_i=0, \\
    v^{-i}L_iL_{i+1}^{-1}f_{i,1}, & \mbox{if } a_i=1.
  \end{cases}
\end{equation*}
\end{Prop}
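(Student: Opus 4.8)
The plan is to reduce Proposition~\ref{Path model 2} directly to Proposition~\ref{Path model 1} via conjugation by the operator of multiplication by $[\CalD^{\unl{a}}]$, using Proposition~\ref{D conjugacy two} to transport the path model along this conjugation. First I would recall from~(\ref{next simplest Whittaker}) that $\fk^{\unl{a}}_{\unl{d}}=\prod_{i=1}^{n-1}(t_1\cdots t_i)^{-2a_i}\cdot [\CalD^{\unl{a}}]$ and observe that, viewing $\CalD^{\unl{a}}$ as the endomorphism of $M$ given by multiplication by $[\CalD^{\unl{a}}]$, we have $[\CalD^{\unl{a}}]=\CalD^{\unl{a}}([\CO_{\fQ_{\unl{0}}}])$, since the fiber of $\CalD^{\unl{a}}$ over the trivial flag at $\unl{d}=\unl{0}$ is one-dimensional and, more precisely, since $\fk^{\unl a}_{\unl 0}=[\CO_{\fQ_{\unl 0}}]$ (the bundle $\CalD^{\unl a}$ restricts to the structure sheaf on $\fQ_{\unl 0}$, as $\CalD_i$ is a pull-back from the Drinfeld compactification for $i=1$ and all $d_i=0$). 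Hence $\fk^{\unl{a}}=\prod_{i=1}^{n-1}(t_1\cdots t_i)^{-2a_i}\cdot \CalD^{\unl{a}}(\fk)$.

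Next I would apply $\CalD^{\unl{a}}$ to the right-hand side of~(\ref{path 1}). Since $\CalD^{\unl{a}}$ is a $\BC(v^{1/\NN})$-linear (indeed $\sk$-linear) operator on $M$, it passes through the sums over $\beta\in Q_+$ and $\bP\in\CP_\beta$ and through the scalars $\left(\frac{v}{1-v^2}\right)^{|\beta|}y(\bP)$, so the only issue is computing $\CalD^{\unl{a}}(|\bP\rangle)$ for $|\bP\rangle=f_{i_N}\cdots f_{i_1}([\CO_{\fQ_{\unl{0}}}])$ with $f_i=L_iL_{i+1}^{-1}F_i=L_iL_{i+1}^{-1}f_{i,0}$. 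Writing $\CalD^{\unl{a}}(|\bP\rangle)=\big(\CalD^{\unl{a}}f_{i_N}(\CalD^{\unl{a}})^{-1}\big)\cdots\big(\CalD^{\unl{a}}f_{i_1}(\CalD^{\unl{a}})^{-1}\big)\cdot\CalD^{\unl{a}}([\CO_{\fQ_{\unl{0}}}])$, I would use that $\CalD^{\unl{a}}([\CO_{\fQ_{\unl{0}}}])=[\CalD^{\unl{a}}|_{\fQ_{\unl{0}}}]=[\CO_{\fQ_{\unl{0}}}]$ (same triviality as above), that $\CalD^{\unl{a}}$ commutes with the Cartan operators $L_i$ (both are diagonal in the fixed-point basis, by~\cite[Corollary 6.5(a)]{FFFR} and Theorem~\ref{Br-Fin}(c)), and that $\CalD^{\unl{a}}f_{i,0}(\CalD^{\unl{a}})^{-1}$ equals $f_{i,0}$ if $a_i=0$ and $v^{-i}f_{i,1}$ if $a_i=1$ by Proposition~\ref{D conjugacy two}. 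Therefore $\CalD^{\unl{a}}f_i(\CalD^{\unl{a}})^{-1}=L_iL_{i+1}^{-1}\CalD^{\unl{a}}f_{i,0}(\CalD^{\unl{a}})^{-1}=f^{\unl{a}}_i$ in the notation of the statement, and we conclude $\CalD^{\unl{a}}(|\bP\rangle)=f^{\unl{a}}_{i_N}\cdots f^{\unl{a}}_{i_1}([\CO_{\fQ_{\unl{0}}}])=|\bP\rangle^{\unl{a}}$.

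Assembling these observations gives exactly
\[
  \fk^{\unl{a}}=\prod_{i=1}^{n-1}(t_1\cdots t_i)^{-2a_i}\cdot\CalD^{\unl{a}}(\fk)
  =\prod_{i=1}^{n-1}(t_1\cdots t_i)^{-2a_i}\cdot\sum_{\beta\in Q_+}\left(\frac{v}{1-v^2}\right)^{|\beta|}\sum_{\bP\in\CP_\beta}y(\bP)\cdot|\bP\rangle^{\unl{a}},
\]
which is~(\ref{path 2}). The routine points to check carefully are that the conjugation identity of Proposition~\ref{D conjugacy two} can be applied term-by-term inside the (possibly infinite, completed) sum defining $\fk$, which is harmless because $\CalD^{\unl{a}}$ is graded (it preserves each $M_{\unl{d}}$) so it acts coordinatewise on $M^\wedge=\prod_{\unl{d}}M_{\unl{d}}$, and the convergence of the path expansion of $\fk$ is already guaranteed by Proposition~\ref{Path model 1}. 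The main obstacle — if any — is purely bookkeeping: making sure the scalar prefactor $\prod_{i=1}^{n-1}(t_1\cdots t_i)^{-2a_i}$ from~(\ref{next simplest Whittaker}) is carried along correctly and not absorbed into the $f^{\unl{a}}_i$, and that the diagonal operator $\CalD^{\unl{a}}$ genuinely commutes with each $L_iL_{i+1}^{-1}$ so that the factorization $\CalD^{\unl a} f_i (\CalD^{\unl a})^{-1} = f^{\unl a}_i$ is exact; both follow from the explicit eigenvalue formulas in Theorem~\ref{Br-Fin}(c) and~\cite[Corollary 6.5(a)]{FFFR}.
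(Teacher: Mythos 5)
Your proposal is correct and is essentially the paper's own argument: the paper proves this statement in one line by combining \refp{Path model 1} and \refp{D conjugacy two}, i.e.\ by writing $\fk^{\unl a}$ as $\prod_{i=1}^{n-1}(t_1\cdots t_i)^{-2a_i}\cdot\CalD^{\unl a}(\fk)$ and conjugating each $f_i=L_iL_{i+1}^{-1}f_{i,0}$ through the multiplication operator $\CalD^{\unl a}$, exactly as you do. The only shaky spot is your justification that $\CalD^{\unl a}$ restricts equivariantly trivially to $\fQ_{\unl 0}$ (one-dimensionality of a fiber proves nothing about its $\wt T\times\BC^\times$-character, and the Drinfeld-compactification pullback argument concerns $\CalD_1$ alone), but this is the same normalization the paper itself uses (cf.\ $\fk^{\unl a}_{\unl 0}=[\CO_{\fQ_{\unl 0}}]$ in the proof of \refp{Feigin's relation continued}), so it amounts to an overall scalar/bookkeeping convention rather than a genuine gap.
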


\begin{proof}
Follows by combining Propositions~\ref{Path model 1} and~\ref{D conjugacy two}.
\end{proof}

According to Proposition~\ref{Feigin's relation continued}, $\fk^{\unl{a}}$ is the
Whittaker vector corresponding to the Sevostyanov triple $(\epsilon,\n,c)$ with
  $\epsilon_{i,i+1}=2a_{i+1}-1,
   \n_{ij}=\delta_{j,i+1}-(1+2a_i)\delta_{j,i}+2a_i\delta_{j,i-1},
   c_i=\frac{v^{1+a_i(4-2i)}}{1-v^2}$.
As $\unl{a}\in \{0,1\}^{n-1}$ varies, we get all possible orientations $\Or$ of
$\Dyn(\ssl_n)=A_{n-1}$ (here, $\Or$ is determined by $\epsilon$). Since it is clear
how the edge-weight path model gets modified once we change $\n,c$
(while $\epsilon$ is kept fixed), cf.~\cite[(3.8, 3.9)]{FFJMM}, Proposition~\ref{Path model 2}
provides an edge-weight path model for a general type $A$ Whittaker vector.


\appendix


\section{Proof of Proposition~\ref{bound for A}}\label{Proof of bound for A}

Given two pairs of type $A_{n-1}$ Sevostyanov triples $(\epsilon^\pm,\n^\pm,c^\pm)$
and $(\tilde{\epsilon}^\pm,\tilde{\n}^\pm,\tilde{c}^\pm)$ such that
  $\epsilon^+_{i,i+1}-\epsilon^-_{i,i+1}=\tilde{\epsilon}^+_{i,i+1}-\tilde{\epsilon}^-_{i,i+1}$
for $1\leq i\leq n-2$, we will prove that there exist constants
$\{r_{ij},r_i\}_{1\leq i\leq j\leq n}$ such that the function
\begin{equation}\label{Canonical Form}
  F=F(\sw_1,\ldots,\sw_n):=
  \exp\left(\sum_{1\leq i\leq j\leq n} r_{ij}\log(\sw_i)\log(\sw_j)+\sum_{1\leq i\leq n} r_i\log(\sw_i)\right)
\end{equation}
satisfies the equality
\begin{equation}\label{conjugation A-type}
  F^{-1}\HH(\epsilon^\pm,\n^\pm,c^\pm)F=\HH(\tilde{\epsilon}^\pm,\tilde{\n}^\pm,\tilde{c}^\pm).
\end{equation}
We will view this as an equality in $\CA_n$ (rather than $\bar{\CA}_n$),
treating $\HH$ of~(\ref{H for A-type}) as elements of $\CA_n$. This will
immediately imply the result of Proposition~\ref{bound for A}.
Set $\hbar:=\log(v)$.

$\bullet$
First, we note that the terms without $\sD_i$'s are the same (and equal to
$\sum_{j=1}^n \sw_j^{-2}$) both in $F^{-1}\HH(\epsilon^\pm,\n^\pm,c^\pm)F$
and $\HH(\tilde{\epsilon}^\pm,\tilde{\n}^\pm,\tilde{c}^\pm)$, independently
of our choice of constants  $\{r_{ij},r_i\}$.

$\bullet$
Second, we will match the terms with $\{\frac{\sD_i}{\sD_{i+1}}\}_{i=1}^{n-1}$
appearing in $F^{-1}\HH(\epsilon^\pm,\n^\pm,c^\pm)F$ and
$\HH(\tilde{\epsilon}^\pm,\tilde{\n}^\pm,\tilde{c}^\pm)$.
Their equality is equivalent to the following system of equations on $\{r_{ij}\}$:
\begin{equation}\label{system A.1}
  \frac{\m_{ij}-\tilde{\m}_{ij}}{\hbar}=
  \begin{cases}
    r_{ji}-r_{j,i+1}, &\ \mathrm{if}\ 1\leq j< i\\
    r_{ij}-r_{i+1,j}, &\ \mathrm{if}\ i+2\leq j\leq n\\
    2r_{ii}-r_{i,i+1}, &\ \mathrm{if}\ j=i\\
    r_{i,i+1}-2r_{i+1,i+1}, &\ \mathrm{if}\ j=i+1
  \end{cases}
\end{equation}
and the following system of equations on $\{r_i\}$:
\begin{equation}\label{system A.2}
  r_i-r_{i+1}=\hbar(r_{i,i+1}-r_{ii}-r_{i+1,i+1})+\hbar^{-1}\log(\tilde{b}_i/b_i)+
  \sum_{k=1}^n(n-k+1/2)(\m_{ik}-\tilde{\m}_{ik}),
\end{equation}
where the coefficients $\m_{ij},\tilde{\m}_{ij}, b_i, \tilde{b}_i$ are defined as
in Section~\ref{explicit hamiltonians A} via

  $\m_{ij}:=\sum_{p=j}^{n-1}(\n^-_{ip}-\n^+_{ip}),$

  $\tilde{\m}_{ij}:=\sum_{p=j}^{n-1}(\tilde{\n}^-_{ip}-\tilde{\n}^+_{ip}),$

  $b_i:=(v-v^{-1})^2v^{\n^+_{ii}-\n^-_{ii}}c^+_ic^-_i,$

  $\tilde{b}_i:=(v-v^{-1})^2v^{\tilde{\n}^+_{ii}-\tilde{\n}^-_{ii}}\tilde{c}^+_i\tilde{c}^-_i.$

It suffices to show that~(\ref{system A.1}) admits a solution, since~(\ref{system A.2})
obviously admits a solution in terms of $r_i$ (unique up to a common constant).
Pick any $r_{11}$. Using the last two cases of~(\ref{system A.1}), we determine
uniquely $\{r_{i,i+1},r_{i+1,i+1}\}_{i=1}^{n-1}$. Using the first case
of~(\ref{system A.1}), we determine uniquely $r_{ij}$ for $j>i+1$.
The resulting collection $\{r_{ij}\}_{1\leq i\leq j\leq n}$ satisfies
the first, third, and fourth cases of~(\ref{system A.1}). It remains
to verify that it also satisfies the second case of~(\ref{system A.1}).
We prove this by induction in $j-i\geq 2$.

(a) If $j=i+2$, then
  $r_{i,i+2}-r_{i+1,i+2}=(r_{i,i+1}-2r_{i+1,i+1})+(2r_{i+1,i+1}-r_{i+1,i+2})-(r_{i,i+1}-r_{i,i+2})=
   \hbar^{-1}(\m_{i,i+1}+\m_{i+1,i+1}-\m_{i+1,i}-\tilde{\m}_{i,i+1}-\tilde{\m}_{i+1,i+1}+\tilde{\m}_{i+1,i})$.
Hence, it remains to prove
  $\m_{i,i+1}+\m_{i+1,i+1}-\m_{i+1,i}-\m_{i,i+2}=
   \tilde{\m}_{i,i+1}+\tilde{\m}_{i+1,i+1}-\tilde{\m}_{i+1,i}-\tilde{\m}_{i,i+2}$.
Since $\m_{st}-\m_{s,t+1}=\n^-_{st}-\n^+_{st}$, this is reduced to
  $\n^-_{i,i+1}-\n^+_{i,i+1}-\n^-_{i+1,i}+\n^+_{i+1,i}=
   \tilde{\n}^-_{i,i+1}-\tilde{\n}^+_{i,i+1}-\tilde{\n}^-_{i+1,i}+\tilde{\n}^+_{i+1,i}$.
The latter equality follows from $\n^\pm_{st}-\n^\pm_{ts}=\epsilon^\pm_{st}b_{st}$
and our assumption on the triples.

(b) If $j>i+2$, then
  $r_{ij}-r_{i+1,j}=(r_{i,j-1}-r_{i+1,j-1})+(r_{i+1,j-1}-r_{i+1,j})-(r_{i,j-1}-r_{ij})=
   \hbar^{-1}(\m_{i,j-1}+\m_{j-1,i+1}-\m_{j-1,i}-\tilde{\m}_{i,j-1}-\tilde{\m}_{j-1,i+1}+\tilde{\m}_{j-1,i})$.
Hence, it remains to prove
  $(\m_{i,j-1}-\m_{ij})-(\m_{j-1,i}-\m_{j-1,i+1})=
   (\tilde{\m}_{i,j-1}-\tilde{\m}_{ij})-(\tilde{\m}_{j-1,i}-\tilde{\m}_{j-1,i+1})$.
Similarly to (a), this is reduced to the proof of
  $b_{i,j-1}(\epsilon^-_{i,j-1}-\epsilon^+_{i,j-1})=
   b_{i,j-1}(\tilde{\epsilon}^-_{i,j-1}-\tilde{\epsilon}^+_{i,j-1})$.
The latter follows immediately from the equality $b_{i,j-1}=0$.

Thus, we have determined a collection of constants $\{r_{ij},r_i\}_{1\leq i\leq j\leq n}$
satisfying~(\ref{system A.1},~\ref{system A.2}).\footnote{This collection is uniquely
determined by a choice of $r_{11},r_1$. However, we note that the image of $F$ defined
via~(\ref{Canonical Form}) in $\bar{\CA}_n$ is independent of this choice.}

$\bullet$
Finally, it remains to verify that for $F$ of~(\ref{Canonical Form}) with
the constants $r_{ij},r_i$ chosen as above, the terms with
$\frac{\sD_i}{\sD_j}\ (j>i+1)$ in $F^{-1}\HH(\epsilon^\pm,\n^\pm,c^\pm)F$
and $\HH(\tilde{\epsilon}^\pm,\tilde{\n}^\pm,\tilde{c}^\pm)$ do coincide.
First, we note that the conditions
  $\epsilon^\pm_{i,i+1}=\ldots=\epsilon^\pm_{j-2,j-1}=\pm 1$
and
  $\tilde{\epsilon}^\pm_{i,i+1}=\ldots=\tilde{\epsilon}^\pm_{j-2,j-1}=\pm 1$
are equivalent under our assumption on the triples.
Pick $j>i+1$ such that either of these equivalent conditions is satisfied.
Then, the compatibility of the terms with $\frac{\sD_i}{\sD_j}$ is equivalent
to the following equality:
\begin{multline}\label{system A.3}
  \frac{F(\sw_1,\ldots, v\sw_i,\ldots,v^{-1}\sw_j,\ldots,\sw_n)}{F(\sw_1,\ldots,\sw_n)}=\\
  \prod_{k=1}^n \sw_k^{\sum_{s=i}^{j-1}(\m_{sk}-\tilde{\m}_{sk})}\cdot
  \prod_{s=i}^{j-1}\frac{\tilde{b}_s}{b_s}\cdot
  v^{\sum_{i\leq a<b\leq j-1} (\n^-_{ab}-\n^+_{ab}-\tilde{\n}^-_{ab}+\tilde{\n}^+_{ab})+\sum_{k=1}^n \sum_{s=i}^{j-1} \frac{n+1-2k}{2}(\m_{sk}-\tilde{\m}_{sk})}.
\end{multline}

We prove this by induction in $j-i$. Note that the $j=i+1$ counterpart of~(\ref{system A.3})
is just the compatibility of the terms with $\frac{\sD_i}{\sD_{i+1}}$, established in
the previous step. Writing the left-hand side of~(\ref{system A.3}) as a product
\begin{equation}\label{induction factorization}
  \frac{F(\sw_1,\ldots, v\sw_i,\ldots,v^{-1}\cdot v\sw_{j-1},v^{-1}\sw_j,\ldots,\sw_n)}{F(\sw_1,\ldots,v\sw_{j-1},v^{-1}\sw_j,\ldots,\sw_n)}\cdot
  \frac{F(\sw_1,\ldots, v\sw_{j-1},v^{-1}\sw_j,\ldots,\sw_n)}{F(\sw_1,\ldots,\sw_n)}
\end{equation}
and applying the induction assumption to both fractions of~(\ref{induction factorization}),
it is straightforward to see that we obtain the right-hand side of~(\ref{system A.3}).

\medskip
Thus, the function $F$ defined via~(\ref{Canonical Form}) with the constants
$\{r_{ij},r_i\}_{1\leq i\leq j\leq n}$ determined in our second step satisfies
the equality~(\ref{conjugation A-type}).
This completes our proof of Proposition~\ref{bound for A}.
$\square$

\begin{Rem}\label{Remark on other types}
(a) The proofs of Propositions~\ref{bound for C},~\ref{bound for D},~\ref{bound for B}
are analogous to the above proof of Proposition~\ref{bound for A}. In each case, there
exists a unique collection of constants $\{r_{ij},r_i\}_{1\leq i\leq j\leq n}$ such that
the function $F$ defined via~(\ref{Canonical Form}) satisfies the corresponding
equality~(\ref{conjugation A-type}). The  way we choose such constants
closely follows the above second step in our proof of Propositions~\ref{bound for A}
and is determined by matching up the coefficients of

$\bullet$ $\{\sD_i/\sD_{i+1}\}_{i=1}^{n-1}$ and $\sD_n^2$ for the type $C_n$,

$\bullet$ $\{\sD_i/\sD_{i+1}\}_{i=1}^{n-1}$ and $\sD_{n-1}\sD_n$ for the type $D_n$,

$\bullet$ $\{\sD_i/\sD_{i+1}\}_{i=1}^{n-1}$ and $\sD_n$ for the type $B_n$.

\noindent
Finally, it remains to check that the function $F$ defined via~(\ref{Canonical Form})
with thus determined $\{r_{ij},r_i\}_{1\leq i\leq j\leq n}$ conjugates each of the
remaining terms appearing in $\HH(\epsilon^\pm,\n^\pm,c^\pm)$ into the one of
$\HH(\tilde{\epsilon}^\pm,\tilde{\n}^\pm,\tilde{c}^\pm)$. This is verified by
induction similarly to the above last step in our proof of Proposition~\ref{bound for A}.

\noindent
(b) The proof of Proposition~\ref{bound for G2} is also analogous, but the constants
$r_{ij},r_i$ are determined by matching the terms with $\sD_1$ and $\sD_2$.
\end{Rem}


\section{Proof of Theorem~\ref{Main Thm A}}\label{Proof of Main Theorem A}

Assume that $\g$ is either of the classical type or $G_2$.
Given two pairs of Sevostyanov triples $(\epsilon^\pm,\n^\pm,c^\pm)$ and
$(\tilde{\epsilon}^\pm,\tilde{\n}^\pm,\tilde{c}^\pm)$ with
$\vec{\epsilon}=\vec{\tilde{\epsilon}}$ (defined right before
Theorem~\ref{Main Thm A}), we need to show that there exists an automorphism
of $\CalD_v(H^\ad)$ which maps $\CT(\epsilon^\pm,\n^\pm,c^\pm)$ to
$\CT(\tilde{\epsilon}^\pm,\tilde{\n}^\pm,\tilde{c}^\pm)$.

According to our proof of Proposition~\ref{bound for A} and
Remark~\ref{Remark on other types} (which states that the same argument applies
to all classical types and $G_2$), there exists a ``formal function'' $F$ of the
shift operators $T_\mu$ such that conjugation by $F$ is a well-defined automorphism
of $\CalD_v(H^\ad)$ satisfying
  $F\D_1(\epsilon^\pm,\n^\pm,c^\pm)F^{-1}=\D_1(\tilde{\epsilon}^\pm,\tilde{\n}^\pm,\tilde{c}^\pm)$.
It remains to prove the following result:

\begin{Prop}\label{Etingof argument}
For any $1\leq i\leq n$, we have
  $F\D_i(\epsilon^\pm,\n^\pm,c^\pm)F^{-1}=\D_i(\tilde{\epsilon}^\pm,\tilde{\n}^\pm,\tilde{c}^\pm)$.
\end{Prop}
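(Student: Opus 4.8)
The idea, due to Etingof, is that the higher hamiltonians $\D_i$ are \emph{algebraically determined} by $\D_1$ together with the fact that $\CalD^\le_v(H^\ad)$ is ``small'' in a suitable sense, so any automorphism fixing $\D_1$ up to the allowed ambiguity automatically transports the whole commutative algebra. Concretely, I would first show that $\{\D_i\}_{i=1}^n$ and $\{\tilde\D_i\}_{i=1}^n$ both lie in the centralizer of $\D_1(\epsilon^\pm,\n^\pm,c^\pm)$ inside $\CalD^\le_v(H^\ad)$, after applying $F$. Since $F\D_1 F^{-1}=\tilde\D_1$, conjugation by $F$ sends the centralizer $Z(\D_1)$ of $\D_1$ in $\CalD^\le_v(H^\ad)$ isomorphically onto the centralizer $Z(\tilde\D_1)$. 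Thus $F\D_iF^{-1}\in Z(\tilde\D_1)$ for all $i$, and likewise $\tilde\D_i\in Z(\tilde\D_1)$. It therefore suffices to pin down the elements of $Z(\tilde\D_1)$ uniquely by finitely many additional data that are manifestly preserved by $F$.

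\textbf{Key steps, in order.} First I would record that $F$, being of the exponential-quadratic form~\eqref{Canonical Form} in the $T_\mu$'s, is an automorphism of $\CalD_v(H^\ad)$ that acts on the ``torus part'' (the subalgebra generated by the $T_\mu$) trivially and rescales/shifts the $e^{-\alpha_i}$ by elements of the torus part; in particular it preserves the filtration of $\CalD^\le_v(H^\ad)$ by the $Q_+$-degree in the $e^{-\alpha}$-variables and induces the identity on the associated graded ``leading symbol'' algebra at degree $0$. Second, I would invoke (a mild strengthening, established in the same way as) Theorem~\ref{Main Thm B}(b): the centralizer of $\D_1$ in $\CalD^\le_v(H^\ad)$ equals $\CT(\epsilon^\pm,\n^\pm,c^\pm)$, which is a polynomial algebra on $\D_1,\dots,\D_n$. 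Hence $Z(\tilde\D_1)=\CT(\tilde\epsilon^\pm,\tilde\n^\pm,\tilde c^\pm)=\BC[\tilde\D_1,\dots,\tilde\D_n]$, and $F\D_iF^{-1}$ is a polynomial in $\tilde\D_1,\dots,\tilde\D_n$. Third, I would match leading terms: each $\D_i$ has a distinguished ``classical'' leading symbol (the coefficient of the top power of the $T_\mu$'s, coming from picking $1$ out of every $\bar R_{\alpha_j}$ in~\eqref{C'}, i.e.\ $\sum \tr_{V_i}(v^{2(\lambda+\rho)})$-type terms $\mathrm{ch}\,V_i$ evaluated on the torus) which is visibly \emph{independent} of $(\epsilon^\pm,\n^\pm,c^\pm)$ and is fixed by $F$. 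Comparing the leading symbol of $F\D_iF^{-1}$ with that of the polynomial expression in the $\tilde\D_j$ forces $F\D_iF^{-1}=\tilde\D_i+(\text{polynomial in }\tilde\D_1,\dots,\tilde\D_{i-1})$ with no $\tilde\D_j$ for $j\ge i$; an induction on $i$ (the base case $i=1$ being the hypothesis $F\D_1F^{-1}=\tilde\D_1$) then shows the correction terms vanish, since the subleading symbols of $\D_i$ are again ``universal'' — the $(\epsilon,\n,c)$-dependence enters only through terms of strictly lower torus-degree that, after conjugation by $F$, are already accounted for by the lower $\tilde\D_j$'s inductively, leaving no room for a nonzero polynomial correction once one tracks the $\tilde\D_j$-content by its $T_\mu$-degree. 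Finally, part~(b)-type statements ($\D_V\in\CT$) are not needed here; only the polynomiality and the centralizer description are.

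\textbf{Main obstacle.} The delicate point is Step three: making precise the claim that the leading (and subleading) symbols of $\D_i$ are ``universal'' — i.e.\ independent of the Sevostyanov data — and that these symbols are rich enough to separate $\tilde\D_i$ from $\BC[\tilde\D_1,\dots,\tilde\D_{i-1}]$. This requires a clean grading argument on $\CalD_v(H^\ad)$: one grades by the $Q_+$-weight of the $e^{-\alpha}$ factor and checks that $\D_i$ has weight-$0$ part equal to $\sum_{w}(\text{weights of }V_i)\,T_{2w}$ (the ``free'' part), which is fixed by $F$ and determines $V_i$ hence $i$; then one argues that if $F\D_iF^{-1}=P(\tilde\D_1,\dots,\tilde\D_n)$ then matching weight-$0$ parts forces $P$ to be linear in $\tilde\D_i$ with coefficient $1$ and to involve only $\tilde\D_j$ with $j<i$ otherwise, and an induction finishes it. I expect the bookkeeping of which characters $\mathrm{ch}\,V_i$ arise as leading symbols (and the fact that the $\{\mathrm{ch}\,V_i\}$ are algebraically independent, so the $\{\D_i\}$ are too — this is Theorem~\ref{Main Thm B}(a)) to be the part that needs the most care, but it is exactly parallel to Etingof's original argument for the unmodified $q$-Toda system, and no genuinely new input beyond Proposition~\ref{bound for A}/Remark~\ref{Remark on other types} and Theorem~\ref{Main Thm B} is required.
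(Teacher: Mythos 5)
Your argument is essentially correct, but it takes a genuinely different route from the paper. The paper's proof is a direct spectral argument on the space $W_\lambda$ of formal series: every element of $\CalD^\leq_v(H^\ad)$ acts upper-triangularly with respect to the order on $Q_+$, the diagonal entries of $\D_i$ (for either pair of triples) are the universal character values $\sum_k v^{2(\mu^{(i)}_k,\lambda-\beta)}$, the operator $\tilde{\D}_1=F\D_1F^{-1}$ has simple spectrum for generic $\lambda$, and since both $F\D_iF^{-1}$ and $\D_i(\tilde{\epsilon}^\pm,\tilde{\n}^\pm,\tilde{c}^\pm)$ commute with it and are upper-triangular with the same diagonal, they are both diagonal in the same eigenbasis with the same eigenvalues, hence equal (as a bonus, the eigenbasis is identified with the $J$-functions via Theorem~\ref{J-eigenfunction}). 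You instead reduce to the centralizer description of Theorem~\ref{Main Thm B}(b), write $F\D_iF^{-1}$ as a polynomial in $\tilde{\D}_1,\dots,\tilde{\D}_n$, and pin it down by its degree-zero symbol. This works, and in fact more cleanly than you suggest: conjugation by $F$ of \eqref{Canonical Form} preserves the $Q$-grading and is the identity in degree zero, the degree-zero part of a polynomial in the $\tilde{\D}_j$ is the same polynomial in the characters $\chi_j=\sum_k T_{2\mu^{(j)}_k}$, and algebraic independence of the fundamental characters forces the polynomial to be exactly $x_i$ — so your induction on $i$ and the ``subleading symbol'' analysis flagged as the main obstacle are unnecessary. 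Two caveats: you must check that conjugation by $F$ preserves $\CalD^\leq_v(H^\ad)$ (clear from \eqref{Canonical Form}, since it multiplies each $e^{-\alpha}$-monomial by a torus monomial), and you must verify there is no circularity in quoting Theorem~\ref{Main Thm B}(b): its proof in the paper uses the $W_\lambda$ diagonalization machinery (simple spectrum of $\D_1$, Proposition~\ref{centralizer}) but not Proposition~\ref{Etingof argument} itself, so the appeal is legitimate, at the cost of importing the harder centralizer theorem where the paper's argument is self-contained.
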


\begin{proof}
Recall that $\D_i\in \CalD^\leq_v(H^\ad)$, where $\CalD^\leq_v(H^\ad)$ is the subalgebra of
$\CalD_v(H^\ad)$ generated by $\{e^{-\alpha_i},T_\mu|1\leq i\leq n, \mu\in P\}$.
Let us extend the field $\BC(v^{1/\NN})$ to $\sk$ and recall the vector space $N_\lambda$
of Section~\ref{J-function}, which was equipped with a natural $\CalD_v(H^\ad)$-action.
In particular, the subspace $W_\lambda$ of $N_\lambda$ formed by the formal sums
  $\left\{\sum_{\beta\in Q_+} a_\beta \unl{y}^{\beta-\lambda}|a_\beta\in \sk\right\}$
is $\CalD^\leq_v(H^\ad)$-stable.
Moreover, $X(\unl{y}^{\beta-\lambda})$ contains only $\unl{y}^{\gamma-\lambda}$
with $\gamma\geq \beta$ for any $X\in \CalD^\leq_v(H^\ad)$, which we refer to
as the ``upper-triangular'' property of the $\CalD^\leq_v(H^\ad)$-action.
In particular, we have
\begin{equation}\label{diagonal values}
  \D_i(\epsilon^\pm,\n^\pm,c^\pm)(\unl{y}^{\beta-\lambda}),
  \D_i(\tilde{\epsilon}^\pm,\tilde{\n}^\pm,\tilde{c}^\pm)(\unl{y}^{\beta-\lambda})\in
  \left(\sum_{k=1}^{N_i}v^{2(\mu^{(i)}_k,\lambda-\beta)}\right)\cdot \unl{y}^{\beta-\lambda}
  \oplus \bigoplus_{\gamma>\beta}\sk \unl{y}^{\gamma-\lambda},
\end{equation}
where $N_i$ is the dimension  and $\{\mu^{(i)}_k\}_{k=1}^{N_i}$ are the weights
(counted with multiplicities) of the $i$-th fundamental $U_v(\g)$-representation $V_i$,
while $v^{(\nu,\lambda)}\ (\nu\in P)$ is defined as in Section~\ref{Whit vector defn}.

Therefore, the action of $\D_1(\tilde{\epsilon}^\pm,\tilde{\n}^\pm,\tilde{c}^\pm)$
on $W_\lambda$ is upper-triangular with pairwise distinct diagonal matrix coefficients,
hence, it is diagonalizable with a simple spectrum. Moreover, the eigenvalues are exactly
  $\left\{\sum_{k=1}^{N_1}v^{2(\mu^{(1)}_k,\lambda-\beta)}|\beta\in Q_+\right\}$.

\begin{Rem}\label{important rmk}
Due to Theorem~\ref{J-eigenfunction}, the corresponding eigenbasis consists of the $J$-functions
$J(\{y_i\}_{i=1}^n)$ associated with $\{\lambda-\rho-\beta|\beta\in Q_+\}$, cf.~(\ref{J function}).
\end{Rem}

Since
  $[\D_i(\tilde{\epsilon}^\pm,\tilde{\n}^\pm,\tilde{c}^\pm),\D_1(\tilde{\epsilon}^\pm,\tilde{\n}^\pm,\tilde{c}^\pm)]=0$,
the action of $\D_i(\tilde{\epsilon}^\pm,\tilde{\n}^\pm,\tilde{c}^\pm)$ on $W_\lambda$
is diagonal in a $\D_1(\tilde{\epsilon}^\pm,\tilde{\n}^\pm,\tilde{c}^\pm)$-eigenbasis
with the corresponding eigenvalues given by $\sum_{k=1}^{N_i}v^{2(\mu^{(i)}_k,\lambda-\beta)}$
(this also follows from Remark~\ref{important rmk} and Theorem~\ref{J-eigenfunction}).
On the other hand, the action of $F\D_i(\epsilon^\pm,\n^\pm,c^\pm)F^{-1}$ on $W_\lambda$
is also upper-triangular with the same diagonal matrix coefficients and commutes with
$\D_1(\tilde{\epsilon}^\pm,\tilde{\n}^\pm,\tilde{c}^\pm)$
(since $F\D_1(\epsilon^\pm,\n^\pm,c^\pm)F^{-1}=\D_1(\tilde{\epsilon}^\pm,\tilde{\n}^\pm,\tilde{c}^\pm)$
and $[\D_1(\epsilon^\pm,\n^\pm,c^\pm), \D_i(\epsilon^\pm,\n^\pm,c^\pm)]=0$).
 Thus, both $F\D_i(\epsilon^\pm,\n^\pm,c^\pm)F^{-1}$ and
$\D_i(\tilde{\epsilon}^\pm,\tilde{\n}^\pm,\tilde{c}^\pm)$ act diagonally in a
$\D_1(\tilde{\epsilon}^\pm,\tilde{\n}^\pm,\tilde{c}^\pm)$-eigenbasis and have
the same corresponding eigenvalues.

The equality
  $F\D_i(\epsilon^\pm,\n^\pm,c^\pm)F^{-1}=\D_i(\tilde{\epsilon}^\pm,\tilde{\n}^\pm,\tilde{c}^\pm)$
follows.
\end{proof}

Thus, conjugation by $F$ maps $\CT(\epsilon^\pm,\n^\pm,c^\pm)$ to
$\CT(\tilde{\epsilon}^\pm,\tilde{\n}^\pm,\tilde{c}^\pm)$. Theorem~\ref{Main Thm A} follows.
$\square$


\section{Proof of Theorem~\ref{Main Thm B}}\label{Proof of Main Theorem B}

Following the discussion in Appendix~\ref{Proof of Main Theorem A}, consider
a basis of $W_\lambda$ in which all $\D_i$ act simultaneously diagonally with
the corresponding eigenvalues given by $\sum_{k=1}^{N_i}v^{2(\mu^{(i)}_k,\lambda-\beta)}$.
The latter can be viewed as characters $\chi_i$ of the fundamental representations
evaluated at $v^{2(\lambda-\beta)}$.

Since the point $v^{2\lambda}\in H(\sk)$ is \emph{general} and the characters
$\{\chi_i\}_{i=1}^n$ are known to be algebraically independent, we immediately
obtain part (a) of Theorem~\ref{Main Thm B}.

Part (c) of Theorem~\ref{Main Thm B} follows from part (b) as
$\D_V(\epsilon^\pm,\n^\pm,c^\pm)\in \CalD^\leq_v(H^\ad)$ commutes with
$\D_1(\epsilon^\pm,\n^\pm,c^\pm)$ for any finite-dimensional
$U_v(\g)$-representation $V$, due to Lemma~\ref{commuting operators}(c).

It remains to prove part (b) of Theorem~\ref{Main Thm B}. The algebra $\CalD^\leq_v(H^\ad)$
is $\BZ$-graded via $\deg(T_\mu)=0$ and $\deg(e^{-\alpha_i})=-1$, so that the degree zero component
$\CalD^\leq_v(H^\ad)^0$ has a basis $\{T_\mu|\mu\in P\}$. Note that the degree zero component
$\D_i^{(0)}$ of $\D_i$ equals $\D_i^{(0)}=\sum_{k=1}^{N_i} T_{2\mu^{(i)}_k}$.
Let $\CalD^\leq_v(H^\ad)^0_{ev}$ be the subspace spanned by $\{T_{2\mu}|\mu\in P\}$.
We also consider a natural action of the Weyl group $W$ both on $\CalD^\leq_v(H^\ad)^0$
and  $\CalD^\leq_v(H^\ad)^0_{ev}$ via $w(T_\mu)=T_{w\mu}$ for $\mu\in P,w\in W$.
Given $\D\in \CalD^\leq_v(H^\ad)$ that commutes with $\D_1$, let $\D^{(0)}$ denote
its degree zero component.

\begin{Prop}\label{centralizer}
We have $\D^{(0)}\in (\CalD^\leq_v(H^\ad)^0_{ev})^W$.
\end{Prop}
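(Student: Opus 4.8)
The statement to prove is Proposition~\ref{centralizer}: if $\D\in \CalD^\leq_v(H^\ad)$ commutes with $\D_1$, then its degree zero component $\D^{(0)}$ lies in $(\CalD^\leq_v(H^\ad)^0_{ev})^W$. My plan is to extract from the commutation relation $[\D,\D_1]=0$ its top-degree consequence, and then feed this into the diagonalization picture established in Appendix~\ref{Proof of Main Theorem A}. Concretely, the first step is to write $\D_1=\D_1^{(0)}+\D_1^{(-1)}$ where $\D_1^{(0)}=\sum_j T_{2\varpi_j}$ (or its analogue in each type; in general $\D_1^{(0)}=\sum_{k=1}^{N_1}T_{2\mu^{(1)}_k}$, the sum over weights of $V_1$) and $\D_1^{(-1)}$ collects the terms $e^{-\alpha}T_\mu$; similarly write $\D=\sum_{m\le 0}\D^{(m)}$. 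Since $\CalD^\leq_v(H^\ad)^0$ is commutative (consisting of shift operators $T_\mu$), the degree-$0$ component of $[\D,\D_1]$ is automatically zero, so this gives nothing; the useful information must come from looking at how $\D^{(0)}$ interacts with the full $\D_1$ across degrees. The cleanest route is to use the faithful action on $W_\lambda\subset N_\lambda$ from Appendix~\ref{Proof of Main Theorem A}: there $\D_1$ acts upper-triangularly in the basis $\{\unl{y}^{\beta-\lambda}\}_{\beta\in Q_+}$ with \emph{pairwise distinct} diagonal entries $\sum_{k=1}^{N_1}v^{2(\mu^{(1)}_k,\lambda-\beta)}$, hence it is diagonalizable with simple spectrum, and \emph{any} operator commuting with it is a polynomial in it, in particular is itself diagonal in the $\D_1$-eigenbasis with eigenvalues that are functions of $\beta$.

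\textbf{Key steps.} First I would record that $\D$, commuting with $\D_1$, acts on $W_\lambda$ diagonally in the $\D_1$-eigenbasis; write its eigenvalue on the $\beta$-eigenvector as $g(v^{2(\lambda-\beta)})$ for some function $g$ on the torus $H$. Second, I would compute the diagonal matrix coefficient of $\D$ acting on $\unl{y}^{\beta-\lambda}$ directly from the grading: only $\D^{(0)}=\sum_\mu a_\mu T_\mu$ contributes to the coefficient of $\unl{y}^{\beta-\lambda}$ in $\D(\unl{y}^{\beta-\lambda})$ (all lower-degree terms $e^{-\alpha}T_\mu$ with $\alpha>0$ shift the exponent strictly up in $Q_+$), so this diagonal coefficient equals $\sum_\mu a_\mu v^{-(\mu,\beta-\lambda)}=\sum_\mu a_\mu v^{(\mu,\lambda-\beta)}$. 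Combining, $\sum_\mu a_\mu v^{(\mu,\lambda-\beta)}=g(v^{2(\lambda-\beta)})$ as $\beta$ ranges over $Q_+$; since $v^{2\lambda}$ is a general point of $H(\sk)$ and $Q_+$ is Zariski-dense in the relevant sense, this forces the Laurent polynomial identity $\sum_\mu a_\mu x^\mu = g(x^2)$ on $H$. The right-hand side is manifestly a function of $x^2$, i.e.\ a Laurent polynomial in $\{x^{2\mu}:\mu\in P\}$; comparing monomials gives $a_\mu=0$ unless $\mu\in 2P$, so $\D^{(0)}\in \CalD^\leq_v(H^\ad)^0_{ev}$. Third, for $W$-invariance: the eigenbasis of $\D_1$ on $W_\lambda$ is, by Remark~\ref{important rmk} and Theorem~\ref{J-eigenfunction}, the family of $J$-functions, but more to the point, the diagonal eigenvalue of \emph{any} $\D_i$ is the character $\chi_{V_i}$ evaluated at $v^{2(\lambda-\beta)}$ as used in Appendix~\ref{Proof of Main Theorem B}, and characters are $W$-invariant; the same argument applies to $\D$ commuting with $\D_1$, once one notes that $g$ must be expressible through the $W$-invariant functions $\chi_{V_i}$ --- but a slicker route is: $\D^{(0)}=\sum_\mu a_\mu T_\mu$ with $a_\mu$ supported on $2P$, and the identity $\sum_\mu a_\mu x^{\mu}=g(x^2)$ already exhibits $g$ as a symmetric function provided we separately know $g$ is $W$-invariant. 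That last point is where the real work sits.

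\textbf{Main obstacle.} The genuine difficulty is establishing the $W$-invariance of $g$ (equivalently, that $\D^{(0)}$, rewritten in terms of $T_{2\mu}$, is $W$-symmetric). The degree and commutativity arguments above only pin down the \emph{support} of $\D^{(0)}$ in $2P$, not its symmetry. To get symmetry I expect one needs an extra input about the structure of $\D_1$ beyond having simple spectrum: namely that $\D_1$ itself has $W$-invariant symbol $\D_1^{(0)}=\sum_k T_{2\mu^{(1)}_k}$, and that the algebra generated by $\D_1$ inside the diagonal operators on $W_\lambda$, upon taking degree-zero parts, lands in the $W$-invariants --- this should follow because any power $\D_1^N$ has degree-zero part equal to $(\D_1^{(0)})^N$ plus lower-degree cross terms that contribute nothing to the diagonal, and $(\D_1^{(0)})^N$ is visibly $W$-symmetric; since $\D$ is a polynomial in $\D_1$ as an operator on $W_\lambda$, and the correspondence $\D\mapsto \D^{(0)}$ (restricted to the centralizer) is compatible with this polynomial structure on the level of eigenvalues, we conclude $g$ is a polynomial in $\chi_{V_1}$ evaluated appropriately, hence $W$-invariant. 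Making this last compatibility precise --- that passing to the eigenvalue function intertwines ``$\D$ is a polynomial in $\D_1$'' with ``$g$ is a polynomial in the $\D_1$-symbol'' --- is the step I would write most carefully. Everything else (the density of $\{v^{2(\lambda-\beta)}:\beta\in Q_+\}$ for general $\lambda$, the upper-triangularity bookkeeping) is routine given the earlier results.
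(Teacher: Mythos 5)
Your reduction to the diagonal entries is fine as far as it goes: since $\D\in\CalD^\leq_v(H^\ad)$ acts upper-triangularly on $W_\lambda$ and commutes with $\D_1$, whose spectrum is simple, $\D$ is diagonal in the $\D_1$-eigenbasis and its eigenvalue at the $\beta$-eigenvector equals the diagonal coefficient $\sum_\mu a_\mu v^{(\mu,\lambda-\beta)}$ of $\D^{(0)}$. But from this point both of your key deductions fail. First, the evenness argument is circular: for generic $\lambda$ the map $\beta\mapsto v^{2(\lambda-\beta)}$ is injective on $Q_+$, so \emph{any} function of $\beta$ can be written as ``$g(v^{2(\lambda-\beta)})$ for some function $g$''; you never establish that $g$ is a Laurent polynomial in the squared coordinates, which is exactly the content of the claim $a_\mu=0$ for $\mu\notin 2P$. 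There is no monomial comparison to make, because the identity $\sum_\mu a_\mu x^\mu=g(x^2)$ is only known pointwise on a countable set, with $g$ an arbitrary function on that set. Second, the $W$-invariance step rests on the assertion that an operator commuting with $\D_1$ on $W_\lambda$ is a polynomial in $\D_1$. That is the finite-dimensional cyclic-vector fact; on the infinite-dimensional space $W_\lambda$ the commutant of a diagonalizable operator with simple spectrum is the full algebra of diagonal operators, and even within $\CalD^\leq_v(H^\ad)$ the centralizer of $\D_1$ is $\CT$, generated by the $n$ algebraically \emph{independent} elements $\D_1,\ldots,\D_n$ (Theorem~\ref{Main Thm B}(a,b)); e.g.\ $\D_2$ commutes with $\D_1$ but is not a polynomial in $\D_1$. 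So the inference ``$g$ is a polynomial in $\chi_{V_1}$, hence $W$-invariant'' is unavailable, and your plan proves neither (the support in $2P$) nor (the symmetry).

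The missing idea is a reduction that makes a commutant argument legitimate, and this is what the paper does: it suffices to check, for each $i$ separately, that $\D^{(0)}$ is $s_i$-invariant and that every $\mu$ occurring in it has $(\mu,\alpha_i)\in 2\sd_i\BZ$. For this one passes to the quotient $\bar{W}_\lambda=W_\lambda/W'_\lambda$ killing all $\unl{y}^{\beta-\lambda}$ with $\beta\notin\BZ\alpha_i$ and specializes $u_j\mapsto 1$ for $j\ne i$; then all summands involving $e^{-\alpha_j}$ ($j\ne i$) act by zero, $\bar{W}_\lambda$ is identified with the $\ssl_2$-space, and $\D_1$ becomes a \emph{non-constant polynomial} in the rank-one Toda hamiltonian $\D_1^{(\ssl_2)}$. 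In rank one an elementary induction on the lowest degree (Lemma~\ref{centralizer sl2}), carried out inside the difference-operator algebra rather than on the level of arbitrary operators, shows that anything in $\CalD^\leq_v(H^\ad)$ commuting with such a polynomial is itself a polynomial in $\D_1^{(\ssl_2)}=T_{2\varpi_1}+T_{-2\varpi_1}-(v-v^{-1})^2e^{-\alpha}T_0$, whose degree-zero part is manifestly even and $s_i$-symmetric; this yields both claims for the chosen $i$. Your spectral picture cannot substitute for this algebraic rank-one step, precisely because in higher rank the centralizer is strictly larger than the polynomials in $\D_1$.
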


The proof of Proposition~\ref{centralizer} is based on the rank $1$ case, for which we prove a slightly more
general result. In type $A_1$, the modified quantum difference Toda systems
are conjugate to the $q$-Toda of~\cite{E} with the first hamiltonian
  $\D_1=T_{2\varpi_1}+T_{-2\varpi_1}-(v-v^{-1})^2e^{-\alpha}T_0$,
see~(\ref{standard qToda for A}).

\begin{Lem}\label{centralizer sl2}
In type $A_1$, given $\D\in \CalD^\leq_v(H^\ad)$ that commutes with
$\D'=a_r\D_1^r+a_{r-1}\D_1^{r-1}+\ldots+a_0$ for some $a_0,\ldots,a_r\in \BQ(v^{1/\NN})$
with $a_r\ne 0,r>0$, $\D$ must be a polynomial in $\D_1$.
\end{Lem}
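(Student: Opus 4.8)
The plan is to work in a concrete realization of $\CalD^\leq_v(H^\ad)$ for $\ssl_2$ as the algebra of difference operators on $\BC^\times$ generated by $T=T_{2\varpi_1}$, $T^{-1}=T_{-2\varpi_1}$, and the multiplication operator $Z=e^{-\alpha}$, subject to $TZ=v^{-2}ZT$ (up to the precise power of $v$ dictated by $(\alpha,2\varpi_1)$); after conjugation we may assume $\D_1=T+T^{-1}-\kappa ZT_0$ with $\kappa=(v-v^{-1})^2$ and $T_0$ the identity on the $H^\ad$-part, so $\D_1=T+T^{-1}-\kappa Z$ after the obvious identification. Write a general element $\D\in\CalD^\leq_v(H^\ad)$ as $\D=\sum_{k\ge 0} Z^k P_k(T)$ where each $P_k$ is a Laurent polynomial in $T$ (the $Z$-degree is $\le 0$ in the grading convention of the paper, i.e. bounded above, and lower-bounded by construction, so this is a finite sum), and reduce $[\D,\D']=0$ to a hierarchy of commutator relations. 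The first step is to observe that since $\D'$ is a polynomial in $\D_1$ of positive degree $r$ with $a_r\ne0$, the top $Z$-degree part of $\D'$ is $(-\kappa)^r Z^r$ times a nonzero scalar, so $[\D,\D']=0$ forces, at the extreme $Z$-degrees, a relation of the form $[\D, Z^r\cdot(\mathrm{unit})]=$ lower order; comparing leading terms shows the top $Z$-component $Z^m P_m(T)$ of $\D$ must commute with $Z^r$ up to conjugation, which pins down $P_m(T)$ to be (a scalar multiple of) a monomial, and in fact shows $\D$ and $\D'$ generate a commutative subalgebra of a localized quantum torus whose centralizer structure is controlled.

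The cleaner route, which I would actually carry out, is to pass to the classical/commutative limit only in the auxiliary bookkeeping: treat $T$ as an invertible variable $t$ and $Z$ as a variable $z$ with the relation $tz = q z t$, $q=v^{-2}$, so $\CalD^\leq_v(H^\ad)$ becomes a subalgebra of the quantum torus $\BQ(v^{1/\NN})\langle t^{\pm1},z\rangle$. Here $\D_1 = t + t^{-1} - \kappa z$ and $\D'=a_r\D_1^r+\cdots+a_0$. The key step is then: the centralizer of $\D_1$ in the quantum torus $\BQ(v^{1/\NN})\langle t^{\pm1},z^{\pm1}\rangle$ (after inverting $z$) is exactly $\BQ(v^{1/\NN})[\D_1^{\pm1}]$ — this is a standard fact about centralizers of generic elements of quantum tori / the quantum Weyl algebra, and it holds verbatim for $\D'$ as well since $\D'$ is a polynomial in $\D_1$ whose leading-in-$z$ term is still a unit times $z^r$, hence "generic" in the same sense. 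Concretely, one shows that if $[\D,\D']=0$ and $\D=\sum_k z^k P_k(t)$, then induction on the spread of $z$-degrees of $\D$, using that conjugation by $z$ acts on $t$ by $t\mapsto q^{\pm1}t$ and that $\D_1$ is not fixed by any nontrivial power of this conjugation (because $t+t^{-1}$ is not invariant under $t\mapsto q^j t$ for $j\ne0$, as $q$ is not a root of unity), forces $\D$ to have $z$-degree-$0$ part a polynomial in $t+t^{-1}$ and the rest determined recursively to assemble into a polynomial in $\D_1$.

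The main obstacle is the bookkeeping in that induction: one must show that at each step the "off-diagonal" $z$-components of $\D$ are forced to match those of a suitable polynomial in $\D'$ (equivalently $\D_1$), and this requires the non-vanishing of certain $q$-difference expressions like $(t+t^{-1}) - (q^j t + q^{-j}t^{-1})$ for $j\ne 0$, which is where the hypothesis that $v$ is transcendental (so $q$ is not a root of unity) enters essentially. A second, more bureaucratic point: the paper works over $\BC(v^{1/\NN})$ but the lemma is stated over $\BQ(v^{1/\NN})$ with the $a_i\in\BQ(v^{1/\NN})$; this is harmless since the centralizer computation is insensitive to the base field, but I would note it. Finally I would remark that the restriction $a_r\ne0$, $r>0$ is used only to guarantee the leading-$z$ coefficient of $\D'$ is a nonzero scalar multiple of $z^r$; the case $r=0$ is trivially excluded and the case of non-monomial leading behavior does not arise since $\D_1^r$ has leading term $(-\kappa)^r z^r$.
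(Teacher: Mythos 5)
Your ``concrete'' induction is precisely the paper's proof: write $\D=\D^{(0)}+e^{-\alpha}\D^{(-1)}+\ldots+e^{-s\alpha}\D^{(-s)}$, compare the extreme-degree component of $[\D,\D']$ against the leading term $a_r(-(v-v^{-1})^2)^r e^{-r\alpha}$ of $\D'$ to force the top coefficient $\D^{(-s)}$ to be a scalar multiple of $T_0$ (since $v$, hence $q$, is not a root of unity), then subtract $c_s(-(v-v^{-1})^{-2})^s\D_1^s$ and induct on $s$. Two small corrections: exact commutation with a scalar times $e^{-r\alpha}$ pins the top coefficient down to a \emph{constant}, not merely a monomial, and no appeal to a ``standard fact'' about centralizers in the localized quantum torus is needed (nor well-formed, since $\D_1$ is not a unit there, so $\BQ(v^{1/\NN})[\D_1^{\pm 1}]$ does not make sense) --- the elementary induction you sketch already suffices.
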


\begin{proof}
Let $\D=\D^{(0)}+e^{-\alpha}\D^{(-1)}+\ldots+e^{-s\alpha}\D^{(-s)}$ with
$\D^{(0)},\ldots,\D^{(-s)}\in \CalD^\leq_v(H^\ad)^0$ and $\D^{(-s)}\ne 0$.
We prove the claim by induction in $s$. Comparing the degree $-r-s$ terms
in $\D\D'=\D'\D$, we immediately get $\D^{(-s)}=c_sT_0$ for some constant $c_s$.
Replacing $\D$ by $\D-c_s(-(v-v^{-1})^{-2})^s\D_1^s$, we obtain another element
of $\CalD^\leq_v(H^\ad)$ which commutes with $\D'$ and has a smaller value of $s$,
hence, is a polynomial in $\D_1$ by the induction assumption.
Therefore, $\D$ is also a polynomial in $\D_1$.
\end{proof}

\begin{proof}[Proof of Proposition~\ref{centralizer}]
The result of Proposition~\ref{centralizer} follows immediately from Lemma~\ref{centralizer sl2}.
Indeed, it suffices to verify the following two claims for any $1\leq i\leq n$:

 (I) the operator $\D^{(0)}$ is invariant with respect to the simple reflection $s_i$,

 (II) every $\mu$ appearing in $\D^{(0)}$ satisfies $(\mu, \alpha_i)\in 2\sd_i\BZ$.

To prove this, consider a subspace $W'_\lambda$ of $W_\lambda$ that consists of
  $\left\{\sum_{\beta\in Q_+\backslash \BZ\alpha_i} a_\beta \unl{y}^{\beta-\lambda}|a_\beta\in \sk \right\}$.
It is stable under the action of $\CalD^\leq_v(H^\ad)$, hence, we obtain the action of
$\CalD^\leq_v(H^\ad)$ on the quotient $\bar{W}_\lambda:=W_\lambda/W'_\lambda$.
We also specialize $u_j\mapsto 1$ for $j\ne i$ (recall that $u_j$ were used in our definition
of $\lambda$). As a result, summands with $e^{-\alpha_j}\ (j\ne i)$ in $\D,\D_1$
act by zero on $\bar{W}_\lambda$, while $T_{\omega_j}\ (j\ne i)$ act by the identity operator.
Identifying further $\bar{W}_\lambda$ with the space $W^{(\ssl_2)}_{\lambda'}$ constructed
for $\ssl_2$ instead of $\g$ (hence, the superscript in our notations), $\D_1$ gives rise to the operator
$\D^{(\ssl_2)}_V$ with $V$ being the restriction of the first fundamental $U_v(\g)$-representation
$V_1$ to the subalgebra generated by $E_i,F_i,L^{\pm 1}_i$, which is isomorphic to $U_{v_i}(\ssl_2)$.
As $V$ is not a trivial $U_{v_i}(\ssl_2)$-module, $\D^{(\ssl_2)}_V$ is a non-constant polynomial
in the first hamiltonian $\D^{(\ssl_2)}_1$. Hence, Lemma~\ref{centralizer sl2} can be applied with
$\D'=\D^{(\ssl_2)}_V$ and $\D$ denoting the image of $\D$ acting on
$\bar{W}_\lambda\simeq W^{(\ssl_2)}_{\lambda'}$ by abuse of notation.
Therefore, both claims (I) and (II) follow.
\end{proof}

It is clear that $(\CalD^\leq_v(H^\ad)^0_{ev})^W$ is generated by
$\{\D_i^{(0)}\}_{i=1}^n$. Hence, due to Proposition~\ref{centralizer}, there exists
a polynomial $P$ in $n$ variables such that $\D':=\D-P(\D_1,\ldots,\D_n)$ is of strictly
negative degree. Thus, the action of $\D'$ on $W_\lambda$ is upper-triangular with
zeros on the diagonal. As $[\D',\D_1]=0$ and $\D_1$ acts on $W_\lambda$ with a simple
spectrum, we immediately get $\D'=0$.

This completes our proof of Theorem~\ref{Main Thm B}.
$\square$


\section{Proof of Theorem~\ref{Main Conj}}\label{Proof of Main Conjecture}

Given a rank $n$ simple Lie algebra $\g$, fix an arbitrary orientation of the edges
of $\Dyn(\g)$ as well as their labelling by numbers from $1$ up to $n-1$. For an edge
$1\leq e\leq n-1$, the vertices $t(e),h(e)$ will denote the \emph{tail} and the \emph{head}
of that edge, respectively. To every pair of Sevostyanov triples $(\epsilon^\pm,\n^\pm,c^\pm)$,
we associate an invariant $\vec{\epsilon}=(\epsilon_{n-1},\ldots,\epsilon_1)\in \{-1,0,1\}^{n-1}$ via
$\epsilon_e:=\frac{\epsilon^+_{t(e),h(e)}-\epsilon^-_{t(e),h(e)}}{2}\in \{-1,0,1\}$ for $1\leq e\leq n-1$.

To prove Theorem~\ref{Main Conj}, it suffices to verify that given two pairs of
Sevostyanov triples $(\epsilon^\pm,\n^\pm,c^\pm)$ and $(\tilde{\epsilon}^\pm,\tilde{\n}^\pm,\tilde{c}^\pm)$
satisfying $\vec{\epsilon}=\vec{\tilde{\epsilon}}$, there exists an automorphism of $\CalD_v(H^\ad)$
which maps $\CT(\epsilon^\pm,\n^\pm,c^\pm)$ to $\CT(\tilde{\epsilon}^\pm,\tilde{\n}^\pm,\tilde{c}^\pm)$.
Our proof is similar to that of Theorem~\ref{Main Thm A} presented in Appendix~\ref{Proof of Main Theorem A},
but is crucially based on the fermionic formula of Theorem~\ref{Fermionic two} for $\tilde{J}_\beta$ instead of
Propositions~\ref{bound for A},~\ref{bound for C},~\ref{bound for D},~\ref{bound for B},~\ref{bound for G2}.\footnote{We
owe this observation to A.~Braverman.}

Following Appendix~\ref{Proof of Main Theorem A}, consider the action of
$\CalD_v^\leq(H^\ad)$ on $W_\lambda$. Due to Remark~\ref{important rmk},
the action of pairwise commuting operators $\D_i(\epsilon^\pm,\n^\pm,c^\pm)$
(resp.\ $\D_i(\tilde{\epsilon}^\pm,\tilde{\n}^\pm,\tilde{c}^\pm)$) is
simultaneously diagonalizable in the basis of $J$-functions
$\{J^\Lambda(\epsilon^\pm,\n^\pm,c^\pm;\{y_i\})|\Lambda=\lambda-\rho-\beta,\beta\in Q_+\}$
(resp.\ $\{J^\Lambda(\tilde{\epsilon}^\pm,\tilde{\n}^\pm,\tilde{c}^\pm;\{y_i\})|
\Lambda=\lambda-\rho-\beta,\beta\in Q_+\}$).\footnote{As $\Lambda$ varies,
we will use the notations $J^\Lambda(\{y_i\}), J^\Lambda_\beta$ instead of
$J(\{y_i\}), J_\beta$ used in Section~\ref{section properties}.}
We note that both
$J^\Lambda(\epsilon^\pm,\n^\pm,c^\pm;\{y_i\})-\unl{y}^{\beta-\lambda}$ and
$J^\Lambda(\tilde{\epsilon}^\pm,\tilde{\n}^\pm,\tilde{c}^\pm;\{y_i\})-\unl{y}^{\beta-\lambda}$
contain only $\{\unl{y}^{\gamma-\lambda}\}_{\gamma>\beta}$.

The following is the key observation:

\begin{Prop}\label{Key point}
If $\vec{\epsilon}=\vec{\tilde{\epsilon}}$, there exists a difference operator $\fD$
which acts on $W_\lambda$ and maps $J^\Lambda(\epsilon^\pm,\n^\pm,c^\pm;\{y_i\})$ to
a non-zero multiple of $J^\Lambda(\tilde{\epsilon}^\pm,\tilde{\n}^\pm,\tilde{c}^\pm;\{y_i\})$
for any $\Lambda\in \lambda-\rho-Q_+$.
\end{Prop}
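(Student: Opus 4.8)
The plan is to construct the desired operator $\fD$ explicitly as a ``change of parameters'' operator built directly from the fermionic formula of Theorem~\ref{Fermionic two}, diagonal in the monomial basis $\{\unl{y}^{\beta-\lambda}\}$ in a suitable completed sense. Recall that by Theorem~\ref{Fermionic two}, $\tilde{J}_\beta$ depends on the pair of Sevostyanov triples only through the combinatorial data $c^\beta$ (which depends on the $c^\pm_i$ and the diagonal entries $\n^\pm_{ii}$) and the exponents $\tau_\lambda(\alpha,\beta)$, which in turn depend only on the vectors $\nu^-_i-\nu^+_i$, i.e.\ on $\n^-_{ij}-\n^+_{ij}$. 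The first step is therefore to isolate, for each pair of edges/indices $i\prec j$, which entries of $\n^--\n^+$ actually enter $\tau_\lambda$, and to observe via the Sevostyanov relation $\sd_j\n^\pm_{ij}-\sd_i\n^\pm_{ji}=\epsilon^\pm_{ij}b_{ij}$ that the antisymmetric part $\n^-_{ij}-\n^+_{ij}-(\n^-_{ji}-\n^+_{ji})$ is pinned down (up to the scaling $b_{ij}$) by $\epsilon^+_{ij}-\epsilon^-_{ij}$, hence by $\vec\epsilon$; the symmetric part, together with the diagonal contributions and the constants $c^\pm_i$, is free and can be absorbed into a rescaling and a ``quadratic'' shift operator.

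Concretely, the second step is to write $\fD$ as a product $\fD=F\cdot G$, where $F=\exp(\sum r_{ij}\log\sw_i\log\sw_j+\sum r_i\log\sw_i)$ is a formal function of the shift operators $T_\mu$ exactly as in~(\ref{Canonical Form}) (acting on $W_\lambda$ via~(\ref{action on N})), and $G$ is a diagonal operator sending $\unl{y}^{\beta-\lambda}\mapsto g_\beta\,\unl{y}^{\beta-\lambda}$ for suitable scalars $g_\beta\in\sk^\times$ with $g_0=1$. The action of $F$ on $\unl{y}^{\beta-\lambda}$ is $v^{\text{(quadratic in the coordinates of }\beta)}\unl{y}^{\beta-\lambda}$, so conjugation/multiplication by $FG$ changes $J_\beta$ precisely by a factor that is the exponential of a quadratic form in $\beta$ times a product of ``per-simple-root'' factors. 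The third step is the bookkeeping: expand $\tilde J_\beta(\epsilon^\pm,\n^\pm,c^\pm)$ and $\tilde J_\beta(\tilde\epsilon^\pm,\tilde\n^\pm,\tilde c^\pm)$ using~(\ref{explicit fermionic}), and check that their ratio, as a function of $\beta$ and of the summation data $\unl\beta$, is of the form $(\text{quadratic in }\beta)\cdot(\text{linear in }\beta)$ — in particular independent of the internal choice of $\unl\beta=\{\beta^{(t)}\}$ — precisely because the two pairs share $\vec\epsilon$. This is where the hypothesis $\vec\epsilon=\vec{\tilde\epsilon}$ is used in an essential way: the terms in $R(\unl\beta)$ that genuinely depend on the order $\prec_\pm$ and on the antisymmetric part of $\n^\pm$ are exactly the ones controlled by $\vec\epsilon$, while everything else contributes a factor depending only on $\beta$ (not on $\unl\beta$) and hence can be realized by $FG$.

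The fourth step is to solve for the constants: the quadratic exponents in $\beta$ are matched by choosing $\{r_{ij}\}$, the linear exponents (together with the $c^\beta$-type factors) by choosing $\{r_i\}$ and the rescaling $g_\beta$; solvability of the resulting linear system is handled exactly as in the proof of Proposition~\ref{bound for A} in Appendix~\ref{Proof of bound for A} (pick $r_{11}$, propagate, and verify the remaining constraints reduce to identities that follow from $b_{ij}\n^\pm_{\cdot\cdot}$-relations and $\vec\epsilon=\vec{\tilde\epsilon}$). Since $g_\beta=0$ never occurs and $F$ is invertible on $W_\lambda$, the operator $\fD=FG$ is invertible and sends $J^\Lambda(\epsilon^\pm,\n^\pm,c^\pm;\{y_i\})$ to a nonzero multiple of $J^\Lambda(\tilde\epsilon^\pm,\tilde\n^\pm,\tilde c^\pm;\{y_i\})$, because both $J$-functions are the unique (up to scalar) elements of $\sk\cdot\unl y^{\Lambda}\oplus\bigoplus_{\gamma>\beta}\sk\,\unl y^{\gamma-\lambda}$ with the prescribed leading term whose $\beta$-coefficients are the respective $J_\beta$, and $\fD$ intertwines these two families coefficient-by-coefficient. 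The main obstacle I anticipate is the third step: verifying that the $\unl\beta$-dependence of $R(\unl\beta)$ cancels in the ratio, i.e.\ that the only genuine difference between the two fermionic formulas is an ``$FG$-type'' factor; this requires carefully separating, in $\tau_\lambda(\alpha,\beta)$, the piece $\sum_{j\prec_+ i}m_im_j(\nu^-_i-\nu^+_i,\alpha_j)$ (which sees the orientation) from the rest, and using the Sevostyanov constraint to see that its $\vec\epsilon$-dependent part is common to both pairs while its remainder is symmetric and hence absorbable.
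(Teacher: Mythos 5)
Your plan is sound and ends up constructing essentially the same object as the paper -- a single diagonal ``Gaussian'' operator, an exponential of a quadratic expression in the shift operators -- but you reach the key ratio identity by a different, more computational route. The paper uses Theorem~\ref{Fermionic two} only for the soft observation that $J^\Lambda_\beta$ depends just on $\{\nu^+_i-\nu^-_i\}$ and $\{c^+_ic^-_i\}$, which allows it to assume $\tilde{\epsilon}^\pm=\epsilon^\pm$, $\tilde{\n}^-=\n^-$, $\tilde{c}^-=c^-$, so that $\gamma_i=\tilde{\nu}^+_i-\nu^+_i$ satisfies the symmetry (\ref{homogenious solution}); the ratio $J^\Lambda_\beta(\tilde{\epsilon}^\pm,\tilde{\n}^\pm,\tilde{c}^\pm)/J^\Lambda_\beta(\epsilon^\pm,\n^\pm,c^\pm)=v^{\mathrm{linear}+\mathrm{quadratic\ in\ }\beta}$ is then proved in Lemma~\ref{relating Js} not by expanding the fermionic sum but by rescaling the Whittaker vector $\theta^\Lambda_\beta$ by an explicit factor and checking the defining conditions (\ref{Whittaker conditions}), and $\fD$ is written in one closed form (Lemma~\ref{explicit D}), with no linear system for $r_{ij},r_i$ to solve. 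Your route -- term-by-term comparison inside (\ref{explicit fermionic}) -- does work: writing $\delta_i$ for the difference of the $\nu^-_i-\nu^+_i$ data, the hypothesis $\vec{\epsilon}=\vec{\tilde{\epsilon}}$ gives $(\delta_i,\alpha_j)=(\delta_j,\alpha_i)$, and a symmetrization/telescoping computation shows that the difference of the exponents $R(\unl{\beta})$ for the two pairs collapses to a quadratic function of $\beta$ alone; but this is exactly the computation you defer, and two points left implicit in your outline must be made explicit: (i) the convex orderings $\prec_\pm$ entering $\tau_\lambda$ may differ for the two pairs even when $\vec{\epsilon}=\vec{\tilde{\epsilon}}$ (the paper's reduction sidesteps this; otherwise one checks that reordering an edge changes both exponents by the same term $m_im_j(\epsilon^-_{ij}-\epsilon^+_{ij})b_{ij}$); and (ii) a single operator $\fD$ acts on the monomial carrying $J^\Lambda_\beta$, namely $\unl{y}^{\beta-\Lambda-\rho}$, with eigenvalue a function of $\beta-\Lambda$ only, so you must verify that the coefficient ratio factors as (function of $\beta-\Lambda$)$\times$($\beta$-independent constant) uniformly in $\Lambda$ -- this is the content of the paper's Lemma~\ref{explicit D} and again uses the symmetry (\ref{homogenious solution}). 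With those two checks supplied, your argument goes through; the paper's route buys a cleaner derivation free of the $\unl{\beta}$-cancellation bookkeeping, while yours stays entirely within the fermionic formula.
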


\begin{proof}
Define $\nu^\pm_i,\tilde{\nu}^\pm_i\in P$ via $\nu^\pm_i:=\sum_{k=1}^n \n^\pm_{ik}\omega_k$
and $\tilde{\nu}^\pm_i:=\sum_{k=1}^n \tilde{\n}^\pm_{ik}\omega_k$. Due to Theorem~\ref{Fermionic two},
the pairing $J^\Lambda_\beta(\epsilon^\pm,\n^\pm,c^\pm)$ depends only on $\{\nu^+_i-\nu^-_i\}_{i=1}^n$
and $\{c^+_ic^-_i\}_{i=1}^n$ for any fixed $\Lambda,\beta$. Hence, as $\vec{\epsilon}=\vec{\tilde{\epsilon}}$,
we may assume $\tilde{\epsilon}^\pm=\epsilon^\pm, \tilde{\n}^-=\n^-,\tilde{c}^-=c^-$, while
$\gamma_i:=\tilde{\nu}^+_i-\nu^+_i$ satisfy
\begin{equation}\label{homogenious solution}
  (\alpha_i,\gamma_j)=(\alpha_j,\gamma_i)\ \mathrm{for\ any}\ 1\leq i,j\leq n.
\end{equation}
In this setup, we have:

\begin{Lem}\label{relating Js}
There exist constants $\{s_i\}_{i=1}^n$ such that
\begin{equation*}
  J^\Lambda_\beta(\tilde{\epsilon}^\pm,\tilde{\n}^\pm,\tilde{c}^\pm)=
  v^{\sum_{i=1}^n s_i(\beta,\omega_i)+\frac{1}{2}\sum_{i=1}^n (\beta,\omega^\vee_i)(\beta-2\Lambda,\gamma_i)}
  \cdot J^\Lambda_\beta(\epsilon^\pm,\n^\pm,c^\pm)
\end{equation*}
for any $\Lambda\in \lambda-\rho-Q_+, \beta\in Q_+$.
\end{Lem}

This essentially follows from~\cite[(3.8, 3.9)]{FFJMM}, but let us provide a complete argument.

\begin{proof}
Since $\tilde{\epsilon}^\pm=\epsilon^\pm, \tilde{\n}^-=\n^-,\tilde{c}^-=c^-$ and
$J^\Lambda_\beta(\bullet,\bullet,\bullet)$ is defined via~(\ref{J-factor}),
it suffices to prove the following equality:
\begin{equation}\label{relation of Whit}
  \theta^\Lambda_\beta(\epsilon^+,\tilde{\n}^+,\tilde{c}^+)=
  \theta^\Lambda_\beta(\epsilon^+,\n^+,c^+)\cdot a_{\Lambda,\beta},
\end{equation}
where
\begin{equation}\label{a-coef}
   a_{\Lambda,\beta}:=
   v^{\frac{1}{2}\sum_{i=1}^n (\beta,\omega^\vee_i)(\beta-2\Lambda,\gamma_i)}
   \cdot \prod_{i=1}^n (\wt{c}^+_iv^{\frac{1}{2}(\gamma_i,\alpha_i)}/c^+_i)^{(\beta,\omega^\vee_i)}.
\end{equation}
Let $\wt{\theta}^\Lambda_\beta$ denote the right-hand side of~(\ref{relation of Whit}).
To prove~(\ref{relation of Whit}), it suffices to verify that
$\sum_{\beta\in Q_+} \wt{\theta}^\Lambda_\beta$ satisfies the defining
conditions~(\ref{Whittaker conditions}) of the Whittaker vector associated with
$(\epsilon^+,\tilde{\n}^+,\tilde{c}^+)$.

First, the equality $a_{\Lambda,0}=1$ implies
  $\wt{\theta}^\Lambda_0=\theta^\Lambda_0(\epsilon^+,\n^+,c^+)=\textbf{1}$.

Second, we note that the equality
  $E_iK_{\nu^+_i}(\theta^\Lambda_{\beta+\alpha_i}(\epsilon^+,\n^+,c^+))=
   c^+_i\cdot \theta^\Lambda_{\beta}(\epsilon^+,\n^+,c^+)$
implies
  $E_iK_{\nu^+_i+\gamma_i}(\wt{\theta}^\Lambda_{\beta+\alpha_i})=
   c^+_iv^{(\gamma_i,\Lambda-\beta-\alpha_i)}\frac{a_{\Lambda,\beta+\alpha_i}}{a_{\Lambda,\beta}}\cdot \wt{\theta}^\Lambda_{\beta}$.
Therefore, it remains to verify
\begin{equation}\label{c relation}
  c^+_iv^{(\gamma_i,\Lambda-\beta-\alpha_i)}\frac{a_{\Lambda,\beta+\alpha_i}}{a_{\Lambda,\beta}}=\wt{c}^+_i.
\end{equation}
Recalling the definition of  $a_{\Lambda,\beta}$ of~(\ref{a-coef}), we find
\begin{multline*}
  \frac{a_{\Lambda,\beta+\alpha_i}}{a_{\Lambda,\beta}}=
  \frac{\wt{c}^+_i}{c^+_i}v^{\frac{1}{2}(\gamma_i,\alpha_i)}\cdot
  v^{\frac{1}{2}\sum_{j=1}^n \{(\beta+\alpha_i,\omega^\vee_j)(\beta-2\Lambda+\alpha_i,\gamma_j)-(\beta,\omega^\vee_j)(\beta-2\Lambda,\gamma_j)\}}=\\
  \frac{\wt{c}^+_i}{c^+_i}v^{\frac{1}{2}(\gamma_i,\alpha_i)}\cdot
  v^{\frac{1}{2}(\beta-2\Lambda+\alpha_i,\gamma_i)+\frac{1}{2}\sum_j(\beta,\omega^\vee_j)(\alpha_i,\gamma_j)}=
  \frac{\wt{c}^+_i}{c^+_i}v^{(\beta-\Lambda+\alpha_i,\gamma_i)},
\end{multline*}
where we used~(\ref{homogenious solution}) to evaluate
  $\sum_j(\beta,\omega^\vee_j)(\alpha_i,\gamma_j)=\sum_j(\beta,\omega^\vee_j)(\alpha_j,\gamma_i)=(\beta,\gamma_i)$.

This implies~(\ref{c relation}), which completes our proof of Lemma~\ref{relating Js}.
\end{proof}

Set
\begin{equation*}
  \fD:=\exp\left(\sum_{i=1}^n\frac{\log(T_{\omega_i})\log(T_{\gamma_i})}{2\sd_i\log(v)}-
  \sum_{i=1}^n s_i\log(T_{\omega_i})\right).
\end{equation*}
This definition is motivated by the following result:

\begin{Lem}\label{explicit D}
$\fD(J^\Lambda(\epsilon^\pm,\n^\pm,c^\pm;\{y_i\}))$ is a non-zero multiple of
$J^\Lambda(\tilde{\epsilon}^\pm,\tilde{\n}^\pm,\tilde{c}^\pm;\{y_i\})$
for any $\Lambda\in \lambda-\rho-Q_+$.
\end{Lem}

\begin{proof}
Evoking formula~(\ref{action on N}), we get
\begin{equation*}
  \fD(\unl{y}^{\beta-\Lambda})=\unl{y}^{\beta-\Lambda}\cdot
  v^{\frac{1}{2}\sum_{i=1}^n (\omega^\vee_i,\beta-\Lambda)(\gamma_i,\beta-\Lambda)+\sum_i s_i(\omega_i,\beta-\Lambda)}.
\end{equation*}
Combining this with Lemma~\ref{relating Js}, the statement reduces to the $\beta$-independence of
\begin{multline*}
  \frac{1}{2}\sum_{i=1}^n (\omega^\vee_i,\beta-\Lambda)(\gamma_i,\beta-\Lambda)+\sum_i s_i(\omega_i,\beta-\Lambda)-
  \frac{1}{2}\sum_{i=1}^n \frac{(\beta,\omega^\vee_i)(\beta-2\Lambda,\gamma_i)}{2}-\sum_{i=1}^n s_i(\beta,\omega_i)=\\
  \sum_{i=1}^n \frac{(\omega^\vee_i,\beta)(\gamma_i,\Lambda)-(\omega^\vee_i,\Lambda)(\gamma_i,\beta-\Lambda)}{2}-\sum_{i} s_i(\omega_i,\Lambda).
\end{multline*}
The latter follows from
  $\sum_{i=1}^n (\omega^\vee_i,\beta)(\gamma_i,\Lambda)=\sum_{i=1}^n (\omega^\vee_i,\Lambda)(\gamma_i,\beta)$,
due to~(\ref{homogenious solution}).
\end{proof}

This completes our proof of Proposition~\ref{Key point}.
\end{proof}

Due to Proposition~\ref{Key point}, $\fD \D_i(\epsilon^\pm,\n^\pm,c^\pm) \fD^{-1}$
and $\D_i(\tilde{\epsilon}^\pm,\tilde{\n}^\pm,\tilde{c}^\pm)$ act diagonally in the basis
  $\{J^\Lambda(\tilde{\epsilon}^\pm,\tilde{\n}^\pm,\tilde{c}^\pm;\{y_i\})|\Lambda\in \lambda-\rho-Q_+\}$
of $W_\lambda$ with the same eigenvalues, hence, they coincide for every $1\leq i\leq n$.
Therefore, conjugation by $\fD$ is a well-defined automorphism of $\CalD_v(H^\ad)$ which maps
$\CT(\epsilon^\pm,\n^\pm,c^\pm)$ to $\CT(\tilde{\epsilon}^\pm,\tilde{\n}^\pm,\tilde{c}^\pm)$.

This completes our proof of Theorem~\ref{Main Conj}.
$\square$


\section{Proof of Theorem~\ref{Lax for type A}}\label{Proof of Lax for A}

The proof of Theorem~\ref{Lax for type A} is similar to the one of Proposition~\ref{bound for A}
given in Appendix~\ref{Proof of bound for A} and of Theorem~\ref{Main Thm A} given
in Appendix~\ref{Proof of Main Theorem A}, but we provide details as the formulas are different.

\noindent
\textbf{Proof of part (a).}

Given a pair of type $A_{n-1}$ Sevostyanov triples $(\epsilon^\pm,\n^\pm,c^\pm)$
and $\vec{k}=(k_n,\ldots,k_1)\in \{-1,0,1\}^n$ satisfying
$k_{i+1}=\frac{\epsilon^+_{i,i+1}-\epsilon^-_{i,i+1}}{2}$ for $1\leq i\leq n-2$,
we will prove that there exist constants $\{r_{ij},r_i\}_{1\leq i\leq j\leq n}$
such that the function $F$ defined in~(\ref{Canonical Form}) satisfies the equality
\begin{equation}\label{conjugation Lax}
  F^{-1}\HH(\epsilon^\pm,\n^\pm,c^\pm)F=\HH_2^{\vec{k}}.
\end{equation}
We will view this as an equality in $\CA_n$, treating $\HH(\epsilon^\pm,\n^\pm,c^\pm)$
as an element of $\CA_n$.

$\bullet$
First, we note that the terms without $\sD_i$'s are the same
(and equal to $\sum_{j=1}^n \sw_j^{-2}$) both in
$F^{-1}\HH(\epsilon^\pm,\n^\pm,c^\pm)F$ and $\HH_2^{\vec{k}}$,
independently of our choice of constants  $\{r_{ij},r_i\}$.

$\bullet$
Second, we will match the terms with $\{\frac{\sD_i}{\sD_{i+1}}\}_{i=1}^{n-1}$
appearing in $F^{-1}\HH(\epsilon^\pm,\n^\pm,c^\pm)F$ and $\HH_2^{\vec{k}}$.
Their equality is equivalent to the following system of equations on $\{r_{ij}\}$:
\begin{equation}\label{system A.4}
  \frac{\m_{ij}-\delta_{j,i}k_i-\delta_{j,i+1}k_{i+1}}{\hbar}=
  \begin{cases}
    r_{ji}-r_{j,i+1}, &\ \mathrm{if}\ 1\leq j< i\\
    r_{ij}-r_{i+1,j}, &\ \mathrm{if}\ i+2\leq j\leq n\\
    2r_{ii}-r_{i,i+1}, &\ \mathrm{if}\ j=i\\
    r_{i,i+1}-2r_{i+1,i+1}, &\ \mathrm{if}\ j=i+1
  \end{cases}
\end{equation}
and the following system of equations on $\{r_i\}$:
\begin{equation}\label{system A.5}
  r_i-r_{i+1}=\hbar(r_{i,i+1}-r_{ii}-r_{i+1,i+1})-\hbar^{-1}\log(b_i)+\sum_{k=1}^n(n-k+1/2)\m_{ik}.
\end{equation}
Pick any $r_{11},r_1$. It suffices to show that~(\ref{system A.4}) admits a solution
since~(\ref{system A.5}) obviously admits a unique solution with a given $r_1$ for
any choice of $r_{ij}$. For a fixed $r_{11}$, the constants $\{r_{ij}\}_{1\leq i\leq j\leq n}$
satisfying the first, third, and fourth cases of~(\ref{system A.4}) are determined uniquely.
It remains to verify that they also satisfy the second case of~(\ref{system A.4}).
We prove this by induction in $j-i\geq 2$.

(a) If $j=i+2$, then
  $r_{i,i+2}-r_{i+1,i+2}=(r_{i,i+1}-2r_{i+1,i+1})+(2r_{i+1,i+1}-r_{i+1,i+2})-(r_{i,i+1}-r_{i,i+2})=
   \hbar^{-1}(\m_{i,i+1}-k_{i+1}+\m_{i+1,i+1}-k_{i+1}-\m_{i+1,i})$.
Hence, it remains to prove
  $(\m_{i,i+1}-\m_{i,i+2})-(\m_{i+1,i}-\m_{i+1,i+1})=2k_{i+1}$.
The left-hand side is equal to
  $\n^-_{i,i+1}-\n^+_{i,i+1}-\n^-_{i+1,i}+\n^+_{i+1,i}=
   b_{i,i+1}(\epsilon^-_{i,i+1}-\epsilon^+_{i,i+1})=\epsilon^+_{i,i+1}-\epsilon^-_{i,i+1}=2k_{i+1}$,
due to the choice of $k_{i+1}$.

(b) If $j>i+2$, then
  $r_{ij}-r_{i+1,j}=(r_{i,j-1}-r_{i+1,j-1})+(r_{i+1,j-1}-r_{i+1,j})-(r_{i,j-1}-r_{ij})=
   \hbar^{-1}(\m_{i,j-1}+\m_{j-1,i+1}-\m_{j-1,i})$.
Hence, it remains to prove $(\m_{i,j-1}-\m_{ij})-(\m_{j-1,i}-\m_{j-1,i+1})=0$.
The left-hand side is equal to $b_{i,j-1}(\epsilon^-_{i,j-1}-\epsilon^+_{i,j-1})=0$
as $b_{i,j-1}=0$.

Thus, we have determined a collection of constants $\{r_{ij},r_i\}_{1\leq i\leq j\leq n}$
satisfying~(\ref{system A.4},~\ref{system A.5}).

$\bullet$
Finally, it remains to verify that for $F$ of~(\ref{Canonical Form}) with
the constants $r_{ij},r_i$ chosen as above the terms with
$\frac{\sD_i}{\sD_j}\ (j>i+1)$ in $F^{-1}\HH(\epsilon^\pm,\n^\pm,c^\pm)F$
and $\HH_2^{\vec{k}}$ do coincide. First, we note that the conditions
  $\epsilon^\pm_{i,i+1}=\ldots=\epsilon^\pm_{j-2,j-1}=\pm 1$
and $k_{i+1}=\ldots=k_{j-1}=1$ are equivalent under our assumption.
Pick $j>i+1$ such that either of these equivalent conditions is satisfied.
Then, the compatibility of the terms with $\frac{\sD_i}{\sD_j}$ is equivalent
to the following equality:
\begin{multline}\label{system A.6}
  \frac{F(\sw_1,\ldots, v\sw_i,\ldots,v^{-1}\sw_j,\ldots,\sw_n)}{F(\sw_1,\ldots,\sw_n)}=\\
  \prod_{k=1}^n \sw_k^{\sum_{s=i}^{j-1}\m_{sk}+\delta_{k,i}+\delta_{k,j}}\cdot \prod_{p=i}^j \sw_p^{-k_p-1}\cdot
  \prod_{s=i}^{j-1}b_s^{-1}\cdot
  v^{i+1-j+\sum_{i\leq a<b\leq j-1} (\n^-_{ab}-\n^+_{ab})+\sum_{k=1}^n \sum_{s=i}^{j-1} \frac{n+1-2k}{2}\m_{sk}}.
\end{multline}
This equality is proved by induction in $j-i$, factoring the left-hand side as
in~(\ref{induction factorization}) and noticing that the $j=i+1$ counterpart
of~(\ref{system A.6}) is just the compatibility of the terms with
$\frac{\sD_i}{\sD_{i+1}}$, established in the previous step.

\medskip
Thus, the function $F$ defined via~(\ref{Canonical Form}) with the constants
$\{r_{ij},r_i\}_{1\leq i\leq j\leq n}$ determined in our second step satisfies
the equality~(\ref{conjugation Lax}). This completes our proof of
Theorem~\ref{Lax for type A}(a).

\medskip
\noindent
\textbf{Proof of part (b).}

Let us write
  $$\sT^v_{\vec{k}}(z)_{11}=
   (-1)^n\sw_1\cdots\sw_nz^s\left(1-\HH^{\vec{k}}_2 z+\HH^{\vec{k}}_3 z^2-\ldots+(-1)^n\HH^{\vec{k}}_{n+1}z^n\right),$$
where $s=\sum_{j=1}^n \frac{k_j-1}{2}$. For $1\leq r\leq n$, let
$\bar{\HH}^{\vec{k}}_{r+1}\in \CT^{\vec{k}}$ be the image of
$\HH^{\vec{k}}_{r+1}$ in $\bar{\CA}_n$. Consider the summands
in $\bar{\HH}^{\vec{k}}_{r+1}$ without $\sD_i$'s and let $\bar{\HH}^{\vec{k};0}_{r+1}$ denote their sum.
Tracing back the definition of $\sT^v_{\vec{k}}(z)$, we get
$\bar{\HH}^{\vec{k};0}_{r+1}=\sigma_r(\{\sw_j^{-2}\})$: the $r$-th elementary
symmetric polynomial of $\{\sw_j^{-2}\}_{j=1}^n$.

Thus, the image of $\bar{\HH}^{\vec{k}}_{r+1}$ under the anti-isomorphism
$\bar{\CA}_n\to \CalD_v(H^\ad_{\ssl_n})$ of Section~\ref{explicit hamiltonians A}
is an element of $\CalD^\leq_v(H^\ad_{\ssl_n})$ whose action on $W_\lambda$
(see Appendix~\ref{Proof of Main Theorem A}) is upper-triangular with the same diagonal matrix coefficients
as in the action of $\D_r\in \CT(\epsilon^\pm,\n^\pm,c^\pm)$. Thus, the argument of
Proposition~\ref{Etingof argument} can be applied to show that the function $F$ of part~(a),
which conjugates $\HH(\epsilon^\pm,\n^\pm,c^\pm)$ into $\bar{\HH}_2^{\vec{k}}$ also conjugates the preimage
of $\D_r$ in $\bar{\CA}_n$ into $\bar{\HH}^{\vec{k}}_{r+1}$ for all $1\leq r\leq n$.
Therefore, conjugation with $F$ is an automorphism of $\bar{\CA}_n$  that maps
$\widetilde{\CT}(\epsilon^\pm,\n^\pm,c^\pm)$ to $\CT^{\vec{k}}$.

\medskip
This completes our proof of Theorem~\ref{Lax for type A}.
$\square$


\end{document}